\documentclass[12pt,a4paper]{article}
\usepackage{amsmath}
\usepackage{amssymb}
\usepackage{amsthm}
\usepackage{amsfonts}
\usepackage{latexsym}

\theoremstyle{plain}
\newtheorem{thm}{Theorem}[section]
\newtheorem{cor}[thm]{Corollary}
\newtheorem{lem}[thm]{Lemma}
\newtheorem{prop}[thm]{Proposition}

\theoremstyle{definition}
\newtheorem{defn}[thm]{Definition}
\newtheorem{notn}[thm]{Notational Caveat}

\newtheorem{rem}[thm]{Remark}

\hyphenation{ma-ni-fold ma-ni-folds re-pre-sen-ta-tion ope-ra-tor
sa-ti-sfy-ing re-pre-sen-ta-tions mul-ti-pli-ci-ties va-lu-ed
com-pa-ti-ble po-la-ri-za-tion par-ti-cu-lar sti-mu-la-ting tri-vial
dif-fe-ren-tial va-ni-shing me-ta-li-near na-tu-ral-ly
e-qui-va-len-tly ge-ne-ra-li-ty na-tu-ral fa-mi-ly geo-me-tric
uni-ta-ri-ly e-qui-va-rian-tly li-nea-ri-za-tion dia-go-nal geo-me-try nor-ma-li-zed
e-xi-sten-ce or-tho-go-nal}
\title{Eigenfunction asymptotics in the complex domain for 
a compact Lie group}
\author{Simone Gallivanone and Roberto Paoletti\footnote{\noindent{\bf Address:}
Dipartimento di Matematica e Applicazioni, Universit\`a degli Studi
di Milano Bicocca, Via R. Cozzi 55, 20125 Milano,
Italy; {\bf e-mail}: roberto.paoletti@unimib.it }}
\date{}

\begin{document}
\maketitle

\begin{abstract}
Let $(G,\kappa)$ be a compact connected Lie group
endowed with a biinvariant Riemannian metric, and let $\tilde{G}$ be
the complexification of $G$.
We apply Grauert tube techniques to the near-diagonal scaling 
asymptotics of 
certain operator kernels, which are defined in terms of the matrix elements of an irreducible representation drifting
to infinity along a ray in weight space. These kernels are the equivariant components of Poisson and Szeg\H{o} kernels on a fixed sphere bundle
in $\tilde{G}$, when the latter is identified with the tangent bundle
of $G$ in an appropriate way.
\end{abstract}

\section{Introduction}

Let $G$ be a connected compact Lie group with Lie algebra
of left-invariant vector fields
$\mathfrak{g}$ and coalgebra $\mathfrak{g}^\vee$; 
we shall canonically identify $\mathfrak{g}$  
with the tangent space
of $G$ at the identity $e\in G$, $T_eG$.
The dimension of $G$ and its rank (i.e., the dimension of a maximal torus
$T\leqslant G$)
will be denoted by, respectively, $d$ and $r_G$.

Let $\kappa_e$ be an $\mathrm{Ad}$-invariant Euclidean
product on $\mathfrak{g}$,
where $\mathrm{Ad}$ denotes the adjoint representation;
we shall denote by $\kappa_e^\vee$ the corresponding
Euclidean product on $\mathfrak{g}^\vee$.
Let $\kappa$ be the
induced biinvariant Riemannian metric
on $G$, and let $L^2(G)$ be the 
Hilbert space of $L^2$-summable functions on
$G$ with respect to the associated Riemannian
density $\mathrm{d} V_G$.
Then $G$ is unitarily represented 
on $L^2(G)$ by left translations:
\begin{equation}
    \label{eqn:left translations}
    \theta_l(g)(f)(h):=f\left(g^{-1}\,h\right)
\quad (f\in L^2(G),\,g,h\in G).
\end{equation}

Let $T\leqslant G$ be a maximal torus, with Lie algebra
$\mathfrak{t}$ and corresponding set of roots $R\subset \mathfrak{t}^\vee$. As is well-known,
choosing an ordering of 
$R$, i.e., a set of positive roots $R^+\subset R$,
determines a notion of dominant weight for $(\mathfrak{g},
\mathfrak{t})$; the set
$\hat{G}$ of all irreducible representations of $G$
is then in bijective correspondence with a certain subset
$\mathcal{D}^G$ of the set $\mathcal{D}$
of all dominant weights.
For instance, $\mathcal{D}^G=\mathcal{D}$ when
$G$ is simply connected.
For every $\boldsymbol{\lambda}\in \mathcal{D}^G$,
we shall denote by $V_{\boldsymbol{\lambda}}$ the
corresponding representation space, by
$d_{\boldsymbol{\lambda}}$ its dimension, and by 
$\chi_{\boldsymbol{\lambda}}$ its character.
We shall also denote by
$\boldsymbol{\lambda}^\vee$
the weight associated to the dual representation.

By the Theorem of Peter and Weyl, there is
a unitary and equivariant decomposition of 
$L^2(G)$ as the Hilbert space direct sum
of its isotypical components (see, e.g., \cite{btd},
\cite{s95}). 
More precisely, let $L^2(G)_{\boldsymbol{\lambda}}\subset L^2(G)$
denote the span of the matrix elements of the 
representation corresponding to $\boldsymbol{\lambda}^\vee$,
with respect to any given basis of
$V_{\boldsymbol{\lambda}^\vee}$. Then there is 
an equivariant isomorphism
$$
L^2(G)_{\boldsymbol{\lambda}}\cong 
V_{\boldsymbol{\lambda}}^{\oplus d_{\boldsymbol{\lambda}}};
$$
hence $L^2(G)_{\boldsymbol{\lambda}}$
is the $\boldsymbol{\lambda}$-th isotypical component
of $L^2(G)$, and
there is an equivariant isomorphism
of Hilbert spaces
\begin{equation}
\label{eqn:direct sum decomp}
L^2(G)\cong 
\bigoplus_{\boldsymbol{\lambda}\in \mathcal{D}^G}
L^2(G)_{\boldsymbol{\lambda}}.
\end{equation}

The subspaces $L^2(G)_{\boldsymbol{\lambda}}$ may also be described in terms
of the positive Laplacian operator $\Delta$ on 
$(G,\kappa)$. Namely, $\Delta$ acts on $L^2(G)_{\boldsymbol{\lambda}}$ as 
scalar multiplication by 
$c_{\boldsymbol{\lambda}}^2$, where
\begin{equation}
\label{eqn:eigenvalue of lambda}
c_{\boldsymbol{\lambda}}:=\sqrt{\|\boldsymbol{\lambda}^\vee+\boldsymbol{\delta}\|_\kappa^2
-\|\boldsymbol{\delta}\|_\kappa^2}
=\kappa_e^\vee\left(\boldsymbol{\lambda}^\vee,\boldsymbol{\lambda}^\vee+2\,\boldsymbol{\delta}\right)
^{1/2},
\end{equation}
$\boldsymbol{\delta}$ being the half-sum of positive roots
(see (\ref{eqn:half sum prs})); in other words,
every matrix element of 
$V_{\boldsymbol{\lambda}^\vee}$ is an eigenfunction of $\Delta$ for the eigenvalue 
$c_{\boldsymbol{\lambda}}^2$.

The complexification $(\tilde{G},J)$ of
$G$ is a connected complex $d$-dimensional Lie group, 
with complex structure $J$,
in which $G$ sits as a totally real submanifold and a maximal compact subgroup \cite{btd}; the Lie algebra $\tilde{\mathfrak{g}}$
of $\tilde{G}$ the complexification of $\mathfrak{g}$,
i.e. $\tilde{\mathfrak{g}}=\mathfrak{g}\otimes \mathbb{C}$. 
Any irreducible representation of $G$ extends uniquely to
a holomorphic irreducible representation of $\tilde{G}$
on the same representation space.
Hence, the matrix elements of any irreducible
representation of $G$  admit holomorphic extensions
to $\tilde{G}$.

There is a diffeomorphism  
$\gamma:G\times \mathfrak{g}\cong\tilde{G} $, which can be described as follows.
Consider the
exponential map 
$\exp^{\tilde{G}}:\tilde{\mathfrak{g}}\rightarrow \tilde{G}$.
Then 
\begin{equation}
\label{eqn:trivializationtildeG}
\gamma (g,\boldsymbol{\xi}):=
g\,\exp^{\tilde{G}}(\imath\,\boldsymbol{\xi})\qquad
(g\in G,\,\boldsymbol{\xi}\in \mathfrak{g}).
\end{equation}

In the sequel, 
we shall use the
short-hands
$$
g\cdot \boldsymbol{\xi}=\mathrm{d}_e L_g(\boldsymbol{\xi}),
\quad \boldsymbol{\xi}\cdot g=\mathrm{d}_e R_g(\boldsymbol{\xi})\quad 
(g\in G,\,\boldsymbol{\xi}\in \mathfrak{g}),
$$
where $L_g,\,R_g:G\rightarrow G$ denote left and right translations by
$g\in G$, respectively.
Let us compose $\gamma$ with the trivialization of the tangent bundle
$TG$ given by left translations to the identity. 
If $v\in T_gG$,
we have $v=g\cdot \boldsymbol{\xi}$ for a unique
$\boldsymbol{\xi}\in \mathfrak{g}$. 
We obtain a diffeomorphism
\begin{equation}
\label{eqn:defn of gamma'}
E:(g,v)=(g,g\cdot \boldsymbol{\xi})\in TG\mapsto 
g\,\exp^{\tilde{G}}(\imath\,\boldsymbol{\xi})\in \tilde{G}.
\end{equation}

Besides being a complex Lie group, $\tilde{G}$ also
carries a built-in $G$-invariant K\"{a}hler structure,
which is intrinsically determined by $\kappa$, and
has an invariant global potential $\rho$ (see (\ref{eqn:defn di rho})
below). 
In terms of $\rho$, one can define a family of
pseudoconvex boundaries $X^\tau$ in $\tilde{G}$, and
we shall be concerned with certain operator kernels
on $X^\tau$.
To clarify this, it is in order to make a brief recall
on so-called \textit{Grauert tubes} 
of compact real-analytic Riemannian manifolds
(such as $(G,\kappa)$).

Let $(M,\beta)$ be a compact real-analytic Riemannian manifold,
and for $\tau>0$ let $T^\tau M$ be the tubular neighbourhood of 
radius $\tau$ 
of the zero section in the tangent
bundle. For sufficiently small $\tau$, 
there exists an intrinsic
so-call \textit{adapted} complex structure
$J_{\mathrm{ad}}$ on $T^\tau M$, which is uniquely characterized by the property that the standard parametrization  of the leaves of the Riemann foliation are holomorphic  maps
from suitable strips in $\mathbb{C}$ (\cite{pw1},
\cite{pw2}, \cite{gs1}, \cite{gs2}, \cite{ls}, \cite{szo91}; see also \cite{b}, \cite{bh}). Furthermore, $J_{\mathrm{ad}}$
is compatible with the canonical
symplectic structure $\Omega_{\mathrm{can}}$
on $TM$, and the associated Riemannian metric restricts to $\beta$
along $M$. 

For a general $(M,\beta)$, $J_{ad}$ needn't 
be defined on the whole of $TM$: as proved in 
\cite{ls},
this will certainly fail in the presence of negative scalar curvatures.
In the present setting, however, the following holds.

\begin{thm}
\label{thm:szoke compgr}
(Sz\"{o}ke, \cite{szo98}))
Let $(G,\kappa)$ be a compact Lie group with a biinvariant Riemannian metric.
Then $J_{ad}$ is defined on $TG$, and $E$ in
(\ref{eqn:defn of gamma'}) is a biholomorphism from $(TG,J_{ad})$ to
$(\tilde{G},J)$.

\end{thm}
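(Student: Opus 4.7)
The plan is to construct a global complex structure $J^\ast:=E^\ast J$ on $TG$ by pullback from $\tilde{G}$, and to check that $J^\ast$ satisfies the defining property of the adapted complex structure; both conclusions of the theorem then follow simultaneously.

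First I would recall that, because $\kappa$ is biinvariant, the Levi--Civita connection on $(G,\kappa)$ coincides with the canonical bi-invariant affine connection, so the geodesics of $(G,\kappa)$ through $g\in G$ are precisely the curves $t \mapsto g\,\exp^G(t\boldsymbol{\xi})$ with $\boldsymbol{\xi}\in\mathfrak{g}$. Given a unit vector $v=g\cdot \boldsymbol{\xi}\in T_gG$, the corresponding leaf of the Riemann foliation of $TG$ is swept out by the parametrization
$$
\Psi:\mathbb{C}\ni \sigma + \imath\,\tau \longmapsto
\bigl(g\,\exp^G(\sigma\,\boldsymbol{\xi}),\ \tau\cdot g\,\exp^G(\sigma\,\boldsymbol{\xi})\cdot \boldsymbol{\xi}\bigr)\in TG.
$$
Composing with $E$, I obtain $E\circ\Psi(\sigma+\imath\tau)=g\,\exp^{\tilde{G}}(\sigma\,\boldsymbol{\xi})\,\exp^{\tilde{G}}(\imath\,\tau\,\boldsymbol{\xi})$. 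Because $\sigma\,\boldsymbol{\xi}$ and $\imath\,\tau\,\boldsymbol{\xi}$ both lie in the complex line $\mathbb{C}\boldsymbol{\xi}\subset\tilde{\mathfrak{g}}$, they commute, and BCH collapses the product to
$$
E\circ\Psi(\sigma+\imath\,\tau)=g\,\exp^{\tilde{G}}\!\bigl((\sigma+\imath\,\tau)\,\boldsymbol{\xi}\bigr),
$$
which is visibly $J$-holomorphic as a map $\mathbb{C}\to\tilde{G}$.

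Next I would verify that $E$ is a global diffeomorphism, building its inverse from the polar (Cartan) decomposition $\tilde{G}=G\cdot\exp^{\tilde{G}}(\imath\,\mathfrak{g})$, a standard fact for the complexification of a compact group. Hence $J^\ast:=E^\ast J$ is a well-defined integrable complex structure on the whole of $TG$, and the preceding computation says that every standard leaf parametrization $\Psi$ is $J^\ast$-holomorphic. Since the adapted complex structure is uniquely characterized by this holomorphy property, $J^\ast$ must agree with $J_{ad}$ wherever the latter is already known to exist; consequently $J^\ast$ provides the desired global extension of $J_{ad}$ to all of $TG$, and the relation $J^\ast=E^\ast J$ is precisely the statement that $E$ is a biholomorphism.

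The principal obstacle is not the commutation computation above, which is essentially a one-line observation, but rather confirming that $J^\ast$ meets the full package of requirements built into $J_{ad}$: compatibility with $\Omega_{\mathrm{can}}$, and restriction of the associated Kähler metric to $\kappa$ along the zero section $G\subset TG$. Compatibility with $\Omega_{\mathrm{can}}$ reduces, by left $G$-invariance of both structures, to checking it along the normal slices $\Psi(\mathbb{C})$ emanating from $e\in G$, where $\Omega_{\mathrm{can}}$ pulls back to the standard symplectic form on $\mathbb{C}$; the Riemannian normalization at $G$ follows from the construction of the Kähler potential $\rho$ and its vanishing together with its first normal derivatives on $G$. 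Once these compatibilities are in place, uniqueness of $J_{ad}$ forces $J^\ast=J_{ad}$ and the theorem is proved.
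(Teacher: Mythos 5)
The paper states this result as Sz\H{o}ke's theorem and simply cites \cite{szo98}, providing no proof of its own, so there is no internal argument to compare against. Your proof is nevertheless correct and follows the natural route: $E$ is a diffeomorphism by the Cartan (polar) decomposition $\tilde{G}=G\cdot\exp^{\tilde{G}}(\imath\,\mathfrak{g})$, the computation $E\circ\Psi(\sigma+\imath\tau)=g\,\exp^{\tilde{G}}\big((\sigma+\imath\tau)\boldsymbol{\xi}\big)$ shows every leaf parametrization becomes $J$-holomorphic, and the characterization of $J_{\mathrm{ad}}$ that the authors themselves quote in the Introduction (it is the \emph{unique} complex structure making the leaf parametrizations holomorphic) forces $E^*J=J_{\mathrm{ad}}$ on any tube $T^R G$ where the latter is defined. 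Since $E^*J$ lives on all of $TG$, this is precisely the asserted global extension, and $E$ is by construction a biholomorphism.

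Your final paragraph is overcautious. Compatibility of $J_{\mathrm{ad}}$ with $\Omega_{\mathrm{can}}$ and the Kähler normalization of the induced metric along the zero section are \emph{consequences} of the Lempert--Sz\H{o}ke/Guillemin--Stenzel theory, not extra conditions built into the characterization you invoke; the leaf-holomorphy condition alone already determines $J_{\mathrm{ad}}$ on $T^R G\setminus G$, and smoothness of both $E^*J$ and $J_{\mathrm{ad}}$ across the zero section supplies the rest by continuity. So no further verifications are needed to finish the proof of the stated theorem; the additional structural properties then hold for $E^*J$ automatically because they hold for $J_{\mathrm{ad}}$.
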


This result puts the theory of complex reductive groups
in contact with the general theory of Grauert tubes.

On the other hand, the asymptotics of Poisson and Szeg\H{o} kernels
on Grauert tube boundaries have attracted considerable
attention in recent years, 
sparked to a large extent by seminal insight of
Boutet de Monvel on the extension properties of
Laplacian eigenfunctions
(see e.g. \cite{bdm1}, \cite{cr1}, \cite{cr2},
\cite{gls},
\cite{g},
\cite{leb}, 
\cite{s1}, \cite{s2},
\cite{z12}, \cite{z13}, \cite{z20}).
Grauert tube techniques also find applications in the
study of nodal sets (see e.g. 
\cite{ct16}, \cite{ct18}, \cite{tz09}, \cite{tz21}).

In light of these considerations, 
it seems natural to apply Grauert tube methods to 
the study the local and global 
asymptotics related to matrix elements
of irrducible representations.
To bring this theme into focus,
let us remark that Theorem 
\ref{thm:szoke compgr} has the following consequences, which are the basis for the present discussion:
\begin{enumerate}
\item $(TG,J_{\mathrm{ad}},\Omega_{\mathrm{can}})$
is a K\"{a}hler manifold.
\item If
$
\Omega:=(E^{-1})^*(\Omega_{\mathrm{can}})
$,
then $(\tilde{G},J,\Omega)$ is also a K\"{a}hler manifold,
and $E$ is an isomorphism of K\"{a}hler manifolds.

\item The induced Riemannian metric 
$\hat{\kappa}(\cdot,\cdot):=\Omega(\cdot, J(\cdot))$ on $\tilde{G}$ restricts
to $\kappa$ along $G$.

\item Consider the norm function $\|\cdot\|_\kappa$ on $TG$, which 
is the pull-back of the norm $\|\cdot\|_{\kappa_e}$ 
on $\mathfrak{g}$ under the previous trivialization:
$$
\|(g,g\cdot \boldsymbol{\xi})\|_\kappa:=
\|g\cdot \boldsymbol{\xi}\|_{\kappa_g}
=\|\boldsymbol{\xi}\|_{\kappa_e};
$$
then $\|\cdot\|_\kappa^2$ is strictly plurisubharmonic 
with respect to $J_{ad}$, and is in fact a global
K\"{a}hler potential for $\Omega_{\mathrm{can}}$. 

\item Therefore, the composition
\begin{equation}
\label{eqn:defn di rho}
\rho:=\|\cdot\|_\kappa^2\circ E^{-1}:
\tilde{G}\rightarrow [0,+\infty)
\end{equation}
is strictly plurisubharmonic  on $(\tilde{G},J)$, and a global
K\"{a}hler potential for $\Omega$:
$$
\Omega=\imath\,\partial\,\overline{\partial}\rho.
$$

\item $\sqrt{\rho}=\|\cdot\|_\kappa\circ E^{-1}$
(a.k.a. the \textit{tube function} on $\tilde{G}$) restricts on $\tilde{G}\setminus G$ to a solution of the homogeneous Monge-Amp\`{e}re
equation.

\item For $\tau>0$, let 
$S^\tau(\mathfrak{g})\subset \mathfrak{g}$
denote the sphere of radius $\tau$ centered at the origin.
Then,
with $\gamma$ as in (\ref{eqn:trivializationtildeG})),
\begin{equation}
\label{eqn:X tau sphere bundle}
X^\tau:=\rho^{-1}(\tau^2)=\gamma\left(G\times S^\tau(\mathfrak{g})\right)
\subset \tilde{G}
\end{equation}
is the boundary of a strictly pseudoconvex domain, and as such is
equipped with a natural contact structure and an induced 
volume form; we shall let $H(X^\tau)
\subset L^2(X^\tau)$
denote its Hardy space, and $\Pi^\tau:L^2(X^\tau)\rightarrow H(X^\tau)$ the corresponding
Szeg\"{o} kernel (orthogonal projector).

\end{enumerate}

Clearly, $E^{-1}(X^\tau)\subset TG$ 
is the sphere bundle of radius $\tau$ (see (\ref{eqn:defn of gamma'})).

Let
$\tilde{L}:G\times \tilde{G}\rightarrow \tilde{G}$ 
be the holomorphic action 
given by left translations. For any $g\in G$,
$\rho$ is 
$\tilde{L}_g$-invariant, and
$\tilde{L}_g$ is a K\"{a}hler automorphism of $(\tilde{G},J,\Omega)$.
Therefore, 
$\tilde{L}$ restricts for any $\tau>0$ 
to a CR-holomorphic action 
\begin{equation}
\label{eqn:mutau defn}
\mu^\tau:=
\left.\tilde{L}\right|_{G\times X^\tau}:
G\times X^\tau
\rightarrow X^\tau
\end{equation}
(that is, $\mu^\tau_g:X^\tau\rightarrow X^\tau$
is given by restriction of 
$\tilde{L}_g:\tilde{G}\rightarrow \tilde{G}$,
$\forall\,g\in G$).
Then $\mu^\tau$ induces 
in a standard manner
a unitary
representation of $G$ on $H(X^\tau)$. By the Theorem
of Peter and Weyl, 
there is 
a unitary and equivariant decomposition as a Hilbert
space direct sum
\begin{equation}
\label{eqn:hardy space decomp equiv}
H(X^\tau)=\bigoplus_{\boldsymbol{\lambda}\in \mathcal{D}^G}
H(X^\tau)_{\boldsymbol{\lambda}},
\end{equation}
where $H(X^\tau)_{\boldsymbol{\lambda}}\subset
H(X^\tau)$ is the $\boldsymbol{\lambda}$-th isotypical
decomposition. 
The equivariant 
decompositions (\ref{eqn:direct sum decomp}) and  (\ref{eqn:hardy space decomp equiv})
are related by the following property:
for any $\boldsymbol{\lambda}$,
$H(X^\tau)_{\boldsymbol{\lambda}}$ consists of the
holomorphic extensions of elements of $L^2(G)_{\boldsymbol{\lambda}}$,
restricted to $X^\tau$. Hence there is an algebraic
(but non-unitary)
isomorphism $L^2(G)_{\boldsymbol{\lambda}}\rightarrow 
H(X^\tau)_{\boldsymbol{\lambda}}$, given by holomorphic extension and restriction.

There are two natural smoothing 
operator kernels associated to this
setting. One relates to the CR and
metric structure of $X^\tau$, and is the kernel of the
orthogonal projector 
\begin{equation}
\label{eqn:equiv szego proj}
\Pi^\tau_{\boldsymbol{\lambda}}:L^2(X^\tau)\rightarrow
H(X^\tau)_{\boldsymbol{\lambda}},
\end{equation}
i.e. the $\boldsymbol{\lambda}$-th equivariant piece
of the Szeg\"{o} kernel $\Pi^\tau$. If 
$(
\sigma_{\boldsymbol{\lambda},j}
)_{j=1}^{d_{\boldsymbol{\lambda}}^2}$ is an orthonormal basis of $H(X^\tau)_{\boldsymbol{\lambda}}$, then 
the distributional kernel of (\ref{eqn:equiv szego proj})
is
\begin{equation}
\label{eqn:equiv szego ker}
\Pi^\tau_{\boldsymbol{\lambda}}(x,y)=
\sum_{j=1}^{d_{\boldsymbol{\lambda}}^2}
\sigma_{\boldsymbol{\lambda},j}(x)\cdot 
\overline{\sigma_{\boldsymbol{\lambda},j}(y)}
\qquad (x,y\in X^\tau).
\end{equation}
The other operator kernel in point is related to the
holomorphic extension property of the matrix elements
of the irreducible representations of $G$, or equivalently of the
eigenfunctions of the Laplacian of $G$. Let 
$(
\sigma_{\boldsymbol{\lambda},j}
)_{j=1}^{d_{\boldsymbol{\lambda}}^2}$
be an orthonormal basis of $L^2(G)_{\boldsymbol{\lambda}}$;
for any $j$, let $\tilde{\sigma}_{\boldsymbol{\lambda},j}$
be the holomorphic extension of 
$\sigma_{\boldsymbol{\lambda},j}$ to $\tilde{G}$, and
denote by $\tilde{\sigma}_{\boldsymbol{\lambda},j}^\tau$
its restriction to $X^\tau$. Thus 
$(\tilde{\sigma}_{\boldsymbol{\lambda},j}^\tau)_{j=1}^{d_{\boldsymbol{\lambda}}^2}$ is also a basis of 
$H(X^\tau)_{\boldsymbol{\lambda}}$, albeit not an orthonormal
one.
The smoothing operator kernel related 
the complexified eigenfunctions 
for the eigenvalue $c_{\boldsymbol{\lambda}}$
(equivalently, of the matrix elements of $V_{\boldsymbol{\lambda}^\vee}$)
is then
\begin{equation}
\label{eqn:Poisson-wave equiv}
P^\tau_{\boldsymbol{\lambda}}(x,y):=
e^{-2\,\tau\,c_{\boldsymbol{\lambda}}}\,
\sum_j
\tilde{\sigma}_{\boldsymbol{\lambda},j}^\tau(x)
\cdot \overline{\tilde{\sigma}_{\boldsymbol{\lambda},j}^\tau(y)}
\qquad (x,y\in X^\tau),
\end{equation}
where $c_{\boldsymbol{\lambda}}$ is as in (\ref{eqn:eigenvalue of lambda}).

Let us briefly dwell to motivate the tempering factor 
$e^{-2\,\tau\,c_{\boldsymbol{\lambda}}}$.
As discussed, say, in \cite{z12}, 
\cite{z14} and \cite{z20},
the operator $P^\tau:\mathcal{C}^\infty(X^\tau)\rightarrow
\mathcal{C}^\infty(X^\tau)$ with Schwartz kernel
\begin{equation}
\label{eqn:Poisson-wave}
P^\tau(x,y):=\sum_{\boldsymbol{\lambda}\in \mathcal{D}^G}
e^{-2\,\tau\,c_{\boldsymbol{\lambda}}}\,
\sum_j
\tilde{\sigma}_{\boldsymbol{\lambda},j}^\tau(x)
\cdot \overline{\tilde{\sigma}_{\boldsymbol{\lambda},j}^\tau(y)}
\qquad (x,y\in X^\tau)
\end{equation}
is a Fourier integral
operator of degree $-(d-1)/2$
with the same canonical relation as $\Pi^\tau$.
Furthermore, $P^\tau$ is closely related to the so-called Poisson-wave operator, which is
obtained by holomorphically extending the kernel
of the wave operator on $G$ and
governs the holomorphic extension of Laplacian eigenfunctions.
For this reason, (\ref{eqn:Poisson-wave}) plays a
crucial role in the asymptotic study of complexified eigenfunction of compact real-analytic Riemannian manifolds
(see also the surveys \cite{z13} and \cite{z17}).
On the other hand, (\ref{eqn:Poisson-wave equiv}) is the
$\boldsymbol{\lambda}$-th component of $P^\tau$; 
hence it is a natural candidate for the asymptotic study of
complexified isotypical eigenfunctions.

In this paper, we shall be concerned with the near-diagonal
asymptotics of (\ref{eqn:equiv szego ker}) and 
(\ref{eqn:Poisson-wave equiv}), when the weight
$\boldsymbol{\lambda}$ drifts to infinity along
a ray in weight space. 
In other words, for a fixed non-zero
$\boldsymbol{\lambda}\in \mathcal{D}^G$, 
we restrict attention to the
\textit{ladder} of representations 
$V_{k\,\boldsymbol{\lambda}}$, 
$k=1,2,\ldots$ (see \cite{gstb82}),
and consider the asymptotics of 
the sequence of smoothing kernels 
$\Pi^\tau_{k\,\boldsymbol{\lambda}}$ and 
$P^\tau_{k\,\boldsymbol{\lambda}}$ when $k\rightarrow +\infty$.
Analogues of these asymptotics in the line bundle setting were considered in \cite{gp19}, \cite{gp20},
and \cite{p22}. In particular, in spite of the rather different
geometric context, the approach in
\cite{p22} is close to the one in the present work,
and the statements bear a formal similarity to
those in \textit{loc. cit.} (see the closing remark of this
introduction).

In order to explain our first result in this direction, 
we need to premise some further notation. 
The isomorphism 
$\mathcal{L}:\mathfrak{g}\rightarrow \mathfrak{g}^\vee$
induced by $\kappa_e$ intertwines
the adjoint and coadjoint representations
$\mathrm{Ad}$ and $\mathrm{Coad}$ of $G$.
Hence the inverse image under 
$\mathcal{L}$ of a coadjoint orbit
$\mathcal{O} \subset \mathfrak{g}^\vee$ is an
adjoint orbit $\tilde{\mathcal{O}} \subset \mathfrak{g}$.
Furthermore, since 
$\mathrm{Ad}$ and $\mathrm{Coad}$
are unitary, 
if non trivial
$\mathcal{O}$ and $\tilde{\mathcal{O}}$
lie in spheres centered at the origin in $\mathfrak{g}^\vee$
and $\mathfrak{g}$, respectively.
For any $\tau>0$, we shall denote by $\mathcal{O}^{\tau}$
(respectively, $\tilde{\mathcal{O}}^{\tau}$) the
unique rescaling of $\mathcal{O}$ (respectively, $\tilde{\mathcal{O}}$) contained in the sphere
of radius $\tau$ centered at the origin 
in $\mathfrak{g}^\vee$
(respectively, $\mathfrak{g}$); thus 
$\mathcal{L}( \tilde{\mathcal{O}}^{\tau})=\mathcal{O}^{\tau}$.

\begin{defn}
\label{defn:defn di XtauO}
Given a non-zero $\boldsymbol{\lambda}\in \mathcal{D}^G$, 
let $\mathcal{O} \subset \mathfrak{g}^\vee$ be its
coadjoint orbit, and let $\mathcal{C}(\mathcal{O})\subseteq
\mathfrak{g}^\vee$ be the positive cone over $\mathcal{O}$.

\begin{enumerate}
    \item With $\gamma$ is as in (\ref{eqn:trivializationtildeG}), let us set
    $$
    \tilde{G}_{\mathcal{O}}:=
\gamma\left(G\times \mathcal{C}(\tilde{\mathcal{O}})\right).
    $$
    \item For any $\tau>0$, let us define
$$
X^\tau_{\mathcal{O}}:=
\gamma (G\times \tilde{\mathcal{O}}^\tau)
\subseteq X^\tau\subset \tilde{G}.
$$
\end{enumerate}
\end{defn}

Clearly, $X^\tau$ and $X^\tau_{\mathcal{O}}$ are 
$G$-invariant.
If $x\in \tilde{G}$, let $G\cdot x \subset \tilde{G}$
denote its $G$-orbit under left translations.

\begin{defn}
\label{eqn:defn di ZtauO}
In the same setting as in Definition
\ref{defn:defn di XtauO}, let us pose
$$ 
\mathcal{Z}^\tau_{\mathcal{O}}:=
\left\{(x,y)\in X^\tau\times X^\tau\,:\,
x\in X^\tau_{\mathcal{O}}\quad\text{and}\quad
y\in G\cdot x\right\}.
$$
\end{defn}

\begin{thm}
\label{thm:rapid decay compact}
Uniformly on compact subsets of
$X^\tau\times X^\tau\setminus \mathcal{Z}^\tau_{\mathcal{O}}$,
one has
$$
\Pi^\tau_{k\,\boldsymbol{\lambda}}(x,y)=
O\left(k^{-\infty}\right)\qquad
\text{and}\qquad P^\tau_{k\,\boldsymbol{\lambda}}(x,y)=
O\left(k^{-\infty}\right)
$$
for $k\rightarrow +\infty$.

\end{thm}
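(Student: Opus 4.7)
The plan is to realize both kernels as oscillatory integrals with a common large parameter $k$, and then to read off the description of $\mathcal{Z}^\tau_{\mathcal{O}}$ from the critical set of the phase. The starting point is the character identity for the $\boldsymbol{\lambda}$-th isotypical projector in a $G$-representation, specialized to $H(X^\tau)$:
\begin{equation*}
\Pi^\tau_{k\boldsymbol{\lambda}}(x,y) \;=\; d_{k\boldsymbol{\lambda}}\int_G \overline{\chi_{k\boldsymbol{\lambda}}(g)}\,\Pi^\tau\bigl(\mu^\tau_{g^{-1}}(x),y\bigr)\,\mathrm{d}V_G(g),
\end{equation*}
with an analogous formula for $P^\tau_{k\boldsymbol{\lambda}}$, into which the tempering factor $e^{-2\,\tau\,c_{k\boldsymbol{\lambda}}}$ is absorbed as part of the symbol (the Poisson--wave kernel is an FIO of Hermite type with the same canonical relation as $\Pi^\tau$). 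Into this formula I would substitute the Boutet de Monvel--Sj\"{o}strand oscillatory representation
\begin{equation*}
\Pi^\tau(z,w) \;\sim\; \int_0^{+\infty} e^{\imath\,u\,\psi(z,w)}\,s(z,w,u)\,\mathrm{d}u,
\end{equation*}
in which $\Im\psi\geq 0$ vanishes to infinite order on the diagonal of $X^\tau\times X^\tau$, and pass to a maximal torus $T$ via Weyl's integration formula. The Weyl character formula then expresses $\chi_{k\boldsymbol{\lambda}}$ on $T=\exp(\mathfrak{t})$ as a finite alternating sum of exponentials $e^{\imath\,\langle w(k\boldsymbol{\lambda}+\boldsymbol{\delta}),\,\boldsymbol{\xi}\rangle}$, $w\in W$, divided by the Weyl denominator; a Hermite rescaling $u\mapsto k\,u$ then displays $k$ as a common large parameter multiplying the phase
\begin{equation*}
\Psi_w(u,\boldsymbol{\xi},h) \;=\; u\,\psi\bigl(\mu^\tau_{\exp(-\boldsymbol{\xi})}\mu^\tau_{h^{-1}}(x),\,\mu^\tau_{h^{-1}}(y)\bigr)\,-\,\langle w\boldsymbol{\lambda},\,\boldsymbol{\xi}\rangle
\end{equation*}
(up to $O(k^{-1})$ corrections), after using the equivariance of $\Pi^\tau$ under $\mu^\tau_h$.

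The heart of the proof is stationary phase applied to $\Psi_w$ in $(u,\boldsymbol{\xi},h)$. Vanishing of $\partial_u\Psi_w$ forces $\psi(\mu^\tau_{\exp(-\boldsymbol{\xi})}\mu^\tau_{h^{-1}}(x),\mu^\tau_{h^{-1}}(y))=0$; coupled with the vanishing of $\psi$ exactly along the diagonal, this yields $\mu^\tau_{h^{-1}}(y)=\mu^\tau_{\exp(-\boldsymbol{\xi})}\mu^\tau_{h^{-1}}(x)$, whence $y\in G\cdot x$. Differentiating in $\boldsymbol{\xi}\in\mathfrak{t}$ and evaluating at the diagonal identifies $w\boldsymbol{\lambda}$, up to a positive factor $u$ and standard signs, with the contraction of the connection $1$-form on $X^\tau$ with the infinitesimal generator of the $T$-action at $\mu^\tau_{h^{-1}}(x)$; this contraction equals $\langle \Phi(\mu^\tau_{h^{-1}}(x)),\,\cdot\,\rangle$, where $\Phi:\tilde{G}\to \mathfrak{g}^\vee$ is the moment map of the left $G$-action on $(\tilde{G},\Omega)$, sending $\gamma(g_0,\boldsymbol{\eta})$ to $\mathcal{L}(\mathrm{Ad}(g_0)\boldsymbol{\eta})$. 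Consequently the $\mathfrak{t}$-component of $\Phi(\mu^\tau_{h^{-1}}(x))=\mathrm{Coad}(h^{-1})\Phi(x)$ must be proportional to $w\boldsymbol{\lambda}\in \mathfrak{t}^\vee$; letting $h$ vary, and using that $\|\Phi(x)\|_{\kappa_e^\vee}=\tau$ on $X^\tau$, this collapses to $\Phi(x)\in \mathcal{O}^\tau$, i.e.\ $x\in X^\tau_{\mathcal{O}}$. Hence $\Psi_w$ can have critical points only when $(x,y)\in \mathcal{Z}^\tau_{\mathcal{O}}$, and off this locus a standard non-stationary phase integration by parts in $(u,\boldsymbol{\xi},h)$ yields $O(k^{-\infty})$ uniformly on compact subsets.

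I expect the main technical obstacle to be twofold. First, the Weyl denominator vanishes on the walls of $T$, so the non-stationary-phase estimate must be performed after a partition of unity separating a neighborhood of the walls, where one exploits the compensating vanishing of the alternating numerator; maintaining uniformity on compact $(x,y)$-sets requires tracking these cancellations carefully. Second, identifying the $\boldsymbol{\xi}$-critical equation with the moment-map condition requires unwinding the first-order jet of the Boutet de Monvel--Sj\"{o}strand phase $\psi$ along the diagonal under the $G$-action, and reconciling the trivializations of $TG$ and $\tilde{G}$ built into Theorem~\ref{thm:szoke compgr}; once this is done, the same scheme transfers verbatim to $P^\tau_{k\boldsymbol{\lambda}}$, since $P^\tau$ shares the canonical relation of $\Pi^\tau$.
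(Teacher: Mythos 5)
Your overall strategy -- expanding both projectors as oscillatory integrals after the character identity, applying Weyl integration and the Weyl character formula, rescaling $u\mapsto k\,u$, and then showing that the resulting phase has no critical points off $\mathcal{Z}^\tau_{\mathcal{O}}$ -- is a legitimate alternative, and it is in fact close in spirit to how the paper proves the sharper, quantitative Theorems~\ref{thm:rapid decay orbit} and~\ref{thm:rapid decay moment map}. But it is \emph{not} the route the paper takes for Theorem~\ref{thm:rapid decay compact}. The paper argues more softly: it splits the compact set by an open cover $\{U_1,U_2\}$, where $U_1$ is the locus $\mathrm{dist}_{X^\tau}(x,G\cdot y)>\delta_0/2$ and $U_2$ is the locus $\mathrm{dist}_{X^\tau}(x,X^\tau_{\mathcal{O}})>\delta_0/2$. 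On $U_1$, one only uses that the singular support of $\Pi^\tau$ is the diagonal, so $g\mapsto\Pi^\tau(\mu^\tau_{g^{-1}}(x),y)$ is uniformly $\mathcal{C}^\infty$ and Sugiura's theorem on rapid decay of Fourier coefficients of smooth functions on compact Lie groups yields $O(k^{-\infty})$. On $U_2$, one replaces $\Pi^\tau$ by the \emph{ladder} projector $\Pi^\tau_{L_{\boldsymbol{\lambda}}}$, whose wave front set (by the Guillemin--Sternberg ladder theory as in~\cite{gp19}) is contained in $\{(x,r\alpha^\tau_x,y,-r\alpha^\tau_y):\,x\in X^\tau_{\mathcal{O}},\,y\in T'_{\tilde\Phi(x)}\cdot x\}$; hence it is smooth on $\overline{U}_2$, and Sugiura again applies. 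This entirely avoids Weyl's formula, stationary phase, and the Weyl walls.

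Where your proposal has a genuine gap is in the step ``letting $h$ vary, this collapses to $\Phi(x)\in\mathcal{O}^\tau$.'' The $\boldsymbol{\xi}$-critical equation, with $\boldsymbol{\xi}$ ranging only in $\mathfrak{t}$, constrains \emph{only the $\mathfrak{t}$-component} of $\mathrm{Coad}_{h^{-1}}\Phi(x)$ to be proportional to $w\boldsymbol{\lambda}$; this by itself does not force $\Phi(x)$ into the cone over $\mathcal{O}$. To close the argument one must invoke the critical condition in the $G/T$-direction, which (at a critical point $\mu^\tau_{g^{-1}}(x)=y$, $g=h\,t\,h^{-1}$) reads
\begin{equation*}
\big(\,\mathrm{Id}-\mathrm{Coad}_{t}\,\big)\,\mathrm{Coad}_{h^{-1}}\tilde{\Phi}(y)=0 .
\end{equation*}
When $t$ is regular this forces $\mathrm{Coad}_{h^{-1}}\tilde\Phi(y)\in\mathfrak{t}^\vee$ and hence, combined with the $\mathfrak{t}$-condition and $\|\tilde\Phi(y)\|=\tau$, gives $y\in X^\tau_{\mathcal{O}}$. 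But when $t$ is \emph{not} regular -- and in particular at $t=e$, where the phase has a critical manifold isomorphic to all of $G/T$ -- this condition degenerates completely. This is precisely the locus where the Weyl denominator $\Delta(t)$ also vanishes, and the interplay between a degenerate critical manifold and a vanishing amplitude is exactly what your ``partition of unity near the walls'' must resolve; the proposal names the difficulty but does not actually carry it out, and it is nontrivial. It is no accident that the paper's own oscillatory-integral arguments sidestep this: Theorem~\ref{thm:rapid decay orbit} imposes $\mathcal{O}\cap\mathfrak{t}^0=\emptyset$ precisely to make the relevant $\mathfrak{t}$-component uniformly nonvanishing, and Theorem~\ref{thm:rapid decay moment map} abandons the Weyl formula altogether in favor of the Kirillov character formula, which replaces the sum over $W$ and the singular $|\Delta|^2$ by an integral over the smooth orbit $\mathcal{O}_{\boldsymbol{\nu}_{k\boldsymbol{\lambda}}}$ and thus has no wall degeneracies to begin with. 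For the coarse Theorem~\ref{thm:rapid decay compact}, the wave-front\,/\,Sugiura route is both cleaner and stronger, since it makes no regularity assumption on $\boldsymbol{\lambda}$ or position assumption on $\mathcal{O}$.
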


We next aim to show, by analogy with the results in \cite{p22},
that rapid decay
may be established at pairs 
$(x,y)\in X^\tau\times X^\tau\setminus \mathcal{Z}^\tau_{\mathcal{O}}$
approaching $\mathcal{Z}^\tau_{\mathcal{O}}$ at a controlled pace as $k\rightarrow +\infty$.

A first manifestation of this is the following result,
which deals with pairs $(x,y)$ with $y\rightarrow G\cdot x$
sufficiently slowly in $k$ (with a restriction on
$\mathcal{O}$). 

Let $\mathrm{dist}_{X^\tau}$ denote the Riemannian distance 
on $X^\tau$, and let $\mathfrak{t}^0\subset\mathfrak{g}^\vee$ denote the annhilator
of $\mathfrak{t}$.

\begin{thm}
\label{thm:rapid decay orbit}
Suppose that $\mathcal{O}\cap \mathfrak{t}^0=\emptyset$.
Let $C,\,\epsilon>0$ be fixed. 
Then, uniformly for 
\begin{equation}
\label{eqn:bd distance orbits}
\mathrm{dist}_{X^\tau}(y,G\cdot x)\ge C\,k^{\epsilon-\frac{1}{2}},
\end{equation}
one has
$$
\Pi^\tau_{k\,\boldsymbol{\lambda}}(x,y)=
O\left(k^{-\infty}\right)\qquad
\text{and}\qquad P^\tau_{k\,\boldsymbol{\lambda}}(x,y)=
O\left(k^{-\infty}\right)
$$
for $k\rightarrow +\infty$.
\end{thm}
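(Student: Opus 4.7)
The overall strategy is to represent $\Pi^\tau_{k\boldsymbol{\lambda}}(x,y)$ as an oscillatory integral with large parameter $k$ and exploit the distance hypothesis to bound the gradient of the total phase transverse to $G\cdot x$ from below by $k^{1/2+\epsilon}$, so that $N$-fold integration by parts yields $O(k^{-\infty})$. The architecture mirrors the line bundle analysis of \cite{p22}, with the Weyl integration and character formulas inserted to handle the non-abelian structure.

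The starting point is the equivariant projection identity
$$
\Pi^\tau_{k\boldsymbol{\lambda}}(x,y)=d_{k\boldsymbol{\lambda}}\int_G\overline{\chi_{k\boldsymbol{\lambda}}(g)}\,\Pi^\tau\bigl(\mu^\tau_{g^{-1}}(x),y\bigr)\,\mathrm{d}V_G(g),
$$
into which one substitutes the Boutet de Monvel--Sj\"ostrand parametrix $\Pi^\tau(x',y)=\int_0^{+\infty}e^{\imath u\,\psi(x',y)}\,s(x',y,u)\,\mathrm{d}u$ (modulo smoothing), with $\psi$ the standard Szeg\H{o} phase ($\Im\psi\geq 0$, vanishing on the diagonal) and $s$ a polyhomogeneous classical symbol of degree $d-1$. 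Rescaling $u=kv$ exposes $k$ as a large parameter; applying the Weyl integration formula on $G$ together with Weyl's character formula
$$
\chi_{k\boldsymbol{\lambda}}(\exp H)\,\Delta(\exp H)=\sum_{w\in W}\mathrm{sgn}(w)\,e^{\imath\langle w(k\boldsymbol{\lambda}^\vee+\boldsymbol{\delta}),H\rangle}
$$
recasts $\Pi^\tau_{k\boldsymbol{\lambda}}(x,y)$ as a finite sum over $w\in W$ of oscillatory integrals with total phase
$$
\Phi_w(v,H,h;x,y)=v\,\psi\bigl(\mu^\tau_{h\exp(-H)h^{-1}}(x),y\bigr)-\bigl\langle w(\boldsymbol{\lambda}^\vee),H\bigr\rangle
$$
and amplitude of at worst polynomial growth in $k$ (having absorbed the $\Delta$ and the subleading $\boldsymbol{\delta}$ contributions).

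The critical equations for $\Phi_w$ decouple: $\partial_H\Phi_w=0$ yields a moment-map-type identity which, under the hypothesis $\mathcal{O}\cap\mathfrak{t}^0=\emptyset$, has only Weyl-isolated solutions in $\mathfrak{t}$ (ruling out degenerate critical submanifolds coming from $\mathfrak{t}^0$-directions), while $\partial_v\Phi_w=\partial_h\Phi_w=0$ force $y\in G\cdot x$ and $x\in X^\tau_{\mathcal{O}}$, so the critical locus sits over $\mathcal{Z}^\tau_{\mathcal{O}}$ (consistently with Theorem \ref{thm:rapid decay compact}). A standard near-diagonal expansion of $\psi$ in the spirit of \cite{z12} and \cite{p22} then gives the transverse lower bound $\bigl|\mathrm{d}_{(v,h)}\psi\bigr|\gtrsim\mathrm{dist}_{X^\tau}(y,G\cdot x)$, so under the assumption $\mathrm{dist}_{X^\tau}(y,G\cdot x)\geq C\,k^{\epsilon-1/2}$ one obtains $\bigl|\mathrm{d}_{(v,h)}(k\,\Phi_w)\bigr|\gtrsim k^{1/2+\epsilon}$. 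Iterating integration by parts against $\mathcal{L}=\langle\mathrm{d}\Phi_w,\cdot\rangle/|\mathrm{d}\Phi_w|^2$ gains a factor $k^{-(1/2+\epsilon)}$ per step while losing at most a fixed polynomial power per step, yielding $\Pi^\tau_{k\boldsymbol{\lambda}}(x,y)=O(k^{-\infty})$ uniformly on the prescribed region. The estimate for $P^\tau_{k\boldsymbol{\lambda}}$ follows the same scheme: by (\ref{eqn:Poisson-wave}), $P^\tau$ is a Fourier integral operator with the same canonical relation as $\Pi^\tau$, and the tempering factor $e^{-2\tau c_{k\boldsymbol{\lambda}}}$ exactly cancels the exponential growth introduced by the holomorphic extension, keeping the amplitude polynomial in $k$.

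The principal difficulty lies in establishing the transverse gradient bound $\bigl|\mathrm{d}_{(v,h)}\psi\bigr|\gtrsim\mathrm{dist}_{X^\tau}(y,G\cdot x)$ uniformly in a neighborhood of $\mathcal{Z}^\tau_{\mathcal{O}}$: one must expand the Boutet--Sj\"ostrand phase in Heisenberg local coordinates adapted to points of $\mathcal{Z}^\tau_{\mathcal{O}}$ and carefully identify a complement to the tangent space of $G\cdot x$ inside $T_yX^\tau$. The hypothesis $\mathcal{O}\cap\mathfrak{t}^0=\emptyset$ enters here in an essential way, as it is precisely what ensures the non-degeneracy of the $H$-critical equation transverse to the Weyl orbit, and hence the uniformity of the constant in the gradient bound.
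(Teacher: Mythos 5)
Your proposal has the right architecture — equivariant projection identity, Boutet de Monvel--Sj\"{o}strand parametrix, Weyl integration and character formulas, rescaling $u\mapsto ku$, and iterative integration by parts — and this is indeed the route the paper takes. However, you misidentify the role of the hypothesis $\mathcal{O}\cap\mathfrak{t}^0=\emptyset$, and as a result you miss the step without which the integration-by-parts argument does not close. You claim the hypothesis ensures ``non-degeneracy of the $H$-critical equation transverse to the Weyl orbit,'' and you propose to derive rapid decay from a transverse gradient bound $|\mathrm{d}_{(v,h)}\psi|\gtrsim\mathrm{dist}_{X^\tau}(y,G\cdot x)$. Neither is what happens. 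The paper uses the hypothesis for a quite different purpose: since $\mathcal{O}\cap\mathfrak{t}^0=\emptyset$, the restriction $\mathrm{Ad}_{g^{-1}}(\boldsymbol{\lambda})|_{\mathfrak{t}}$ is uniformly bounded away from zero over $g\in G$. Combined with the computation $\partial_{\boldsymbol{\vartheta}}\Psi\sim\bigl[u\,a\,\mathrm{Ad}_{g_0^{-1}}(\boldsymbol{\lambda})-\boldsymbol{\lambda}\bigr]|_{\mathfrak{t}}$, this yields a uniform positive lower bound on $\|\partial_{\boldsymbol{\vartheta}}\Psi\|$ whenever $u\notin(1/D,D)$ (Lemma~\ref{lem:partial theta bdd bl}). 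This is the crucial mechanism by which one can insert a cut-off and reduce to \emph{compact} $u$-support. Your proposal never addresses this; but without it the $u$-integral is improper and the amplitude, after rescaling, grows like $(ku)^{d-1}$, so integration by parts (against $\partial_u$ or any operator involving the $v$-direction) does not produce an $O(k^{-\infty})$ bound --- a direct evaluation of $\int_0^{+\infty}e^{\imath ku\psi}(ku)^{d-1}\mathrm{d}u$ only gives a fixed negative power of $k\Im\psi$, which is insufficient.

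There is a second, more local, inaccuracy. The bound that enters the final integration by parts in the paper is $|\partial_u\Psi_{x,y}|=|\psi^\tau(x,y)|\geq\Im\psi^\tau(x,y)\geq C^\tau\,\mathrm{dist}_{X^\tau}(x,y)^2$ (equation~(\ref{eqn:bd dist psitau xy})), which is \emph{quadratic} in the distance and is a lower bound on the value of $\psi^\tau$ itself, not on a gradient. The stated linear lower bound $|\mathrm{d}_{(v,h)}\psi|\gtrsim\mathrm{dist}_{X^\tau}(y,G\cdot x)$ is problematic: $\psi$ does not depend on $v$, so $\mathrm{d}_v\psi=0$; and there is no uniform linear lower bound on $|\mathrm{d}_h\psi|$ in terms of the distance to the orbit. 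The exponent you obtain ($k^{1/2+\epsilon}$ instead of $k^{2\epsilon}$ per step) is a downstream symptom of this. Once the $u$-range is made compact via the mechanism above, the quadratic bound combined with the hypothesis $\mathrm{dist}_{X^\tau}(y,G\cdot x)\geq Ck^{\epsilon-1/2}$ and the rescaling $u\mapsto ku$ yields a gain of $k^{-2\epsilon}$ per integration by parts in $u$, which gives $O(k^{-\infty})$. You would also do well to invoke Theorem~\ref{thm:rapid decay compact} at the outset to reduce to pairs $(x,y)$ near $\mathcal{Z}^\tau_{\mathcal{O}}$, as the paper does; this makes the cut-off $\rho_1$ localizing $\mu^\tau_{g^{-1}}(x)$ near $y$ harmless.
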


For instance, the previous assumption on $\mathcal{O}$ is satisfied when
$G=U(n)$ and the matrices in $\tilde{\mathcal{O}}$ have non-zero trace.
On the other hand, for $G=SU(2)$
the conclusion of Theorem \ref{thm:rapid decay orbit} may 
be proved by adapting the \textit{ad hoc}
argument for
Theorem 1.1 in \cite{gp20}.
Thus it seems natural to expect that the result should hold in
greater generality.

The weight $\boldsymbol{\lambda}$ is called 
\textit{regular} if its coadjoint orbit $\mathcal{O}$ has
maximal dimension, or equivalently if its
stabilizer is $T$.

\begin{thm}
\label{thm:rapid decay moment map}
In the situation of Theorem \ref{thm:rapid decay orbit},
let us assume
that $\boldsymbol{\lambda}$ is regular.
Let us a fix constants $C,\,\epsilon>0$. Then, uniformly on
\begin{equation}
\label{eqn:bound distance momentO}
\left\{
(x,y)\in X^\tau\times X^\tau\,:\,\mathrm{dist}_{X^\tau}
\left(x,X^\tau_{\mathcal{O}}\right)
%
\ge \,C\,k^{\epsilon-\frac{1}{2}}\right\}
\end{equation}
one has
$$
\Pi^\tau_{k\,\boldsymbol{\lambda}}(x,y)=
O\left(k^{-\infty}\right)\qquad
\text{and}\qquad P^\tau_{k\,\boldsymbol{\lambda}}(x,y)=
O\left(k^{-\infty}\right)
$$
for $k\rightarrow +\infty$.

\end{thm}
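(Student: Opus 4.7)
The plan is to perform an iterated non-stationary phase argument on an oscillatory integral representation of $\Pi^\tau_{k\boldsymbol{\lambda}}$ and $P^\tau_{k\boldsymbol{\lambda}}$, exploiting the geometric fact that the $G$-action $\mu^\tau$ on $X^\tau$ is Hamiltonian with moment map $\Phi:X^\tau\to\mathfrak{g}^\vee$, and that $X^\tau_{\mathcal{O}}=\Phi^{-1}(\mathcal{O}^\tau)$. Hence the hypothesis $\mathrm{dist}_{X^\tau}(x,X^\tau_{\mathcal{O}})\ge Ck^{\epsilon-1/2}$ translates, by Lipschitz continuity of $\Phi$, into a corresponding lower bound $\mathrm{dist}_{\mathfrak{g}^\vee}(\Phi(x),\mathcal{O}^\tau)\gtrsim k^{\epsilon-1/2}$. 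The central point is to arrange that this lower bound forces the gradient of the oscillatory phase to remain large on the whole relevant portion of the integration domain.

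First, thanks to Theorem \ref{thm:rapid decay orbit}, one may restrict to the region $\mathrm{dist}_{X^\tau}(y,G\cdot x)\le k^{\epsilon'-1/2}$ for some small $\epsilon'>0$, since otherwise rapid decay is already known. Writing
$$
\Pi^\tau_{k\boldsymbol{\lambda}}(x,y)\,=\,d_{k\boldsymbol{\lambda}}\int_G\overline{\chi_{k\boldsymbol{\lambda}}(g)}\,\Pi^\tau\bigl(\mu^\tau_{g^{-1}}(x),y\bigr)\,\mathrm{d}V_G(g),
$$
together with the analogue for $P^\tau_{k\boldsymbol{\lambda}}$ (absorbing $e^{-2\tau c_{k\boldsymbol{\lambda}}}$ into the amplitude after noting $c_{k\boldsymbol{\lambda}}=k\|\boldsymbol{\lambda}^\vee\|+O(1)$), insert the Boutet de Monvel--Sj\"ostrand parametrix $\Pi^\tau(p,q)=\int_0^{+\infty}e^{\imath s\psi(p,q)}a(s,p,q)\,\mathrm{d}s$. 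Apply Weyl's integration formula to decompose $g=h\sigma h^{-1}$ over $G/T\times T$, and use the Weyl character formula — available precisely because $\boldsymbol{\lambda}$ is regular — to expand $\chi_{k\boldsymbol{\lambda}}(\sigma)\,\Delta(\sigma)=\sum_{w\in W}\mathrm{sgn}(w)\,e^{\imath(k\boldsymbol{\lambda}+\boldsymbol{\delta})(w\cdot\log\sigma)}$.

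After rescaling $s=ku$, the full phase takes the form
$$
\Psi_k(u,h,\sigma)\,=\,k\bigl[\,u\,\psi\bigl(\mu^\tau_{h\sigma^{-1}h^{-1}}(x),y\bigr)+(w\cdot\boldsymbol{\lambda})(\log\sigma)\bigr]+O(1).
$$
At a would-be critical point, $\partial_u\Psi_k=0$ enforces $\psi\approx 0$ (which is consistent with the localization $y\in G\cdot x$ obtained above), $\partial_h\Psi_k=0$ forces $\mathrm{Ad}^*_{h^{-1}}\Phi(x)$ to lie in $\mathfrak{t}^\vee$, and $\partial_{\log\sigma}\Psi_k=0$ then reads $u\,\mathrm{Ad}^*_{h^{-1}}\Phi(x)+w\cdot\boldsymbol{\lambda}=0$, whose solvability is exactly the condition $\Phi(x)\in\mathcal{O}^\tau$, i.e.\ $x\in X^\tau_{\mathcal{O}}$. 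Regularity of $\boldsymbol{\lambda}$ makes the Weyl orbit $\{w\cdot\boldsymbol{\lambda}\}_{w\in W}$ consist of $|W|$ distinct points well separated in $\mathfrak{t}^\vee$, so that a partition of unity in $h$ selects one such $w$ per piece; on each piece the quantitative hypothesis $\mathrm{dist}(\Phi(x),\mathcal{O}^\tau)\gtrsim k^{\epsilon-1/2}$ produces a lower bound $\|\nabla\Psi_k\|\gtrsim k^{1/2+\epsilon}$, and $N$ iterations of integration by parts in $(\sigma,h,u)$ give $O(k^{-2\epsilon N})$ for every $N$, hence $O(k^{-\infty})$.

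The main obstacle is the combined bookkeeping of three technical points: the vanishing of the Weyl denominator $|\Delta(\sigma)|^2$ on the walls of the Weyl chamber, which must be handled by a microlocal cutoff away from the walls (or a separate analysis of the singular contributions); the complex-valued nature of $\psi$, whose $\mathrm{Im}\,\psi\ge 0$ is what localizes to the CR diagonal once Theorem \ref{thm:rapid decay orbit} has been invoked; and the partition of unity in $h$ that uniquely selects a Weyl representative for each $x$ with $\Phi(x)$ lying near a chosen Weyl translate of $\mathcal{O}^\tau\cap\mathfrak{t}^\vee$. These are essentially the same difficulties already surmounted in the line-bundle setting of \cite{p22}, and the regularity of $\boldsymbol{\lambda}$ — equivalently, the simple transitivity of $W$ on the set $\{w\cdot\boldsymbol{\lambda}\}$ — is precisely what makes the selection argument work.
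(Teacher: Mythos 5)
You take a genuinely different route from the paper, and the key step that makes your route work is asserted rather than established. The paper first reduces to the diagonal $y=x$ via $\left|\Pi^\tau_{k\boldsymbol{\lambda}}(x,y)\right|\le\Pi^\tau_{k\boldsymbol{\lambda}}(x,x)^{1/2}\,\Pi^\tau_{k\boldsymbol{\lambda}}(y,y)^{1/2}$ together with an a priori polynomial bound on $\Pi^\tau_{k\boldsymbol{\lambda}}(y,y)$, and on the diagonal it invokes the \emph{Kirillov} character formula rather than the Weyl character formula: $\chi_{k\boldsymbol{\lambda}}(e^{\boldsymbol{\xi}})$ is written as an integral of $e^{\imath\,\boldsymbol{\beta}(\boldsymbol{\xi})}$ over the (suitably rescaled) coadjoint orbit $\mathcal{O}_{\boldsymbol{\lambda}}$ --- this is precisely where regularity of $\boldsymbol{\lambda}$ is used. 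After the rescalings $u\mapsto ku$ and $\boldsymbol{\xi}\mapsto\boldsymbol{\xi}/\sqrt{k}$, the phase is $\Upsilon_x(u,\boldsymbol{\beta},\boldsymbol{\xi})=\bigl(u\,\tilde{\Phi}^\tau(x)-\boldsymbol{\beta}\bigr)(\boldsymbol{\xi})$, whose $\boldsymbol{\xi}$-gradient is \emph{literally} $u\,\tilde{\Phi}^\tau(x)-\boldsymbol{\beta}$ with $\boldsymbol{\beta}$ ranging over the orbit; the hypothesis then gives $\|\partial_{\boldsymbol{\xi}}\Upsilon_x\|\gtrsim k^{\epsilon-1/2}$ uniformly in $(u,\boldsymbol{\beta})$ on the cut-off domain, and iterated integration by parts in $\boldsymbol{\xi}$ gains $k^{-\epsilon}$ per step.

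In your Weyl-formula approach the continuous orbit parameter is replaced by the finitely many Weyl translates $w\cdot\boldsymbol{\lambda}$, so the phase gradient is no longer literally $u\,\Phi(x)-\boldsymbol{\beta}$ but a nontrivial combination of the $\partial_h$, $\partial_{\log\sigma}$ and $\partial_u$ components. Your critical-point analysis is correct at the set level, but the claim that $\mathrm{dist}_{X^\tau}(x,X^\tau_{\mathcal{O}})\gtrsim k^{\epsilon-1/2}$ forces $\|\nabla\Psi_k\|\gtrsim k^{1/2+\epsilon}$ \emph{uniformly on the domain} is a quantitative transversality statement that does not follow from emptiness of the critical set. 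Concretely, $\partial_{\log\sigma}\Psi_k/k = u\,\bigl[\mathrm{Ad}^*_{h^{-1}}\Phi(x)\bigr]\bigr|_{\mathfrak{t}}+w\cdot\boldsymbol{\lambda}$ can vanish for appropriate $(u,h)$ even when $\Phi(x)\notin\mathcal{O}^\tau$: the set $\{\mathrm{Ad}^*_{h^{-1}}\Phi(x)|_{\mathfrak{t}}\}_{h\in G}$ is the Kostant polytope of the orbit through $\Phi(x)$, which contains $0$ and has nonempty interior whenever $\Phi(x)$ is regular, so $\mathbb{R}_+\,w\boldsymbol{\lambda}$ meets it. One must therefore combine the $\partial_h$ and $\partial_{\log\sigma}$ conditions and control the relevant cross-terms of the Hessian, and this is not addressed. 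Your stated gain $k^{-2\epsilon}$ per integration by parts is also not derived and does not match the natural estimate $1/\|\nabla\Psi_k\|$. Finally, the worry about the Weyl denominator is a red herring: after Weyl's integration formula $|\Delta(\sigma)|^2$ sits in the measure and cancels only one inverse factor of $\Delta$ coming from the character formula, so the integrand contains a smooth $\Delta(\sigma)$ to a positive power and the chamber walls cause no singularity.
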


Next we consider near-diagonal scaling asymptotics 
for $\Pi^\tau_{k\,\boldsymbol{\lambda}}$ along $X^\tau_{\mathcal{O}}$.
To formulate this, we need to describe how 
the tangent space of $X^\tau$ at a given $x\in X^\tau_{\mathcal{O}}$
decomposes in terms of the local
geometry. 

Let $\jmath^\tau:X^\tau\hookrightarrow \tilde{G}$ be the inclusion,
and set $\alpha^\tau:={\jmath^\tau}^*(\alpha)$; thus 
$(X^\tau,\alpha^\tau)$ is a contact manifold, and $\mathcal{H}_x^\tau:= \ker(\alpha^\tau_x)$ is the
maximal complex subspace of $T_xX^\tau$
(and has complex dimension $d-1$).
Let $\mathcal{R}^\tau$ be the Reeb vector field of 
$(X^\tau,\alpha^\tau)$.
Then
\begin{equation}
\label{eqn:general decomp}
T_xX^\tau=\mathrm{span}_{\mathbb{R}}\big(\mathcal{R}^\tau(x)\big)
\oplus_{\hat{\kappa}_x} \mathcal{H}_x^\tau,
\end{equation}
where $\oplus_{\hat{\kappa}_x}$ denotes an orthogonal direct sum for
$\hat{\kappa}_x$; (\ref{eqn:general decomp}) holds at any $x\in X^\tau$.

Assuming $x\in X^\tau_{\mathcal{O}}$, let 
$N(\tilde{G}_{\mathcal{O}}/\tilde{G})_x\subseteq T_x\tilde{G}$ be the normal 
space to $\tilde{G}_{\mathcal{O}}$ in $\tilde{G}$ at $x$.
Its (real) dimension is $r_{\mathcal{O}}-1$, where 
$r_{\mathcal{O}}$ is the dimension of the stabilizer of any
element of $\mathcal{O}$ (that is, $r_{\mathcal{O}}$
is the codimension of $\mathcal{O}$ in $\mathfrak{g}^\vee$).

By the transversality of $\tilde{G}_{\mathcal{O}}$ and $X^\tau$, $N(\tilde{G}_{\mathcal{O}}/\tilde{G})_x\subseteq T_xX^\tau$
is also the normal space to 
$X^\tau_{\mathcal{O}}=X^\tau\cap \tilde{G}_{\mathcal{O}}$ in $X^\tau$, that is, 
$$
N(\tilde{G}_{\mathcal{O}}/\tilde{G})_x=
N(X^\tau_{\mathcal{O}}/X^\tau)_x.$$
As discussed in \S \ref{sctn:normal bundle}, 
$N(X^\tau_{\mathcal{O}}/X^\tau)_x\subset \mathcal{H}_x$, and 
furthermore $N(X^\tau_{\mathcal{O}}/X^\tau)_x$ is $\hat{\kappa}_x$-orthogonal
to $J_x\big(N(X^\tau_{\mathcal{O}}/X^\tau)_x \big)$.
Let us consider the complex vector subspace
\begin{equation}
\label{eqn:direct sum Nx}
\mathcal{N}_x:=
N(X^\tau_{\mathcal{O}}/X^\tau)_x\oplus_{\hat{\kappa}_x} J_x\big(N(X^\tau_{\mathcal{O}}/X^\tau)_x\big)\subseteq \mathcal{H}_x,
\end{equation}
and let  $\mathcal{S}_x$
be its orthocomplement in $\mathcal{H}_x$. There 
is a direct sum of complex vector spaces
\begin{equation}
\label{eqn:dsdecomp HSN}
\mathcal{H}_x=\mathcal{S}_x\oplus_{\hat{\kappa}_x}\mathcal{N}_x,
\quad\text{where}
\quad \dim_{\mathbb{C}}(\mathcal{N}_x)=r_{\mathcal{O}}-1,\,
\dim_{\mathbb{C}}(\mathcal{S}_x)=d-r_{\mathcal{O}}.
\end{equation}

Then:
\begin{enumerate}
\item $\tilde{G}_{\mathcal{O}}$ is a coisotropic submanifold 
of $(\tilde{G},\Omega)$;
\item $J_x\big(N(X^\tau_{\mathcal{O}}/X^\tau)_x\big)$ 
is the tangent space to the leaf 
through $x$ of the null foliation of
$\tilde{G}_{\mathcal{O}}$;
\item the $\hat{\kappa}_x$-orthocomplement
of $J_x\big(N(X^\tau_{\mathcal{O}}/X^\tau)_x\big)$
in $T_xX^\tau$ is
\begin{equation}
\label{eqn:normal to leaf}
J_x\big(N(X^\tau_{\mathcal{O}}/X^\tau)_x\big)^{\perp_{\kappa_x}}
=\mathrm{span}_{\mathbb{R}}\big(\mathcal{R}^\tau(x)\big)
\oplus_{\hat{\kappa}_x}
N(X^\tau_{\mathcal{O}}/X^\tau)_x \oplus_{\hat{\kappa}_x}\mathcal{S}_x;
\end{equation}
\item there is an orthogonal direct sum of real vector spaces
\begin{equation}
\label{eqn:direct sum decmp TxX}
T_xX^\tau_{\mathcal{O}}=
\mathrm{span}_{\mathbb{R}}\big(\mathcal{R}^\tau(x)\big)
\oplus _{\hat{\kappa}_x}
J_x\big(N(X^\tau_{\mathcal{O}}/X^\tau)_x\big)\oplus_{\hat{\kappa}_x}
\mathcal{S}_x.
\end{equation}

\end{enumerate}
We shall consider scaling asymptotics along directions normal to
the null foliations, that is in (\ref{eqn:normal to leaf}).

Furthermore, these scaling asymptotics will be formulated in suitable
sets of local coordinates on $X^\tau$ centered at $x$, 
adapted to the pseudoconvex geometry of $X^\tau$,
and called
\textit{normal Heisenberg local coordinates} (NHLC's).
We refer to \cite{p24} for a detailed discussion building on 
\cite{cr1}, \cite{cr2}, \cite{sz}, \cite{fs1} and \cite{fs2}; the introduction of this type
of coordinates in pseudoconvex geometry goes back to Folland and Stein, and (to the best of our knowledge) they have first been to use in the specific
Grauert tube setting by Chang and Rabinowitz.
NHLC's at $x$ on $X^\tau$ are constructed by first introducing 
NHLC's on $\tilde{G}$, and then \lq projecting and restricting\rq. 

In NHLC's on $\tilde{G}$, the local defining equation for $X^\tau$ admits an especially 
simple canonical form,
and this allows for a relatively explicit approximation of the
metric, CR and symplectic structures, and ultimatively of
the phase of the Szeg\"{o} and Poisson kernels (see \S 
\ref{scnt:szego parametrix}).

NHLC's on $X^\tau$ centered at $x$ will be denoted by
a pair $(\theta,\mathbf{v})\in \mathbb{R}\times \mathbb{R}^{2d-2}$
of suitably small norm. 
When convenient, we shall identify
$\mathbb{R}^{2d-2}\cong \mathbb{C}^{d-1}$. 
A point $x'$ close to
$x$ having NHLC's $(\theta,\mathbf{v})$ will be denoted 
by the additive
notation $x'=x+(\theta,\mathbf{v})$.
Then
\begin{equation}
\label{eqn:contact theta}
\left.\frac{\partial}{\partial \theta}\right|_x=
\mathcal{R}^\tau(x),\qquad
\left.\frac{\partial}{\partial \mathbf{v}}\right|_x\in \mathcal{H}_x,
\end{equation}
where $\partial/\partial \mathbf{v}$ denotes the directional
derivative along $\mathbf{v}\in \mathbb{R}^{2d-2}$.

For $\theta=0$, we shall generally write
$x+\mathbf{v}$ for $x+(0,\mathbf{v})$.

It follows that NHLC's at $x\in X^\tau$ determine an isomorphism
$\mathbb{R}\times \mathbb{R}^{2d-2}\cong T_xX^\tau$, with
$\mathbb{R}\times \{\mathbf{0}\}$ (and 
$\{0\}\times \mathbb{R}^{2d-2}$) mapping to, respectively, 
$\mathrm{span}_{\mathbb{R}}\big(\mathcal{R}^\tau(x)\big)$
and $\mathcal{H}_x$.
Furthermore, with the identification
$\{0\}\times \mathbb{R}^{2d-2}\cong \mathbb{C}^{d-1}$
we obtain an isomorphism of complex vector spaces
$\mathbb{C}^{d-1}\cong \mathcal{H}_x$.

Therefore, pulling back to $\mathbb{C}^{d-1}$ the direct sum decomposition 
(\ref{eqn:dsdecomp HSN})
determines a corresponding orthogonal complex direct sum decomposition
$$
\mathbb{C}^{d-1}= \mathbb{C}^{r_{\mathcal{O}}-1}_{\mathcal{N}}\oplus
\mathbb{C}^{d-r_{\mathcal{O}}}_{\mathcal{S}}
=\left[\mathbb{R}^{r_{\mathcal{O}}-1}_{N}\oplus 
\mathbb{R}^{r_{\mathcal{O}}-1}_{JN}\right]
\oplus \mathbb{C}^{d-r_{\mathcal{O}}}_{\mathcal{S}},
$$
where $\mathbb{C}^{r_{\mathcal{O}}-1}_{\mathcal{N}}$
and $\mathbb{C}^{d-r_{\mathcal{O}}}_{\mathcal{S}}$ are, respectively, 
the inverse image
of $\mathcal{N}_x$ and $\mathcal{S}_x$; in the second equality, 
we have further used the splitting (\ref{eqn:direct sum Nx}) to define the
real summands $\mathbb{R}^{r_{\mathcal{O}}-1}_{N}
\subseteq \mathbb{C}^{r_{\mathcal{O}}-1}_{\mathcal{N}}
$ (corresponding to
$N(X^\tau_{\mathcal{O}}/X^\tau)_x\subseteq \mathcal{N}_x$) and $\mathbb{R}^{r_{\mathcal{O}}-1}_{JN}\subseteq \mathbb{C}^{r_{\mathcal{O}}-1}_{\mathcal{N}}$
(corresponding to $J_x\big(N(X^\tau_{\mathcal{O}}/X^\tau)_x\big)
\subseteq \mathcal{N}_x$) in a similar manner.

We shall focus on near-diagonal scaling asymptotics on the scale
of $k^{\epsilon-1/2}$ along directions normal to the null foliation
of $\tilde{G}_{\tilde{O}}$, thus at pairs of points 
$(x_{1k},x_{2k})$
near $(x,x)$ (for a given $x\in X^\tau_{\mathcal{O}}$) of the form
\begin{equation}
\label{eqn:defn di xjk}
x_{j,k}:=x+\left(\frac{\theta_j}{\sqrt{k}},
\frac{\mathbf{n}_j+\mathbf{s}_j}{\sqrt{k}}  \right)
\qquad (j=1,2),
\end{equation}
where 
\begin{equation}
\label{eqn:defn scaling vectors}
(\theta_j,\mathbf{n}_j,\mathbf{s}_j)\in \mathbb{R}\times
\mathbb{R}^{r_{\mathcal{O}}-1}_{N}\times \mathbb{C}^{d-r_{\mathcal{O}}}_{\mathcal{S}}
\cong \mathbb{R}\cdot \mathcal{R}_x\times N(X^\tau_{\mathcal{O}}/X^\tau)_x\times 
\mathcal{S}_x
\end{equation}
has norm $O\left(k^\epsilon\right)$.

If  
$\mathcal{O}=\mathcal{O}_{\boldsymbol{\lambda}}$ is the coadjoint orbit of
a regular weight, then $r_{\mathcal{O}}=r_G$.

In order to state the following Theorem, we need to
define a certain number of local and global invariants
of our geometric setting.

\begin{defn}
Let $(V,h)$ be a complex $d$-dimensional Hermitian vector space,
and set $\varphi:=\Re(h)$ (an Euclidean scalar product on $V$)
and $\gamma=-\Im(h)$ (a symplectic bilinear form on
$V$), so that
$h=\varphi-\imath\,\gamma$. 
Let us define 
$\psi_2^h:V\times \times V\rightarrow \mathbb{C}$ by
setting
$$
\psi_2^h(v,w):=-\imath\,\gamma(v,w)-\frac{1}{2}\,
\|v-w\|_\varphi^2,
$$
where $\|v \|_\varphi:=\varphi(v,v)^{1/2}$; 
we shall equivalently write $\psi_2^h=
\psi_2^\varphi=\psi_2^\gamma$.
If $(V,h)=(\mathbb{C}^d,h_{st})$ (the standard Hermitian
product), we shall write $\psi_2=\psi_2^{h_{st}}$.

\end{defn}

\begin{defn}
Let us adopt the following notation:
\begin{enumerate}
\item $\mathrm{vol}(\mathcal{O})$ is the 
symplectic volume of the coadjoint orbit $\mathcal{O}
=\mathcal{O}_{\boldsymbol{\lambda}}$ through 
$\boldsymbol{\lambda}$ (for the Kirillov-Kostant-Souriau symplectic form);
\item $\mathrm{vol}^k(G)$ and $\mathrm{vol}^\kappa (T)$
are the volumes of $G$ and $T$, respectively, for the 
bi-invariant Riemannian structures associated to $\kappa$;
\item If $\boldsymbol{\lambda}$ is a regular weight
and $\boldsymbol{\lambda}^\kappa\in \mathfrak{t}$ corresponds to
$\boldsymbol{\lambda}$ under $\kappa_e$, then the
endomorphism
\begin{equation}
\label{eqn:inf adoint iso}
S_{\boldsymbol{\lambda}}:=\left.
\mathrm{ad}_{\boldsymbol{\lambda}^\kappa}
\right |_{\mathfrak{t}^{\perp_{\kappa_e}}}:\boldsymbol{\rho}\in 
\mathfrak{t}^{\perp_{\kappa_e}}\mapsto 
\left[\boldsymbol{\lambda}^\kappa , \boldsymbol{\rho}\right]
\in \mathfrak{t}^{\perp_{\kappa_e}}
\end{equation}
is a skew-symmetric automorphism. Thus 
$\mathfrak{d}_{\boldsymbol{\lambda}}:=\det(S_{\boldsymbol{\lambda}})
>0$.
\item Let $\omega:=\frac{1}{2}\,\Omega$,
so that $(\tilde{G},J,\omega)$ is K\"{a}hler manifold
with associated Riemannian metric $\tilde{\kappa}:=
\frac{1}{2}\,\hat{\kappa}$.
\item Given $x\in \tilde{G}$, let $\mathrm{val}_x:
\mathfrak{g}\rightarrow T_x\tilde{G}$ be the map
$\boldsymbol{\xi}\mapsto \boldsymbol{\xi}_{\tilde{G}}(x)$,
where $\boldsymbol{\xi}_{\tilde{G}}\in \mathfrak{X}(\tilde{G})$
is the vector field induced by $\boldsymbol{\xi}$ under 
the action $\tilde{L}$; then $\mathrm{val}_x^t(\tilde{\kappa}_x)$ is an
Euclidean scalar product on $\mathfrak{g}$. 

\item Suppose that $x\in \tilde{G}\setminus G$ 
is such that $\Phi(x)\in \mathfrak{g}^\vee$
is regular. 
Let $T_x$ be the stabilizer of 
$\Phi(x)$ (a maximal torus) and $\mathfrak{t}_x\subseteq \mathfrak{g}$
its Lie algebra. Let
$\mathfrak{t}_x':=\mathfrak{t}_x\cap \Phi(x)^0$
(a hyperplane in $\mathfrak{t}_x$). Let $\mathfrak{B}_x$
be an orthonormal basis of $\mathfrak{t}_x'$ for $\kappa_e$,
and let $D_x$ be the matrix representing the
restriction of $\mathrm{val}_x^t(\tilde{\kappa}_x)$ to $\mathfrak{t}_x'$
with respect to $\mathcal{B}_x$.
Then $\det(D_x)$ only depends on $x$, and we may set
$$
\mathfrak{D}^\kappa(x):=\sqrt{\det(D_x)}.
$$
\end{enumerate}

\end{defn}

\begin{thm}
\label{thm:rescaled asympt}
Let us fix constants $C>0$ and $\epsilon\in (0,1/6)$.
Under the assumptions of Theorem 
\ref{thm:rapid decay moment map}, consider $x\in X^\tau_{\mathcal{O}}$ and choose a system
of NHLC at $x$. Let us define 
$x_{jk}$ as in (\ref{eqn:defn di xjk})
and (\ref{eqn:defn scaling vectors}).
Then, uniformly for 
$\|(\theta_j,\mathbf{n}_j,\mathbf{s}_j)\|
\le C\,k^{\epsilon-1/2}$, the following asymptotic expansions
hold for $k\rightarrow +\infty$:
\begin{eqnarray*}
\Pi^\tau_{k\,\boldsymbol{\lambda}}(x_{1k},x_{2k})
&\sim&
\left( \frac{k\,\| \boldsymbol{\lambda} \|}{2\pi\,\tau}  \right)
^{d-1+\frac{1-r_G}{2}}\,
\left(\frac{\mathrm{vol}(\mathcal{O}_{\boldsymbol{\lambda}})}{\mathrm{vol}^\kappa (G)}\right)^2\cdot 
\frac{\mathrm{vol}^\kappa
(T)}{\mathfrak{D}^\kappa (x)\cdot\mathfrak{d}_{\boldsymbol{\lambda}}}\nonumber
\\
&&\cdot 
\exp\left(\frac{\|\boldsymbol{\lambda}\|}{\tau}\,\left[
\psi_2^{\omega_x}\big( \mathbf{s}_1
,
\mathbf{s}_2  \big)
-\|
\mathbf{n}_1\|_{\tilde{\kappa}_x}^2
-\|\mathbf{n}_2\|_{\tilde{\kappa}_x}  ^2
\right]\right)\nonumber\\
&&\cdot \left[1+\sum_{j\ge 1}k^{-j/2}\,R_j(\theta_1,\theta_2,\mathbf{s}_1,
\mathbf{s}_2,\mathbf{n}_1,
\mathbf{n}_2)    \right],\nonumber
\end{eqnarray*}
\begin{eqnarray*}
P^\tau_{k\,\boldsymbol{\lambda}}(x_{1k},x_{2k})
&\sim&
\left(  \frac{1}{2} \right)^{\frac{d-1}{2}}
\left( \frac{k\,\| \boldsymbol{\lambda} \|}{2\pi\,\tau}  \right)
^{\frac{d-r_G}{2}}\,
\left(\frac{\mathrm{vol}(\mathcal{O}_{\boldsymbol{\lambda}})}{\mathrm{vol}^\kappa (G)}\right)^2\cdot 
\frac{\mathrm{vol}^\kappa
(T)}{\mathfrak{D}^\kappa (x)\cdot\mathfrak{d}_{\boldsymbol{\lambda}}}\nonumber
\\
&&\cdot 
\exp\left(\frac{\|\boldsymbol{\lambda}\|}{\tau}\,\left[
\psi_2^{\omega_x}\big( \mathbf{s}_1
,
\mathbf{s}_2  \big)
-\|
\mathbf{n}_1\|_{\tilde{\kappa}_x}^2
-\|\mathbf{n}_2\|_{\tilde{\kappa}_x}  ^2
\right]\right)\nonumber\\
&&\cdot \left[1+\sum_{j\ge 1}k^{-j/2}\,S_j(\theta_1,\theta_2,\mathbf{s}_1,
\mathbf{s}_2,\mathbf{n}_1,
\mathbf{n}_2)    \right],\nonumber\nonumber
\end{eqnarray*}
where $R_j,\,S_j$ are polynomials of degree $\le 3j$ and
parity $j$.

\end{thm}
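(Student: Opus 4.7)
The strategy is to combine the Boutet de Monvel--Sj\"{o}strand microlocal parametrix for the Szeg\H{o} kernel $\Pi^\tau$ (and its analogue for $P^\tau$) with the Weyl integration and Weyl character formulas on $G$, and then perform complex stationary phase in a suitable list of auxiliary variables. First I would express the equivariant projector as an average of the full Szeg\H{o} kernel against the character,
\[
\Pi^\tau_{k\boldsymbol{\lambda}}(x,y)=d_{k\boldsymbol{\lambda}}\int_G\overline{\chi_{k\boldsymbol{\lambda}}(g)}\,\Pi^\tau\bigl(\mu^\tau_{g^{-1}}(x),y\bigr)\,\mathrm{d} V_G(g),
\]
with an analogous expression for $P^\tau_{k\boldsymbol{\lambda}}$ carrying the tempering factor $e^{-2\tau c_{k\boldsymbol{\lambda}}}$. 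Substituting the Fourier integral representation $\Pi^\tau(x',y)=\int_0^{+\infty}e^{\imath t\psi^\tau(x',y)}\,s(x',y,t)\,\mathrm{d} t$ with amplitude $s\sim\sum_j t^{d-1-j}s_j$, then invoking the Weyl integration formula on $G$ together with the Weyl character formula on $T$, reduces the problem to an oscillatory integral in $(t,\boldsymbol{\xi},[h])\in (0,+\infty)\times\mathfrak{t}\times G/T$ with total phase
\[
\Psi_k(t,\boldsymbol{\xi},[h])=t\,\psi^\tau\bigl(\mu^\tau_{(h\exp(\boldsymbol{\xi})h^{-1})^{-1}}(x_{1k}),x_{2k}\bigr)-k\,\langle\boldsymbol{\lambda},\boldsymbol{\xi}\rangle.
\]
Non-identity Weyl-group summands in the character contribute $O(k^{-\infty})$ via non-stationarity in $\boldsymbol{\xi}$, as in the argument of \cite{p22}.

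Next I would rescale $t=k\,u$, so that $\Psi_k=k\,\Psi'(u,\boldsymbol{\xi},[h])$, turning $k$ into a common large parameter, and apply complex stationary phase. Theorems \ref{thm:rapid decay compact}--\ref{thm:rapid decay moment map} (and the microlocal estimates behind them) localize the integral to an $O(k^{\epsilon-1/2})$-neighbourhood of the critical manifold, characterized by $g\cdot x=y$, $u=\|\boldsymbol{\lambda}\|/\tau$, and $\mathrm{Ad}^\vee_g\Phi(x)=\boldsymbol{\lambda}/\tau$ lying on the coadjoint orbit $\mathcal{O}_{\boldsymbol{\lambda}}$, where $\Phi$ is the moment map for $\tilde{L}$ on $(\tilde G,\Omega)$. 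Passing to the NHLC's at $x\in X^\tau_{\mathcal O}$ and to the Heisenberg scaling $(\theta_j,\mathbf n_j,\mathbf s_j)/\sqrt k$ from (\ref{eqn:defn di xjk}), the Taylor expansion of $\psi^\tau$ to quadratic order (after the linear terms cancel against $-\langle\boldsymbol{\lambda},\boldsymbol{\xi}\rangle$ at the critical $\boldsymbol{\xi}$) reproduces the universal Heisenberg exponential
\[
\exp\!\Bigl(\tfrac{\|\boldsymbol{\lambda}\|}{\tau}\bigl[\psi_2^{\omega_x}(\mathbf s_1,\mathbf s_2)-\|\mathbf n_1\|_{\tilde\kappa_x}^2-\|\mathbf n_2\|_{\tilde\kappa_x}^2\bigr]\Bigr);
\]
here the $\mathbf s_j$ lie along the null foliation of $\tilde G_{\mathcal O}$ and give the oscillatory Heisenberg term $\psi_2^{\omega_x}$, while the $\mathbf n_j$ are transverse and give Gaussian suppression.

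The leading coefficient then arises from combining the Weyl dimension estimate $d_{k\boldsymbol{\lambda}}\sim(k/2\pi)^{(d-r_G)/2}\,\mathrm{vol}(\mathcal{O}_{\boldsymbol{\lambda}})$ with the Hessian determinant of $\Psi'$ in $(u,\boldsymbol{\xi},[h])$. I would split $\mathfrak{g}=\mathbb{R}\boldsymbol{\lambda}^\kappa\oplus\mathfrak{t}_x'\oplus\mathfrak{t}_x^{\perp_{\kappa_e}}$: the first summand pairs with $u$ and contributes a factor $\tau/\|\boldsymbol{\lambda}\|$; the second contributes $\mathfrak{D}^\kappa(x)$ directly from its definition; and the third, identified with $T_{\Phi(x)}\mathcal{O}_{\boldsymbol{\lambda}}$, contributes $\mathfrak{d}_{\boldsymbol{\lambda}}=\det(S_{\boldsymbol{\lambda}})$ through the skew-symmetric operator $S_{\boldsymbol{\lambda}}$. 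The remaining Jacobians (the Weyl-integration weight $|\Delta|^2$, the denominator of the Weyl character, and $\mathrm{vol}^\kappa(G/T)$) combine via the Kirillov--Harish-Chandra leading-order character formula to produce the overall geometric factor $(\mathrm{vol}(\mathcal{O}_{\boldsymbol{\lambda}})/\mathrm{vol}^\kappa(G))^2\cdot\mathrm{vol}^\kappa(T)$. The parity of the $R_j$, $S_j$ follows from the parity in $1/\sqrt k$ of the Taylor expansion of $\psi^\tau$ and of the amplitude $s$. For $P^\tau_{k\boldsymbol{\lambda}}$ one runs the same argument with the Stenzel--Zelditch parametrix for $P^\tau$; the tempering factor $e^{-2\tau c_{k\boldsymbol{\lambda}}}$ cancels against $\Im(t\psi^\tau)$ at the diagonal, reducing the power of $k$ by $(d-1)/2$ and producing the prefactor $2^{-(d-1)/2}$.

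The main obstacle is the last bookkeeping step: verifying that the stationary-phase Hessian in $(u,\boldsymbol{\xi},[h])$, the Weyl-integration weight, the character denominator, and the Weyl dimension formula fit together \emph{exactly} to reproduce the stated prefactor $(\mathrm{vol}(\mathcal{O}_{\boldsymbol{\lambda}})/\mathrm{vol}^\kappa(G))^2\cdot\mathrm{vol}^\kappa(T)/(\mathfrak{D}^\kappa(x)\,\mathfrak{d}_{\boldsymbol{\lambda}})$. This rests on identifying the Boutet phase $\psi^\tau$ at the critical set with the moment map $\Phi$, and on matching the Kirillov character to leading order with the Harish-Chandra--Duistermaat-Heckman integration formula on $\mathcal{O}_{\boldsymbol{\lambda}}$.
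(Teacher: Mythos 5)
Your proposal takes a genuinely different route from the paper's.  The paper proves the rapid-decay result (Theorem \ref{thm:rapid decay orbit}) via the Weyl integration and character formulas, but for the scaling asymptotics it switches to the \emph{Kirillov character formula}, writing $\chi_{k\boldsymbol{\lambda}}(e^{\boldsymbol{\xi}})$ as a Fourier-type integral over the coadjoint orbit $\mathcal{O}_{\boldsymbol{\lambda}}$ and then performing complex stationary phase in the variables $(u,\boldsymbol{\xi},\boldsymbol{\beta})\in(0,\infty)\times\mathfrak{g}\times\mathcal{O}_{\boldsymbol{\lambda}}$.  After a local parametrization of $\mathcal{O}_{\boldsymbol{\lambda}}$ near $\boldsymbol{\lambda}$ by $\boldsymbol{\gamma}\in\mathfrak{t}^{\perp_{\kappa_e}}$, the Hessian is manifestly non-degenerate (the skew block $Z_{\boldsymbol{\lambda}}$ gives $\mathfrak{d}_{\boldsymbol{\lambda}}$ directly), and the residual $\mathfrak{t}'$-integral is a clean Gaussian producing $\mathfrak{D}^\kappa(x)$.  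You instead carry the Weyl-integration/Weyl-character route all the way through, working over $(t,\boldsymbol{\xi},[h])\in(0,\infty)\times\mathfrak{t}\times G/T$.

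There is a concrete gap in the bookkeeping step.  Combining the Weyl integration weight $|\Delta(t)|^2$ with the Weyl character denominator $1/\Delta(t)$ leaves a factor $\Delta(t)$ in the amplitude, exactly as in the paper's derivation of equation (\ref{eqn:4th reduction Pik}).  The critical point of your phase (after reducing to the identity summand in $W$) lies at $t=e$, $\boldsymbol{\xi}=0$, where $\Delta$ vanishes to order $n_G=\frac{1}{2}(d-r_G)$.  Thus the leading symbol of your amplitude vanishes identically on the critical manifold, and the naive stationary-phase leading term is zero.  To recover the correct prefactor you would need to Taylor-expand $\Delta(e^{\imath\boldsymbol{\vartheta}})\sim\imath^{n_G}\prod_{\boldsymbol{\alpha}\in R^+}\langle\boldsymbol{\alpha},\boldsymbol{\vartheta}\rangle+\ldots$ and match the resulting root products against $\mathrm{vol}(\mathcal{O}_{\boldsymbol{\lambda}})$ and $\mathfrak{d}_{\boldsymbol{\lambda}}$ via the Weyl dimension formula and the identification of the Hessian on the $G/T$-block — which is precisely the computation packaged inside the Kirillov/Duistermaat--Heckman formula that the paper invokes.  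You flag this as ``the main obstacle'' but do not supply the mechanism; as written the argument stops short of producing the stated constant.  A secondary (minor) slip: your splitting $\mathfrak{g}=\mathbb{R}\boldsymbol{\lambda}^\kappa\oplus\mathfrak{t}_x'\oplus\mathfrak{t}_x^{\perp_{\kappa_e}}$ does not match your chosen integration domain $\mathfrak{t}\times G/T$, where only $\mathfrak{t}=\mathbb{R}\boldsymbol{\lambda}^\kappa\oplus\mathfrak{t}_x'$ comes from the $\boldsymbol{\xi}$-variable and $\mathfrak{t}^{\perp}$ comes from tangent vectors along $G/T$; the Hessian block structure you would actually encounter is therefore different from the one in (\ref{eqn:Psidefnxi23}), where $\boldsymbol{\rho}$ and $\boldsymbol{\gamma}$ are \emph{independent} copies of $\mathfrak{t}^\perp$ paired by $Z_{\boldsymbol{\lambda}}$.
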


\subsection{Applications}

Let us describe a sample of applications of the previous Theorems.
To begin with, Theorem \ref{thm:rapid decay moment map} and the
second expansion of Theorem 
\ref{thm:rescaled asympt}
yield an asymptotic estimate on the $L^\infty$ norms of complexified
matrix elements and their Husimi probability distributions (see
\cite{z12}, \cite{z20}).

If $\varphi\in L^2(G)$ is an eigenfunction of
$\Delta$, let $\tilde{\varphi}$ denote its complexification
and set $\tilde{\varphi}^\tau:=
\left.\tilde{\varphi}\right|_{X^\tau}$.
The \textit{Husimi distribution} 
$U^\tau_{\varphi}:X^\tau\rightarrow \mathbb{R}$ is given by
$$
U^\tau_{\varphi}(x):=\frac{\left|\tilde{\varphi}^\tau (x)\right|^2}{\|\varphi^\tau\|^2_{L^2(X^\tau)}}.
$$
The following Proposition specializes to the present setting
the upper bounds in Theorem 0.1 in \cite{z20}.

\begin{thm}
\label{thm:Husimi equiv}
There exists constants $C(\tau,\boldsymbol{\nu}),\,
C'(\tau,\boldsymbol{\nu})>0$ such that
for any $\gg 0$ the following holds.

\begin{enumerate}
\item Let $\varphi\in L^2(G)_{k\,\boldsymbol{\nu}}$ have
unit $L^2(G)$-norm.
Then
$$
\max_{x\in X^\tau}\left\{\left|\varphi^\tau(x)\right|\right\}\le 
C(\tau,\boldsymbol{\nu})\,
e^{\tau\,c_{k\,\boldsymbol{\lambda}}}\,
(c_{k\,\boldsymbol{\lambda}})
^{\frac{d-r_G}{4}}.
$$

\item Under the same assumptions,
$$
\max_{x\in X^\tau}\left\{ U^\tau_{\varphi}(x)^{\frac{1}{2}}
\right\}\le 
C'(\tau,\boldsymbol{\nu})\,
(c_{k\,\boldsymbol{\lambda}})
^{\frac{d}{2}-\frac{1+r_G}{4}}.
$$

\end{enumerate}

\end{thm}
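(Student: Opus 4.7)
The plan is to derive both bounds from diagonal evaluations of $P^\tau_{k\boldsymbol{\nu}}$ and $\Pi^\tau_{k\boldsymbol{\nu}}$ via Theorems \ref{thm:rapid decay moment map} and \ref{thm:rescaled asympt}, combined with standard Cauchy--Schwarz and reproducing-kernel manipulations in the spirit of \cite{z20}. Throughout I assume the hypotheses on $\boldsymbol{\nu}$ inherited from Theorem \ref{thm:rescaled asympt}, namely that $\boldsymbol{\nu}$ is regular and $\mathcal{O}_{\boldsymbol{\nu}}\cap\mathfrak{t}^0=\emptyset$.

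For part (1), expand $\varphi=\sum_j c_j\,\sigma_{k\boldsymbol{\nu},j}$ in a chosen ONB of $L^2(G)_{k\boldsymbol{\nu}}$ with $\sum_j|c_j|^2=1$; then the Cauchy--Schwarz inequality and the definition (\ref{eqn:Poisson-wave equiv}) yield
$$
|\tilde{\varphi}^\tau(x)|^2\;\le\;\sum_j|\tilde{\sigma}^\tau_{k\boldsymbol{\nu},j}(x)|^2\;=\;e^{2\tau\,c_{k\boldsymbol{\nu}}}\,P^\tau_{k\boldsymbol{\nu}}(x,x),
$$
so the asserted bound reduces to a uniform $O\!\left(k^{(d-r_G)/2}\right)$ estimate on the diagonal of $P^\tau_{k\boldsymbol{\nu}}$. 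I would split $X^\tau$ into the strip $\{x:\mathrm{dist}_{X^\tau}(x,X^\tau_{\mathcal{O}})<C'k^{\epsilon-1/2}\}$ and its complement. On the complement Theorem \ref{thm:rapid decay moment map} supplies $P^\tau_{k\boldsymbol{\nu}}(x,x)=O(k^{-\infty})$; on the strip I apply the second expansion of Theorem \ref{thm:rescaled asympt} with $\theta_1=\theta_2=0$, $\mathbf{s}_1=\mathbf{s}_2=\mathbf{0}$ and $\mathbf{n}_1=\mathbf{n}_2=\mathbf{n}$. The $\psi_2^{\omega_x}$ contribution vanishes on the diagonal (since its symplectic part is skew-symmetric) and the surviving factor $\exp\!\bigl(-2\,\|\boldsymbol{\nu}\|\,\tau^{-1}\|\mathbf{n}\|_{\tilde{\kappa}_x}^2\bigr)$ is at most $1$, giving the desired uniform $O\!\left(k^{(d-r_G)/2}\right)$ bound. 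Part (1) then follows, since (\ref{eqn:eigenvalue of lambda}) gives $c_{k\boldsymbol{\nu}}=k\,\|\boldsymbol{\nu}^\vee\|+O(1)$ and hence $k^{(d-r_G)/4}\asymp c_{k\boldsymbol{\nu}}^{(d-r_G)/4}$.

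For part (2), the key is that $\psi:=\tilde{\varphi}^\tau/\|\tilde{\varphi}^\tau\|_{L^2(X^\tau)}$ is a unit vector of $H(X^\tau)_{k\boldsymbol{\nu}}$ and is therefore reproduced by $\Pi^\tau_{k\boldsymbol{\nu}}$. By Cauchy--Schwarz and the idempotency of the orthogonal projector,
$$
U^\tau_\varphi(x)\;=\;|\psi(x)|^2\;\le\;\int_{X^\tau}\bigl|\Pi^\tau_{k\boldsymbol{\nu}}(x,y)\bigr|^2\,dV(y)\;=\;\Pi^\tau_{k\boldsymbol{\nu}}(x,x).
$$
The same splitting argument, now applied to the first expansion of Theorem \ref{thm:rescaled asympt}, produces $\Pi^\tau_{k\boldsymbol{\nu}}(x,x)=O\!\left(k^{d-(1+r_G)/2}\right)$ uniformly on $X^\tau$; taking square roots gives $U^\tau_\varphi(x)^{1/2}=O\!\left(k^{d/2-(1+r_G)/4}\right)\asymp c_{k\boldsymbol{\nu}}^{\,d/2-(1+r_G)/4}$, as claimed.

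The argument is mechanical once Theorems \ref{thm:rapid decay moment map} and \ref{thm:rescaled asympt} are in hand; the only step requiring any care is the matching of the two regimes, the rapid-decay zone far from $X^\tau_{\mathcal{O}}$ and the Gaussian zone within the $k^{\epsilon-1/2}$-strip. This is handled by choosing the threshold $C'$ large enough that the two estimates overlap, and by exploiting the manifest non-positivity of the exponent $-\|\mathbf{n}\|_{\tilde{\kappa}_x}^2$ so that the on-diagonal bound inside the strip is independent of the normal displacement $\mathbf{n}$.
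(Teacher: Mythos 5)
Your proof of part (1) is essentially the paper's own argument: the same Cauchy--Schwarz reduction to the diagonal of $P^\tau_{k\boldsymbol{\nu}}$, the same two-regime splitting into the strip of width $O(k^{\epsilon-1/2})$ around $X^\tau_{\mathcal{O}}$ and its complement, Theorem \ref{thm:rapid decay moment map} off the strip, Theorem \ref{thm:rescaled asympt} on it, and the observation that $c_{k\boldsymbol{\nu}}\asymp k$ so powers of $k$ convert to powers of $c_{k\boldsymbol{\nu}}$. No differences worth noting there.

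Your part (2) takes a genuinely different route from the paper. The paper derives the Husimi bound by combining part (1), which controls the numerator $|\tilde{\varphi}^\tau|^2$, with Zelditch's $L^2$-norm asymptotic (Lemma \ref{lem:L2 norm est zel}), which controls the denominator $\|\tilde{\varphi}^\tau\|^2_{L^2(X^\tau)}$; dividing gives the stated exponent. You instead set $\psi:=\tilde{\varphi}^\tau/\|\tilde{\varphi}^\tau\|_{L^2(X^\tau)}$, observe it is a unit vector in $H(X^\tau)_{k\boldsymbol{\nu}}$, and invoke the reproducing property of the orthogonal projector $\Pi^\tau_{k\boldsymbol{\nu}}$ plus self-adjoint idempotence to get $U^\tau_\varphi(x)=|\psi(x)|^2\le \Pi^\tau_{k\boldsymbol{\nu}}(x,x)$, then bound the diagonal of $\Pi^\tau_{k\boldsymbol{\nu}}$ by the same splitting argument, using the first expansion of Theorem \ref{thm:rescaled asympt}. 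Both approaches are correct. Yours has the merit of being more self-contained: it does not appeal to Zelditch's $L^2$-growth law, only to results already proved in the paper, and it is structurally parallel to part (1) (diagonal of $P^\tau_{k\boldsymbol{\nu}}$ for the pointwise bound, diagonal of $\Pi^\tau_{k\boldsymbol{\nu}}$ for the Husimi bound). The paper's route is shorter given Lemma \ref{lem:L2 norm est zel}, and makes explicit the precise normalization constant in the $L^2$-norm, which is of independent interest; but as a derivation of the stated upper bound, your argument is equally valid and arguably cleaner.

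One small point of care you handled correctly but implicitly: the identity $\int_{X^\tau}|\Pi^\tau_{k\boldsymbol{\nu}}(x,y)|^2\,dV(y)=\Pi^\tau_{k\boldsymbol{\nu}}(x,x)$ uses that $\Pi^\tau_{k\boldsymbol{\nu}}$ is a self-adjoint orthogonal projector with respect to the $L^2(X^\tau)$ inner product, which is how it is defined in (\ref{eqn:equiv szego proj}); this is genuinely different from the relation between $\tilde{\sigma}^\tau_{k\boldsymbol{\nu},j}$ (which are not $L^2(X^\tau)$-orthonormal) and $P^\tau_{k\boldsymbol{\nu}}$ used in part (1), and the two parts must not be conflated.
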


The second application concerns the norm
of $\Pi^\tau_{k\,\boldsymbol{\lambda}}$ as a bounded operator
$L^p(X^\tau)\rightarrow L^q(X^\tau)$. A similar estimate
can be given for $P^\tau_{k\,\boldsymbol{\lambda}}$ and
will be left to the reader. This result is the analogue of 
analogous operator estimates in \cite{cr2} 
and \cite{gp24} in the Grauert tube setting, and \cite{sz03} in the line bundle context.
On the other hand, these estimates are inspired by previous estimates
in the real domain due to Sogge (\cite{so88} and \cite{so}).

\begin{thm}
\label{thm:operator norm norm estimate}
Suppose $1\le p\le q$, and define $R$ by setting
$$
\frac{1}{R}:=1-\frac{1}{p}+\frac{1}{q}.
$$ 
Then there is a constant
$C>0$ such that for any $\epsilon>0$ and
$k\gg 0$ one has
$$
\|\Pi^{\tau}_{k\,\boldsymbol{\lambda}}\|_{L^p(M)\rightarrow L^q(M)}\le C\,
k^{\frac{1}{2\,R}\left[ (d-1)\,\left( R-1  \right)
+R(d-r_G)\right]+\epsilon}.
$$

\end{thm}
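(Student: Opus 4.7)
The plan is to combine Riesz-Thorin interpolation between the three corners $L^1\to L^1$, $L^\infty\to L^\infty$, and $L^1\to L^\infty$ with uniform upper bounds on $\sup_x\|\Pi^\tau_{k\boldsymbol{\lambda}}(x,\cdot)\|_{L^1}$ and $\|\Pi^\tau_{k\boldsymbol{\lambda}}\|_\infty$. Since the kernel is Hermitian symmetric (being that of an orthogonal projector), duality yields $\|\Pi^\tau_{k\boldsymbol{\lambda}}\|_{L^1\to L^1}=\|\Pi^\tau_{k\boldsymbol{\lambda}}\|_{L^\infty\to L^\infty}$, and the Schur test with weight $h\equiv 1$ gives
\begin{equation*}
\|\Pi^\tau_{k\boldsymbol{\lambda}}\|_{L^1\to L^1}\le \sup_{x\in X^\tau}\bigl\|\Pi^\tau_{k\boldsymbol{\lambda}}(x,\cdot)\bigr\|_{L^1(X^\tau)},\qquad \|\Pi^\tau_{k\boldsymbol{\lambda}}\|_{L^1\to L^\infty}=\sup_{x,y}\bigl|\Pi^\tau_{k\boldsymbol{\lambda}}(x,y)\bigr|.
\end{equation*}
Riesz-Thorin interpolation between these endpoints produces
\begin{equation*}
\|\Pi^\tau_{k\boldsymbol{\lambda}}\|_{L^p\to L^q}\le \bigl[\sup_x\|\Pi^\tau_{k\boldsymbol{\lambda}}(x,\cdot)\|_1\bigr]^{1/R}\cdot \bigl[\sup_{x,y}|\Pi^\tau_{k\boldsymbol{\lambda}}(x,y)|\bigr]^{1-1/R},
\end{equation*}
with $1/R=1-1/p+1/q$, for every $1\le p\le q\le\infty$.

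For the second factor, Cauchy-Schwarz for projection kernels gives $|\Pi^\tau_{k\boldsymbol{\lambda}}(x,y)|\le \sqrt{\Pi^\tau_{k\boldsymbol{\lambda}}(x,x)\,\Pi^\tau_{k\boldsymbol{\lambda}}(y,y)}$, so it suffices to bound the diagonal. Specialising Theorem \ref{thm:rescaled asympt} to $x_{1k}=x_{2k}=x\in X^\tau_{\mathcal{O}}$ (all scaling vectors are zero and the Gaussian factor reduces to $1$) yields $\Pi^\tau_{k\boldsymbol{\lambda}}(x,x)=O\bigl(k^{(2d-1-r_G)/2}\bigr)$ uniformly on $X^\tau_{\mathcal{O}}$, while Theorem \ref{thm:rapid decay moment map} applied with $y=x$ shows that the diagonal is $O(k^{-\infty})$ outside a $C\,k^{\epsilon-1/2}$-neighbourhood of $X^\tau_{\mathcal{O}}$. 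Hence $\sup_{x,y}|\Pi^\tau_{k\boldsymbol{\lambda}}(x,y)|\le C_1\,k^{(2d-1-r_G)/2}$.

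For the first factor, Theorem \ref{thm:rapid decay orbit} implies (using that the distance of a point to a $G$-orbit is symmetric in its arguments, by $G$-invariance of the metric) that the contribution to $\|\Pi^\tau_{k\boldsymbol{\lambda}}(x,\cdot)\|_{L^1}$ from the complement of a tubular neighbourhood of $G\cdot x$ of width $O(k^{\epsilon-1/2})$ is $O(k^{-\infty})$. Inside this tube I would use the crude pointwise bound $|\Pi^\tau_{k\boldsymbol{\lambda}}|\le C_1\,k^{(2d-1-r_G)/2}$ obtained above; since $G\cdot x$ is compact of dimension $d$ and codimension $d-1$ in $X^\tau$ (stabilisers of the left $G$-action on $\tilde G$ are trivial), the tube has volume $O\bigl(k^{-(d-1)(1/2-\epsilon)}\bigr)$, and multiplication produces
\begin{equation*}
\sup_x\bigl\|\Pi^\tau_{k\boldsymbol{\lambda}}(x,\cdot)\bigr\|_{L^1}\le C_2\,k^{(d-r_G)/2+(d-1)\epsilon}.
\end{equation*}

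Substituting both factors into the interpolation bound, the exponent of $k$ in $\|\Pi^\tau_{k\boldsymbol{\lambda}}\|_{L^p\to L^q}$ reads
\begin{equation*}
\frac{d-r_G}{2R}+\frac{2d-1-r_G}{2}\Bigl(1-\frac{1}{R}\Bigr)+O(\epsilon)=\frac{2d-1-r_G}{2}-\frac{d-1}{2R}+O(\epsilon),
\end{equation*}
which rearranges to $\frac{1}{2R}\bigl[(d-1)(R-1)+R(d-r_G)\bigr]+O(\epsilon)$, completing the claim since $\epsilon>0$ is arbitrary. The hard part will be making the rapid-decay localisation of the kernel completely uniform in $x$ and in the tube width (so that the volume estimate has the stated exponent with only the $k^{(d-1)\epsilon}$ slack), and ensuring that the crude use of the peak bound inside the $k^{\epsilon-1/2}$-tube, though wasteful compared to the genuine Gaussian envelope of Theorem \ref{thm:rescaled asympt}, still matches the exponent claimed in the statement.
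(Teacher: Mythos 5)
Your proposal is correct, and it reaches the same exponent as the paper while using a genuinely different mechanism for converting the pointwise asymptotics into an operator norm bound. The paper invokes the Schur--Young inequality directly, reducing the claim to a single $R$-th moment integral $\sup_y\int|\Pi^\tau_{k\boldsymbol{\lambda}}(y,y')|^R\,\mathrm{d}V(y')$, and then integrates the explicit Gaussian envelope coming from Theorem \ref{thm:rescaled asympt} (in the normal variable $\mathbf{n}'$) against the $k^{-(d-1)/2}$ Jacobian of the tube parametrization $\Lambda_{x,k}$. You instead split into three Riesz--Thorin endpoints ($L^1\to L^1$, $L^\infty\to L^\infty$ by self-adjoint duality, and $L^1\to L^\infty$), and estimate the $L^1\to L^1$ norm by the crude product (peak height)$\times$(tube volume). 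This is at first sight wasteful, since you replace the genuine Gaussian by its supremum; but because the tube width is precisely the $k^{\epsilon-1/2}$ scale where the Gaussian is $O(1)$, the volume factor $k^{-(d-1)/2+(d-1)\epsilon}$ exactly compensates, and the two-step interpolation then reproduces $\frac{2d-1-r_G}{2}-\frac{d-1}{2R}+O(\epsilon)$, i.e. the claimed $\frac{1}{2R}[(d-1)(R-1)+R(d-r_G)]$. What you buy is that the argument needs no $R$-dependent integration (so the constant can be tracked more cleanly) and no Sogge-style inequality; what the paper buys is a one-line reduction with a single exponent bookkeeping. Both approaches consume exactly the same analytic inputs: Theorem \ref{thm:rapid decay orbit}, Theorem \ref{thm:rapid decay moment map} (for the rapid decay off the tube and off $X^\tau_{\mathcal{O}}$), and Theorem \ref{thm:rescaled asympt} (via Cauchy--Schwarz for the on-diagonal bound of order $k^{(2d-1-r_G)/2}$). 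The uniformity you flag at the end is indeed supplied by those theorems as stated, and the codimension count $\dim X^\tau-\dim(G\cdot x)=(2d-1)-d=d-1$ is correct since the left $G$-action on $\tilde G$ is free.
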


Let us close this introduction with a remark 
on a slight difference in the representation-theoretic
approach
between \cite{p22}, where similar asymptotics
were considered in the line bundle setting, 
and the present paper.
Given 
a dominant weight 
$\boldsymbol{\lambda}$, let us set 
$\boldsymbol{\nu}:=\boldsymbol{\nu}_{\boldsymbol{\lambda}}=\boldsymbol{\lambda}+\boldsymbol{\delta}$,
where $\boldsymbol{\delta}$ is the half-sum of the positive roots
- see (\ref{eqn:half sum prs}) below;
for a fixed dominant weight $\boldsymbol{\lambda}$, the 
equivariant asymptotics in \cite{p22} are those associated to
the sequence of weights $\boldsymbol{\lambda}_k$ such that
$\boldsymbol{\nu}_{\boldsymbol{\lambda}_k}=k\,\boldsymbol{\nu}$. 

In this paper, given a fixed regular dominant weight
$\boldsymbol{\lambda}$, we consider the asymptotics
associated to the representations $k\,\boldsymbol{\lambda}$.
This implies that the role played by 
$\mathcal{O}_{\boldsymbol{\nu}}$ in \cite{p22} is played here by
$\mathcal{O}_{\boldsymbol{\lambda}}$.

This approach is perhaps more natural, and makes the
character computations based on the Kirillov character formula
only slightly more involved. The same arguments and change in
approach could of course be applied to the setting of
\cite{p22}.

\section{Notation}

In this section, we collect some notation and conventions 
used in the paper for the
reader's convenience. 

To begin with, we shall think of $\mathfrak{g}$ as the Lie algebra of left-invariant vector fields on $G$ (that is, those vector fields associated to
the right action of $G$ on itself by right translations).
Since $G$ also acts on $\tilde{G}$ by right translations, any $\boldsymbol{\xi}\in \mathfrak{g}$ extends to
a left-invariant vector field $\boldsymbol{\xi
}^{\tilde{G}}$
on $\tilde{G}$. 

On the other hand, $G$ also acts on itself and $\tilde{G}$ by left translations. This action associates to each $\boldsymbol{\xi}\in 
\mathfrak{g}$ a right-invariant vector field $\boldsymbol{\xi}_{\tilde{G}}$
on $\tilde{G}$. 

Thus $\boldsymbol{\xi
}^{\tilde{G}}(x),\,\boldsymbol{\xi
}_{\tilde{G}}(x)\in T_x\tilde{G}$ are given by
$$
\boldsymbol{\xi
}_{\tilde{G}}(x):=\left.\frac{\mathrm{d}}{\mathrm{d}t}\,
e^{t\,\boldsymbol{\xi}}
\cdot x\right|_{t=0},\quad
\boldsymbol{\xi
}^{\tilde{G}}(x):=\left.\frac{\mathrm{d}}{\mathrm{d}t}\,
x\cdot e^{t\,\boldsymbol{\xi}}\right|_{t=0}\quad
(x\in \tilde{G}).
$$

Since $X^\tau$ is $G$-invariant,
when $x\in X^\tau$ we have $\boldsymbol{\xi
}_{\tilde{G}}(x)\in T_xX^\tau$; we shall occasionally emphasize this by writing $\boldsymbol{\xi
}_{X^\tau}(x)$ for $\boldsymbol{\xi
}_{\tilde{G}}(x)$.

\begin{enumerate}

\item $\mathcal{O}_{\boldsymbol{\beta}}$: the coadjoint orbit through 
$\boldsymbol{\beta}\in \mathfrak{g}^\vee$; if $\boldsymbol{\beta}=
\boldsymbol{\lambda}$ (the fixed regular weight in the positive Weyl chamber) we shall often write $\mathcal{O}$ for $\mathcal{O}_{\boldsymbol{\lambda}}$;

\item $\mathcal{O}^\tau$: the rescaling of $\mathcal{O}$ which is contained in the sphere of radius $\tau$ in $\mathfrak{g}^\vee$ ($\tau>0$);

\item $\tilde{G}$: the complexification of $G$; $\tilde{\mathfrak{g}}=
\mathfrak{g}\oplus\imath\,\mathfrak{g}$: its Lie algebra;

\item $\boldsymbol{\xi}_{\tilde{G}}\in \mathfrak{X}(\tilde{G})$:
the vector field induced by $\boldsymbol{\xi}\in \mathfrak{g}$
where $G$ acts on $\tilde{G}$ by left translations (see above);

\item $\mathrm{val}_x:\mathfrak{g}\rightarrow T_x\tilde{G}$:
the evaluation map $\boldsymbol{\xi}\mapsto \boldsymbol{\xi}_{\tilde{G}}(x)$; its complex linear extension is the 
corresponding evaluation map for the holomorphic action
of $\tilde{G}$ on itself by left translations,
$\widehat{\mathrm{val}}_{x}:\tilde{\mathfrak{g}}\rightarrow T_x\tilde{G}$;

\item since $X^\tau\subset\tilde{G}$ is $G$-invariant for every $\tau>0$,
we have 
$\boldsymbol{\xi}_{\tilde{G}}(x)\in T_xX^\tau\subseteq T_x\tilde{G}$ 
if $x\in X^\tau$;
thus there is no ambiguity in identifying $\boldsymbol{\xi}_{\tilde{G}}(x)$ and
$\boldsymbol{\xi}_{X^\tau}(x)$ (similarly defined) and 
as mentioned we shall occasionally lighten notation (e.g. in the presence of subscripts) by simply writing 
$\boldsymbol{\xi}(x)$;

\item for a vector subspace $\mathfrak{a}\subseteq \mathfrak{g}$,
we shall set $\mathfrak{a}_{\tilde{G}}\subseteq T\tilde{G}$
the (real) vector subbundle given by
$$
\mathfrak{a}_{\tilde{G}}(x):=
\mathrm{val}_x(\mathfrak{a}),
$$
and similarly for its restriction 
$\mathfrak{a}_{X^\tau}\subset TX^\tau$ to $X^\tau$; we shall again 
occasionally lighten notation and simply write $\mathfrak{a}(x)$
for $\mathfrak{a}_{\tilde{G}}(x)$;

\item when the previous vector subspace depends 
on $x\in X^\tau$, we shall emphasize this by writing 
$\mathfrak{a}_x$, and lighten notation by writing
$\mathfrak{a}_x(x)$ instead of $({\mathfrak{a}_x})_{X^\tau}(x)$;

\item $\kappa_e$: the given
$\mathrm{Ad}$-invariant Euclidean product on $\mathfrak{g}$;

\item $\sigma_x:=\mathrm{val}_x^t(\hat{\kappa}_x)$ (the Euclidean product on $\mathfrak{g}$ given by pull-back
of $\hat{\kappa}$ under vector field evaluation at $x$
- see \S \ref{sctn:direct sum dec g});

\item $\rho$: the unique K\"{a}hler potential on $\tilde{G}$, such that
$\sqrt{\rho}$ satisfies the homogeneous Monge-Amp\`{e}re equation on
$\tilde{G}\setminus G$ and the associated K\"{a}hler metric $\hat{\kappa}$ restricts to $\kappa$ on $G$;

\item $\Omega:=\imath\,\overline{\partial}\,\partial\,\rho$: 
the K\"{a}hler form on $\tilde{G}$; 
$\omega:=\frac{1}{2}\,\Omega$, $\tilde{\kappa}:=\frac{1}{2}\,\hat{\kappa}$.

\end{enumerate}

\section{An example}

Consider the compact torus 
$T^d=\mathbb{R}^d/(2\,\pi\,\mathbb{Z}^d)$ with the standard metric (\textit{cfr.} \S 2 of \cite{p24});
in this case $d=r_{T^d}$. 
Let us identify $\mathbb{R}^d\cong (\mathbb{R}^d)^\vee$ by the
standard Euclidean product.
The the unitary dual $\hat{T}^d\cong \mathbb{Z}^d$, 
and for each $\boldsymbol{\lambda}\in \mathbb{Z}^d$ the subspace
$L^2(T^d)_{\boldsymbol{\lambda}}$ is $1$-dimensional.
Furthermore, the cone $\mathcal{C}(\mathcal{O})$ is in this case
just the positive ray $\mathbb{R}_+\cdot\boldsymbol{\lambda}$.

More precisely, let 
$e^{\imath\,\boldsymbol{\vartheta}}=\begin{pmatrix}
e^{\imath\,\vartheta_1}&\ldots&e^{\imath\,\vartheta_d}
\end{pmatrix}$
denote the general element of $T^d$, and for $\boldsymbol{\lambda}\in \mathbb{Z}^d$
let us set $\varphi_{\boldsymbol{\lambda}}
(e^{\imath\,\boldsymbol{\vartheta}})
:=c_d\,
e^{-\imath\,
\langle\boldsymbol{\lambda},\boldsymbol{\vartheta}
\rangle}$, where $c_d=(2\,\pi)^{-d/2}$.
In view of (\ref{eqn:left translations}), $(\varphi_{\boldsymbol{\lambda}})$ is an orthonormal basis  of $L^2(T^d)_{\boldsymbol{\lambda}}$;
furthermore,
$\varphi_{\boldsymbol{\lambda}}$
is an eigenvector of the positive 
Laplacian corresponding to the eigenvalue $\|\boldsymbol{\lambda}\|^2$.

Let us identify $\tilde{T}^d=\mathbb{C}^d/(2\,\pi\,\mathbb{Z}^d)\cong (\mathbb{C}^*)^d$, and write the
general element of
$\tilde{T}^d$ as $e^{\imath\,\boldsymbol{\vartheta}+\mathbf{r}}
=e^{\imath\,\boldsymbol{\vartheta}}\,
e^{\imath \cdot \imath (-\mathbf{r})}$,
where $\mathbf{r}\in \mathbb{R}^d$. The holomorphic extension of
$\varphi_{\boldsymbol{\lambda}}$ is
$\tilde{\varphi}_{\boldsymbol{\lambda}}(e^{\imath\,\boldsymbol{\vartheta}+\mathbf{r}}):=c_d\,
e^{-\imath\,
\langle\boldsymbol{\lambda},\boldsymbol{\vartheta}
\rangle
-\langle
\boldsymbol{\lambda},\mathbf{r}
\rangle
}$.

On $\tilde{T}^d\setminus T$, we may pass to polar coordinates in the real component, and write the general element as
in the form
$e^{\imath\,\boldsymbol{\vartheta}+\tau\,\boldsymbol{\omega}}
=e^{\imath\,\boldsymbol{\vartheta}}\,
e^{\imath\,\imath (-\tau\,\boldsymbol{\omega})}$, where 
$\tau>0$ and $\boldsymbol{\omega}\in S^{d-1}$ (the unit sphere).
Keeping $\tau$ fixed yields a parametrization of $X^\tau$.

Thus on $X^\tau$ 
$$
e^{-2\,k\,\tau\,\|\boldsymbol{\lambda}\|}\,\left|\tilde{\varphi}_{k\,\boldsymbol{\lambda}}(e^{\imath\,\boldsymbol{\vartheta}+\tau\,\boldsymbol{\omega}})
\right|^2
=(2\,\pi)^{-d}\,
e^{2\,\tau\,k\,\left[\langle\boldsymbol{\lambda},
-\boldsymbol{\omega}\rangle-\|\boldsymbol{\lambda}\|\right]}.
$$
We have $\langle\boldsymbol{\lambda},
-\boldsymbol{\omega}\rangle-\|\boldsymbol{\lambda}\|\le 0$ for any
$\boldsymbol{\omega}$ of unit norm, and equality holds if and only if
$-\boldsymbol{\omega}=
\boldsymbol{\lambda}/\|\boldsymbol{\lambda}\|$; 
in terms
of the moment map $\tilde{\Phi}$ discussed in
\S \ref{sctn:prel}, this is 
the condition $e^{\imath\,\boldsymbol{\vartheta}
+\tau\,\boldsymbol{\omega}}\in 
{\tilde{\Phi}}^{-1}(\mathbb{R}_+\cdot\boldsymbol{\lambda})$.

Therefore, if $e^{\imath\,\boldsymbol{\vartheta}+\tau\,\boldsymbol{\omega}}
\not\in 
{\tilde{\Phi}}^{-1}(\mathbb{R}_+\cdot\boldsymbol{\lambda})\cap X^\tau$ then
$$
e^{-2\,k\,\tau\,\|\boldsymbol{\lambda}\|}\,\left|\tilde{\varphi}_{k\,\boldsymbol{\lambda}}(e^{\imath\,\boldsymbol{\vartheta}+\tau\,\boldsymbol{\omega}})
\right|^2=O\left(k^{-\infty}\right)\quad\text{for}\quad
k\rightarrow+\infty;
$$
on the other hand, if
$e^{\imath\,\boldsymbol{\vartheta}+\tau\,\boldsymbol{\omega}}\in 
\Phi^{-1}(\mathbb{R}_+\cdot\boldsymbol{\lambda})\cap X^\tau$ then
$$
\left|\tilde{\varphi}_{k\,\boldsymbol{\lambda}}(e^{\imath\,\boldsymbol{\vartheta}+\tau\,\boldsymbol{\omega}})
\right|
=(2\,\pi)^{-d/2}\,e^{\tau\,(k\,\|\boldsymbol{\lambda}\|)}
,\quad
e^{-2\,k\,\tau\,\|\boldsymbol{\lambda}\|}\,\left|\tilde{\varphi}_{k\,\boldsymbol{\lambda}}(e^{\imath\,\boldsymbol{\vartheta}+\tau\,\boldsymbol{\omega}})
\right|^2
=(2\,\pi)^{-d}.
$$
This conclusion is in agreement with the statement of
Theorem \ref{thm:rescaled asympt} for $P^\tau_{k\,\boldsymbol{\lambda}}$,
since $\mathrm{vol}^\kappa(G)=(2\,\pi)^d$ and
$D_x=2^{-1}\,I_{d-1}$, so that
$$
\mathfrak{D}^\kappa(x)=\sqrt{\det(D_x)}
=\frac{1}{2^{(d-1)/2}}.
$$

Let us determine the orthonormal basis of 
$L^2(X^\tau)_{k\,\boldsymbol{\lambda}}$.
Let $S^{d-1}(\tau)\subseteq \mathbb{R}^d$ denote the
sphere centered of radius $\tau$ at the origin.
Then the $L^2$-norm of $\tilde{\varphi}_{k\,\boldsymbol{\lambda}}^\tau$ is
\begin{eqnarray}
\left\|\tilde{\varphi}_{k\,\boldsymbol{\lambda}}^\tau\right\|^2
_{L^2(X^\tau)}
&=&(2\,\pi)^{-d}\,\int_{T^d}\,\mathrm{d}\boldsymbol{\vartheta}
\,\int_{S^{d-1}(\tau)}\,\mathrm{d}\boldsymbol{\omega}\,
\left[e^{2\,k\,\langle\boldsymbol{\lambda},
\boldsymbol{\omega}\rangle}   \right]
\nonumber\\
&=&\tau^{d-1}\,\int_{S^{d-1}(1)}\,\mathrm{d}\boldsymbol{\omega}\,
\left[e^{2\,\tau\,k\,\langle\boldsymbol{\lambda},
\boldsymbol{\omega}\rangle}   \right]\nonumber\\
&=&\tau^{d-1}\,e^{2\,k\,\tau\,\|\boldsymbol{\lambda}\|}\,
\int_{S^{d-1}(1)}\,\mathrm{d}\boldsymbol{\omega}\,
\left[e^{2\,\tau\,k\,\left[\langle\boldsymbol{\lambda},
\boldsymbol{\omega}\rangle-
\|\boldsymbol{\lambda}\|\right]}   \right]\nonumber\\
&=&\tau^{d-1}\,e^{2\,k\,\tau\,\|\boldsymbol{\lambda}\|}\,
\int_{S^{d-1}(1)}\,\mathrm{d}\boldsymbol{\omega}\,
\left[e^{\imath\,k\,\Psi_{\boldsymbol{\lambda}}
(\boldsymbol{\omega})}   \right],\nonumber 
\end{eqnarray}
where
$$
\Psi_{\boldsymbol{\lambda}}
(\boldsymbol{\omega})=\imath\,2\,\tau\,\left[
\|\boldsymbol{\lambda}\|-
\langle\boldsymbol{\lambda},
\boldsymbol{\omega}\rangle\right].
$$

We have $\Psi_{\boldsymbol{\lambda}}=\Im\Psi_{\boldsymbol{\lambda}}\ge 0$, and
$\Im\Psi_{\boldsymbol{\lambda}}= 0$
if and only if $\boldsymbol{\omega}=
\boldsymbol{\lambda}/\|\boldsymbol{\lambda}\|$.
In the neighbourhood of $\boldsymbol{\omega}=
\boldsymbol{\lambda}/\|\boldsymbol{\lambda}\|$,
we can write
$$
\boldsymbol{\omega}_{\boldsymbol{\eta}}:=\sqrt{1-\|\boldsymbol{\eta}\|^2}\,
\frac{\boldsymbol{\lambda}}{\|\boldsymbol{\lambda}\|}+
\boldsymbol{\eta}=\left(1-\frac{1}{2}\,\|\boldsymbol{\eta}\|^2
+R_3(\boldsymbol{\eta})\right)\,\frac{\boldsymbol{\lambda}}{\|\boldsymbol{\lambda}\|}+
\boldsymbol{\eta},
$$
where $\boldsymbol{\eta}\in \boldsymbol{\lambda}^\perp
\cong \mathbb{R}^{2d-1}$
varies near the origin.
Hence,
\begin{eqnarray*}
\Psi_{\boldsymbol{\lambda}}(\boldsymbol{\omega}_{\boldsymbol{\eta}})&=&
2\,\imath\,\tau\,
\left[ \|\boldsymbol{\lambda}\|- 
\left(1-\frac{1}{2}\,\|\boldsymbol{\eta}\|^2
+R_3(\boldsymbol{\eta})\right)\,\|\boldsymbol{\lambda}\|
     \right]\\
     &=&\imath\,\tau\,\left[\|\boldsymbol{\eta}\|^2\cdot 
     \|\boldsymbol{\lambda}\|+R_3(\boldsymbol{\eta})\right].
\end{eqnarray*}
Hence $\boldsymbol{\eta}=\mathbf{0}$ is a 
non-degenerate critical point, with Hessian
matrix $2\,\imath\,\tau\,\|\boldsymbol{\lambda}\|\,I_{d-1}$.
Thus
$$
\sqrt{\det\left( \frac{1}{2\,\pi\,\imath}	\,2\,\imath\,
\tau\,\|\boldsymbol{\lambda}\|\,I_{d-1}   \right)}
=\sqrt{\left(\frac{\tau\,\|\boldsymbol{\lambda}\|}{\pi}\right)^{d-1}}=
\left(\frac{\tau\,\|\boldsymbol{\lambda}\|}{\pi}\right)
^{\frac{d-1}{2}}
.
$$
Hence, there is an asymptotic expansion for 
$k\rightarrow +\infty$ 
\begin{eqnarray*}
\left\|\tilde{\varphi}_{k\,\boldsymbol{\lambda}}^\tau\right\|^2
_{L^2(X^\tau)}&\sim& \tau^{d-1}\,e^{2\,k\,\tau\,\|\boldsymbol{\lambda}\|}\,
\left(\frac{\pi}{k\,\tau\,\|\boldsymbol{\lambda}\|}\right)
^{\frac{d-1}{2}}\cdot \left(1+\sum_jk^{-j}\,a_j\right)\\
&=&e^{2\,k\,\tau\,\|\boldsymbol{\lambda}\|}\,
\left(\frac{\tau\,\pi}{k\,\|\boldsymbol{\lambda}\|}\right)
^{\frac{d-1}{2}}\cdot \left(1+\sum_jk^{-j}\,a_j\right)
.
\end{eqnarray*}

Thus the $L^2$-normalization of 
$\tilde{\varphi}^\tau_{k\,\boldsymbol{\lambda}}$
is
$$
\rho^\tau_{k\,\boldsymbol{\lambda}}
\left( e^{\imath\,\boldsymbol{\vartheta}+\tau\,\boldsymbol{\omega}}   \right)
\sim \frac{1}{(2\,\pi)^{d/2}}\,
\left(\frac{k\,\|\boldsymbol{\lambda}\|}{\tau\,\pi}\right)
^{\frac{d-1}{4}}\,
e^{\imath\,k\,
\langle\boldsymbol{\lambda},\boldsymbol{\vartheta}
\rangle
+k\,\tau\,\left[\langle
\boldsymbol{\lambda},-\boldsymbol{\omega}
\rangle-\|\boldsymbol{\lambda}\|\right]
}.
$$

Therefore, to leading order,
\begin{eqnarray}
\label{eqn:general near dg expon tor}
\left| \rho^\tau_{k\,\boldsymbol{\lambda}}
\left( e^{\imath\,\boldsymbol{\vartheta}+\tau\,\boldsymbol{\omega}}   \right)   \right|^2   
&\sim&\frac{1}{(2\,\pi)^{d}}\,
\left(\frac{k\,\|\boldsymbol{\lambda}\|}{\tau\,\pi}\right)
^{\frac{d-1}{2}}\,
e^{2\,k\,\tau\,\left[\langle
\boldsymbol{\lambda},-\boldsymbol{\omega}
\rangle-\|\boldsymbol{\lambda}\|\right]
},
\end{eqnarray}
which is also rapidly decreasing unless 
$-\boldsymbol{\omega}=\boldsymbol{\lambda}/\|\boldsymbol{\lambda}\|$,
and in that case it agrees 
with the first asymptotic expansion in
Theorem \ref{thm:rescaled asympt}.

Let us consider a rescaled displacement in a normal direction
to $Z^\tau$, which amounts in this case to replacing
(to first order) $\tau\,\boldsymbol{\omega}=\tau\,(
-\boldsymbol{\lambda}/\|\boldsymbol{\lambda}\|)$ by $\tau\,\boldsymbol{\omega}=\tau(-\boldsymbol{\lambda}/\|\boldsymbol{\lambda}\|)+\mathbf{n}/\sqrt{k}$, where $\mathbf{n}\in \boldsymbol{\lambda}^\perp$
is fixed.
In the previous general notation, 
we consider the asymptotics at a diagonal pair
$(x+\mathbf{n}/\sqrt{k},x+\mathbf{n}/\sqrt{k})$, with
$x\in X^\tau_{\mathcal{O}}$ and $\mathbf{n}$ normal to $X^\tau_{\mathcal{O}}$.
Since $\boldsymbol{\omega}\in S^{d-1}$, 
$$
\boldsymbol{\omega}=\sqrt{1-\frac{1}{\tau^2\,k}\|\mathbf{n}\|^2}\,
\left(-\frac{\boldsymbol{\lambda}}{\|\boldsymbol{\lambda}\|}\right)+
\frac{1}{\tau\,\sqrt{k}}\,\mathbf{n}.
$$
Hence, the exponent in (\ref{eqn:general near dg expon tor})
is
\begin{eqnarray*}
\lefteqn{ 
2\,k\,\tau\,\left[\langle
\boldsymbol{\lambda},-\boldsymbol{\omega}
\rangle-\|\boldsymbol{\lambda}\|\right]  
=2\,k\,\tau\,\|\boldsymbol{\lambda}\|\,\left[
\sqrt{1-\frac{1}{\tau^2\,k}\|\mathbf{n}\|^2}-1\right] }\\\\
&=&2\,k\,\tau\,\|\boldsymbol{\lambda}\|\,
\left[
-\frac{1}{2\,\tau^2\,k}\|\mathbf{n}\|^2+R_2\left(\frac{\|\mathbf{n}\|^2}{k}\right)\right] 
=\frac{\|\boldsymbol{\lambda}\|}{\tau}\,\left[
-\|\mathbf{n}\|^2+O\left(k^{-1}\right)
\right]\\
&=&\frac{\|\boldsymbol{\lambda}\|}{\tau}\,\left[
-2\,\|\mathbf{n}\|_{\tilde{\kappa}}^2+O\left(k^{-1}\right)
\right],
\end{eqnarray*}
where $\|\cdot\|^2_{\tilde{\kappa}}=\|\cdot\|^2/2$ in the notation of the 
introduction. The same argument applies for, say,
$\|\mathbf{n}\|=O\left(k^{1/6}\right)$, with a remainder
term $O\left(k^{-1/3}\right)$.

\section{Preliminaries}
\label{sctn:prel}

In this section, we collect some basic results that will be used in the following proofs. More specific preliminaries
will be given in the subsections devoted to the proofs.

\subsection{The Hamiltonian structure of $\tilde{L}$}
\label{sctn:ham str L}
As above,
let $L:G\times G\rightarrow G$ and $\tilde{L}:G\times \tilde{G}\rightarrow \tilde{G}$ be the actions given by left translations. 
Then $\tilde{L}$ is intertwined by $E$ in (\ref{eqn:defn of gamma'}) with the tangent lift $\mathrm{d}L:G\times TG\rightarrow TG$ of $L$.

In turn, $\tilde{L}$ extends to the holomorphic action 
$\hat{L}$
of $\tilde{G}$
on itself given by left translations. 
For any $x\in \tilde{G}$, the induced evaluation 
$\widehat{\mathrm{val}}_{x}:\tilde{\mathfrak{g}}\rightarrow T_x\tilde{G}$
is an isomorphism of complex vector spaces.
Since $\tilde{\mathfrak{g}}=\mathfrak{g}\oplus \imath\,\mathfrak{g}$, 
we conclude that
$(\mathfrak{g}_{\tilde{G}})_x\cap J_x\big(
(\mathfrak{g}_{\tilde{G}})_x\big)=(0)$ (heuristically, the action of $G$
on $\tilde{G}$ and on $TG$ is \lq totally real\rq).
Since $\mathrm{d}L$ is Hamiltonian for $\Omega_{\mathrm{can}}$, so is $\tilde{L}$ for $\Omega$, with moment
map
$$
\tilde{\Phi}:=\Phi\circ E^{-1}:\tilde{G}\rightarrow \mathfrak{g}^\vee,
$$
where $\Phi:TG\rightarrow \mathfrak{g}^\vee$ is the moment map for 
$\mathrm{d}L$.

For any $\boldsymbol{\xi}\in \mathfrak{g}$, 
$\boldsymbol{\beta}
\in \mathfrak{g}^\vee$ let us set
\begin{equation}
\label{eqn:defn di xik e betak}
\boldsymbol{\xi}_\kappa:=\mathcal{L}(\boldsymbol{\xi})\in \mathfrak{g}^\vee,\qquad\boldsymbol{\beta}^\kappa:=\mathcal{L}^{-1}(\boldsymbol{\beta}).
\end{equation}
Thus the annhilator subspace of $\boldsymbol{\beta}$ is the
orthocomplement of $\boldsymbol{\beta}^\kappa$ for $\kappa_e$:
$$
\boldsymbol{\beta}^0=(\boldsymbol{\beta}^\kappa)^{\perp_{\kappa_e}}
\subseteq \mathfrak{g}.
$$

Then $\Phi$ can be written equivalently as
$$
\Phi (g,\boldsymbol{\xi}\cdot g)=\boldsymbol{\xi}_\kappa,
\qquad \Phi (g,g\cdot \boldsymbol{\xi})
=\mathrm{Ad}_g(\boldsymbol{\xi})_\kappa.
$$
Hence
\begin{equation}
\label{eqn:moment map tilde G}
\tilde{\Phi}\big(g\,\exp^{\tilde{G}}(\imath\,\boldsymbol{\xi})\big)=
\mathrm{Ad}_g(\boldsymbol{\xi})_\kappa=\mathrm{Coad}_g(\boldsymbol{\xi}_\kappa).
\end{equation}
In other words, if we identify $\mathfrak{g}$ and $\mathfrak{g}^\vee$ 
by $\mathcal{L}$ and $TG$ with $G\times \mathfrak{g}\cong G\times \mathfrak{g}^\vee$ by right
translations, then $\Phi$ is the projection on the second factor.

Let now 
$$
\mathcal{C}(\mathcal{O}):=
\left\{r\,\boldsymbol{\beta}\,:\,r>0,\,\boldsymbol{\beta}
\in \mathcal{O}\right\}\subset \mathfrak{g}^\vee,
\quad
\mathcal{C}(\tilde{\mathcal{O}}):=
\left\{r\,\boldsymbol{\xi}\,:\,r>0,\,\boldsymbol{\xi}
\in \tilde{\mathcal{O}}\right\}\subseteq \mathfrak{g}
$$
be the cone over $\tilde{\mathcal{O}}=\mathcal{L}^{-1}(\mathcal{O})$. 
Then, recalling Definition \ref{defn:defn di XtauO},
\begin{equation}
\label{eqn:defn di tildeGO}
\tilde{\Phi}^{-1}\left(\mathcal{C}(\tilde{\mathcal{O}})\right)=
\gamma\left(G\times \mathcal{C}(\tilde{\mathcal{O}})\right)
=
\tilde{G}_{\mathcal{O}}
.
\end{equation}
Therefore, if $\Phi^\tau:=\left.\tilde{\Phi}\right|_{X^\tau}:X^\tau\rightarrow\mathfrak{g}^\vee$ then
\begin{equation}
\label{eqn:XtauO GtauO}
{\Phi^\tau}^{-1}\big(\mathcal{C}(\mathcal{O})\big)=X^\tau_{\mathcal{O}}
=\tilde{G}_{\mathcal{O}}\cap X^\tau.
\end{equation}
(see Definition \ref{defn:defn di XtauO}), 
and the intersection is transverse.

\subsection{The splitting of $T\tilde{G}$}
\label{sctn:splitting alpha}

The previous setting determines a natural splitting of the
tangent bundle $T\tilde{G}$ on $\tilde{G}\setminus G$.
Let $\upsilon_{\sqrt{\rho}}$ be the Hamiltonian vector field of
$\sqrt{\rho}$ with respect to $\Omega$.
Then 
\begin{equation}
\label{eqn:VT vector bundles}
\mathcal{T}:=\mathrm{span}_{\mathbb{R}}\left(\upsilon_{\sqrt{\rho}}  \right),\quad
\mathcal{V}:=\mathrm{span}_{\mathbb{C}}\left(\upsilon_{\sqrt{\rho}}  \right)
=\mathrm{span}_{\mathbb{R}}
\left(\upsilon_{\sqrt{\rho}} ,\,J\left(\upsilon_{\sqrt{\rho}}  \right)  \right)
\end{equation}
are a real and a complex line subbundle of $T\tilde{G}$, respectively; the symplectic orthocomplement of $\mathcal{V}$ is a complex subbundle 
$\mathcal{H}$ of $T\tilde{G}$. 
Thus $T\tilde{G}=\mathcal{V}\oplus \mathcal{H}$. 

The previous splitting may be related to the exact symplectic structure of
$(\tilde{G},\,\Omega)$, as follows.
Let $\lambda_{\mathrm{can}}$ denote the canonical 1-form on $TG$ (identified with
$T^\vee G$ by $\kappa$), 
so that $\Omega_{\mathrm{can}}:=-\mathrm{d}\lambda_{\mathrm{can}}$,
and set
$$
\lambda:=(E^{-1})^*(\lambda_{\mathrm{can}}),\quad
\alpha:=-\lambda.
$$
Thus $\Omega=-\mathrm{d}\lambda=\mathrm{d}\alpha$.
Then
$$
\ker(\alpha)=J(\mathcal{T})\oplus \mathcal{H},\quad
\mathcal{T}=\ker(\alpha)^{\perp_{\hat{\kappa}}},
$$
 on $\tilde{G}\setminus G$,
 where the suffix $\perp_{\hat{\kappa}}$ denotes the Riemannian orthocomplement with respect to $\hat{\kappa}$.

Furthermore, $\mathcal{H}$ and $\mathcal{T}$ are tangent to $X^\tau$
for every $\tau>0$; therefore they restrict to subbundles $\mathcal{T}^\tau$
and $\mathcal{H}^\tau$ of $TX^\tau$, and we have the $\hat{k}$-orthogonal direct sum of vector bundles
\begin{equation}
\label{eqn:decomp Xtau}
TX^\tau=\mathcal{T}^\tau\oplus \mathcal{H}^\tau,\quad
\mathcal{H}^\tau=\ker(\alpha^\tau)
\end{equation}
where $\alpha^\tau$ is the pull-back of $\alpha$ to $X^\tau$.
Furthermore, $\mathcal{H}^\tau$ is the maximal complex subbundle of
$TX^\tau$.

For any $\boldsymbol{\xi}\in \mathfrak{g}$, let 
$\boldsymbol{\xi}_{\tilde{G}}\in \mathfrak{X}(\tilde{G})$ denote the
induced vector field on $\tilde{G}$ under $\tilde{L}$.
Then $\boldsymbol{\xi}_{\tilde{G}}$ is Hamiltonian and 
is tangent to $X^\tau$ for every $\tau$.
In view of this and (\ref{eqn:decomp Xtau}), by the discussion in 
\S 3.2 of [Pao 24] on $\tilde{G}\setminus G$ we have
\begin{equation}
\label{eqn:xiG horiz R}
\boldsymbol{\xi}_{\tilde{G}}=\boldsymbol{\xi}_{\tilde{G}}^\sharp
-\tilde{\varphi}^{\boldsymbol{\xi}}\,\mathcal{R},\,
\text{where $\boldsymbol{\xi}_{\tilde{G}}^\sharp$ is tangent to $\mathcal{H}$ and $
\mathcal{R}:=-\frac{1}{\sqrt{\rho}}\,\upsilon_{\sqrt{\rho}}$.}
\end{equation}
Here $\tilde{\varphi}^{\boldsymbol{\xi}}:=
\langle \tilde{\Phi},\boldsymbol{\xi}\rangle$.
We shall call $\mathcal{R}\in \mathfrak{X}(\tilde{G}\setminus G)$ in (\ref{eqn:xiG horiz R}) the \textit{Reeb vector field} of $\tilde{G}$; its restriction 
$\mathcal{R}^\tau\in \mathfrak{X}(X^\tau)$ is the genuine Reeb vector field
of $(X^\tau,\alpha^\tau)$ for every $\tau>0$.
In particular, $\alpha (\mathcal{R})\equiv 1$ on $\tilde{G}\setminus G$.

It follows from (\ref{eqn:moment map tilde G}) and 
(\ref{eqn:xiG horiz R}) that if 
$x=g\,\exp^{\tilde{G}}(\imath\,\boldsymbol{\xi})\in \tilde{G}$
and $\boldsymbol{\eta}\in \mathfrak{g}$ then
$$
\boldsymbol{\eta}_{\tilde{G}}(x)=
\boldsymbol{\eta}_{\tilde{G}}(x)^\sharp
-\kappa_e\left(\boldsymbol{\eta}, \mathrm{Ad}_g(\boldsymbol{\xi}) \right)\,\mathcal{R}_x.
$$
Hence $\boldsymbol{\eta}_{\tilde{G}}(x)\in \mathcal{H}_x$
if and only if 
$\boldsymbol{\eta}\in \mathrm{Ad}_g(\boldsymbol{\xi})^{\perp_{\kappa_e}}$.
We conclude the following.

\begin{lem}
\label{lem:H_xdirect sum eval}
Suppose that $x=g\,\exp^{\tilde{G}}(\imath\,\boldsymbol{\xi})\in \tilde{G}$.
Then
$$
\mathcal{H}_x=\widehat{\mathrm{val}}_{x}\left(
\mathrm{Ad}_g(\boldsymbol{\xi})^{\perp_{\kappa_e}}\oplus
\imath\,\mathrm{Ad}_g(\boldsymbol{\xi})^{\perp_{\kappa_e}}  
\right)  .
$$
\end{lem}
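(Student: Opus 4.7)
The plan is to reduce the lemma to the identity established in the paragraph immediately preceding the statement, combined with the $J$-invariance of $\mathcal{H}_x$ and a dimension count. First, the computation just before the statement shows that for any $\boldsymbol{\eta}\in \mathfrak{g}$, the real vector $\mathrm{val}_x(\boldsymbol{\eta})=\boldsymbol{\eta}_{\tilde{G}}(x)$ lies in $\mathcal{H}_x$ exactly when $\boldsymbol{\eta}\in \mathrm{Ad}_g(\boldsymbol{\xi})^{\perp_{\kappa_e}}$. Hence
$$
\mathrm{val}_x\!\left(\mathrm{Ad}_g(\boldsymbol{\xi})^{\perp_{\kappa_e}}\right)\subseteq \mathcal{H}_x.
$$

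Next I would exploit the fact that $\mathcal{H}$ is by construction a \emph{complex} subbundle of $T\tilde{G}$, being the symplectic orthocomplement of the complex line bundle $\mathcal{V}$; in particular $\mathcal{H}_x$ is $J_x$-stable. Since $\widehat{\mathrm{val}}_x:\tilde{\mathfrak{g}}\rightarrow T_x\tilde{G}$ is the complex linear extension of $\mathrm{val}_x$, one has the identity $J_x\circ \mathrm{val}_x=\widehat{\mathrm{val}}_x\circ (\imath\,\cdot)$ on $\mathfrak{g}$. Applying $J_x$ to the previous inclusion therefore gives
$$
\widehat{\mathrm{val}}_x\!\left(\imath\,\mathrm{Ad}_g(\boldsymbol{\xi})^{\perp_{\kappa_e}}\right)\subseteq \mathcal{H}_x,
$$
so that
$$
\widehat{\mathrm{val}}_x\!\left(\mathrm{Ad}_g(\boldsymbol{\xi})^{\perp_{\kappa_e}}\oplus \imath\,\mathrm{Ad}_g(\boldsymbol{\xi})^{\perp_{\kappa_e}}\right)\subseteq \mathcal{H}_x.
$$

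To upgrade this containment to equality I would argue by real dimension. The lemma is meaningful only for $x\in \tilde{G}\setminus G$, so $\boldsymbol{\xi}\neq 0$, and $\mathrm{Ad}_g(\boldsymbol{\xi})^{\perp_{\kappa_e}}\subset \mathfrak{g}$ has real dimension $d-1$. Because $\widehat{\mathrm{val}}_x$ is a complex linear isomorphism and the two real subspaces $\mathrm{Ad}_g(\boldsymbol{\xi})^{\perp_{\kappa_e}}$ and $\imath\,\mathrm{Ad}_g(\boldsymbol{\xi})^{\perp_{\kappa_e}}$ of $\tilde{\mathfrak{g}}=\mathfrak{g}\oplus \imath\,\mathfrak{g}$ intersect trivially, the subspace on the left has real dimension $2(d-1)$. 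This matches $\dim_{\mathbb{R}}\mathcal{H}_x=2(d-1)$, since $\dim_{\mathbb{C}}T_x\tilde{G}=d$ and $\dim_{\mathbb{C}}\mathcal{V}_x=1$; equality of subspaces then follows.

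There is essentially no substantive obstacle, as the lemma is little more than a repackaging of the just-computed criterion in terms of the complex linear extension $\widehat{\mathrm{val}}_x$. The only points requiring attention are recording the $J$-invariance of $\mathcal{H}_x$ (implicit in its definition as a symplectic orthocomplement of a complex line bundle), and invoking the injectivity of $\widehat{\mathrm{val}}_x$ to validate the dimension count.
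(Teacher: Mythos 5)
Your argument is correct and is essentially the paper's proof: both start from the inclusion $\mathrm{val}_x\bigl(\mathrm{Ad}_g(\boldsymbol{\xi})^{\perp_{\kappa_e}}\bigr)\subseteq \mathcal{H}_x$ established just before the statement, invoke the fact that $\mathcal{H}_x$ is a complex ($J_x$-stable) subspace together with the $\mathbb{C}$-linearity of $\widehat{\mathrm{val}}_x$ (i.e.\ $J_x\circ\widehat{\mathrm{val}}_x=\widehat{\mathrm{val}}_x\circ M_\imath$), and conclude by a dimension count. The only cosmetic difference is the order in which you apply $J_x$ and the commutation relation, and that you count real rather than complex dimension.
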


\begin{proof}
We have seen that
$\widehat{\mathrm{val}}_{x}\left(
\mathrm{Ad}_g(\boldsymbol{\xi})^{\perp_{\kappa_e}}\right)
\subseteq \mathcal{H}_x$.
Since $\mathcal{H}_x\subseteq T_x\tilde{G}$ is a complex
vector subspace, 
$$
\widehat{\mathrm{val}}_{x}\left(
\mathrm{Ad}_g(\boldsymbol{\xi})^{\perp_{\kappa_e}}\right)\oplus
J_x\left(\widehat{\mathrm{val}}_{x}\left(
\mathrm{Ad}_g(\boldsymbol{\xi})^{\perp_{\kappa_e}}\right)
\right)
\subseteq \mathcal{H}_x.
$$
However $\widehat{\mathrm{val}}_{x}$ is $\mathbb{C}$-linear,
hence $\widehat{\mathrm{val}}_{x}\circ M_\imath=J_x\circ 
\widehat{\mathrm{val}}_{x}$ ($M_\imath$ denoting multiplication
by $\imath$). 
Thus
$$
\mathcal{H}_x\supseteq\widehat{\mathrm{val}}_{x}\left(
\mathrm{Ad}_g(\boldsymbol{\xi})^{\perp_{\kappa_e}}\oplus
\imath\,\mathrm{Ad}_g(\boldsymbol{\xi})^{\perp_{\kappa_e}}  
\right)  ;
$$
since both spaces have complex dimension $d-1$,
equality holds.
\end{proof}

\subsection{The normal bundle to $\tilde{G}_{\mathcal{O}}$ and 
$X^\tau_{\mathcal{O}}$}

\label{sctn:normal bundle}

We assume here that $\mathcal{O}$ is the coadjoint orbit
(not necessarily of maximal dimension) through a given 
$\boldsymbol{\lambda}\neq 0\in \mathfrak{g}^\vee$.

By (\ref{eqn:XtauO GtauO}), the normal bundle 
$N(X^\tau_{\mathcal{O}}/X^\tau)$ of
$X^\tau_{\mathcal{O}}$ in $X^\tau$
is the restriction to $X^\tau$ of the normal bundle 
$N(\tilde{G}_{\mathcal{O}}/\tilde{G})$ to $\tilde{G}_{\mathcal{O}}$ in $\tilde{G}$.
If the stabilizer $T_{\boldsymbol{\lambda}}\leqslant T$ of 
$\boldsymbol{\lambda}$ has dimension $r_{\boldsymbol{\lambda}}$, then 
$\mathcal{O}\cong G/T_{\boldsymbol{\lambda}}$ has dimension 
$d-r_{\boldsymbol{\lambda}}$; hence 
$\dim\big(\mathcal{C}(\mathcal{O})  \big)=d-r_{\boldsymbol{\lambda}}+1$.
By (\ref{eqn:defn di tildeGO}), 
$\dim(\tilde{G}_{\mathcal{O}})=2d-r_{\boldsymbol{\lambda}}+1$; thus $N(\tilde{G}_{\mathcal{O}}/\tilde{G})$ has rank $r_{\boldsymbol{\lambda}}-1$.
In the sequel we shall also set
$r_{\mathcal{O}}:=r_{\boldsymbol{\lambda}}$ if 
$\boldsymbol{\lambda}\in \mathcal{O}$.

\begin{defn}
\label{defn:tlambda'}
For $\boldsymbol{\lambda}\in \mathfrak{g}^\vee$, let us set set
$$
\mathfrak{t}_{\boldsymbol{\lambda}}':=
\mathfrak{t}_{\boldsymbol{\lambda}}\cap \boldsymbol{\lambda}^0=
\left\{ \boldsymbol{\eta}\in \mathfrak{g}\,:\,
\left[\boldsymbol{\eta},\boldsymbol{\lambda}^\kappa\right]=0,\,
\kappa_e\left(\boldsymbol{\eta},\boldsymbol{\lambda}^\kappa\right)=0  \right\}.
$$
We shall denote by $T'_{\boldsymbol{\lambda}}\leqslant 
T_{\boldsymbol{\lambda}}$ the connected subgroup with Lie algebra
$\mathfrak{t}_{\boldsymbol{\lambda}}'$. 

\end{defn}

\begin{rem}
If $\boldsymbol{\lambda}\in 
\mathcal{C}(\mathcal{O})$, where $\mathcal{O}$ is an integral orbit
\cite{gstb82}, then $T'_{\boldsymbol{\lambda}}$ is closed. In particular, 
this is the
case if $\boldsymbol{\lambda}\in \mathcal{D}^G$.
\end{rem}

\begin{rem}
\label{rem:tlambda'}
For $x\in \tilde{G}$, we shall abridge notation by
setting
$$T_x:=T_{\tilde{\Phi}(x)},\quad \mathfrak{t}_x:=
\mathfrak{t}_{\tilde{\Phi}(x)},
\quad
\mathfrak{t}_x':=\mathfrak{t}_{\tilde{\Phi}(x)}',\quad
T'_x:=T'_{\tilde{\Phi}(x)}.
$$
Hence if $x=g\cdot\exp^{\tilde{G}}(\imath\,\boldsymbol{\xi})$ then
$T_x=T_{\mathrm{Coad}_g(\boldsymbol{\xi}_\kappa)}=
T_{\mathrm{Ad}_g(\boldsymbol{\xi})}$, and so forth.
\end{rem}

Thus $\dim (\mathfrak{t}_x')=r_{\tilde{\Phi}(x)}-1$; since the
$G$ action on $\tilde{G}$ is free, for any $y\in \tilde{G}$ we also have
$\dim \left(
(\mathfrak{t}_x')_{\tilde{G}}(y)\right)=r_{\tilde{\Phi}(x)}-1$.
In the following, we shall abridge notation and simply
write 
\begin{equation}
\label{eqn:defn Np}
\mathfrak{t}_x'(x):=(\mathfrak{t}_x')_{\tilde{G}}(x).
\end{equation}
This does not define a vector bundle on $\tilde{G}$, given that 
$\mathfrak{t}_x'$ has
variable dimension.
On the other hand,
(\ref{eqn:defn Np}) does define a vector bundle on $\tilde{G}_{\mathcal{O}}$, with the following geometric significance: 
$\tilde{G}_{\mathcal{O}}$ is a coisotropic submanifold of $(\tilde{G},\Omega)$, and 
$\mathfrak{t}_x'(x)\subseteq T_x\tilde{G}_{\mathcal{O}}$
is the tangent space to its null fibration.

\begin{lem}
\label{lem:normal bundle GO}
For every $x\in \tilde{G}$, 
the following holds:
\begin{enumerate}
\item $\mathfrak{t}_x'(x)\oplus 
J_x\left( \mathfrak{t}_x'(x) \right)\subseteq \mathcal{H}_x$;
\item if $x\in \tilde{G}_{\mathcal{O}}$, then
$N(\tilde{G}_{\mathcal{O}}/\tilde{G})_x=
J_x\big(\mathfrak{t}_x'(x)\big)$.
\end{enumerate}

\end{lem}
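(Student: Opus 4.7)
The plan is to treat parts (1) and (2) separately, the first being essentially formal and the second resting on a moment-map computation.

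For part (1), I would write $x=g\,\exp^{\tilde{G}}(\imath\,\boldsymbol{\xi})$, so that (\ref{eqn:moment map tilde G}) gives $\tilde{\Phi}(x)^\kappa = \mathrm{Ad}_g(\boldsymbol{\xi})$; then by Definition \ref{defn:tlambda'} and Remark \ref{rem:tlambda'},
$$
\mathfrak{t}_x' = \mathfrak{t}_x\cap \mathrm{Ad}_g(\boldsymbol{\xi})^{\perp_{\kappa_e}} \subseteq \mathrm{Ad}_g(\boldsymbol{\xi})^{\perp_{\kappa_e}}.
$$
Since $\widehat{\mathrm{val}}_x$ is $\mathbb{C}$-linear, $J_x\circ \widehat{\mathrm{val}}_x = \widehat{\mathrm{val}}_x\circ M_\imath$, so that
$$
\mathfrak{t}_x'(x)\oplus J_x\bigl(\mathfrak{t}_x'(x)\bigr) = \widehat{\mathrm{val}}_x\bigl(\mathfrak{t}_x'\oplus \imath\,\mathfrak{t}_x'\bigr) \subseteq \widehat{\mathrm{val}}_x\bigl(\mathrm{Ad}_g(\boldsymbol{\xi})^{\perp_{\kappa_e}}\oplus\imath\,\mathrm{Ad}_g(\boldsymbol{\xi})^{\perp_{\kappa_e}}\bigr) = \mathcal{H}_x,
$$
where the last equality is Lemma \ref{lem:H_xdirect sum eval}. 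The directness of the sum on the left is guaranteed by $\mathfrak{g}_{\tilde{G}}(x)\cap J_x\mathfrak{g}_{\tilde{G}}(x)=(0)$, noted in \S\ref{sctn:ham str L}.

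For part (2), the dimensions already match: as $G$ acts freely on $\tilde{G}$,
$$\dim_{\mathbb{R}} J_x\bigl(\mathfrak{t}_x'(x)\bigr) = \dim_{\mathbb{R}}\mathfrak{t}_x' = r_{\mathcal{O}}-1 = \dim_{\mathbb{R}} N(\tilde{G}_{\mathcal{O}}/\tilde{G})_x,$$
so it will suffice to verify the inclusion $J_x(\mathfrak{t}_x'(x)) \subseteq T_x\tilde{G}_{\mathcal{O}}^{\perp_{\hat{\kappa}_x}}$. Combining the Kähler identity $\hat{\kappa}(Ja,b)=\Omega(Ja,Jb)=\Omega(a,b)$ with the moment map equation $\iota_{\boldsymbol{\eta}_{\tilde{G}}}\Omega=\mathrm{d}\tilde{\Phi}^{\boldsymbol{\eta}}$, for $\boldsymbol{\eta}\in\mathfrak{t}_x'$ and $v\in T_x\tilde{G}_{\mathcal{O}}$ I would obtain
$$
\hat{\kappa}_x\bigl(J_x\boldsymbol{\eta}_{\tilde{G}}(x),v\bigr) = \Omega_x\bigl(\boldsymbol{\eta}_{\tilde{G}}(x),v\bigr) = \langle \mathrm{d}\tilde{\Phi}_x(v),\boldsymbol{\eta}\rangle,
$$
and the task then reduces to showing that this pairing vanishes.

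To conclude I would verify that $\mathfrak{t}_x'$ annihilates $T_{\boldsymbol{\beta}}\mathcal{C}(\mathcal{O})$ for $\boldsymbol{\beta}:=\tilde{\Phi}(x)$, because $\mathrm{d}\tilde{\Phi}_x(v)\in T_{\boldsymbol{\beta}}\mathcal{C}(\mathcal{O})$ by definition of $\tilde{G}_{\mathcal{O}} = \tilde{\Phi}^{-1}(\mathcal{C}(\mathcal{O}))$. This tangent space splits as $\mathbb{R}\boldsymbol{\beta}+T_{\boldsymbol{\beta}}\mathcal{O}$: the condition $\boldsymbol{\eta}\in\boldsymbol{\beta}^0$ kills the radial summand, while $\boldsymbol{\eta}\in\mathfrak{t}_{\boldsymbol{\beta}}$ gives $\mathrm{coad}_{\boldsymbol{\eta}}\boldsymbol{\beta}=0$, so that for every $\boldsymbol{\zeta}\in\mathfrak{g}$,
$$
\langle \mathrm{coad}_{\boldsymbol{\zeta}}\boldsymbol{\beta},\boldsymbol{\eta}\rangle = -\langle \boldsymbol{\beta},[\boldsymbol{\zeta},\boldsymbol{\eta}]\rangle = \langle \boldsymbol{\beta},[\boldsymbol{\eta},\boldsymbol{\zeta}]\rangle = 0,
$$
killing the orbital summand. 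The inclusion—and hence, by dimensions, the equality—then follows. The main obstacle I anticipate is keeping the sign and direction conventions straight for the moment map and the coadjoint action; however, since the conclusion is that the pairing vanishes identically, potential sign errors are inessential, and the argument should close cleanly.
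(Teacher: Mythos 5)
Your proposal is correct and follows essentially the same route as the paper: part (1) reduces, via the identity $\mathfrak{t}_x' = \mathfrak{t}_x \cap \tilde{\Phi}(x)^0 \subseteq \mathrm{Ad}_g(\boldsymbol{\xi})^{\perp_{\kappa_e}}$, to the horizontality observation that underlies Lemma \ref{lem:H_xdirect sum eval} (the paper invokes (\ref{eqn:xiG horiz R}) directly rather than citing that lemma, but the content is identical), and part (2) is precisely the paper's dimension-count plus the moment-map computation $\hat{\kappa}_x(J_x\boldsymbol{\eta}_{\tilde{G}}(x),v)=\langle\mathrm{d}_x\tilde{\Phi}(v),\boldsymbol{\eta}\rangle$, with the radial and orbital summands of $T_{\tilde{\Phi}(x)}\mathcal{C}(\mathcal{O})$ killed by $\boldsymbol{\eta}\in\boldsymbol{\beta}^0$ and $\boldsymbol{\eta}\in\mathfrak{t}_{\boldsymbol{\beta}}$ respectively. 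No gap; your closing remark that any sign discrepancy is harmless because the pairing vanishes identically is exactly right.
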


\begin{proof}
To begin with, $\mathfrak{g}_{\tilde{G}}(x)
\cap J_x\left(\mathfrak{g}_{\tilde{G}}(x)\right)=(0)$
at any $x\in \tilde{G}$, because $\tilde{G}$ acts freely on itself
by left translations.

Suppose $\boldsymbol{\xi}\in \mathfrak{t}_x'$, and consider the
induced vector field (see (\ref{eqn:xiG horiz R}))
$\boldsymbol{\xi}_{\tilde{G}}=\boldsymbol{\xi}_{\tilde{G}}^\sharp
-\tilde{\varphi}^{\boldsymbol{\xi}}\,\mathcal{R}$.
As $\boldsymbol{\xi}\in \tilde{\Phi}(x)^0$, 
$\tilde{\varphi}^{\boldsymbol{\xi}}(x)=\langle\tilde{\Phi}(x),\boldsymbol{\xi}\rangle=0$;
hence $\boldsymbol{\xi}_{\tilde{G}}(x)\in \mathcal{H}_x$.
Thus $\mathfrak{t}_x'(x)\subseteq \mathcal{H}_x$ by definition.
Since $\mathcal{H}_x$ is a complex subspace of $T_p\tilde{G}$, the
proof of 1. is complete. 

Since both $N(\tilde{G}_{\mathcal{O}}/\tilde{G})_x$ and $J_p\big(\mathfrak{t}_x'(p)\big)$
have dimension $r_{\mathcal{O}}-1$, to prove 2. it suffices 
to show that $N(\tilde{G}_{\mathcal{O}}/\tilde{G})_x\supseteq J_x\big(\mathfrak{t}_x'(x)\big)$.
If $w\in T_x\tilde{G}_{\mathcal{O}}$, then 
$$
\mathrm{d}_x\tilde{\Phi}(w)\in T_{\tilde{\Phi}(x)}\mathcal{C}(\mathcal{O})=\mathbb{R}\,\tilde{\Phi}(x)\oplus T_{\tilde{\Phi}(x)}\mathcal{O}
\subseteq \mathfrak{g}^\vee.
$$
Hence there exist $a\in \mathbb{R}$ and $\boldsymbol{\eta}\in \mathfrak{g}$
such that 
\begin{equation}
\label{eqn:dPhi w}
\mathrm{d}_x\tilde{\Phi}(w)=
a\,\tilde{\Phi}(x)
+\mathrm{coad}_{\boldsymbol{\eta}}\big(\tilde{\Phi}(x)\big).
\end{equation}
Suppose $\boldsymbol{\xi}\in \mathfrak{t}_x'$.
Then
\begin{eqnarray*}
\hat{\kappa}_x\left(J_x\big(\boldsymbol{\xi}_{\tilde{G}}(x)\big),w\right)
&=&\Omega_x\left(\boldsymbol{\xi}_{\tilde{G}}(x),w\right)
=\mathrm{d}_x\tilde{\varphi}^{\boldsymbol{\xi}}(w)=
\left\langle\mathrm{d}_x\tilde{\Phi}(w),\boldsymbol{\xi}\right\rangle\\
&=&\left\langle a\,\tilde{\Phi}(x)
+\mathrm{coad}_{\boldsymbol{\eta}}\big(\tilde{\Phi}(x)\big),\boldsymbol{\xi}\right\rangle\\
&=&\kappa_e\left(a\,\tilde{\Phi}(x)^\kappa
+\left[\boldsymbol{\eta},\tilde{\Phi}(x)^\kappa\right],\boldsymbol{\xi}\right)\\
&=&a\,\kappa_e\left(\tilde{\Phi}(x)^\kappa,\boldsymbol{\xi}\right)
+\kappa_e\left(\boldsymbol{\eta},
\left[\tilde{\Phi}(x)^\kappa,\boldsymbol{\eta}\right]\right)=0.
\end{eqnarray*}
Hence $J_x\big(\boldsymbol{\xi}_{\tilde{G}}(x)\big)\in 
N(\tilde{G}_{\mathcal{O}}/\tilde{G})_x$.

\end{proof}

\begin{cor}
\label{cor:normal bundle Xtau}
The normal bundle 
of $X^\tau_{\mathcal{O}}$ in $X^\tau$ is given by
$$
N\left(X^\tau_{\mathcal{O}}/X^\tau\right)_x=
J_x\left( \mathfrak{t}_x'(x)  \right)
\qquad (x\in X^\tau_{\mathcal{O}}).
$$
\end{cor}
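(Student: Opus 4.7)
The plan is to deduce this directly from Lemma \ref{lem:normal bundle GO}, using transversality to match the ambient normal bundle with the restricted one.

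First, I would record the transversality statement $\tilde{G}_{\mathcal{O}} \pitchfork X^\tau$, already noted in the introduction and reflected in (\ref{eqn:XtauO GtauO}): at any $x \in X^\tau_{\mathcal{O}}$, one has $T_x\tilde{G} = T_xX^\tau + T_x\tilde{G}_{\mathcal{O}}$ and $T_xX^\tau_{\mathcal{O}} = T_xX^\tau \cap T_x\tilde{G}_{\mathcal{O}}$. A standard linear-algebra consequence of transversality is that the inclusion $T_xX^\tau \hookrightarrow T_x\tilde{G}$ descends to an isomorphism of quotient spaces
\begin{equation*}
N(X^\tau_{\mathcal{O}}/X^\tau)_x \;\xrightarrow{\sim}\; N(\tilde{G}_{\mathcal{O}}/\tilde{G})_x.
\end{equation*}

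Second, I would apply Lemma \ref{lem:normal bundle GO}(2) to identify $N(\tilde{G}_{\mathcal{O}}/\tilde{G})_x = J_x\bigl(\mathfrak{t}_x'(x)\bigr)$. Then I would invoke Lemma \ref{lem:normal bundle GO}(1), which gives $J_x\bigl(\mathfrak{t}_x'(x)\bigr) \subseteq \mathcal{H}_x$. Since $\mathcal{H}_x \subseteq T_xX^\tau$ by (\ref{eqn:decomp Xtau}), the ambient normal space $N(\tilde{G}_{\mathcal{O}}/\tilde{G})_x$ already sits inside $T_xX^\tau$. Consequently, the abstract isomorphism above is realised concretely as an equality of subspaces of $T_x\tilde{G}$, and the lift of $N(X^\tau_{\mathcal{O}}/X^\tau)_x$ into $T_xX^\tau$ coincides with $J_x\bigl(\mathfrak{t}_x'(x)\bigr)$.

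Finally, a dimension check closes the argument: both sides have real dimension $r_{\mathcal{O}}-1$, which matches the codimension of $X^\tau_{\mathcal{O}}$ in $X^\tau$ (equal to the codimension of $\tilde{G}_{\mathcal{O}}$ in $\tilde{G}$ by transversality). There is no real obstacle here; the corollary is essentially a bookkeeping statement, and the only point requiring care is the clean invocation of transversality to transfer the identification from $(\tilde{G}_{\mathcal{O}},\tilde{G})$ to $(X^\tau_{\mathcal{O}},X^\tau)$, which is guaranteed by part (1) of the preceding lemma ensuring the ambient normal directions lie tangent to $X^\tau$.
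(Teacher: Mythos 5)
Your proof is correct and follows essentially the same route as the paper: the paper treats this Corollary as an immediate consequence of Lemma \ref{lem:normal bundle GO}, having already recorded in the surrounding text that transversality of $\tilde{G}_{\mathcal{O}}$ and $X^\tau$ forces $N(\tilde{G}_{\mathcal{O}}/\tilde{G})_x = N(X^\tau_{\mathcal{O}}/X^\tau)_x$. Your explicit observation that $J_x\big(\mathfrak{t}_x'(x)\big)\subseteq\mathcal{H}_x\subseteq T_xX^\tau$ (via part (1) of the Lemma) is precisely the point that makes the abstract quotient isomorphism into an equality of concrete subspaces, and the dimension count is the same bookkeeping the paper relies on.
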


\subsection{The geodesic flow on $TG$ and $\tilde{G}$}

Let us consider the trivialization of $TG$ given by \textit{right}
translations: 
$$
\Psi:(g,\boldsymbol{\xi})\in 
G\times \mathfrak{g}\mapsto (g,\boldsymbol{\xi}\cdot g)\in TG.
$$
Passing to the differential, we obtain a diffeomorphism
$$
\mathrm{d}\Psi^{-1}:T(TG)\rightarrow TG\times T\mathfrak{g}\cong
TG\times (\mathfrak{g}\times \mathfrak{g}),
$$
Composing with 
$\Psi^{-1}\times \mathrm{id}_{\mathfrak{g}\times \mathfrak{g}}:
TG\times (\mathfrak{g}\times \mathfrak{g})\rightarrow
(G\times \mathfrak{g})\times (\mathfrak{g}\times \mathfrak{g})$
yields another diffeomorphism
$$
(\Psi^{-1}\times \mathrm{id}_{\mathfrak{g}\times \mathfrak{g}})\circ 
\mathrm{d}\Psi^{-1}:T(TG)\rightarrow 
(G\times \mathfrak{g})\times (\mathfrak{g}\times \mathfrak{g}).
$$
Explicitly, the element of $T(TG)$ corresponding to 
$\big((g,\boldsymbol{\eta}'),(\boldsymbol{\xi},\boldsymbol{\eta}'')\big)$
is 
$$
\left.\frac{\mathrm{d}}{\mathrm{d}t}
\left(e^{t\,\boldsymbol{\eta}'}\,g,\left(\boldsymbol{\xi}+t\,
\boldsymbol{\eta}''\right)\cdot e^{t\,\boldsymbol{\eta}'}\,g\right)
\right|_{t=0}.
$$

The geodesic in $G$ 
with initial condition $(g,\boldsymbol{\xi}\cdot g)\in TG$
is 
$$\gamma_{(g,\boldsymbol{\xi})}:t\in \mathbb{R}\mapsto 
e^{t\,\boldsymbol{\xi}}\cdot g\in G.
$$ 
Its velocity lift 
$\dot{\gamma}_{(g,\boldsymbol{\xi})}:\mathbb{R}\rightarrow TG$ 
is 
$$
\dot{\gamma}_{(g,\boldsymbol{\xi})}(t)=
\left(e^{t\,\boldsymbol{\xi}}\cdot g, \boldsymbol{\xi}\cdot 
\left( e^{t\,\boldsymbol{\xi}}\cdot g \right)   \right)\quad 
(t\in \mathbb{R}),
$$
and is the integral curve of the geodesic vector field 
$\Gamma^G\in \mathfrak{X}(TG)$ passing through 
$(g,\boldsymbol{\xi}\cdot g)$ for $t=0$. In particular,
$$
\Gamma^G(g,\boldsymbol{\xi}\cdot g)=
\left.\frac{\mathrm{d}}{\mathrm{d}t}
\dot{\gamma}_{(g,\boldsymbol{\xi})}(t)\right|_{t=0}.
$$
On the other hand, 
$\Upsilon_{(g,\boldsymbol{\xi})}:=\Psi^{-1}\circ \dot{\gamma}_{(g,\boldsymbol{\xi})}:
\mathbb{R}\rightarrow G\times \mathfrak{g}$ is given by
$$
\Upsilon_{(g,\boldsymbol{\xi})}(t)
=\left(e^{t\,\boldsymbol{\xi}}\cdot g, \boldsymbol{\xi}  \right).
$$
Hence, 
$$
\dot{\Upsilon}_{(g,\boldsymbol{\xi})}(0)=
\mathrm{d}\Psi^{-1}\left(\Gamma^G(g,\boldsymbol{\xi}\cdot g)\right)
=
\left(\left( g,
\boldsymbol{\xi}\cdot  g\right), (\boldsymbol{\xi},0)\right)
\in TG\times (\mathfrak{g}\times \mathfrak{g}).
$$
Therefore, 
\begin{equation}
\label{eqn:geod vf}
(\Psi^{-1}\times \mathrm{id})\circ 
\mathrm{d}\Psi^{-1}\left(\Gamma^G(g,\boldsymbol{\xi}\cdot g)\right)
=\left( (g,
\boldsymbol{\xi}), (\boldsymbol{\xi},0)\right)
\in (G\times \mathfrak{g})
\times (\mathfrak{g}\times \mathfrak{g}).
\end{equation}

Since $\Psi$ intertwines the action $\mathrm{d}L$  of $G$ on $TG$ 
with the action on $G\times \mathfrak{g}$
given by
$$
\hat{L}_h(g,\boldsymbol{\xi}):=\big(h\,g,
\mathrm{Ad}_h(\boldsymbol{\xi})\big),
$$
the composed diffeomorphism
$$
(\Psi^{-1}\times \mathrm{id})\circ 
\mathrm{d}\Psi^{-1}:T(TG)\rightarrow (G\times \mathfrak{g})
\times (\mathfrak{g}\times \mathfrak{g})
$$
is also $G$-equivariant.
Thus for any $\boldsymbol{\eta}\in \mathfrak{g}$ the induced vector
fields $\boldsymbol{\eta}_{TG}$ and 
$\boldsymbol{\eta}_{G\times \mathfrak{g}}$ are $\Psi$-correlated,
and abusing notation we may view the latter as a map
$G\times \mathfrak{g}\rightarrow (G\times \mathfrak{g})\times
(\mathfrak{g}\times \mathfrak{g})$. Hence
\begin{eqnarray}
\label{eqn:Psi corr eta}
\lefteqn{(\Psi^{-1}\times \mathrm{id})\circ 
\mathrm{d}\Psi^{-1}\left(\boldsymbol{\eta}_{TG}(g,\boldsymbol{\xi}\cdot g)\right)
=(\Psi^{-1}\times \mathrm{id})
\left(\boldsymbol{\eta}_{G\times \mathfrak{g}}(g,\boldsymbol{\xi}) \right)}\\
&=&(\Psi^{-1}\times \mathrm{id})\big((g,\boldsymbol{\eta}\cdot g),(\boldsymbol{\xi},
[\boldsymbol{\eta},\boldsymbol{\xi}] )   \big)
=\big((g,\boldsymbol{\eta}),(\boldsymbol{\xi},
[\boldsymbol{\eta},\boldsymbol{\xi}] )   \big)
\in (G\times \mathfrak{g})
\times (\mathfrak{g}\times \mathfrak{g}).\nonumber
\end{eqnarray}
In particular, with $\boldsymbol{\xi}=\boldsymbol{\eta}$ 
in view of (\ref{eqn:geod vf}) we obtain
\begin{eqnarray}
\label{eqn:eta xi geod}
(\Psi^{-1}\times \mathrm{id})\circ 
\mathrm{d}\Psi^{-1}\left(
\boldsymbol{\xi}_{TG}(g,\boldsymbol{\xi}\cdot g)\right)&=&\big((g,\boldsymbol{\xi}),(\boldsymbol{\xi},
0 )   \big)\\
&=&(\Psi^{-1}\times \mathrm{id})\circ 
\mathrm{d}\Psi^{-1}\left(\Gamma^G(g,\boldsymbol{\xi}\cdot g)\right).
\nonumber
\end{eqnarray}
We conclude the following.

\begin{lem}
\label{lem:geod vf and act}
For any $(g,\boldsymbol{\xi}\cdot g)\in TG$, we have
$$
\Gamma^G(g,\boldsymbol{\xi}\cdot g)=
\boldsymbol{\xi}_{TG}(g,\boldsymbol{\xi}\cdot g),
$$
where $\Gamma^G,\,\boldsymbol{\xi}_{TG}\in \mathfrak{X}(TG)$
are, respectively, the geodesic vector field and 
the vector field induced by $\boldsymbol{\xi}$ under
the action $\mathrm{d}L$. In particular, $\Gamma^G$ is tangent
to the $G$-orbits and the geodesic flow of any $p\in TG$ is
contained in its $G$-orbit.
\end{lem}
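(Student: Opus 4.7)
The proof follows almost immediately from the computations (\ref{eqn:geod vf}) and (\ref{eqn:eta xi geod}) the authors have already set up. My plan is to exploit the fact that $(\Psi^{-1}\times \mathrm{id})\circ \mathrm{d}\Psi^{-1}:T(TG)\rightarrow (G\times \mathfrak{g})\times (\mathfrak{g}\times \mathfrak{g})$ is a diffeomorphism, and to observe that the two vectors $\Gamma^G(g,\boldsymbol{\xi}\cdot g)$ and $\boldsymbol{\xi}_{TG}(g,\boldsymbol{\xi}\cdot g)$ are sent to the same target element, namely $\big((g,\boldsymbol{\xi}),(\boldsymbol{\xi},0)\big)$. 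Since the transformation is a bijection, the two vectors must coincide in $T_{(g,\boldsymbol{\xi}\cdot g)}(TG)$.

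Concretely, the first step is to recall (\ref{eqn:geod vf}), which computes the image of $\Gamma^G(g,\boldsymbol{\xi}\cdot g)$ by using the explicit form of the geodesic $\gamma_{(g,\boldsymbol{\xi})}(t)=e^{t\boldsymbol{\xi}}\cdot g$ and its velocity lift. The second step is to specialize (\ref{eqn:Psi corr eta}) by setting $\boldsymbol{\eta}=\boldsymbol{\xi}$, which gives the image of $\boldsymbol{\xi}_{TG}(g,\boldsymbol{\xi}\cdot g)$: the bracket $[\boldsymbol{\xi},\boldsymbol{\xi}]$ vanishes, leaving exactly $\big((g,\boldsymbol{\xi}),(\boldsymbol{\xi},0)\big)$. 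Comparing the two expressions and invoking injectivity of the diffeomorphism yields $\Gamma^G(g,\boldsymbol{\xi}\cdot g)=\boldsymbol{\xi}_{TG}(g,\boldsymbol{\xi}\cdot g)$.

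The last two assertions in the lemma are then immediate corollaries. Since $\boldsymbol{\xi}_{TG}$ is by definition the infinitesimal generator of the $G$-action $\mathrm{d}L$ associated to $\boldsymbol{\xi}\in \mathfrak{g}$, its value at any point is tangent to the $G$-orbit through that point; hence the same is true of $\Gamma^G$ at $(g,\boldsymbol{\xi}\cdot g)$. For the last statement, one observes that the integral curve of $\boldsymbol{\xi}_{TG}$ through a point $p\in TG$ is $t\mapsto \exp(t\boldsymbol{\xi})\cdot p$, which manifestly lies in the $G$-orbit of $p$; since this integral curve coincides with the geodesic flow of $p$ by the identification just proved, the geodesic trajectory is contained in $G\cdot p$.

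There is really no hard step here: the identification has effectively been done in the preceding display equations, and the lemma is the act of reading them off. If anything, the only subtlety is a purely bookkeeping one, namely making sure that the identification $\boldsymbol{\eta}_{TG}\leftrightarrow \boldsymbol{\eta}_{G\times \mathfrak{g}}$ under $\Psi$ respects base points, so that one may validly compare the two vectors in $T_{(g,\boldsymbol{\xi}\cdot g)}(TG)$ rather than at different fibers; this is automatic since $\Psi$ is $G$-equivariant with respect to the two actions on $TG$ and $G\times \mathfrak{g}$.
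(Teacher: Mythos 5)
Your proof is correct and follows exactly the same route as the paper: comparing the images of $\Gamma^G$ and $\boldsymbol{\xi}_{TG}$ under the diffeomorphism $(\Psi^{-1}\times\mathrm{id})\circ\mathrm{d}\Psi^{-1}$ via (\ref{eqn:geod vf}) and the $\boldsymbol{\eta}=\boldsymbol{\xi}$ specialization of (\ref{eqn:Psi corr eta}), then invoking injectivity. The remarks on orbit-tangency and containment of the geodesic trajectory in the $G$-orbit are also handled correctly (in particular the observation that the integral curve of $\boldsymbol{\xi}_{TG}$ through $(g,\boldsymbol{\xi}\cdot g)$ stays in the locus $\{(g',\boldsymbol{\xi}\cdot g')\}$, where the two vector fields continue to agree).
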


\begin{cor}
The $\mathbb{R}$-orbit of any non-zero $p\in TG$ under the homogeneous geodesic flow
is contained in its $G$-orbit under $\mathrm{d}L$.
\label{cor:geodesic flow TG}
\end{cor}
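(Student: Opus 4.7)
The plan is to reduce the Corollary to Lemma \ref{lem:geod vf and act} via the observation that the homogeneous geodesic flow and the ordinary geodesic flow have the same orbits on $TG\setminus (G\times\{0\})$, and then identify the geodesic orbits with orbits of one-parameter subgroups of $G$ acting by $\mathrm{d}L$.

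First, I would recall that the homogeneous geodesic flow on $TG\setminus (G\times\{0\})$ is by definition the Hamiltonian flow of $\|\cdot\|_\kappa$ with respect to $\Omega_{\mathrm{can}}$ (equivalently, the geodesic flow reparametrized to unit speed along each energy level). Since $\|\cdot\|_\kappa$ is a strictly positive smooth function of the kinetic energy $\tfrac{1}{2}\|\cdot\|_\kappa^2$ away from the zero section, the homogeneous Hamiltonian vector field is a nonvanishing multiple of the ordinary geodesic vector field $\Gamma^G$ on $TG\setminus(G\times\{0\})$. Consequently, the unparametrized $\mathbb{R}$-orbits of the two flows coincide, and it suffices to prove that the $\mathbb{R}$-orbit through any $p=(g,\boldsymbol{\xi}\cdot g)$ with $\boldsymbol{\xi}\neq 0$ of the ordinary geodesic flow is contained in the $G$-orbit $\mathrm{d}L(G)\cdot p$.

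Second, I would invoke Lemma \ref{lem:geod vf and act}, which identifies $\Gamma^G(g,\boldsymbol{\xi}\cdot g)$ with the action-induced vector field $\boldsymbol{\xi}_{TG}(g,\boldsymbol{\xi}\cdot g)$. Since the integral curve of $\boldsymbol{\xi}_{TG}$ through $p$ is $t\mapsto \mathrm{d}L_{e^{t\boldsymbol{\xi}}}(p)$, and this is also — by uniqueness of integral curves — the integral curve of $\Gamma^G$ through $p$, we conclude that the geodesic trajectory through $p$ is exactly
\[
t\longmapsto \mathrm{d}L_{e^{t\boldsymbol{\xi}}}(p)=\bigl(e^{t\boldsymbol{\xi}}g,\,\boldsymbol{\xi}\cdot e^{t\boldsymbol{\xi}}g\bigr),
\]
which lies in the $G$-orbit of $p$ under $\mathrm{d}L$. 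Combined with the first step, this proves the Corollary.

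The only subtlety, and hence the main (very mild) obstacle, is to make precise the sense in which the ordinary and homogeneous geodesic flows have the same orbits; once one fixes the convention that the homogeneous flow is generated by $\|\cdot\|_\kappa$ (or any other positive homogeneous function of the energy), the conclusion is immediate from the proportionality of the two vector fields on $TG\setminus(G\times\{0\})$ and from Lemma \ref{lem:geod vf and act}.
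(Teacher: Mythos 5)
Your proof is correct and follows essentially the same route as the paper: everything reduces to Lemma \ref{lem:geod vf and act}, which identifies $\Gamma^G$ at $(g,\boldsymbol{\xi}\cdot g)$ with the action vector field $\boldsymbol{\xi}_{TG}$, and then the geodesic trajectory $t\mapsto(e^{t\boldsymbol{\xi}}g,\boldsymbol{\xi}\cdot e^{t\boldsymbol{\xi}}g)=\mathrm{d}L_{e^{t\boldsymbol{\xi}}}(p)$ is visibly contained in the $G$-orbit. Your explicit remark that the homogeneous flow is a nonvanishing reparametrization of $\Gamma^G$ off the zero section is a small clarification the paper leaves implicit; otherwise the arguments coincide.
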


\begin{rem}
\label{rem:equiv form geod}
Lemma \ref{lem:geod vf and act} may be reformulated as saying that
$$
\Gamma^G(g,g\cdot \boldsymbol{\xi})=
\mathrm{Ad}_g(\boldsymbol{\xi})_{TG}(g,g\cdot \boldsymbol{\xi})
$$
\end{rem}

The $G$-equivariant diffemorphism $E:TG\rightarrow \tilde{G}$ in
(\ref{eqn:defn of gamma'}) intertwines the homogeneous geodesic
flow on $TG$ (which is the Hamiltonian flow of $\|\cdot \|_\kappa$
with respect to $\Omega_{\mathrm{can}}$) with the Hamiltonian
flow of $\sqrt{\rho}$ with respect to $\Omega$. 
Therefore, 
Corollary \ref{cor:geodesic flow TG} may be restated as follows.  

\begin{cor}
The $\mathbb{R}$-orbit of any $x\in \tilde{G}\setminus G$ under the 
flow of $\upsilon_{\sqrt{\rho}}$
is contained in its $G$-orbit under $\tilde{L}$.
\label{cor:geodesic flow TG 1}
\end{cor}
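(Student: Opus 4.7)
The plan is to transport Corollary \ref{cor:geodesic flow TG} from $TG$ to $\tilde{G}$ via the diffeomorphism $E$ of (\ref{eqn:defn of gamma'}). First I would record three facts already established in the preceding discussion: (a) $E$ is $G$-equivariant, intertwining the tangent lift $\mathrm{d}L$ with $\tilde{L}$, so it carries $\mathrm{d}L$-orbits bijectively onto $\tilde{L}$-orbits; (b) by the very definition $\Omega := (E^{-1})^*\Omega_{\mathrm{can}}$ (item~2 among the consequences of Theorem \ref{thm:szoke compgr}), $E$ is a symplectomorphism onto $(\tilde{G},\Omega)$; (c) again by definition, $\sqrt{\rho}\circ E=\|\cdot\|_\kappa$ on $TG\setminus\{\text{zero section}\}$.

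From (b) and (c), $E_*$ carries the Hamiltonian vector field $\upsilon_{\|\cdot\|_\kappa}$ on $TG\setminus\{\text{zero section}\}$ to $\upsilon_{\sqrt{\rho}}$ on $\tilde{G}\setminus G$, so their flows are $E$-conjugate. On the other hand, the geodesic vector field $\Gamma^G$ is the Hamiltonian vector field of $\tfrac{1}{2}\|\cdot\|_\kappa^2$, and because $\|\cdot\|_\kappa$ is constant along geodesics the vector fields $\upsilon_{\|\cdot\|_\kappa}$ and $\Gamma^G$ differ on each component of constant norm only by the positive scalar $1/\|\cdot\|_\kappa$. Hence $\upsilon_{\|\cdot\|_\kappa}$ and $\Gamma^G$ share the same unparametrized orbits, which are precisely the orbits of the homogeneous geodesic flow.

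Finally I would apply Corollary \ref{cor:geodesic flow TG}: the $\mathbb{R}$-orbit of any non-zero $p\in TG$ under the homogeneous geodesic flow sits inside its $\mathrm{d}L$-orbit. Pushing this inclusion forward through $E$, and using (a), yields that the $\upsilon_{\sqrt{\rho}}$-orbit through $x=E(p)\in\tilde{G}\setminus G$ is contained in its $\tilde{L}$-orbit, which is the desired conclusion. The argument is purely one of transport of structure; there is no substantive obstacle beyond the routine verification, recorded in the preceding paragraph, that the Hamiltonian flow of $\sqrt{\rho}$ really does correspond under $E$ to the homogeneous geodesic flow on $TG\setminus\{\text{zero section}\}$.
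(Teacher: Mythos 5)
Your proposal is correct and follows essentially the same route as the paper: the paper likewise transports Corollary \ref{cor:geodesic flow TG} through $E$, observing that $E$ is $G$-equivariant and intertwines the homogeneous geodesic flow (the Hamiltonian flow of $\|\cdot\|_\kappa$ for $\Omega_{\mathrm{can}}$) with the Hamiltonian flow of $\sqrt{\rho}$ for $\Omega$. Your items (b) and (c) just unpack that intertwining statement, and your remark that $\Gamma^G$ and $\upsilon_{\|\cdot\|_\kappa}$ share unparametrized orbits is the routine fact already implicit in the paper's passage from Lemma \ref{lem:geod vf and act} to Corollary \ref{cor:geodesic flow TG}.
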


Somewhat more precisely, let $\tilde{\Gamma}^G\in \mathfrak{X}(\tilde{G})$ be the vector field
on $\tilde{G}$ correlated to $\Gamma^G$ by $E$. We shall refer to $\tilde{\Gamma}^G$ as the
geodesic vector field on $\tilde{G}$.
By Remark \ref{rem:equiv form geod},
\begin{equation}
\label{eqn:geod vct fld tildeG}
\tilde{\Gamma}^G\left(g\cdot\exp^{\tilde{G}}(\imath\,\boldsymbol{\xi})  \right)
=\mathrm{Ad}_g(\boldsymbol{\xi})_{\tilde{G}}
\left(g\cdot\exp^{\tilde{G}}(\imath\,\boldsymbol{\xi})  \right),
\end{equation}
for all $g\in G$ and $\boldsymbol{\xi}\in \mathfrak{g}$.
Furthermore, $\tilde{\Gamma}^G$ is a multiple of $\upsilon_{\sqrt{\rho}}$, hence of $\mathcal{R}$ in (\ref{eqn:xiG horiz R}); in particular, it spans
$\mathcal{V}^\tau$.
It follows from (\ref{eqn:moment map tilde G}),
(\ref{eqn:xiG horiz R}),
and (\ref{eqn:geod vct fld tildeG}) that 
\begin{eqnarray}
\label{eqn:Adgxi val}
\lefteqn{ \mathrm{Ad}_g(\boldsymbol{\xi})_{\tilde{G}}\left(g\,\exp^{\tilde{G}}(\imath\,\boldsymbol{\xi})  \right)  }\\
&=&-\left\langle\tilde{\Phi} \left(g\cdot\exp^{\tilde{G}}(\imath\,\boldsymbol{\xi})  \right),
\mathrm{Ad}_g(\boldsymbol{\xi})_{\tilde{G}}  \right\rangle\,
\mathcal{R}\left(g\cdot\exp^{\tilde{G}}(\imath\,\boldsymbol{\xi})  \right)
\nonumber\\
&=&-\|\boldsymbol{\xi}\|^2_\kappa\,\mathcal{R}\left(g\cdot\exp^{\tilde{G}}(\imath\,\boldsymbol{\xi})  \right)\nonumber
\end{eqnarray}
(notice that $\|\boldsymbol{\xi}\|_\kappa=\tau$ if 
$g\cdot\exp^{\tilde{G}}(\imath\,\boldsymbol{\xi}) \in X^\tau$).

\subsection{Direct sum decompositions of $\mathfrak{g}$}

\label{sctn:direct sum dec g}

In \S \ref{sctn:normal bundle}, we have denoted by 
$T_{\boldsymbol{\lambda}}\leqslant G$ the stabilizer subgroup
of a given $\boldsymbol{\lambda}\in \mathfrak{g}^\vee$ under the coadjoint action, and
by $\mathfrak{t}_{\boldsymbol{\lambda}}\leqslant \mathfrak{g}$
its Lie algebra. With slight 
ambiguity, we shall also denote by $T_{\boldsymbol{\xi}}\leqslant G$
the stabilizer subgroup of a given $\boldsymbol{\xi}\in \mathfrak{g}$
under the adjoint action,
by $r_{\boldsymbol{\xi}}$ its dimension, 
and
by $\mathfrak{t}_{\boldsymbol{\xi}}\leqslant \mathfrak{g}$
its Lie algebra.

Similarly, we shall set
\begin{equation}
\label{eqn:t'xi}
\mathfrak{t}'_{\boldsymbol{\xi}}:=
\mathfrak{t}_{\boldsymbol{\xi}}\cap \boldsymbol{\xi}^{\perp_{\kappa_e}}=
\left\{\boldsymbol{\eta}\in \mathfrak{g}:
[\boldsymbol{\eta},\boldsymbol{\xi}]=0,\,\kappa_e(\boldsymbol{\eta},\boldsymbol{\xi})=0\right\}.
\end{equation}

For any $x\in \tilde{G}$, 
there are two natural choices of a Euclidean product on $\mathfrak{g}$.
One is the given $\kappa_e$, and the other is the pull-back
$\sigma_x:=\mathrm{val}_x^t(\hat{\kappa}_x)$ of
$\hat{\kappa}_x$ under the injective linear map given by
vector field evaluation at $x$: 
$$
\mathrm{val}_x:\boldsymbol{\eta}\in \mathfrak{g}\mapsto
\boldsymbol{\eta}_{\tilde{G}}(x)\in T_x\tilde{G}.
$$
If $x=g\cdot\exp^{\tilde{G}}(\imath\,\boldsymbol{\xi})$, where
$g\in G$ and $\boldsymbol{\xi}\in \mathfrak{g}$, then 
$\sigma_x$ depends only on $\boldsymbol{\xi}$, 
since $G$ acts isometrically on $\tilde{G}$.
In particular, $\kappa_e=\sigma_x$ when $x\in G$ (i.e., when
$\boldsymbol{\xi}=0$).

Given $\boldsymbol{\eta}\neq 0\in \mathfrak{g}$, the orthocomplements
$\boldsymbol{\eta}^{\perp_{\kappa_e}},\,\boldsymbol{\eta}^{\perp_{\sigma_x}}
\subset \mathfrak{g}$ are two \textit{a priori} distinct hyperplanes.

\begin{lem}
\label{lem:same orthocomplement}
If $g\in G$, $\boldsymbol{\xi}\in \mathfrak{g}\setminus \{0\}$ and $x=g\cdot\exp^{\tilde{G}}(\imath\,\boldsymbol{\xi})$, then
we have the following equality of Euclidean orthocomplements:
$$
\mathrm{Ad}_g(\boldsymbol{\xi})^{\perp_{\kappa_e}}
=\mathrm{Ad}_g(\boldsymbol{\xi})^{\perp_{\sigma_x}}.
$$
\end{lem}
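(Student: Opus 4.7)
The plan is to show that the two linear functionals on $\mathfrak{g}$ given by
$$
\boldsymbol{\eta}\mapsto \sigma_x\big(\boldsymbol{\eta},\mathrm{Ad}_g(\boldsymbol{\xi})\big)\quad\text{and}\quad \boldsymbol{\eta}\mapsto \kappa_e\big(\boldsymbol{\eta},\mathrm{Ad}_g(\boldsymbol{\xi})\big)
$$
differ by a non-zero scalar, so that their kernels (both hyperplanes in $\mathfrak{g}$, since $\mathrm{Ad}_g(\boldsymbol{\xi})\neq 0$) coincide.

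First, unravel the definition of $\sigma_x$ as a pull-back under $\mathrm{val}_x$:
$$
\sigma_x\big(\boldsymbol{\eta},\mathrm{Ad}_g(\boldsymbol{\xi})\big)=\hat{\kappa}_x\Big(\boldsymbol{\eta}_{\tilde{G}}(x),\,\mathrm{Ad}_g(\boldsymbol{\xi})_{\tilde{G}}(x)\Big).
$$
The second slot is handled by (\ref{eqn:Adgxi val}), which identifies it as a multiple of the Reeb vector:
$$
\mathrm{Ad}_g(\boldsymbol{\xi})_{\tilde{G}}(x)=-\|\boldsymbol{\xi}\|_\kappa^2\,\mathcal{R}(x).
$$

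Next, apply the splitting (\ref{eqn:xiG horiz R}) to the first slot, writing $\boldsymbol{\eta}_{\tilde{G}}(x)=\boldsymbol{\eta}_{\tilde{G}}^\sharp(x)-\tilde{\varphi}^{\boldsymbol{\eta}}(x)\,\mathcal{R}(x)$ with $\boldsymbol{\eta}_{\tilde{G}}^\sharp(x)\in\mathcal{H}_x$. Since $\mathcal{H}_x$ is $\hat{\kappa}_x$-orthogonal to $\mathrm{span}_{\mathbb{R}}(\mathcal{R}(x))$ by (\ref{eqn:decomp Xtau}), we obtain
$$
\hat{\kappa}_x\big(\boldsymbol{\eta}_{\tilde{G}}(x),\mathcal{R}(x)\big)=-\tilde{\varphi}^{\boldsymbol{\eta}}(x)\,\|\mathcal{R}(x)\|_{\hat{\kappa}_x}^2.
$$
By (\ref{eqn:moment map tilde G}) and the definition of $\tilde{\varphi}^{\boldsymbol{\eta}}$, we have $\tilde{\varphi}^{\boldsymbol{\eta}}(x)=\langle\tilde{\Phi}(x),\boldsymbol{\eta}\rangle=\kappa_e\big(\mathrm{Ad}_g(\boldsymbol{\xi}),\boldsymbol{\eta}\big)$.

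Combining these equalities yields
$$
\sigma_x\big(\boldsymbol{\eta},\mathrm{Ad}_g(\boldsymbol{\xi})\big)=\|\boldsymbol{\xi}\|_\kappa^2\,\|\mathcal{R}(x)\|_{\hat{\kappa}_x}^2\,\kappa_e\big(\boldsymbol{\eta},\mathrm{Ad}_g(\boldsymbol{\xi})\big).
$$
Since $\boldsymbol{\xi}\neq 0$ forces $x\in\tilde{G}\setminus G$, where $\mathcal{R}$ is defined and nowhere zero, the scalar is strictly positive. Thus the two functionals have the same kernel, which gives the claimed equality of hyperplanes. There is no serious obstacle: the only subtlety is the non-vanishing of the Reeb field at $x$, which is guaranteed by $\boldsymbol{\xi}\neq 0$ and is intrinsic to the Grauert tube geometry recalled in \S\ref{sctn:splitting alpha}.
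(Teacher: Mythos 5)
Your proof is correct and rests on the same ingredients as the paper's: the splitting (\ref{eqn:xiG horiz R}), the identity (\ref{eqn:Adgxi val}) identifying $\mathrm{Ad}_g(\boldsymbol{\xi})_{\tilde{G}}(x)$ as a Reeb multiple, and the $\hat{\kappa}$-orthogonality of $\mathcal{T}^\tau$ and $\mathcal{H}^\tau$. The only stylistic difference is that you explicitly compute the positive scalar $\|\boldsymbol{\xi}\|_\kappa^2\,\|\mathcal{R}(x)\|_{\hat{\kappa}_x}^2$ relating the two linear functionals, whereas the paper argues more qualitatively by observing that $\boldsymbol{\eta}_{\tilde{G}}(x)\in\mathcal{H}_x$ for $\boldsymbol{\eta}\in\mathrm{Ad}_g(\boldsymbol{\xi})^{\perp_{\kappa_e}}$ while $\mathrm{Ad}_g(\boldsymbol{\xi})_{\tilde{G}}(x)\in\mathcal{T}^\tau_x$, giving the inclusion $\mathrm{Ad}_g(\boldsymbol{\xi})^{\perp_{\kappa_e}}\subseteq\mathrm{Ad}_g(\boldsymbol{\xi})^{\perp_{\sigma_x}}$ and then equality by dimension count.
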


\begin{proof}
Let us set $\tau:=\|\boldsymbol{\xi}\|_{\kappa_e}$,
so that $x\in X^\tau$. By (\ref{eqn:xiG horiz R}), for any
$\boldsymbol{\eta}\in \mathfrak{g}$
\begin{equation}
\label{eqn:xiG horiz Rx}
\boldsymbol{\eta}_{\tilde{G}}(x)=\boldsymbol{\eta}_{\tilde{G}}^\sharp(x)
-\kappa_e\big(\mathrm{Ad}_g(\boldsymbol{\xi}),\boldsymbol{\eta}  \big)
\,\mathcal{R}(x).
\end{equation}
Thus one the one hand
$\mathrm{Ad}_g(\boldsymbol{\xi})_{\tilde{G}}\left(x  \right)\in \mathcal{T}^\tau_x$ by (\ref{eqn:Adgxi val}), and on the other
$\boldsymbol{\eta}_{\tilde{G}}(x)=\boldsymbol{\eta}_{\tilde{G}}^\sharp(x)
\in \mathcal{H}^\tau_x$ for any $\boldsymbol{\eta}\in 
\mathrm{Ad}_g(\boldsymbol{\xi})^{\perp_{\kappa_e}}$.
Since (\ref{eqn:decomp Xtau}) is an orthogonal direct sum, the claim follows.

\end{proof}

By its definition in  (\ref{eqn:t'xi}), $\mathfrak{t}'_{\mathrm{Ad}_g(\boldsymbol{\xi})}\subset
\mathrm{Ad}_g(\boldsymbol{\xi})^{\perp_{\kappa_e}}$.

\begin{defn}
\label{defn:sfrakx}
If $x=g\cdot\exp^{\tilde{G}}(\imath\,\boldsymbol{\xi})\in \tilde{G}\setminus G$, so that
$\tilde{\Phi}(x)=\mathrm{Coad}_g(\boldsymbol{\xi}_\kappa)$,
\begin{enumerate}
\item we shall set 
$\mathfrak{r}_x:=
(\mathfrak{t}_x)^{\perp_{\kappa_e}}$,
so that (recalling Remark \ref{rem:tlambda'})
$$
\mathrm{Ad}_g(\boldsymbol{\xi})^{\perp_{\kappa_e}}
=\mathfrak{t}'_{\mathrm{Ad}_g(\boldsymbol{\xi})}
\oplus_{\kappa_e}
\mathfrak{r}_x,
$$
that is, the direct sum is $\kappa_e$-orthogonal;

\item we shall denote by
$\mathfrak{s}_x\subseteq \mathrm{Ad}_g(\boldsymbol{\xi})^{\perp_{\kappa_e}}$
the vector subspace such that
$$
\mathrm{Ad}_g(\boldsymbol{\xi})^{\perp_{\kappa_e}}
=\mathfrak{t}'_{\mathrm{Ad}_g(\boldsymbol{\xi})}
\oplus_{\sigma_x}
\mathfrak{s}_x,
$$
that is, the direct sum is $\sigma_x$-orthogonal.

\end{enumerate}

\end{defn}

Clearly,
$$
\dim \mathfrak{s}_x=\dim (\mathrm{Ad}_g(\boldsymbol{\xi})^{\perp_{\kappa_e}})-
\dim(\mathfrak{t}'_{\mathrm{Ad}_g(\boldsymbol{\xi})})=
(d-1)-(r_{\boldsymbol{\xi}}-1)=
d-r_{\boldsymbol{\xi}}.
$$

\begin{lem}
\label{lem:s_x orthocomplement}
Under the previous assumptions, 
$$
\mathfrak{s}_x(x)\oplus J_x\big( \mathfrak{s}_x(x) \big)
=\left[ \mathfrak{t}_x(x)\oplus J_x\big(
\mathfrak{t}_x(x)\big)\right]^{\perp_{\hat{\kappa}_x}}\cap \mathcal{H}_x.
$$
\end{lem}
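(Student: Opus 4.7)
The plan is to prove the equality by establishing the containment of the LHS in $\mathcal{H}_x$ and in the required orthocomplement, and then matching real dimensions. Throughout, set $\boldsymbol{\xi}':=\mathrm{Ad}_g(\boldsymbol{\xi})$ for brevity; recall that $\boldsymbol{\xi}'\in\mathfrak{t}_x$ and hence $\mathfrak{t}'_{\boldsymbol{\xi}'}=\mathfrak{t}_x'$ (cf.\ Remark \ref{rem:tlambda'}).

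\textbf{Containment in $\mathcal{H}_x$.} By Definition \ref{defn:sfrakx}, $\mathfrak{s}_x\subseteq(\boldsymbol{\xi}')^{\perp_{\kappa_e}}$, so Lemma \ref{lem:H_xdirect sum eval} yields $\mathfrak{s}_x(x)\subseteq\mathcal{H}_x$ and therefore $J_x(\mathfrak{s}_x(x))\subseteq\mathcal{H}_x$. The sum $\mathfrak{s}_x(x)+J_x(\mathfrak{s}_x(x))$ is direct because $(\mathfrak{g}_{\tilde{G}})_x\cap J_x\big((\mathfrak{g}_{\tilde{G}})_x\big)=(0)$ (see \S\ref{sctn:ham str L}). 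Hence the LHS is a complex subspace of $\mathcal{H}_x$ of complex dimension $d-r_{\boldsymbol{\xi}}$.

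\textbf{Dimension of the RHS.} Using the $\kappa_e$-orthogonal splitting $\mathfrak{t}_x=\mathbb{R}\,\boldsymbol{\xi}'\oplus_{\kappa_e}\mathfrak{t}'_{\boldsymbol{\xi}'}$ (which makes sense since $\boldsymbol{\xi}'\in\mathfrak{t}_x$) together with (\ref{eqn:Adgxi val}), which shows that $\boldsymbol{\xi}'_{\tilde{G}}(x)$ is a nonzero real multiple of $\mathcal{R}(x)$, one obtains
$$
\mathfrak{t}_x(x)\oplus J_x(\mathfrak{t}_x(x))=\mathcal{V}_x\oplus\big[\mathfrak{t}'_{\boldsymbol{\xi}'}(x)\oplus J_x(\mathfrak{t}'_{\boldsymbol{\xi}'}(x))\big],
$$
where the second summand lies in $\mathcal{H}_x$ as in the previous step. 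Since $T_x\tilde{G}=\mathcal{V}_x\oplus_{\hat{\kappa}_x}\mathcal{H}_x$, the RHS coincides with the $\hat{\kappa}_x$-orthocomplement inside $\mathcal{H}_x$ of $\mathfrak{t}'_{\boldsymbol{\xi}'}(x)\oplus J_x(\mathfrak{t}'_{\boldsymbol{\xi}'}(x))$, which has complex dimension $(d-1)-(r_{\boldsymbol{\xi}}-1)=d-r_{\boldsymbol{\xi}}$. Dimensions thus match.

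\textbf{Orthogonality.} It remains to show $\mathfrak{s}_x(x)\oplus J_x(\mathfrak{s}_x(x))\perp_{\hat{\kappa}_x}\mathfrak{t}'_{\boldsymbol{\xi}'}(x)\oplus J_x(\mathfrak{t}'_{\boldsymbol{\xi}'}(x))$. Since both subspaces are $J_x$-invariant and $\hat{\kappa}_x$ is a K\"ahler metric, this reduces to two identities, for arbitrary $\boldsymbol{\eta}\in\mathfrak{s}_x$, $\boldsymbol{\zeta}\in\mathfrak{t}'_{\boldsymbol{\xi}'}$: (a) $\hat{\kappa}_x\big(\boldsymbol{\eta}_{\tilde{G}}(x),\boldsymbol{\zeta}_{\tilde{G}}(x)\big)=0$, and (b) $\Omega_x\big(\boldsymbol{\eta}_{\tilde{G}}(x),\boldsymbol{\zeta}_{\tilde{G}}(x)\big)=0$. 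Identity (a) is just $\sigma_x(\boldsymbol{\eta},\boldsymbol{\zeta})=0$, which is the very definition of $\mathfrak{s}_x$. For (b), the Hamiltonian-moment identity gives, up to sign,
$$
\Omega_x\big(\boldsymbol{\eta}_{\tilde{G}}(x),\boldsymbol{\zeta}_{\tilde{G}}(x)\big)=\big\langle\tilde{\Phi}(x),[\boldsymbol{\eta},\boldsymbol{\zeta}]\big\rangle=\kappa_e\big(\boldsymbol{\xi}',[\boldsymbol{\eta},\boldsymbol{\zeta}]\big)=\kappa_e\big(\boldsymbol{\eta},[\boldsymbol{\zeta},\boldsymbol{\xi}']\big)
$$
by $\mathrm{ad}$-invariance of $\kappa_e$, and the last expression vanishes since $\boldsymbol{\zeta}\in\mathfrak{t}_{\boldsymbol{\xi}'}$. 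Combined with the dimension match, this forces the asserted equality.

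The routine parts are the dimension count and containment; the substantive point is (b), which threads the moment-map identity (\ref{eqn:moment map tilde G}) together with the $\mathrm{ad}$-invariance of $\kappa_e$ at the right step. Everything else is a bookkeeping exercise using the splittings of \S\ref{sctn:splitting alpha} and \S\ref{sctn:direct sum dec g}.
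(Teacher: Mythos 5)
Your proof is correct, and it diverges from the paper's argument in one substantive step. Both proofs reduce the claim, via the splitting $\mathfrak{t}_x(x)=\mathrm{span}\big(\mathrm{Ad}_g(\boldsymbol{\xi})_{X^\tau}(x)\big)\oplus\mathfrak{t}'_x(x)$ and the observation $\mathfrak{t}'_x(x)=\mathfrak{t}_x(x)\cap\mathcal{H}_x$, to showing $\mathfrak{s}_x(x)\perp_{\hat\kappa_x}\mathfrak{t}'_x(x)\oplus J_x\big(\mathfrak{t}'_x(x)\big)$ plus a dimension count; both also get $\mathfrak{s}_x(x)\perp_{\hat\kappa_x}\mathfrak{t}'_x(x)$ for free from the $\sigma_x$-orthogonality in Definition \ref{defn:sfrakx}. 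The difference is in the orthogonality $\mathfrak{s}_x(x)\perp_{\hat\kappa_x}J_x\big(\mathfrak{t}'_x(x)\big)$. The paper derives it geometrically: $X^\tau_{\mathcal{O}}$ is $G$-invariant, so $\mathfrak{s}_x(x)\subseteq\mathfrak{g}_{X^\tau}(x)\subseteq T_xX^\tau_{\mathcal{O}}$, while Corollary \ref{cor:normal bundle Xtau} identifies $J_x\big(\mathfrak{t}'_x(x)\big)$ as the normal space; orthogonality of tangent and normal is then automatic. You instead inline the underlying Hamiltonian computation: $\Omega_x\big(\boldsymbol{\eta}_{\tilde G}(x),\boldsymbol{\zeta}_{\tilde G}(x)\big)=\big\langle\tilde\Phi(x),[\boldsymbol{\eta},\boldsymbol{\zeta}]\big\rangle=\kappa_e\big(\boldsymbol{\eta},[\boldsymbol{\zeta},\boldsymbol{\xi}']\big)=0$ since $\boldsymbol{\zeta}$ commutes with $\boldsymbol{\xi}'$, which (via the K\"ahler identity) is precisely $\hat\kappa_x\big(\boldsymbol{\eta}_{\tilde G}(x),J_x\boldsymbol{\zeta}_{\tilde G}(x)\big)=0$. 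This is in effect the same bilinear identity that drives the proof of Lemma \ref{lem:normal bundle GO}, so you are unpacking the normal-bundle lemma rather than citing it. Your route is more self-contained and makes the symplectic mechanism explicit; the paper's is shorter because it reuses the already-established normal-bundle characterization. Both are sound.
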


\begin{proof}
We have 
$\mathfrak{t}_x(x)=\mathrm{span}\big(
\mathrm{Ad}_g(
\boldsymbol{\xi})_{X^\tau}(x)
\big)
\oplus \mathfrak{t}'_x(x)$. 
Therefore, by Definition \ref{defn:sfrakx} 
$\mathfrak{s}_x(x)\subseteq \mathfrak{t}_x(x)^{\perp_{\hat{\kappa}_x}}$.
On the other hand, since $X^\tau_{\mathcal{O}}$
is $G$-invariant $\mathfrak{g}_{X^\tau}(x)\subseteq
T_xX^\tau_{\mathcal{O}}$.
Hence, $\mathfrak{s}_x(x)\subseteq \mathfrak{g}_{X^\tau}(x)
\subseteq 
J_x\big( \mathfrak{t}'_x(x) \big)^{\perp_{\hat{\kappa}_x}}$
by Corollary \ref{cor:normal bundle Xtau}.
Thus
\begin{eqnarray}
\label{eq:sxJxsx}
\lefteqn{\mathfrak{s}_x(x)\subseteq \left[\mathfrak{t}'_x(x)
\oplus J_x\big( \mathfrak{t}'_x(x) \big)\right]
^{\perp_{\hat{\kappa}_x}}}\nonumber\\
&\Rightarrow&
\mathfrak{s}_x(x)\oplus 
J_x\big(\mathfrak{s}_x(x)   \big)\subseteq \left[\mathfrak{t}'_x(x)
\oplus J_x\big( \mathfrak{t}'_x(x) \big)\right]
^{\perp_{\hat{\kappa}_x}}\cap \mathcal{H}_x.
\end{eqnarray}

On the other hand, 
$\mathfrak{s}_x(x)\oplus 
J_x\big(\mathfrak{s}_x(x)   \big)$ has real dimension
$2\,\dim \mathrm{s}_x=2\,(d-r_{\boldsymbol{\xi}})$,
and 
$\left[\mathfrak{t}'_x(x)
\oplus J_x\big( \mathfrak{t}'_x(x) \big)\right]
^{\perp_{\hat{\kappa}_x}}\cap \mathcal{H}_x$ has 
real dimension 
$2d-2-2\,(r_{\boldsymbol{\xi}}-1)=
2\,(d-r_{\boldsymbol{\xi}})$.
Hence equality holds in (\ref{eq:sxJxsx}), and the
claim follows since $\mathfrak{t}'_x(x)=
\mathfrak{t}_x(x)\cap \mathcal{H}_x$.
\end{proof}

Thus we have direct sum decompositions
\begin{eqnarray}
\label{eqn:direct sums T_x}
\mathcal{H}_x&=&\left[\mathfrak{t}'_x(x)
\oplus_{\hat{\kappa}_x} J_x\big( \mathfrak{t}'_x(x) \big)\right]\oplus 
_{\hat{\kappa}_x}
\left[\mathfrak{s}_x(x)\oplus J_x\big(\mathfrak{s}_x(x)  \big)   \right]\\
&=&(\tilde{\Phi}(x)^{\perp_{\kappa_e}}_{\tilde{G}})(x)
\oplus J_x\left((\tilde{\Phi}(x)^{\perp_{\kappa_e}}_{\tilde{G}})(x)\right),\nonumber\\
T_x\tilde{G}_{\mathcal{O}}\cap \mathcal{H}_x&=&
\mathfrak{t}'_x(x)
\oplus_{\hat{\kappa}_x}\left[\mathfrak{s}_x(x)\oplus J_x\big(\mathfrak{s}_x(x)  \big)   \right],\nonumber
\end{eqnarray}
where $\oplus_{\hat{\kappa}_x}$ denotes
$\hat{\kappa}_x$-orthogonality. 

\begin{rem}
\label{rem:NSsigma}
In terms of (\ref{eqn:dsdecomp HSN}), we have
$$
\mathcal{N}_x=\mathfrak{t}'_x(x)
\oplus_{\hat{\kappa}_x} J_x\big( \mathfrak{t}'_x(x) \big)=
\widetilde{\mathfrak{t}'}_x(x),
\qquad
\mathcal{S}_x=\mathfrak{s}_x(x)\oplus J_x\big(\mathfrak{s}_x(x)\big)=
\widetilde{\mathfrak{s}}_x(x),
$$
where $\widetilde{\mathfrak{t}'}_x$ and $\widetilde{\mathfrak{s}}_x$
are the complexifications of $\mathfrak{t}'_x$ and $\mathfrak{s}_x$,
respectively.
\end{rem}

\subsection{$\Pi^\tau$ and $P^\tau$}
\label{scnt:szego parametrix}

Let us dwell on the key properties of the operators $\Pi^\tau$ and $P^\tau$ discussed in the Introduction that will be used in the following arguments.

\subsubsection{The description by Fourier integral operators}

The Szeg\"{o} projector $\Pi^\tau:L^2(X^\tau)\rightarrow H(X^\tau)$
is a Fourier integral operator with complex phase of positive type, whose
microlocal structure has been precisely determined in \cite{bs}
(see also \cite{feff}, \cite{ms}, \cite{bg}, \cite{sz}). 
In particular,
up to a smoothing term its Schartz kernel 
$\Pi^\tau\in \mathcal{D}'(X^\tau\times X^\tau)$
can be written in the form
\begin{equation}
\label{eqn:szego ker}
\Pi^\tau(x,y)\sim \int_0^{+\infty}e^{\imath\,u\,\psi^\tau(x,y)}\,
s^\tau(x,y,u)\,\mathrm{d}u,
\end{equation}
where the phase $\psi^\tau$ has non-negative imaginary part and the
amplitude $s^\tau$ is a semiclassical symbol admitting
an asymptotic expansion of the form
\begin{equation}
\label{eqn:amplitude pi}
s^\tau(x,y,u)\sim \sum_{j\ge 0}u^{d-1-j}\,s_j^\tau(x,y).
\end{equation}

The phase $\psi^\tau$ is only determined up to a function vanishing to infinite order along the diagonal. In particular, for any
$x\in X^\tau$ we have 
\begin{equation}
\label{eqn:diagonal diff psi}
\mathrm{d}_{(x,x)}\psi^\tau=(\alpha^\tau_x,-\alpha^\tau_x).
\end{equation}
Furthermore, 
the imaginary part of $\psi^\tau$ may be assumed to satisfy
\begin{equation}
\label{eqn:bd dist psitau xy}
\Im \left(\psi^\tau(x,y)\right)\ge C^\tau\,\mathrm{dist}_{X^\tau}
(x,y)^2\quad (x,y\in X^\tau)
\end{equation}
for an appropriate constant $C^\tau>0$.

Let $\Sigma^\tau$ be the closed symplectic cone in $TX^\tau\setminus (0)$
($(0)$ denoting the zero section)
sprayed by the contact form $\alpha^\tau$ 
(see \S \ref{sctn:splitting alpha}):
$$
\Sigma^\tau:=\left\{\left(x,r\,\alpha_x^\tau\right)\,:\,x\in X^\tau,
\,r>0\right\}.
$$
The wave front of $\Pi^\tau$ is the anti-diagonal of $\Sigma^\tau$:
\begin{eqnarray}
\mathrm{WF}(\Pi^\tau)=
{\Sigma^\tau}^\sharp:=\left\{\left(x,r\,\alpha_x^\tau,x,-r\,\alpha_x^\tau
\right)\,:\,x\in X^\tau,\,r>0\right\}
\end{eqnarray}
Therefore, the singular support is the diagonal in $X^\tau\times X^\tau$:
$$
\mathrm{S.S.}(\Pi^\tau)=\mathrm{diag}(X^\tau\times X^\tau)
$$
(see \cite{bs}
for discussion and derivation of these properties)

As proved by Zelditch, the operator $P^\tau$
in (\ref{eqn:Poisson-wave})
is also a Fourier integral operator with complex
phase, associated to the same canonical relation as $\Pi^\tau$ - hence with the same wave front and singular support - 
and of
degree $-(d-1)/2$ (see e.g. \cite{z12}, \cite{z14} and \cite{z20}, and the discussions in \cite{cr1} and \cite{cr2}).
In fact $P^\tau=U_\mathbb{C}(2\,\imath\,\tau)$,
where for general $t\in \mathbb{R}$ one denotes
$ U_\mathbb{C}(t+2\,\imath\,\tau):\mathcal{O}(X^\tau)
\rightarrow \mathcal{O}(X^\tau)$ the complexified
Poisson-wave operators obtained (roughly speaking) by
holomorphically extending the operator kernel of the
wave operator. 
Zelditch showed that complexified 
Poisson-wave operators may described
in terms of dynamical Toeplitz operators;
one can then conclude that
$P^\tau$ is a Toeplitz operator of degree
$-(d-1)/2$, given by the compression 
by $\Pi^\tau$ of a pseudodifferential
operator $Q^\tau$ of the same degree. 
To leading order, $Q^\tau$
has the form $(\pi\,\tau)^{\frac{d-1}{2}} \cdot D_{\sqrt{\rho}}^{-(d-1)/2}$
(\textit{cfr.} the discussion in \S 5 of \cite{gp24}).
It follows that
the operator kernel of $P^\tau$ has the form
\begin{equation}
\label{eqn:Ptau ker}
P^\tau(x,y)\sim \int_0^{+\infty}e^{\imath\,u\,\psi^\tau(x,y)}\,
q^\tau(x,y,u)\,\mathrm{d}u,
\end{equation}
where 
\begin{equation}
\label{eqn:asy exp Ptau}
q^\tau(x,y,u)\sim \sum_{j\ge 0}u^{\frac{d-1}{2}-j}\,q_j(x,y),
\quad\text{where}\quad
q^\tau_0(x,y)=\pi^{\frac{d-1}{2}}\,s^\tau_0(x,y).
\end{equation}

In the present real-analytic
setting, 
there is a natural choice for $\psi^\tau$.
Namely, $\phi^\tau:=\rho-\tau^2:\tilde{G}\rightarrow \mathbb{R}$ is 
real-analytic defining funtion for $X^\tau$; let $\tilde{\phi}^\tau$
denote the holomorphic extension of $\phi^\tau$ to
$\tilde{G}\times \overline{\tilde{G}}$ (defined at least on 
a neighbourhood of the diagonal). Then we can set
\begin{equation}
\label{eqn:psi in terms of phi}
\psi^\tau:=\frac{1}{\imath}\left.\tilde{\phi}^\tau
\right|_{X^\tau\times X^\tau}.
\end{equation}

\subsubsection{Normal Heisenberg local coordinates on $X^\tau$}
\label{scnt:heis coord}

To perform computations involving $\Pi^\tau$ and $P^\tau$,
it is convenient to work in specific systems 
of local coordinates on $X^\tau$.
For any $\tau>0$ and $x\in X^\tau$, there exist systems of holomorphic
local coordinates for $\tilde{G}$ centered at $x$ in which the defining
equation of $X^\tau$ has a certain canonical form;
in turn, these induce local coordinates on $X^\tau$ centered at
$x$, in which (given (\ref{eqn:psi in terms of phi}))
$\psi^\tau$ admits a relatively explicit form.

These coordinates are called \textit{normal Heisenberg local coordinates}
and will be referred to as NHLC's in the following
(here we shall follow the notation and conventions in [P 2024],
and refer the
reader to \cite{fs1}, \cite{fs2}, \cite{cr1}, \cite{cr2}, \cite{sz}
for general background and discussion). 

We shall refer as needed to \S 3 of [Pao 2024], in particular Propositions 34 and 48. Furthermore, in NHLC's centered at $x$, $s_0^\tau$ in
(\ref{eqn:amplitude pi}) satisfies
\begin{equation}
\label{eqn:valore s0 xx}
s_0^\tau(x,x)=\frac{\tau}{(2\,\pi)^d}
\end{equation}
(Theorem 51 of [Pao 2024]).
NHLC's centered at $x\in X^\tau$ will be 
denoted in additive notation, in the form
$x+(\theta,\mathbf{v})$, where $(\theta,\mathbf{v})\in \mathbb{R}\times \mathbb{R}^{2d-2}$ belong to a small ball centered at the origin.

\section{Proofs of the Theorems}

Let us premise the following general remark.
Let $\mathcal{Q}^\tau_{\boldsymbol{\lambda}}:L^2(X^\tau)
\rightarrow L^2(X^\tau)_{\boldsymbol{\lambda}}$ be the orthogonal
projector, so that $\Pi^\tau_{\boldsymbol{\lambda}}=\mathcal{Q}^\tau_{\boldsymbol{\lambda}}
\circ \Pi^\tau$, and let $\mu^\tau$ 
be as in (\ref{eqn:mutau defn}).
Then the relation between the Schwartz kernels of
$\Pi^\tau$ and 
$\Pi^\tau_{\boldsymbol{\lambda}}$ is given
by 
\begin{equation}
\label{eqn:equivariant szego}
\Pi^\tau_{\boldsymbol{\lambda}}(x,y):=
d_{\boldsymbol{\lambda}}\cdot 
\int_G \overline{\chi_{\boldsymbol{\lambda}}(g)}\,
\Pi^\tau
\left(\mu^\tau_{g^{-1}}(x),y   \right)\,
\mathrm{d}^HV_G(g),
\end{equation}
where $\mathrm{d}^H V_G$ denotes the Haar measure on
$G$. A similar relation holds between $P^\tau$
and $P^\tau_{\boldsymbol{\lambda}}$.

\begin{notn}
We shall generally denote the coupling of elements in $\boldsymbol{\beta}\in\mathfrak{g}^\vee$ and $\boldsymbol{\xi}\in \mathfrak{g}$ by
$\boldsymbol{\beta}(\boldsymbol{\xi})$, and of elements
$\boldsymbol{\lambda}\in \mathfrak{t}^\vee$ and 
$\boldsymbol{\vartheta}\in \mathfrak{t}$ by
by $\langle\boldsymbol{\lambda},\boldsymbol{\vartheta}\rangle$.
\end{notn}

\subsection{Proof of Theorem \ref{thm:rapid decay compact}}
\label{sctn:rapid decay compact}

\begin{proof}
Given $(x,y)\in X^\tau\times X^\tau$, set
$$
\delta(x,y)
:=
\mathrm{dist}_{X^\tau}(x,G\cdot y)+
\mathrm{dist}_{X^\tau}
\left(x,X^\tau_{\mathcal{O}}\right).
$$
If $K\Subset X^\tau\times X^\tau\setminus \mathcal{Z}^\tau_{\mathcal{O}}$, then 
$$
\delta_K:=\min\{\delta (x,y)\,:\,
(x,y)\in K\}>0.
$$
Thus it suffices to show that for any given 
$\delta_0>0$ the conclusion of the Theorem
holds uniformly on the locus where $\delta (x,y)\ge \delta_0$.

Set  
$K_{\delta_0}:=\{(x,y)\in X^\tau\times X^\tau\,:\,\delta
(x,y)\ge \delta_0\}$
and
$$
U_1:=\left\{(x,y)\in X^\tau\times X^\tau\,:\,
\mathrm{dist}_{X^\tau}
(x,G\cdot y)> \frac{1}{2}\,\delta_0
\right\},
$$
$$
U_2:=\left\{(x,y)\in X^\tau\times X^\tau\,:\,
\mathrm{dist}_{X^\tau}\left(x,X^\tau_{\mathcal{O}}\right)> 
\frac{1}{2}\,\delta_0
\right\};
$$
then $\{U_1,U_2\}$ is an open cover of $K_{\delta_0}$,
and we need only prove that the statement holds
uniformly on each $U_j$.

Let us consider $U_1$. Since the singular support of $\Pi^\tau$ is the diagonal in $X^\tau\times X^\tau$, 
$f_{x,y}(g):=\Pi^\tau
\big(\mu^\tau_{g^{-1}}(x),y   \big)$ is a uniformly
smooth function of $g\in G$ for $(x,y)\in U_1$.
By a theorem of Sugiura \cite{sug}, 
its group-theoretic Fourier transform 
$\mathcal{F}(f_{x,y})$
is 
uniformly rapidly decreasing
as a matrix-valued function on $\mathcal{D}^G$; therefore, so is its trace.

Let $\rho_{\boldsymbol{\lambda}}(g)\in 
\mathrm{GL}(V_{\boldsymbol{\lambda}})$ be the automorphism
associated to $g\in G$ in the representation 
$V_{\boldsymbol{\lambda}}$; assuming the choice of
an orthonormal basis, we may view it as a unitary matrix.
In view of (\ref{eqn:equivariant szego})
\begin{eqnarray}
\label{eqn:equivariant szego klambda}
\Pi^\tau_{k\,\boldsymbol{\lambda}}(x,y)&=&
d_{k\boldsymbol{\lambda}}\,
\int_G \overline{\chi_{k\,\boldsymbol{\lambda}}(g)}\,
\Pi^\tau
\left(\mu^\tau_{g^{-1}}(x),y   \right)\,
\mathrm{d}^HV_G(g)\nonumber\\
&=&d_{k\boldsymbol{\lambda}}\,
\mathrm{trace}\left(
\int_G \rho_{k\,\boldsymbol{\lambda}}(g^{-1})\,
\Pi^\tau
\left(\mu^\tau_{g^{-1}}(x),y   \right)\,
\mathrm{d}^HV_G(g)\right)\nonumber\\
&=&
d_{k\boldsymbol{\lambda}}\,
\mathrm{trace}\left(\mathcal{F}(f_{x,y})(k\,
\boldsymbol{\lambda})\right)=O\left(k^{-\infty}\right).
\end{eqnarray}

Let us consider $U_2$. 
Let $L_{\boldsymbol{\lambda}}=(\boldsymbol{\lambda},\,2\,\boldsymbol{\lambda},
\,\ldots)$ denote the \textit{ladder} of irreducible representations
generated by $\boldsymbol{\lambda}$ \cite{gstb82}. 
Let us consider the corresponding subspaces
$$
L^2(X^\tau)_{L_{\boldsymbol{\lambda}}}:=\bigoplus_{k=1}^{+\infty} 
L^2(X^\tau)_{k\boldsymbol{\lambda}},\qquad
H(X^\tau)_{L_{\boldsymbol{\lambda}}}:=\bigoplus_{k=1}^{+\infty} 
H(X^\tau)_{k\boldsymbol{\lambda}}
$$
with orthogonal projectors
$$
\mathcal{Q}^\tau_{L_{\boldsymbol{\lambda}}}:
L^2(X^\tau)\rightarrow L^2(X^\tau)_{L_{\boldsymbol{\lambda}}},
\qquad
\Pi^\tau_{L_{\boldsymbol{\lambda}}}=\mathcal{Q}^\tau_{L_{\boldsymbol{\lambda}}}
\circ \Pi^\tau :
L^2(X^\tau)\rightarrow H^2(X^\tau)_{L_{\boldsymbol{\lambda}}}.
$$
Therefore the wave front set 
$\mathrm{WF}(\Pi^\tau_{L_{\boldsymbol{\lambda}}})\subseteq \left(T^*X^\tau\setminus (0)\right)
\times \left(T^*X^\tau\setminus (0)\right)$
of $\Pi^\tau_{L_{\boldsymbol{\lambda}}}$
is obtained by composing those of 
$\mathcal{Q}^\tau_{L_{\boldsymbol{\lambda}}}$
and $\Pi^\tau$.
More precisely, the arguments in \S 3.1.2 of \cite{gp19} (based on the theory of
\cite{gstb82}) imply that
\begin{eqnarray}
\label{eqn:wave front piL}
\mathrm{WF}(\Pi^\tau_{L_{\boldsymbol{\lambda}}})\subseteq 
\left\{\left((x,r\,\alpha^\tau_x),(y,\,-r\,\alpha^\tau_y)\right) \,:\, 
x\in X^\tau_{\mathcal{O}},\,r>0,\,y\in T'_{\tilde{\Phi}(x)}\cdot x
 \right\}
\end{eqnarray}
(recall Definition \ref{defn:tlambda'}).
Hence, $\Pi^\tau_{L_{\boldsymbol{\lambda}}}$
is $\mathcal{C}^\infty$ on $\overline{U}_2$; given that $U_2$ is $G$-invariant, 
$h_{x,y}(g):=\Pi^\tau_{L_{\boldsymbol{\lambda}}}\left(\mu^\tau_{g^{-1}}(x),y   \right)$
is $\mathcal{C}^\infty$ on $G$ when $(x,y)\in \overline{U}_2$.
On the other hand, 
$\Pi^\tau_{k\,\boldsymbol{\lambda}}=\mathcal{Q}^\tau_{k\,\boldsymbol{\lambda}}
\circ \Pi^\tau_{L_{\boldsymbol{\lambda}}}$, since we can first project 
onto the full ladder of isotypical components and then to 
the $k\,\boldsymbol{\lambda}$-th. Hence, again by Sugiura's Theorem
(\cite{sug}),
\begin{eqnarray}
\label{eqn:equivariant ladder szego klambda}
\Pi^\tau_{k\,\boldsymbol{\lambda}}(x,y)&=&
d_{k\boldsymbol{\lambda}}\,
\int_G \overline{\chi_{k\,\boldsymbol{\lambda}}(g)}\,
\Pi^\tau_{L_{\boldsymbol{\lambda}}}
\left(\mu^\tau_{g^{-1}}(x),y   \right)\,
\mathrm{d}^HV_G(g)\nonumber\\
&=&d_{k\boldsymbol{\lambda}}\,
\mathrm{trace}\left(
\int_G \rho_{k\,\boldsymbol{\lambda}}(g^{-1})\,
\Pi^\tau_{L_{\boldsymbol{\lambda}}}
\left(\mu^\tau_{g^{-1}}(x),y   \right)\,
\mathrm{d}^HV_G(g)\right)\nonumber\\
&=&
d_{k\boldsymbol{\lambda}}\,
\mathrm{trace}\left(\mathcal{F}(h_{x,y})(k\,
\boldsymbol{\lambda})\right)=O\left(k^{-\infty}\right).
\end{eqnarray}

Since $\Pi^\tau$ and $P^\tau$ are Fourier integral
operators associated to the same canonical relation
(\S \ref{scnt:szego parametrix}), 
the previous arguments apply \textit{verbatim}
to the case of $P^\tau$.

\end{proof}

\subsection{Proof of Theorem \ref{thm:rapid decay orbit}}

Before attacking the proof, we need to lay down some 
basic facts from Lie theory
(standard references are 
\cite{btd}, \cite{var har}, \cite{var lie}).

There exists a a finite covering of Lie groups 
$\mathfrak{p}:
\hat{G}\rightarrow G$, where $\hat{G}$ is isomorphic
to the direct product of a compact torus and 
a simply connected compact semisimple Lie group;
in particular, $\hat{G}$ is also compact (\S V.8 of
\cite{btd}). 
The Lie algebras
of $\hat{G}$ and $G$ 
may be identified by means of
the differential 
$\mathrm{d}_{\hat{e}}\mathfrak{p}:
T_{\hat{e}}\hat{G}\rightarrow T_eG$ ($\hat{e}\in \hat{G}$ and
$e\in G$ are the identity elements).

The inverse image $\hat{T}=\mathfrak{p}^{-1}(T)$
is a maximal torus of $\hat{G}$, and $\mathfrak{p}$
restricts to a covering map $\mathfrak{q}:
\hat{T}\rightarrow T$ of the same degree as
$\mathfrak{p}$ (this is clear of the connected component
through the identity $\hat{T}_0\subseteq \hat{T}$;
on the other hand, $\ker(\mathfrak{p})$ is normal and
discrete, hence central, so that $\ker(\mathfrak{p})
\subset \hat{T}_0=\hat{T}$).
We can similarly identify the Lie algebras 
of $\hat{T}$ and $T$ by 
$\mathrm{d}_{\hat{e}}\mathfrak{q}$.

Let $\hat{\Lambda},\,\Lambda\subset \mathfrak{t}$ be the
lattices of $\hat{T}$ and $T$, respectively.
Then $\hat{\Lambda}\subseteq \Lambda$, and 
$\hat{\Lambda}$ has index in $\Lambda$ equal to the degree
of $\mathfrak{p}$. Thus
$\mathcal{D}^{\hat{G}}\supseteq \mathcal{D}^G$.
Any representation of $G$ pulls back to 
a representation of $\hat{G}$, and the isotypical
decomposition is the same over $G$ and $\hat{G}$,
in view of the previous inclusion.

Let $R_+\subset R$ denote the collection of positive roots
of the pair $(\mathfrak{g},\mathfrak{t})
=(\hat{\mathfrak{g}},\hat{\mathfrak{t}})$,
and set
\begin{equation}
\label{eqn:half sum prs}
\boldsymbol{\delta}:=\frac{1}{2}\,
\sum_{\boldsymbol{\alpha}\in R_+}
\boldsymbol{\alpha}.
\end{equation}
Then $\boldsymbol{\delta}\in \mathcal{D}^{\hat{G}}$.

We shall view the elements of 
Weyl group as 
group automorphisms of $\hat{T}$ (or, depending
on the context of $T$, $\mathfrak{t}$,...). 
If $f:\hat{T}\rightarrow \mathbb{C}$, we shall
set $f^w:=f\circ w^{-1}$.
\begin{defn}
\label{defn:alternator}
Given $f:\hat{T}\rightarrow \mathbb{C}$, 
let us set
$$
\mathrm{Alt}_W(f):=\sum_{w\in W}\,(-1)^w\,
f^w,
$$
where $(-1)^w:=\det(w)\in \{\pm 1\}$ 
is the determinant of $w\in W$ 
as a linear automorphism of $\mathfrak{t}$. 
\end{defn}

\begin{rem}
\label{rem:alternanza}
For any $f:\hat{T}\rightarrow \mathbb{C}$ and
$w\in W$, 
$$
\mathrm{Alt}_W(f^w)=(-1)^w\,\mathrm{Alt}_W(f).
$$
\end{rem}

\begin{defn}
Any $\boldsymbol{\eta}\in \mathcal{D}^{\hat{G}}$
corresponds to a group character
$E_{\boldsymbol{\eta}}:\hat{T}\rightarrow S^1$.
Let us define $A_{\boldsymbol{\eta}},\,\Delta:
\hat{T}\rightarrow \mathbb{C}$ by setting
$$
A_{\boldsymbol{\eta}}:=\mathrm{Alt}_W(E_{\boldsymbol{\eta}})
=\sum_{w\in W}\,(-1)^w\,E_{w(\boldsymbol{\eta})},
\quad \Delta:=A_{\boldsymbol{\delta}}
$$
\end{defn}

Then $\Delta\neq 0$ on the dense open subset
$\hat{T}'\subseteq \tilde{T}$ of regular elements; $\hat{T}'$
is the inverse image in $\hat{T}$ of the open dense subset
$T'\subseteq T$ of regular elements of $T$.

\subsubsection{The Weyl character formula}
\label{sctn:weyl char for}

Suppose 
$\boldsymbol{\lambda}\in \mathcal{D}^G\subseteq \mathcal{D}^{\hat{G}}$.
The Weyl character formula describes the restriction 
$\left.\chi_{\boldsymbol{\lambda}}\right|_T:T\rightarrow \mathbb{C}$ 
in terms of alternating sums
of irreducible characters on $T$ or, more precisely, on the
covering $\hat{T}$.
Set $\boldsymbol{\nu}_{\boldsymbol{\lambda}}:=
\boldsymbol{\lambda}+\boldsymbol{\delta}\in \mathcal{D}^{\hat{G}}$.
Thus we can consider the ratio
\begin{equation}
\label{eqn:ratio weyl char}
\frac{A_{\boldsymbol{\nu}_{\boldsymbol{\lambda}}}}{\Delta}:
\hat{T}'\rightarrow \mathbb{C}.
\end{equation}

\begin{thm}
(Weyl character formula)
The function in (\ref{eqn:ratio weyl char}) admits a unique continuous extension to $\hat{T}$; furthermore, this extended function is the pull-back of $\left.\chi_{\boldsymbol{\lambda}}\right|_T$ by the covering map
$\hat{T}\rightarrow T$.
\end{thm}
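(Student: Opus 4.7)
The approach is the classical one for the Weyl character formula: reduce everything to the maximal torus using that $\chi_{\boldsymbol{\lambda}}$ is a class function, establish an alternation property, and pin down the result via the Weyl integration formula combined with the orthogonality of irreducible characters. First I would pull $\chi_{\boldsymbol{\lambda}}$ back to the covering $\hat{G}$ and work on $\hat{T}$; since every element of $\hat{G}$ is conjugate to one in $\hat{T}$, the pulled-back class function $\tilde{\chi}_{\boldsymbol{\lambda}} := \chi_{\boldsymbol{\lambda}}\circ \mathfrak{p}$ is determined by its restriction to $\hat{T}$. The product $\Delta \cdot \tilde{\chi}_{\boldsymbol{\lambda}}\vert_{\hat{T}}$ is continuous on all of $\hat{T}$, and since $\tilde{\chi}_{\boldsymbol{\lambda}}$ is $W$-invariant while $\Delta^w = (-1)^w \Delta$ by Remark~\ref{rem:alternanza}, the product is $W$-antiinvariant.

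Next I would expand $\Delta \cdot \tilde{\chi}_{\boldsymbol{\lambda}}$ in the orthonormal basis $\{E_{\boldsymbol{\mu}}\}$ of $L^2(\hat{T})$. Antiinvariance forces the coefficient of $E_{\boldsymbol{\mu}}$ to vanish whenever $\boldsymbol{\mu}$ has nontrivial $W$-stabilizer, and grouping the surviving terms into $W$-orbits yields a finite expansion
$$
\Delta \cdot \tilde{\chi}_{\boldsymbol{\lambda}} = \sum_{\boldsymbol{\mu}} c_{\boldsymbol{\mu}}\,A_{\boldsymbol{\mu}},
$$
with $\boldsymbol{\mu}$ ranging over strictly dominant weights and $c_{\boldsymbol{\mu}} \in \mathbb{Z}$ (since both $\tilde{\chi}_{\boldsymbol{\lambda}}$ and $\Delta$ have integer coefficients). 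Applying the Weyl integration formula $\int_{\hat{G}} |\tilde{\chi}_{\boldsymbol{\lambda}}|^2\,dg = |W|^{-1} \int_{\hat{T}} |\Delta \tilde{\chi}_{\boldsymbol{\lambda}}|^2\,dt$, together with $\|A_{\boldsymbol{\mu}}\|^2_{L^2(\hat{T})} = |W|$ (which uses that the $W$-orbit of a strictly dominant $\boldsymbol{\mu}$ has size $|W|$) and the normalization $\int_{\hat{G}} |\tilde{\chi}_{\boldsymbol{\lambda}}|^2\,dg = 1$, yields $\sum |c_{\boldsymbol{\mu}}|^2 = 1$; integrality then forces exactly one $c_{\boldsymbol{\mu}_0} = \pm 1$, with all others zero.

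Finally I would identify $\boldsymbol{\mu}_0$ via highest weights. The representation $V_{\boldsymbol{\lambda}}$ has $\boldsymbol{\lambda}$ as unique highest weight of multiplicity one, so $\tilde{\chi}_{\boldsymbol{\lambda}} = E_{\boldsymbol{\lambda}} + (\text{strictly lower})$. The Weyl denominator factors as $\Delta = \prod_{\boldsymbol{\alpha}\in R_+}(E_{\boldsymbol{\alpha}/2} - E_{-\boldsymbol{\alpha}/2})$, exhibiting $E_{\boldsymbol{\delta}}$ as the unique highest-weight term of $\Delta$ with coefficient $+1$. Hence $\Delta \cdot \tilde{\chi}_{\boldsymbol{\lambda}}$ has $E_{\boldsymbol{\lambda}+\boldsymbol{\delta}} = E_{\boldsymbol{\nu}_{\boldsymbol{\lambda}}}$ as its unique maximal-weight term with coefficient $+1$; since $A_{\boldsymbol{\nu}_{\boldsymbol{\lambda}}}$ has the same leading coefficient, this forces $\boldsymbol{\mu}_0 = \boldsymbol{\nu}_{\boldsymbol{\lambda}}$ and $c_{\boldsymbol{\mu}_0} = +1$. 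Dividing by $\Delta$ on $\hat{T}'$ and invoking the continuity of $A_{\boldsymbol{\nu}_{\boldsymbol{\lambda}}}/\Delta$ (which exists on $\hat{T}$ precisely because $\Delta \cdot \tilde{\chi}_{\boldsymbol{\lambda}}$ is continuous and equals $A_{\boldsymbol{\nu}_{\boldsymbol{\lambda}}}$) completes the argument. The main obstacles are the sign and highest-weight identification in this last step, which rely on the multiplicity-one property of the highest weight and on the product formula for $\Delta$; secondarily, one must take care to work on the covering $\hat{G}$ rather than $G$ itself, since $\boldsymbol{\delta}$ need not lie in the weight lattice of $G$ even when $\boldsymbol{\lambda}$ does.
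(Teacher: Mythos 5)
Your argument is correct, and it is the classical proof of the Weyl character formula (alternation $\Rightarrow$ expansion in $A_{\boldsymbol{\mu}}$'s, Weyl integration plus $\|\chi_{\boldsymbol{\lambda}}\|_{L^2}=1$ $\Rightarrow$ exactly one nonzero coefficient equal to $\pm1$, highest-weight comparison $\Rightarrow$ the coefficient is $+1$ on $A_{\boldsymbol{\nu}_{\boldsymbol{\lambda}}}$); note that the paper does not prove this theorem but simply cites \cite{btd}, \S VI.1, where this very argument appears. One small remark: the appeal to the Weyl denominator identity $\Delta=\prod_{\boldsymbol{\alpha}\in R_+}\bigl(E_{\boldsymbol{\alpha}/2}-E_{-\boldsymbol{\alpha}/2}\bigr)$ is not needed to identify the leading term of $\Delta$ — since $\boldsymbol{\delta}$ is strictly dominant, the defining expression $\Delta=A_{\boldsymbol{\delta}}=\sum_{w\in W}(-1)^w E_{w(\boldsymbol{\delta})}$ already shows that $E_{\boldsymbol{\delta}}$ is the unique highest term with coefficient $+1$, which keeps the argument self-contained and avoids any worry about logical dependence on the denominator formula.
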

(See, e.g., \cite{btd}, \S VI.1).

\subsubsection{The Weyl integration formula}
\label{sctn:weyl integration for}

Given any compact Lie group $K$, $\mathrm{d}^H_KV$ denotes its
Haar measure. If $K'\leqslant K$ is an inclusion of compact Lie groups, there
is an induced Haar measure $\mathrm{d}^{H}V_{K/K'}$ on the
homogenous space $K/K'$.
If $f:T\rightarrow \mathbb{C}$ is contnuous, let us define
$f_T:T\rightarrow \mathbb{C}$ by setting
$$
f_T(t):=\int_{G/T}f\left(g\,t\,g^{-1}\right)\,
\mathrm{d}^HV_{G/T}(g\,T).
$$

Let us consider the $\kappa$-orthogonal direct sum
$\mathfrak{g}=\mathfrak{t}\oplus \mathfrak{t}^\perp$. For any $t\in T$,
the orthogonal automorphism $\mathrm{Ad}_{t^{-1}}:\mathfrak{g}\rightarrow
\mathfrak{g}$ leaves both $\mathfrak{t}$ and $\mathfrak{t}^\perp$
invariant; hence, it restricts to an orthogonal endomorphism
$(\mathrm{Ad}_{\mathfrak{t}^\perp})_{t^{-1}}:\mathfrak{t}^\perp\rightarrow \mathfrak{t}^\perp$, which doesn't have the eigenvalue $1$ whenever
$t$ is regular.
Furthermore,
$\det\big((\mathrm{Ad}_{\mathfrak{t}^\perp})_{t^{-1}}
-\mathrm{Id}_{\mathfrak{t}^\perp})>0$ for any $t\in T'$.

\begin{thm}
(Weyl integration formula)
Under the previous assumptions,
$$
\int_G\,f(g)\,\mathrm{d}^HV_G(g)=\frac{1}{|W|}\,\int_T\,
\det\big((\mathrm{Ad}_{\mathfrak{t}^\perp})_{t^{-1}}
-\mathrm{Id}_{\mathfrak{t}^\perp})\,f_T(t)\,
\mathrm{d}^HV_T(t).
$$
In addition, when $(G,T)=(\tilde{G},\tilde{T})$
$$
\det\big((\mathrm{Ad}_{\mathfrak{t}^\perp})_{t^{-1}}
-\mathrm{Id}_{\mathfrak{t}^\perp})=
|\Delta(t)|^2.
$$
\end{thm}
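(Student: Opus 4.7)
The plan is to reduce the formula to a change-of-variables computation for the conjugation map
\[
\Phi:(G/T)\times T'\longrightarrow G',\qquad (gT,t)\longmapsto g\,t\,g^{-1},
\]
where $T'\subseteq T$ and $G'\subseteq G$ denote the regular loci. First I would recall, using compactness and connectedness of $G$, that every element of $G'$ is conjugate to an element of $T'$, so that $\Phi$ is a smooth surjection onto a dense open subset of full Haar measure. Next I would show that the fibers of $\Phi$ are precisely the orbits of the natural $W$-action $w\cdot(gT,t):=(g\dot w^{-1}T,\,\dot w\,t\,\dot w^{-1})$. The key input here is that the centralizer in $G$ of a regular $t\in T'$ is exactly $T$, so that two regular elements of $T$ are $G$-conjugate iff they are $W$-conjugate; hence $\Phi$ is a $|W|$-to-one covering.

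The heart of the proof is the Jacobian computation. Using bi-invariance of the Haar measures, it suffices to compute $\mathrm{d}_{(eT,t)}\Phi$ on the splitting $\mathfrak{t}^\perp\oplus\mathfrak{t}$. A direct calculation, differentiating $\exp(sX)\,t\,\exp(sY)\exp(-sX)$ at $s=0$ and identifying $T_tG\cong\mathfrak g$ by left translation, gives
\[
\mathrm{d}_{(eT,t)}\Phi(X,Y)\;=\;(\mathrm{Ad}_{t^{-1}}-\mathrm{Id})(X)\,+\,Y,
\]
which is block-triangular in $\mathfrak t^\perp\oplus\mathfrak t$ because $\mathrm{Ad}_{t^{-1}}$ preserves $\mathfrak{t}^\perp$. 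Its determinant is therefore exactly $\det\bigl((\mathrm{Ad}_{\mathfrak{t}^\perp})_{t^{-1}}-\mathrm{Id}_{\mathfrak{t}^\perp}\bigr)$, which is moreover positive on $T'$. Applying the change-of-variables formula with the $|W|$-fold covering and recognizing that the integrand $f(g t g^{-1})$ on $(G/T)\times T$ is by construction the one whose inner integral over $G/T$ defines $f_T(t)$ yields the stated Weyl integration formula.

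For the additional identity when $(G,T)=(\tilde{G},\tilde T)$, I would pass to the complexified Lie algebra and use the root-space decomposition
\[
\tilde{\mathfrak g}\otimes_{\mathbb{R}}\mathbb{C}\;=\;(\tilde{\mathfrak t}\otimes\mathbb{C})\oplus\bigoplus_{\boldsymbol\alpha\in R}\mathfrak g_{\boldsymbol\alpha},
\]
on each $\mathfrak g_{\boldsymbol\alpha}$ of which $\mathrm{Ad}_{t^{-1}}$ acts by the scalar $E_{-\boldsymbol\alpha}(t)$. Thus
\[
\det\bigl((\mathrm{Ad}_{\mathfrak{t}^\perp})_{t^{-1}}-\mathrm{Id}\bigr)
=\prod_{\boldsymbol\alpha\in R}\bigl(E_{-\boldsymbol\alpha}(t)-1\bigr)
=\prod_{\boldsymbol\alpha\in R_+}\bigl(E_{\boldsymbol\alpha}(t)-1\bigr)\bigl(E_{-\boldsymbol\alpha}(t)-1\bigr).
\]
Pulling back to the cover $\hat T$, factoring $E_{\pm\boldsymbol\delta}$ from each half of the product to symmetrize, and invoking the Weyl denominator identity $\Delta=A_{\boldsymbol\delta}=\prod_{\boldsymbol\alpha\in R_+}\bigl(E_{\boldsymbol\alpha/2}-E_{-\boldsymbol\alpha/2}\bigr)$ then identifies the product with $|\Delta(t)|^2$.

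The main obstacle I expect is the careful bookkeeping of Haar-measure normalizations together with the argument that $\Phi$ is a \emph{free} $W$-covering on the regular locus, which is what produces the combinatorial factor $1/|W|$; establishing this cleanly requires the fact that distinct Weyl translates of a regular $t$ are distinct in $T$, which in turn relies on the centralizer computation above.
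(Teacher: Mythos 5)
Your proof is correct, and it is the classical textbook argument (change of variables for the $|W|$-to-one conjugation map $\Phi\colon(G/T)\times T'\to G'$, with the Jacobian computed via left translation and the root-space decomposition for the determinant identity). The paper does not supply a proof of this statement at all — it is quoted as a standard fact, with the textbook references \cite{btd}, \cite{var har}, \cite{var lie} — so there is no authorial argument to compare against. Two very small points worth tightening. First, the differential $\mathrm{d}_{(eT,t)}\Phi(X,Y)=(\mathrm{Ad}_{t^{-1}}-\mathrm{Id})(X)+Y$ is in fact block \emph{diagonal} (not merely triangular) on $\mathfrak t^\perp\oplus\mathfrak t$, since $\mathrm{Ad}_{t^{-1}}$ preserves the orthogonal splitting; stating this directly removes any ambiguity about which block contributes. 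Second, the ``symmetrization'' step at the end produces $E_{\boldsymbol\delta}E_{-\boldsymbol\delta}\cdot(-1)^{|R_+|}\Delta(t)^2=(-1)^{|R_+|}\Delta(t)^2$, and one must then invoke $\overline{\Delta(t)}=(-1)^{|R_+|}\Delta(t)$ (from $-\boldsymbol\delta=w_0\boldsymbol\delta$ and $(-1)^{w_0}=(-1)^{|R_+|}$) to identify this with $|\Delta(t)|^2$; alternatively it is cleaner to observe at once that $E_{-\boldsymbol\alpha}(t)=\overline{E_{\boldsymbol\alpha}(t)}$ on the compact torus, so each paired factor $(E_{\boldsymbol\alpha}(t)-1)(E_{-\boldsymbol\alpha}(t)-1)=|E_{\boldsymbol\alpha}(t)-1|^2$, and then the denominator identity gives $|\Delta(t)|^2=\prod_{\boldsymbol\alpha\in R_+}|E_{\boldsymbol\alpha}(t)-1|^2$ with no sign bookkeeping.
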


\subsubsection{The proof}

\begin{proof}
[{Proof of Theorem \ref{thm:rapid decay orbit}}]
We shall give the argument for $\Pi^\tau$, the one 
for $P^\tau$ being essentially the same.
Our starting point is again the relation
\begin{equation}
\label{eqn:Piklambda}
\Pi^\tau_{k\,\boldsymbol{\lambda}}(x,y)=
d_{k\boldsymbol{\lambda}}\,
\int_G \overline{\chi_{k\,\boldsymbol{\lambda}}(g)}\,
\Pi^\tau
\left(\mu^\tau_{g^{-1}}(x),y   \right)\,
\mathrm{d}^HV_G(g).
\end{equation}
By composing with $\mathfrak{p}$, 
we can pull-back $\mu^\tau$ to an action $\hat{\mu}^\tau$ of $\hat{G}$ on $X^\tau$, and
$\chi_{k\,\boldsymbol{\lambda}}$ to the character $\hat{\chi}_{k\,\boldsymbol{\lambda}}$ on 
$\hat{G}$. We
can rewrite (\ref{eqn:Piklambda})
as
\begin{equation}
\label{eqn:Piklambdatilde}
\Pi^\tau_{k\,\boldsymbol{\lambda}}(x,y)=
d_{k\boldsymbol{\lambda}}\,
\int_{\hat{G}} \overline{\hat{\chi}_{k\,\boldsymbol{\lambda}}(g)}\,
\Pi^\tau
\left(\hat{\mu}^\tau_{g^{-1}}(x),y   \right)\,
\mathrm{d}^HV_{\hat{G}}(g).
\end{equation}
By Theorem \ref{thm:rapid decay compact}, we may restrict to the case where
$(x,y)$ belongs to a small neighbourhood of $\mathcal{Z}^\tau_{\mathcal{O}}
\subset X^\tau\times X^\tau$; hence we may assume that $x$ and $y$ belong to a small neighbourhood of
$X^\tau_{\mathcal{O}}$, and therefore that 
$\mu^\tau$ is locally free at $x$ and $y$.

Perhaps after replacing $(x,y)$ by $\left(\mu^\tau_h(x),\mu^\tau_h(y)\right)$
for some $h\in G$,
we may assume that $\Phi^\tau(y)$ belongs to a small conic 
neighourhood in $\mathfrak{g}^\vee$ of the ray 
$\mathbb{R}_+\,\boldsymbol{\lambda}$. Hence
\begin{equation}
\label{eqn:Phitauy}
\Phi^\tau(y)=a\,\boldsymbol{\lambda}+
\boldsymbol{\beta},
\end{equation}
where $\boldsymbol{\beta}\in \boldsymbol{\lambda}^\perp$ and
$\|\boldsymbol{\beta}\|\ll \|\Phi^\tau(y)\|$.

Furthermore, let us fix a suitably small $\delta>0$,
and consider a cut-off function
$\rho_1(g,x,y)$ on $\hat{G}\times X^\tau\times X^\tau$ which is identically equal to one where 
$\mathrm{dist}_{X^\tau}\big(\hat{\mu}^\tau_{g^{-1}}(x),y   \big)
<\delta$, and vanishes identically where $\mathrm{dist}_{X^\tau}\big(\hat{\mu}^\tau_{g^{-1}}(x),y   \big)
>2\,\delta$. We can rewrite the left hand side of
(\ref{eqn:Piklambdatilde})  as
$$
\Pi^\tau_{k\,\boldsymbol{\lambda}}(x,y)=
\Pi^\tau_{k\,\boldsymbol{\lambda}}(x,y)'+
\Pi^\tau_{k\,\boldsymbol{\lambda}}(x,y)'',
$$
where $\Pi^\tau_{k\,\boldsymbol{\lambda}}(x,y)'$
(respectively,  $\Pi^\tau_{k\,\boldsymbol{\lambda}}(x,y)''$) is as in 
(\ref{eqn:Piklambdatilde}), but with the integrand multiplied by 
$\rho_1(g,x,y)$ (respectively, by $1-\rho_1(g,x,y)$).
Arguing as in \S \ref{sctn:rapid decay compact}, 
one can check that $\Pi^\tau_{k\,\boldsymbol{\lambda}}(x,y)''
=O\left(k^{-\infty}\right)$, so that 
$$
\Pi^\tau_{k\,\boldsymbol{\lambda}}(x,y)\sim
\Pi^\tau_{k\,\boldsymbol{\lambda}}(x,y)'.
$$
On the domain of integration of the integrand of $\Pi^\tau_{k\,\boldsymbol{\lambda}}(x,y)'$, we can represent $\Pi^\tau$ as a Fourier
integral operator with complex phase, as discussed in \S \ref{scnt:szego parametrix}.

In view of the Weyl integration formula in \S \ref{sctn:weyl integration for}
\begin{eqnarray}
\label{eqn:first reduction Pik}
\Pi^\tau_{k\,\boldsymbol{\lambda}}(x,y)&\sim&
\frac{d_{k\,\boldsymbol{\lambda}}}{|W|}\,
\int_{\hat{T}}\,\mathrm{d}^HV_{\hat{T}}(t)\,\int_{\hat{G}/\hat{T}}
\,\mathrm{d}^HV_{\hat{G}/\hat{T}}(g\,\hat{T})\\
&&\left[\overline{\hat{\chi}_{k\,\boldsymbol{\lambda}}(t)}
\,\rho_1\left(g\,t\,g^{-1},x,y\right)\,
\Pi^\tau
\left(\hat{\mu}^\tau_{g\,t^{-1}\,g^{-1}}(x),y   \right)
|\Delta(t)|^2\right].\nonumber
\end{eqnarray}
Combining (\ref{eqn:first reduction Pik}) with the Weyl character formula
in \S \ref{sctn:weyl char for} we obtain
\begin{eqnarray}
\label{eqn:second reduction Pik}
\Pi^\tau_{k\,\boldsymbol{\lambda}}(x,y)&\sim&
\frac{d_{k\,\boldsymbol{\lambda}}}{|W|}\,\sum_{w\in W}
\int_{\hat{T}}\,\mathrm{d}^HV_{\tilde{T}}(t)\,\int_{\hat{G}/\hat{T}}
\,\mathrm{d}^HV_{\hat{G}/\hat{T}}(g\,\hat{T})\\
&&\left[(-1)^w\,\overline{E_{w(\boldsymbol{\nu}_{k\,\boldsymbol{\lambda}})}
(t)}
\,\rho_1\left(g\,t\,g^{-1},x,y\right)\,
\Pi^\tau
\left(\tilde{\mu}^\tau_{g\,t^{-1}\,g^{-1}}(x),y   \right)
\,\Delta(t)\right]\nonumber\\
&=&
\frac{d_{k\,\boldsymbol{\lambda}}}{|W|}\,\sum_{w\in W}
\int_{\hat{T}}\,\mathrm{d}^HV_{\hat{T}}(t)\,\int_{\hat{G}/\hat{T}}
\,\mathrm{d}^HV_{\hat{G}/\hat{T}}(g\,\hat{T})\nonumber\\
&&\left[(-1)^w\,\overline{E_{\boldsymbol{\nu}_{k\,\boldsymbol{\lambda}}}
\big(w(t)\big)}
\,\rho_1\left(g\,t\,g^{-1},x,y\right)\,
\Pi^\tau
\left(\hat{\mu}^\tau_{g\,t^{-1}\,g^{-1}}(x),y   \right)
\,\Delta(t)\right]\nonumber
\end{eqnarray}
where $\boldsymbol{\nu}_{k\boldsymbol{\lambda}}:=
k\,\boldsymbol{\lambda}+\boldsymbol{\delta}$.

For any $w\in W$, there is $g_w\in N(\hat{T})$ (the normalizer of
$\hat{T}$ in $\hat{G}$) such that
$w(t):=g_w\,t\,g_w^{-1}$. Furthermore,
$\Delta(g_w\,t\,g_w^{-1})=(-1)^w\,\Delta(t)$ by Remark \ref{rem:alternanza}.
We may then rewrite (\ref{eqn:second reduction Pik}) as follows:
\begin{eqnarray}
\label{eqn:third reduction Pik}
\Pi^\tau_{k\,\boldsymbol{\lambda}}(x,y)&\sim&
\frac{d_{k\,\boldsymbol{\lambda}}}{|W|}\,\sum_{w\in W}
\int_{\hat{T}}\,\mathrm{d}^HV_{\hat{T}}(t)\,\int_{\hat{G}/\hat{T}}
\,\mathrm{d}^HV_{\hat{G}/\hat{T}}(g\,\hat{T})\\
&&\left[\overline{E_{\boldsymbol{\nu}_{k\,\boldsymbol{\lambda}}}
\big(g_w\,t\,g_w^{-1}\big)}
\,\rho_1\left(g\,t\,g^{-1},x,y\right)\,
\Pi^\tau
\left(\hat{\mu}^\tau_{g\,t^{-1}\,g^{-1}}(x),y   \right)
\,\Delta(g_w\,t\,g_w^{-1})\right].\nonumber
\end{eqnarray}
Observe that $w\in W$ also induces a measure preserving diffeomorphism 
$$\alpha_w:g\,\hat{T}\in \hat{G}/\hat{T}\mapsto
g\,g_w\,\hat{T}\in\hat{G}/\hat{T}.$$
Hence (\ref{eqn:third reduction Pik}) may be rewritten
\begin{eqnarray}
\label{eqn:4th reduction Pik}
\lefteqn{\Pi^\tau_{k\,\boldsymbol{\lambda}}(x,y)}\\
&\sim&\nonumber
\frac{d_{k\,\boldsymbol{\lambda}}}{|W|}\,\sum_{w\in W}
\int_{\hat{T}}\,\mathrm{d}^HV_{\hat{T}}(t)\,\int_{\hat{G}/\hat{T}}
\,\mathrm{d}^HV_{\hat{G}/\hat{T}}(g\,\hat{T})\\
&&\left[\overline{E_{\boldsymbol{\nu}_{k\,\boldsymbol{\lambda}}}
\big(g_w\,t\,g_w^{-1}\big)}
\,\rho_1\left(g\,g_w\,t\,g_w^{-1}\,g^{-1},x,y\right)\,
\Pi^\tau
\left(\hat{\mu}^\tau_{g\,g_w\,t^{-1}\,g_w^{-1}\,g^{-1}}(x),y   \right)
\,\Delta(g_w\,t\,g_w^{-1})\right]\nonumber\\
&=&\nonumber
d_{k\,\boldsymbol{\lambda}}
\int_{\hat{T}}\,\mathrm{d}^HV_{\hat{T}}(t)\,\int_{\hat{G}/\hat{T}}
\,\mathrm{d}^H V_{\hat{G}/\hat{T}}(g\,\hat{T})\\
&&\left[\overline{E_{\boldsymbol{\nu}_{k\,\boldsymbol{\lambda}}}
\big(t\big)}
\,\rho_1\left(g\,t\,g^{-1},x,y\right)\,
\Pi^\tau
\left(\tilde{\mu}^\tau_{g\,t^{-1}\,g^{-1}}(x),y   \right)
\,\Delta(t)\right];\nonumber
\end{eqnarray}
to obtain the last equality, in the $w$-th summand we have performed
the measure preserving change of variable 
$t'=g_w\,t\,g_w^{-1}$ in $\hat{T}$.

Let us insert in (\ref{eqn:4th reduction Pik}) the description of $\Pi^\tau$ as a Fourier integral operator recalled in \S \ref{scnt:szego parametrix}.
Furthermore, let us fix an isomorphism 
$\hat{T}\cong (S^1)^{r_G}$ and trigonometric coordinates
$\boldsymbol{\vartheta}=(\vartheta_1,\ldots,\vartheta_{r_G})$
with $\vartheta_j\in (-\pi,\pi)$. We may view
$\mathcal{D}^{\tilde{G}}\subset \mathbb{Z}^{r_G}$.
Thus we have the replacements
$$
t=e^{\imath\,\boldsymbol{\vartheta}},\quad
\mathrm{d}^HV_{\tilde{T}}(t)=\frac{1}{(2\,\pi)^{r_G}}\,
\mathrm{d}\boldsymbol{\vartheta},\quad
E_{\boldsymbol{\nu}_{k\,\boldsymbol{\lambda}}}
\big(t\big)=e^{\imath\,\langle k\,\boldsymbol{\lambda}
+\boldsymbol{\delta},\boldsymbol{\vartheta}\rangle}.
$$
We obtain
\begin{eqnarray}
\label{eqn:5th reduction Pik}
\lefteqn{\Pi^\tau_{k\,\boldsymbol{\lambda}}(x,y)}\\
&\sim&
d_{k\,\boldsymbol{\lambda}}
\int_{\hat{T}}\,\mathrm{d}^HV_{\hat{T}}(t)\,\int_{\hat{G}/\hat{T}}
\,\mathrm{d}^H V_{\hat{G}/\hat{T}}(g\,\hat{T})\,
\int_0^{+\infty}\,\mathrm{d}u\nonumber\\
&&\left[\overline{E_{\boldsymbol{\nu}_{k\,\boldsymbol{\lambda}}}
\big(t\big)}
\,\rho_1\left(g\,t\,g^{-1},x,y\right)\,
e^{\imath\,u\,\psi^\tau\left( \hat{\mu}^\tau_{g\,t^{-1}\,g^{-1}}(x),y   \right) }
s\left(\hat{\mu}^\tau_{g\,t^{-1}\,g^{-1}}(x),y,u   \right)
\,\Delta(t)\right]\nonumber\\
&=&\frac{ d_{k\,\boldsymbol{\lambda}}}{(2\,\pi)^{r_G}}\,
\int_{(-\pi,\pi)^{r_G}}\,\mathrm{d}\boldsymbol{\vartheta}\,\int_{\hat{G}/\hat{T}}
\,\mathrm{d}^H V_{\hat{G}/\hat{T}}(g\,\hat{T})\,
\int_0^{+\infty}\,\mathrm{d}u\nonumber\\
&&\left[e^{-\imath\,\langle k\,\boldsymbol{\lambda}
+\boldsymbol{\delta},\boldsymbol{\vartheta}\rangle}
\,\rho_1\left(g\,e^{\imath\,\boldsymbol{\vartheta}}\,g^{-1},x,y\right)\,
e^{\imath\,u\,\psi^\tau \left(\hat{\mu}^\tau_{g\,e^{-\imath\,\boldsymbol{\vartheta}}\,g^{-1}}(x),y    \right)}
s\left(\hat{\mu}^\tau_{g\,e^{-\imath\,\boldsymbol{\vartheta}}\,g^{-1}}(x),y,u   \right)
\,\Delta(t)\right]\nonumber
\end{eqnarray} 
With the rescaling $u\mapsto k\,u$, we may rewrite
(\ref{eqn:5th reduction Pik}) as an oscillatory integral
\begin{eqnarray}
\label{eqn:6th reduction Pik}
\lefteqn{\Pi^\tau_{k\,\boldsymbol{\lambda}}(x,y)}\\
&\sim&\frac{ k\,d_{k\,\boldsymbol{\lambda}}}{(2\,\pi)^{r_G}}\,
\int_{(-\pi,\pi)^{r_G}}\,\mathrm{d}\boldsymbol{\vartheta}\,\int_{\hat{G}/\hat{T}}
\,\mathrm{d}^H V_{\hat{G}/\hat{T}}(g\,\hat{T})\,
\int_0^{+\infty}\,\mathrm{d}u\nonumber\\
&&\left[e^{-\imath\,\langle k\,\boldsymbol{\lambda}
+\boldsymbol{\delta},\boldsymbol{\vartheta}\rangle}
\,\rho_1\left(g\,e^{\imath\,\boldsymbol{\vartheta}}\,g^{-1},x,y\right)\,
e^{\imath\,k\,u\,\psi^\tau\left( \hat{\mu}^\tau_{g\,e^{-\imath\,\boldsymbol{\vartheta}}\,g^{-1}}(x),y    \right)}
s\left(\hat{\mu}^\tau_{g\,e^{-\imath\,\boldsymbol{\vartheta}}\,g^{-1}}(x),y ,k\,u  \right)
\,\Delta(e^{\imath\,\boldsymbol{\vartheta}})\right]\nonumber\\
&=&\frac{ k\,d_{k\,\boldsymbol{\lambda}}}{(2\,\pi)^{r_G}}\,
\int_{(-\pi,\pi)^{r_G}}\,\mathrm{d}\boldsymbol{\vartheta}\,\int_{\hat{G}/\hat{T}}
\,\mathrm{d}^H V_{\hat{G}/\hat{T}}(g\,\hat{T})\,
\int_0^{+\infty}\,\mathrm{d}u\,
\left[e^{\imath\,k\,\Psi_{x,y}(u,g,\boldsymbol{\vartheta})}\,
\tilde{\mathcal{A}}_{x,y,k}(u,g,\boldsymbol{\vartheta})\right],\nonumber
\end{eqnarray} 
where
\begin{eqnarray}
\label{eqn:PsiAxydefn}
\Psi_{x,y}(u,g\,\hat{T},\boldsymbol{\vartheta})&:=&
u\,\psi^\tau\left( \hat{\mu}^\tau_{g\,e^{-\imath\,\boldsymbol{\vartheta}}\,g^{-1}}(x),y    \right)-
\langle \boldsymbol{\lambda},\boldsymbol{\vartheta}\rangle,\\
\tilde{\mathcal{A}}_{x,y,k}(u,g\,\hat{T},\boldsymbol{\vartheta})&:=&
e^{-\imath\,\langle \boldsymbol{\delta},\boldsymbol{\vartheta}\rangle}
\,\rho_1\left(g\,e^{\imath\,\boldsymbol{\vartheta}}\,g^{-1},x,y\right)\,
s\left(\hat{\mu}^\tau_{g\,e^{-\imath\,\boldsymbol{\vartheta}}\,g^{-1}}(x),y ,k\,u  \right)
\,\Delta(e^{\imath\,\boldsymbol{\vartheta}}).\nonumber
\end{eqnarray}

Let us consider a fixed but general pair 
$(g_0\,\hat{T},t_0)\in (\hat{G}/\hat{T})\times \hat{T}$, where
$t_0=e^{\imath\,\boldsymbol{\vartheta}_0}$, in the
domain of integration (that is, such that 
$\left(g_0\,e^{\imath\,\boldsymbol{\vartheta}_0}\,g_0^{-1},x,y\right)\in 
\mathrm{supp}(\rho')$). 

Near $t_0$, we can write
$t=e^{\imath\,(\boldsymbol{\vartheta}+\boldsymbol{\vartheta}_0)}$
with $\boldsymbol{\vartheta}\sim \mathbf{0}$.
Let us set 
$x_0:=\tilde{\mu}^\tau_{g_0\,e^{-\imath\,\boldsymbol{\vartheta}_0}\,
g_0^{-1}}(x)$. Then $x_0\sim y$ (since we are on the support of
$\rho'$), whence
(recalling (\ref{eqn:Phitauy}))
\begin{equation}
\label{eqn:Phitaux0}
\Phi^\tau(x_0)\sim \Phi^\tau(y)\sim a\,\boldsymbol{\lambda},
\end{equation}
where $\sim$ provisionally stands for \lq is very close to\rq\,
(say at distance $O(\delta)$).
Thus for any $\boldsymbol{\vartheta}$
$$
\hat{\mu}^\tau_{
g_0\,
e^{-\imath\,(\boldsymbol{\vartheta}+\boldsymbol{\vartheta}_0)}\,
g_0^{-1}
}(x)=\hat{\mu}^\tau_{
g_0\,
e^{-\imath\,\boldsymbol{\vartheta}}\,
g_0^{-1}
}(x_0).
$$
Therefore, by the results in \S 2.5 of [GP24], in
NHLC's centered at $x_0$ (\S \ref{scnt:heis coord}) we have
\begin{eqnarray}
\label{eqn:local action}
\tilde{\mu}^\tau_{
g_0\,
e^{-\imath\,(\boldsymbol{\vartheta}+\boldsymbol{\vartheta}_0)}\,
g_0^{-1}
}(x)&=&
x_0+\Big(\big\langle \Phi(x_0),\mathrm{Ad}_{g_0}(\boldsymbol{\vartheta})
\big\rangle+R_3(\boldsymbol{\vartheta}),\mathbf{R}_1(\boldsymbol{\vartheta})\\
&=&x_0+\Big(\big\langle \mathrm{Ad}_{g_0^{-1}}\big(\Phi(x_0)\big),\boldsymbol{\vartheta}
\big\rangle+R_3(\boldsymbol{\vartheta}),\mathbf{R}_1(\boldsymbol{\vartheta})
\Big),\nonumber
\end{eqnarray}
where $R_j$ (respectively, $\mathbf{R}_j$) generically denotes 
a real-valued (respectively, vector valued) 
function defined on a neighbourhood of the origin of
some Euclidean space and vanishing to
$j$-th order at the origin.

In view of 
(\ref{eqn:diagonal diff psi}),
(\ref{eqn:PsiAxydefn}) and (\ref{eqn:Phitaux0}) above, and Proposition 34 of [Pao2024], 
(\ref{eqn:local action}) implies
\begin{equation}
\label{eqn:diff of Psixy}
\partial_{\boldsymbol{\vartheta}}\Psi_{x,y}(u,g\,\tilde{T},\boldsymbol{\vartheta})\sim \left.\left[u\,
\mathrm{Ad}_{g_0^{-1}}\big(\Phi(x_0)\big)-\boldsymbol{\lambda}
\right]\right|_\mathfrak{t}
\sim \left.\left[u\,a\,
\mathrm{Ad}_{g_0^{-1}}\big(\boldsymbol{\lambda}\big)-\boldsymbol{\lambda}\right]\right|_\mathfrak{t}.
\end{equation}
On the other hand, by the assumption on $\mathcal{O}$ 
there exist a constant $c>0$ such that
$
 \left\|\left.\mathrm{Ad}_{g_0^{-1}}\big(\boldsymbol{\lambda}\big)\right|_\mathfrak{t}
\right\|\ge c
$
for all $g_0\in G$. 
Therefore, (\ref{eqn:diff of Psixy}) implies the following.

\begin{lem}
\label{lem:partial theta bdd bl}
Under the previous assumptions, there exist constants $D\gg 0,\,c>0$
such that 
$\|\partial_{\boldsymbol{\vartheta}}\Psi_{x,y}(u,g\,\tilde{T},\boldsymbol{\vartheta})\|\ge c$ whenever $u\not\in (1/D,D)$.
\end{lem}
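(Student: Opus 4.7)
The plan is to extract the leading behaviour of $\partial_{\boldsymbol{\vartheta}}\Psi_{x,y}$ from the approximation (\ref{eqn:diff of Psixy}) and apply a reverse triangle inequality, separately for $u$ small and $u$ large.

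To begin with, I would use the hypothesis $\mathcal{O}\cap\mathfrak{t}^0=\emptyset$ together with the compactness of the coadjoint orbit $\mathcal{O}$: the continuous map $\boldsymbol{\beta}\mapsto \|\boldsymbol{\beta}|_\mathfrak{t}\|$ is then bounded below by a uniform $c_0>0$ on $\mathcal{O}$. In particular, $\|\mathrm{Ad}_{g_0^{-1}}(\boldsymbol{\lambda})|_\mathfrak{t}\|\ge c_0$ for every $g_0\in G$; on the other hand, the unitarity of the (co)adjoint action gives the opposite estimate $\|\mathrm{Ad}_{g_0^{-1}}(\boldsymbol{\lambda})|_\mathfrak{t}\|\le \|\boldsymbol{\lambda}\|$. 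Moreover, the scalar $a$ in (\ref{eqn:Phitauy}) satisfies $a\approx \tau/\|\boldsymbol{\lambda}\|$ up to an $O(\delta)$-error, since $\|\Phi^\tau(y)\|=\tau$ by (\ref{eqn:moment map tilde G}) and $\|\boldsymbol{\beta}\|$ is small; hence $a$ is bounded above and below away from zero, uniformly in $y$ on the support of the cutoff.

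Setting $v:=\mathrm{Ad}_{g_0^{-1}}(\boldsymbol{\lambda})|_\mathfrak{t}$, the approximation (\ref{eqn:diff of Psixy}) combined with the reverse triangle inequality yields
$$\bigl\|\partial_{\boldsymbol{\vartheta}}\Psi_{x,y}(u,g\hat{T},\boldsymbol{\vartheta})\bigr\|\;\ge\;\bigl|\,u\,a\,\|v\|-\|\boldsymbol{\lambda}\|\,\bigr|\;-\;E,$$
where $E$ controls the remainder hidden in the $\sim$ of (\ref{eqn:diff of Psixy}) and is uniformly $O(\delta)+O(|\boldsymbol{\vartheta}|)$ by the Taylor expansion (\ref{eqn:local action}). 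Now choose $D$ so large that $D\,a\,c_0\ge 2\,\|\boldsymbol{\lambda}\|$ and $a/D\le 1/2$: then $u\,a\,\|v\|\ge u\,a\,c_0\ge 2\,\|\boldsymbol{\lambda}\|$ whenever $u\ge D$, while $u\,a\,\|v\|\le (a/D)\,\|\boldsymbol{\lambda}\|\le \|\boldsymbol{\lambda}\|/2$ whenever $u\le 1/D$. In either case $\bigl|u\,a\,\|v\|-\|\boldsymbol{\lambda}\|\bigr|\ge \|\boldsymbol{\lambda}\|/2$. Shrinking $\delta$ (and hence the admissible $\boldsymbol{\vartheta}$) so that $E\le \|\boldsymbol{\lambda}\|/4$ then yields the claim with $c=\|\boldsymbol{\lambda}\|/4$.

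The only non-routine point is to ensure that the $\sim$-error in (\ref{eqn:diff of Psixy}) is controlled uniformly in $(x,y,g_0\hat{T})$ on the relevant compact loci. This reduces to the standard uniformity of the construction of normal Heisenberg coordinates under the $G$-action and the smoothness of the moment map, both of which are already available; no genuinely new input beyond what is recorded in the paper is needed.
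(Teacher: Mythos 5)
Your approach is essentially the paper's: the paper derives (\ref{eqn:diff of Psixy}) and then appeals (very tersely) to the orbit hypothesis, and you correctly fill in the reverse triangle inequality and the case split on small vs.~large $u$. The lower bound $\|\mathrm{Ad}_{g_0^{-1}}(\boldsymbol{\lambda})|_{\mathfrak{t}}\|\ge c_0$ and the bound on $a$ are both correct.

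There is, however, a real imprecision in the way you control the remainder $E$. Looking at (\ref{eqn:PsiAxydefn}), the phase is $\Psi_{x,y}(u,g\hat{T},\boldsymbol{\vartheta})=u\,\psi^\tau(\cdot,\cdot)-\langle\boldsymbol{\lambda},\boldsymbol{\vartheta}\rangle$, so the factor $u$ multiplies the entire first term; differentiating in $\boldsymbol{\vartheta}$ therefore produces $u\,\partial_{\boldsymbol{\vartheta}}\psi^\tau(\cdot,\cdot)-\boldsymbol{\lambda}$, and the $O(\delta)$-errors coming from $\Phi(x_0)\ne a\boldsymbol{\lambda}$, $y\ne x_0$, and the Taylor remainder in (\ref{eqn:local action}) all sit \emph{inside} the bracket multiplied by $u$. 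Hence the error hidden in the $\sim$ of (\ref{eqn:diff of Psixy}) is of size $u\cdot O(\delta)$, not uniformly $O(\delta)+O(|\boldsymbol{\vartheta}|)$ as you claim. Your concluding step ``shrink $\delta$ so that $E\le\|\boldsymbol{\lambda}\|/4$'' therefore cannot hold uniformly over $u\ge D$, where $u$ is unbounded, and the subtraction that gives $c=\|\boldsymbol{\lambda}\|/4$ breaks down precisely in the regime where you most need it.

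The statement is nonetheless salvageable with your machinery, because the principal term also grows linearly in $u$ on the same regime: for $u\ge D$ the reverse triangle inequality gives $\bigl|u\,a\,\|v\|-\|\boldsymbol{\lambda}\|\bigr|\ge u\,a\,c_0-\|\boldsymbol{\lambda}\|$, which dominates $u\cdot C_1\delta$ once $\delta< a\,c_0/(2C_1)$, and then the difference is $\ge u\,a\,c_0/2-\|\boldsymbol{\lambda}\|\ge\|\boldsymbol{\lambda}\|$ for $u\ge 4\|\boldsymbol{\lambda}\|/(a\,c_0)$. For $u\le 1/D$ the error really is $O(\delta/D)$ and your estimate is fine. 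So you should separate the two regimes when bounding $E$ rather than claiming a $u$-independent bound; with that correction the proof is complete and coincides in substance with the paper's.
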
 

We can then \lq integrate by parts\rq\, in $\boldsymbol{\vartheta}$ to reduce
to the case where the integrand in (\ref{eqn:6th reduction Pik})
is compactly supported in $u$. More precisely, by a standard argument
Lemma \ref{lem:partial theta bdd bl} implies the following.

\begin{cor}
\label{cor:compact supp u}
The asymptotics of (\ref{eqn:5th reduction Pik}) are unchanged, if the
integrand is multiplied by a compactly supported 
cut-off function 
$\rho_2\in \mathcal{C}^\infty_c\big( 1/(2\,D),2\,D) \big)$
such that $\rho_2\equiv 1$ on $(1/D,D)$.

\end{cor}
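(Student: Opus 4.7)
The plan is to prove this by a non-stationary-phase argument in $\boldsymbol{\vartheta}$, localized to the region where $u$ is not in a compact interval; the premise $\mathcal{O}\cap \mathfrak{t}^0=\emptyset$ enters via Lemma \ref{lem:partial theta bdd bl}. Concretely, I would fix a function $\rho_2\in \mathcal{C}^\infty_c\bigl(1/(2D),2D\bigr)$ with $\rho_2\equiv 1$ on $(1/D,D)$, decompose the integrand of (\ref{eqn:6th reduction Pik}) via the partition $1=\rho_2+(1-\rho_2)$, and show that the $(1-\rho_2)$-contribution is $O(k^{-\infty})$.

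On the support of $(1-\rho_2)$ Lemma \ref{lem:partial theta bdd bl} yields $\|\partial_{\boldsymbol{\vartheta}}\Psi_{x,y}\|\ge c>0$, so the first-order differential operator
$$
L:=\frac{1}{\imath\,k\,\|\partial_{\boldsymbol{\vartheta}}\Psi\|^2}\,\sum_{j=1}^{r_G}\overline{\partial_{\vartheta_j}\Psi}\,\partial_{\vartheta_j}
$$
is well defined and satisfies $L\bigl(e^{\imath\,k\,\Psi}\bigr)=e^{\imath\,k\,\Psi}$. Every factor of $\tilde{\mathcal{A}}_{x,y,k}$ (namely $e^{-\imath\langle\boldsymbol{\delta},\boldsymbol{\vartheta}\rangle}$, the cutoff $\rho_1$, the amplitude $s(\cdot,y,ku)$ and $\Delta(e^{\imath\boldsymbol{\vartheta}})$) is $2\pi$-periodic in each $\vartheta_j$, so integration by parts on $\hat{T}\cong(S^1)^{r_G}$ produces no boundary terms, and for every $N\ge 1$
$$
\int_{(-\pi,\pi)^{r_G}}\!\!e^{\imath\,k\,\Psi_{x,y}}\,\tilde{\mathcal{A}}_{x,y,k}\,(1-\rho_2)\,\mathrm{d}\boldsymbol{\vartheta}
\;=\;\int_{(-\pi,\pi)^{r_G}}\!\!e^{\imath\,k\,\Psi_{x,y}}\,(L^{\ast})^{N}\!\bigl(\tilde{\mathcal{A}}_{x,y,k}(1-\rho_2)\bigr)\,\mathrm{d}\boldsymbol{\vartheta}.
$$

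Next I would establish uniform symbol estimates on $(L^{\ast})^{N}\bigl(\tilde{\mathcal{A}}_{x,y,k}(1-\rho_2)\bigr)$, distinguishing the two components of the support: on $u\in (0,1/(2D))$ one has only the uniform bound $\|\partial_{\boldsymbol{\vartheta}}\Psi\|\ge c$, but the $u$-range is bounded; on $u\in (2D,+\infty)$ one has in fact $\|\partial_{\boldsymbol{\vartheta}}\Psi\|\asymp u$, because the term $u\,\mathrm{Ad}_{g_0^{-1}}(\boldsymbol{\lambda})|_{\mathfrak{t}}$ in (\ref{eqn:diff of Psixy}) dominates $\boldsymbol{\lambda}|_{\mathfrak{t}}$ under the assumption $\mathcal{O}\cap \mathfrak{t}^0=\emptyset$. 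Since $s(\cdot,\cdot,ku)$ is a classical symbol of order $d-1$ in its last variable, one obtains $|(L^{\ast})^{N}\tilde{\mathcal{A}}_{x,y,k}|\lesssim C_N\,k^{-N}\,(1+ku)^{d-1-N}$ on the tail and $|(L^{\ast})^{N}\tilde{\mathcal{A}}_{x,y,k}|\lesssim C_N\,k^{-N}\,(1+ku)^{d-1}$ on the bounded piece. Integrating in $u$ and over the compact quotient $\hat{G}/\hat{T}$, and accounting for the prefactor $k\cdot d_{k\boldsymbol{\lambda}}=O\bigl(k^{\,d-r_G+1}\bigr)$ produced by the Weyl dimension formula, one gets a total bound of order $k^{\,2d-r_G+1-N}$ for every $N\gg d$, i.e.\ $O(k^{-\infty})$ as required.

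The principal obstacle is the symbol bookkeeping for $(L^{\ast})^{N}$: one must use the stronger lower bound $\|\partial_{\boldsymbol{\vartheta}}\Psi\|\gtrsim u$ at infinity (not merely the uniform bound of Lemma \ref{lem:partial theta bdd bl}) in order to beat the polynomial growth of $s(\cdot,\cdot,ku)$ and make the $u$-integral converge after finitely many integrations by parts. Every other step is a routine application of non-stationary phase, made possible by Lemma \ref{lem:partial theta bdd bl} together with the toroidal periodicity of the integrand in $\boldsymbol{\vartheta}$.
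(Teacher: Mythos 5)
Your argument is correct and is precisely the ``standard argument based on integration by parts in $\boldsymbol{\vartheta}$'' that the paper invokes without spelling out. You supply exactly the detail the paper leaves implicit.

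One point you identify is genuinely worth making explicit: the statement of Lemma~\ref{lem:partial theta bdd bl} as written only records the uniform lower bound $\|\partial_{\boldsymbol{\vartheta}}\Psi_{x,y}\|\ge c$, which, as you observe, is \emph{not} by itself enough to control the tail $u\in(2D,\infty)$, since the amplitude grows like $(ku)^{d-1}$ and the $u$-integral would diverge after finitely many integrations by parts. The linear growth $\|\partial_{\boldsymbol{\vartheta}}\Psi_{x,y}\|\gtrsim u$ for large $u$ — which you extract from (\ref{eqn:diff of Psixy}) together with the assumption $\mathcal{O}\cap\mathfrak{t}^0=\emptyset$ and compactness of $G$ — is what makes the coefficient $\overline{\partial_{\vartheta_j}\Psi}/\|\partial_{\boldsymbol{\vartheta}}\Psi\|^2$ decay like $u^{-1}$ and renders the $u$-integral absolutely convergent once $N>d$. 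This is implicit in the paper's derivation of (\ref{eqn:diff of Psixy}) but not in the bare statement of the Lemma.

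Two small corrections. First, in your paraphrase of (\ref{eqn:diff of Psixy}) you dropped the scalar $a$ (from $\Phi^\tau(y)=a\,\boldsymbol{\lambda}+\boldsymbol{\beta}$) in front of $u\,\mathrm{Ad}_{g_0^{-1}}(\boldsymbol{\lambda})|_{\mathfrak{t}}$; since $a>0$ and bounded away from zero this does not affect the estimate, but it should appear. Second, your prefactor count $k\,d_{k\boldsymbol{\lambda}}=O(k^{d-r_G+1})$ is not quite right: by the Weyl dimension formula (or (\ref{eqn:kir char form rescaled transl 0})), $d_{k\boldsymbol{\lambda}}=O\bigl(k^{n_G}\bigr)=O\bigl(k^{(d-r_G)/2}\bigr)$, so the prefactor is $O\bigl(k^{1+(d-r_G)/2}\bigr)$. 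Again this is a polynomial bound, so the $O(k^{-\infty})$ conclusion is unaffected.
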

Thus the asymptotics remain unchanged if the
amplitude $\tilde{\mathcal{A}}_{x,y,k}(u,g\,\tilde{T},\boldsymbol{\vartheta})$ in (\ref{eqn:PsiAxydefn}) 
is replaced by
\begin{eqnarray}
\label{eqn:Axyredefn}
\mathcal{A}_{x,y,k}(u,g\,\tilde{T},\boldsymbol{\vartheta}):=
\tilde{\mathcal{A}}_{x,y,k}(u,g\,\tilde{T},\boldsymbol{\vartheta})
\cdot \rho_2(u).\nonumber
\end{eqnarray}

Hence,
\begin{eqnarray}
\label{eqn:7th reduction Pik}
\lefteqn{\Pi^\tau_{k\,\boldsymbol{\lambda}}(x,y)}\\
&\sim&\frac{ k\,d_{k\,\boldsymbol{\lambda}}}{(2\,\pi)^{r_G}}\,
\int_{(-\pi,\pi)^{r_G}}\,\mathrm{d}\boldsymbol{\vartheta}\,\int_{\hat{G}/\hat{T}}
\,\mathrm{d}^H V_{\hat{G}/\hat{T}}(g\,\hat{T})\,
\int_0^{+\infty}\,\mathrm{d}u\,
\left[e^{\imath\,k\,\Psi_{x,y}(u,g,\boldsymbol{\vartheta})}\,
\mathcal{A}_{x,y,k}(u,g,\boldsymbol{\vartheta})\right],\nonumber
\end{eqnarray} 
where the integrand is now compactly supported in $u$.
In particular, it is legitimate to integrate by parts in $u$.

Under hypothesis (\ref{eqn:bd distance orbits}) of the Theorem,
in view of (\ref{eqn:bd dist psitau xy}) for some 
constant $D^\tau>0$ we have
\begin{equation}
\label{eqn:der u Psixy}
\left|\partial_u \Psi_{x,y}(u,g\,\hat{T},\boldsymbol{\vartheta})\right|
=
\left|\psi^\tau (x,y)\right|\ge \Im\left(\psi^\tau(x,y)\right)
\ge D^\tau\,k^{2\,\epsilon-1}.
\end{equation}
Writing 
$$
e^{\imath\,k\,\Psi_{x,y}(u,g\,\hat{T},\boldsymbol{\vartheta})}
=-\frac{\imath}{k\,\psi^\tau (x,y)}\,\partial_u\left(
e^{\imath\,k\,\Psi_{x,y}(u,g\,\hat{T},\boldsymbol{\vartheta})}\right),
$$
we can then iteratively integrate by parts in $u$, introducing
at each step a factor which is bounded by $
D_1^\tau \,k^{-2\,\epsilon}$ for some constant $D_1^\tau>0$.
We conclude that
$\Pi^\tau_{k\,\boldsymbol{\lambda}}(x,y)=O\left(k^{-\infty}\right)$
in the given range.
\end{proof}

\subsection{Proof of Theorem \ref{thm:rapid decay moment map}}

The proof of Theorem \ref{thm:rapid decay moment map}
relies on the Kirillov character formula (see [Kir]), which we briefly recall.

\subsubsection{The Kirillov character formula}
\label{sctn:kircharform}

Let us fix an $\mathrm{Ad}$-invariant Euclidean product
$\kappa^H$ on
$\mathfrak{g}$ whose associated Riemannian density is
the Haar measure $\mathrm{d}^HV_G$, and
let $\mathrm{d}^HV_{\mathfrak{g}}$ be the corresponding
Lebesgue measure on $\mathfrak{g}$.

Let $\exp_G:\mathfrak{g}\rightarrow G$ denote the exponential
map, and let $\mathfrak{g}'\subseteq \mathfrak{g}$
be an open neighbourhood of the origin such that
\begin{equation}
\label{eqn:restriction exp G'}
\exp_G':=
\left.\exp_G\right|_{\mathfrak{g}'}:\mathfrak{g}'\rightarrow G':=\exp_G(\mathfrak{g}')
\end{equation}
is a diffeomorphism. 
Let $\mathcal{P}:\mathfrak{g}'\rightarrow (0,+\infty)$ 
be the smooth function 
defined by
\begin{equation}
\label{eqn:defn P^2}
\exp_G^*\left(\mathrm{d}^HV_G\right)=
\mathcal{P}^2\,\mathrm{d}^HV_{\mathfrak{g}};
\end{equation}
thus $\mathcal{P}(0)=1$. We shall usually write
$\exp_G(\boldsymbol{\xi})=e^{\boldsymbol{\xi}}$
($\boldsymbol{\xi}\in \mathfrak{g}$).

Furthermore, let 
$\mathcal{O}_{\boldsymbol{\nu}_{\boldsymbol{\lambda}}}
\subseteq \mathfrak{g}^\vee$
be the coadjoint orbit through $\boldsymbol{\nu}_{\boldsymbol{\lambda}}:=\boldsymbol{\lambda}+
\boldsymbol{\delta}$, and let $\sigma_{\boldsymbol{\nu}_{\boldsymbol{\lambda}}}$ denote its Konstant-Kirillov 
symplectic structure. The symplectic volume form on
$\mathcal{O}_{\boldsymbol{\nu}_{\boldsymbol{\lambda}}}$
is then 
$$
\mathrm{d}
V_{\mathcal{O}_{\boldsymbol{\nu}_{\boldsymbol{\lambda}}}}
:=
\frac{\sigma_{\boldsymbol{\nu}_{\boldsymbol{\lambda}}}
^{\wedge n_G}}{n_G!},
\quad \text{where}\quad
n_G:=\frac{1}{2}\,\dim(\mathcal{O}_{\boldsymbol{\nu}_{\boldsymbol{\lambda}}})=\frac{1}{2}\,
(d-r_G),
$$
where the latter equality holds because 
$\boldsymbol{\nu}_{\boldsymbol{\lambda}}$ is a regular weight.

The Kirillov character formula expresses the restriction of 
$\chi_{\boldsymbol{\lambda}}$ to $G'$ in 
terms of an integral on
the symplectic manifold $(\mathcal{O}_{\boldsymbol{\nu}_{\boldsymbol{\lambda}}},\sigma_{\boldsymbol{\nu}_{\boldsymbol{\lambda}}})$. Namely, if 
$\boldsymbol{\xi}\in \mathfrak{g}'$ then
\begin{equation}
\label{eqn:kir char form}
\chi_{\boldsymbol{\lambda}}
\left(e^{\boldsymbol{\xi}}\right)
=\frac{1}{(2\,\pi)^{n_G}}\,
\frac{1}{\mathcal{P}(\boldsymbol{\xi})}\,
\int_{\mathcal{O}_{\boldsymbol{\nu}_{\boldsymbol{\lambda}}}}
e^{\imath\,\boldsymbol{\beta}(\boldsymbol{\xi})}\,
\mathrm{d}
V_{\mathcal{O}_{\boldsymbol{\nu}_{\boldsymbol{\lambda}}}}
(\boldsymbol{\beta}).
\end{equation}

\subsubsection{The proof}

We shall first prove the following 
apparently weaker statement.

\begin{thm}
\label{thm:rapid decay moment map diag}
Under the hypothesis of Theorem \ref{thm:rapid decay moment map},
 uniformly for
\begin{equation}
\label{eqn:bound distance momentO1}
x\in K\,:\,\mathrm{dist}_{X^\tau}
\left(x,X^\tau_{\mathcal{O}}\right)
%
\ge \,C\,k^{\epsilon-\frac{1}{2}}
\end{equation}
one has
$$
\Pi^\tau_{k\,\boldsymbol{\lambda}}(x,x)=
O\left(k^{-\infty}\right)\qquad
\text{and}\qquad P^\tau_{k\,\boldsymbol{\lambda}}(x,x)=
O\left(k^{-\infty}\right)
$$
for $k\rightarrow +\infty$.

\end{thm}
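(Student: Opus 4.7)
The plan is to combine the Kirillov character formula with the oscillatory-integral representation of $\Pi^\tau$ and then integrate by parts in the Lie-algebra variable. First I would start from
\begin{equation*}
\Pi^\tau_{k\boldsymbol{\lambda}}(x,x)=d_{k\boldsymbol{\lambda}}\int_G\overline{\chi_{k\boldsymbol{\lambda}}(g)}\,\Pi^\tau\bigl(\mu^\tau_{g^{-1}}(x),x\bigr)\,\mathrm{d}^HV_G(g),
\end{equation*}
and cut off to $g=e^{\boldsymbol{\xi}}$ in a small neighbourhood of the identity; the complement contributes $O(k^{-\infty})$ by the same wave-front / Sugiura-transform argument used in Theorem~\ref{thm:rapid decay compact}, since the action $\tilde{L}$ is free and the singular support of $\Pi^\tau$ is the diagonal.

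On the localized piece, I would insert the Kirillov character formula of \S\ref{sctn:kircharform} and rescale the orbital integration variable by $k$. Setting $\boldsymbol{\lambda}_k:=\boldsymbol{\lambda}+\boldsymbol{\delta}/k$, the regularity of $\boldsymbol{\lambda}$ ensures that $\dim\mathcal{O}_{\boldsymbol{\lambda}_k}=2n_G=d-r_G$ for $k\gg 0$, and one obtains
\begin{equation*}
\chi_{k\boldsymbol{\lambda}}\bigl(e^{\boldsymbol{\xi}}\bigr)=\frac{k^{n_G}}{(2\pi)^{n_G}\,\mathcal{P}(\boldsymbol{\xi})}\int_{\mathcal{O}_{\boldsymbol{\lambda}_k}}e^{\imath k\,\boldsymbol{\gamma}(\boldsymbol{\xi})}\,\mathrm{d}V(\boldsymbol{\gamma}).
\end{equation*}
Inserting the Szeg\H{o} parametrix (\ref{eqn:szego ker})--(\ref{eqn:amplitude pi}) and rescaling $u\mapsto ku$ then yields an oscillatory integral over $(u,\boldsymbol{\xi},\boldsymbol{\gamma})$, with phase scaled by $k$:
\begin{equation*}
\Psi_x(u,\boldsymbol{\xi},\boldsymbol{\gamma}):=u\,\psi^\tau\bigl(\mu^\tau_{e^{-\boldsymbol{\xi}}}(x),x\bigr)-\boldsymbol{\gamma}(\boldsymbol{\xi}),
\end{equation*}
the amplitude collecting $s^\tau$, $\mathcal{P}(\boldsymbol{\xi})^{-1}$ and polynomial prefactors (all of polynomial growth in $k$).

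Using (\ref{eqn:diagonal diff psi}) together with the identity $\alpha^\tau_x\bigl(\boldsymbol{\xi}_{X^\tau}(x)\bigr)=-\langle\tilde{\Phi}(x),\boldsymbol{\xi}\rangle$ following from (\ref{eqn:xiG horiz R}), I would compute
\begin{equation*}
\partial_{\boldsymbol{\xi}}\Psi_x\big|_{\boldsymbol{\xi}=0}=u\,\tilde{\Phi}(x)-\boldsymbol{\gamma}.
\end{equation*}
Since $\boldsymbol{\lambda}$ is regular, the transversality of $\Phi^\tau$ to $\mathcal{C}(\mathcal{O})$ near $X^\tau_{\mathcal{O}}$ converts hypothesis (\ref{eqn:bound distance momentO1}) into $\mathrm{dist}\bigl(\tilde{\Phi}(x),\mathcal{C}(\mathcal{O})\bigr)\ge C_1\,k^{\epsilon-\frac{1}{2}}$, and since $\|\tilde{\Phi}(x)\|_{\kappa_e^\vee}=\tau$ while $\|\boldsymbol{\gamma}\|_{\kappa_e^\vee}\to\|\boldsymbol{\lambda}\|_{\kappa_e^\vee}$, a short geometric computation gives
\begin{equation*}
\bigl\|u\,\tilde{\Phi}(x)-\boldsymbol{\gamma}\bigr\|\ge c_2\,k^{\epsilon-\frac{1}{2}}
\end{equation*}
uniformly for $(u,\boldsymbol{\gamma})$ in any compact subset of $(0,\infty)\times\mathcal{O}_{\boldsymbol{\lambda}_k}$. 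An analogue of Lemma~\ref{lem:partial theta bdd bl} (integration by parts in $u$) restricts the $u$-range to such a compact interval up to an $O(k^{-\infty})$ error.

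The final step is iterated integration by parts in $\boldsymbol{\xi}$. Here I would exploit $\Im\psi^\tau\bigl(\mu^\tau_{e^{-\boldsymbol{\xi}}}(x),x\bigr)\ge c_3\,|\boldsymbol{\xi}|^2$ from (\ref{eqn:bd dist psitau xy}) together with freeness of $\mu^\tau$: the resulting factor $e^{-c_3 k u|\boldsymbol{\xi}|^2}$ effectively confines $\boldsymbol{\xi}$ to $|\boldsymbol{\xi}|\le k^{-\frac{1}{2}+\delta}$ for any fixed $\delta\in(0,\epsilon)$. On this shrinking region the nonlinear correction to $\partial_{\boldsymbol{\xi}}\Psi_x$ is $O(k^{-\frac{1}{2}+\delta})$, which by the choice $\delta<\epsilon$ is dominated by the lower bound $c_2\,k^{\epsilon-\frac{1}{2}}$; hence $\|\partial_{\boldsymbol{\xi}}\Psi_x\|\ge\tfrac{c_2}{2}\,k^{\epsilon-\frac{1}{2}}$ throughout that region. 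Each IBP in $\boldsymbol{\xi}$ then gains a factor $O(k^{-\frac{1}{2}-\epsilon})$, while derivatives of the $\boldsymbol{\xi}$-cutoff cost at most $k^{\frac{1}{2}-\delta}$, for a net factor $O(k^{-(\delta+\epsilon)})$ per step; iterating produces $O(k^{-\infty})$, absorbing the polynomial prefactors $d_{k\boldsymbol{\lambda}}\cdot k^{n_G+1}$. The argument for $P^\tau_{k\boldsymbol{\lambda}}$ is identical, using (\ref{eqn:Ptau ker})--(\ref{eqn:asy exp Ptau}) in place of the Szeg\H{o} parametrix. The main obstacle is precisely the balance between the two cut-off scales---$|\boldsymbol{\xi}|\lesssim k^{-1/2+\delta}$ forced by the Gaussian damping of $\Im\psi^\tau$, against the gradient scale $k^{\epsilon-1/2}$---which is what makes the borderline exponent $1/2$ in (\ref{eqn:bound distance momentO1}) natural and the assumption $\epsilon>0$ essential.
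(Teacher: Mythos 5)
Your proof is correct and follows essentially the same approach as the paper: cut off to a neighbourhood of the identity, substitute the Kirillov character formula and the Szeg\H{o} parametrix, rescale $u\mapsto ku$, observe $\partial_{\boldsymbol{\xi}}\Psi_x|_{\boldsymbol{\xi}=0}=u\,\tilde{\Phi}(x)-\boldsymbol{\gamma}$, convert hypothesis (\ref{eqn:bound distance momentO1}) into a lower bound $\|u\,\tilde{\Phi}(x)-\boldsymbol{\gamma}\|\gtrsim k^{\epsilon-1/2}$, and finish by iterated integration by parts in $\boldsymbol{\xi}$. The only structural variation is in handling the region $|\boldsymbol{\xi}|\gg k^{-1/2}$: the paper cuts off at scale $k^{\epsilon-1/2}$ and disposes of the outer piece by integrating by parts in $u$ (using $|\partial_u\Gamma_x|\geq\Im\psi^\tau\gtrsim k^{2\epsilon-1}$), then rescales $\boldsymbol{\xi}\mapsto k^{-1/2}\boldsymbol{\xi}$ before the final IBP, whereas you invoke the pointwise Gaussian damping $e^{-cku|\boldsymbol{\xi}|^2}$ to confine $\boldsymbol{\xi}$ to $|\boldsymbol{\xi}|\leq k^{\delta-1/2}$ with $\delta<\epsilon$ and integrate by parts in the unrescaled variable; both give $O(k^{-\infty})$, yours via a slightly cruder but more direct damping estimate, the paper's via a cleaner rescaled phase with bounded higher derivatives.
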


\begin{proof}
Let us start with (\ref{eqn:Piklambda}) with 
$x=y$.
We may assume without loss that $x$ belongs to a small tubular
neighbourhood of $X^\tau_{\mathcal{O}}$. 
Perhaps replacing
$x$ by $\mu^\tau_h(x)$ for some $h\in G$, we may further 
assume that $\Phi^\tau(x)$ belongs to a small conic neighourhood of $\mathbb{R}_+\cdot \boldsymbol{\lambda}$,
hence that (\ref{eqn:Phitauy}) holds with $y=x$.

Furthermore, fix $r>0$ sufficiently small
(how small may depend on $K$) and 
let $\rho_2\in \mathcal{C}^\infty(G)$
be supported in the ball
$G_r\subseteq G$
centered at $e$ of radius
$2\,r$ (say, in the Riemannian metric
$\kappa$) and identically equal to $1$ on the ball of
radius $r$. Since $\tilde{\mu}^\tau$ is free on $K$,
the asymptotics of 
$\Pi^\tau_{k\,\boldsymbol{\lambda}}(x,x)$ are unchanged, 
if the integrand in (\ref{eqn:Piklambda}) is multiplied
by $\rho_2(g)$. Thus
\begin{equation}
\label{eqn:Piklambda G}
\Pi^\tau_{k\,\boldsymbol{\lambda}}(x,x)\sim
d_{k\boldsymbol{\lambda}}\,
\int_{G_r} \overline{\chi_{k\,\boldsymbol{\lambda}}(g)}\,
\rho_2(g)\,
\Pi^\tau
\left(\mu^\tau_{g^{-1}}(x),x   \right)\,
\mathrm{d}^HV_G(g).
\end{equation}
We may assume without loss that $G_r\subseteq G'$; hence
integration over $G_r$ may be transferred to 
$\mathfrak{g}_r\subseteq \mathfrak{g}$, the open ball
centered at the origin for $\kappa_e$, by the diffeomorphism
(\ref{eqn:restriction exp G'}). In view of  
(\ref{eqn:defn P^2}), (\ref{eqn:Piklambda G}) may be rewritten
\begin{equation}
\label{eqn:Piklambda g}
\Pi^\tau_{k\,\boldsymbol{\lambda}}(x,x)\sim
d_{k\boldsymbol{\lambda}}\,
\int_{\mathfrak{g}_r} \overline{\chi_{k\,\boldsymbol{\lambda}}(e^{\boldsymbol{\xi}})}\,
\rho_2(e^{\boldsymbol{\xi}})\,
\Pi^\tau
\left(\mu^\tau_{e^{-\boldsymbol{\xi}}}(x),x   \right)\,
\mathcal{P}(\boldsymbol{\xi})^2\,
\mathrm{d}^HV_{\mathfrak{g}}(\boldsymbol{\xi}).
\end{equation}

Next let us make use of (\ref{eqn:kir char form}), with
$k\,\boldsymbol{\lambda}$ in place of $\boldsymbol{\lambda}$.
Namely, for $\boldsymbol{\xi}\in \mathfrak{g}'$ we have
\begin{equation}
\label{eqn:kir char form k}
\chi_{k\,\boldsymbol{\lambda}}
\left(e^{\boldsymbol{\xi}}\right)
=\frac{1}{(2\,\pi)^{n_G}}\,
\frac{1}{\mathcal{P}(\boldsymbol{\xi})}\,
\int_{\mathcal{O}_{\boldsymbol{\nu}_{k\,\boldsymbol{\lambda}}}}
e^{\imath\,\boldsymbol{\beta}(\boldsymbol{\xi})}\,
\mathrm{d}
V_{\mathcal{O}_{\boldsymbol{\nu}_{k\,\boldsymbol{\lambda}}}}
(\boldsymbol{\beta}).
\end{equation}

We have
$$
\mathcal{O}_{\boldsymbol{\nu}_{k\,\boldsymbol{\lambda}}}=
\mathcal{O}_{k\,\boldsymbol{\lambda}+
\boldsymbol{\delta}}
=k\,\mathcal{O}_{\boldsymbol{\lambda}+\frac{1}{k}\,
\boldsymbol{\delta}}\subseteq \mathfrak{g}^\vee.
$$

We can therefore transfer integration on 
$\mathcal{O}_{\boldsymbol{\nu}_{k\,\boldsymbol{\lambda}}}$
to $\mathcal{O}_{\boldsymbol{\lambda}+\frac{1}{k}\,
\boldsymbol{\delta}}$ by the dilation 
$\mathfrak{d} _k:
\boldsymbol{\beta}\in \mathcal{O}_{\boldsymbol{\lambda}+\frac{1}{k}\,
\boldsymbol{\delta}}\mapsto k\,\boldsymbol{\beta}
\in \mathcal{O}_{\boldsymbol{\nu}_{k\,\boldsymbol{\lambda}}}$.
Since 
$$\mathfrak{d} _k^*(\sigma_{\mathcal{O}_{\boldsymbol{\nu}_{k\,\boldsymbol{\lambda}}}})=
k\,\sigma_{\mathcal{O}_{\boldsymbol{\lambda}+\frac{1}{k}\,
\boldsymbol{\delta}}},
$$ 
we can rewrite (\ref{eqn:kir char form}) as 
\begin{equation}
\label{eqn:kir char form rescaled}
\chi_{k\,\boldsymbol{\lambda}}
\left(e^{\boldsymbol{\xi}}\right)
=\left(\frac{k}{2\,\pi}\right)^{n_G}\,
\frac{1}{\mathcal{P}(\boldsymbol{\xi})}\,
\int_{\mathcal{O}_{\boldsymbol{\lambda}+\frac{1}{k}\,
\boldsymbol{\delta}}}
e^{\imath\,k\,\boldsymbol{\beta}(\boldsymbol{\xi})}\,
\mathrm{d}
V_{\mathcal{O}_{\boldsymbol{\lambda}+\frac{1}{k}\,
\boldsymbol{\delta}}}
(\boldsymbol{\beta}).
\end{equation}
Finally, we transfer integration from the
variable orbit $\mathcal{O}_{\boldsymbol{\lambda}+\frac{1}{k}\,
\boldsymbol{\delta}}$ to the fixed orbit 
$\mathcal{O}=\mathcal{O}_{\boldsymbol{\lambda}}$.
Since both $\boldsymbol{\lambda}$ and $\boldsymbol{\delta}$
are regular weights, there is a well-defined equivariant diffeomorphism
$L_k:\mathcal{O}_{\boldsymbol{\lambda}}\rightarrow
\mathcal{O}_{\boldsymbol{\lambda}+\frac{1}{k}\,
\boldsymbol{\delta}}$, defined as follows.
If $\boldsymbol{\beta}=\mathrm{Coad}_g(\boldsymbol{\lambda})$
for some $g\in G$, let us set
\begin{eqnarray}
\label{eqn:delta beta}
\boldsymbol{\delta}_{\boldsymbol{\beta}}&:=&
\mathrm{Coad}_g(\boldsymbol{\delta})\in \mathcal{O}_{\boldsymbol{\delta}}\\
L_k(\boldsymbol{\beta})&:=&\mathrm{Coad}_g\left(\boldsymbol{\lambda}+\frac{1}{k}\,
\boldsymbol{\delta}\right)=\boldsymbol{\beta}+
\frac{1}{k}\,\boldsymbol{\delta}_{\boldsymbol{\beta}}\in \mathcal{O}_{\boldsymbol{\lambda}+\frac{1}{k}\,
\boldsymbol{\delta}}.
\nonumber
\end{eqnarray}
Then
\begin{equation}
\label{eqn:Vk beta exp}
L_k^*(\mathrm{d}
V_{\mathcal{O}_{\boldsymbol{\lambda}+\frac{1}{k}\,
\boldsymbol{\delta}}})=
\mathcal{V}_k\,
\mathrm{d}
V_{\mathcal{O}_{\boldsymbol{\lambda}}},\quad
\text{where}\quad
\mathcal{V}_k(\boldsymbol{\beta})\sim 1+
\sum_{j\ge 1}\,k^{-j}\,V_j(\boldsymbol{\beta}).
\end{equation}
Thus (\ref{eqn:kir char form rescaled}) can be rewritten
\begin{eqnarray}
\label{eqn:kir char form rescaled transl}
\chi_{k\,\boldsymbol{\lambda}}
\left(e^{\boldsymbol{\xi}}\right)
&=&\left(\frac{k}{2\,\pi}\right)^{n_G}\,
\frac{1}{\mathcal{P}(\boldsymbol{\xi})}\,
\int_{\mathcal{O}_{\boldsymbol{\lambda}}}
e^{\imath\,k\,L_k(\boldsymbol{\beta})(\boldsymbol{\xi})
}\,\mathcal{V}_k(\boldsymbol{\beta})\,
\mathrm{d}
V_{\mathcal{O}_{\boldsymbol{\lambda}}}
(\boldsymbol{\beta})\\
&=&\left(\frac{k}{2\,\pi}\right)^{n_G}\,
\frac{1}{\mathcal{P}(\boldsymbol{\xi})}\,
\int_{\mathcal{O}_{\boldsymbol{\lambda}}}
e^{\imath\,[k\,\boldsymbol{\beta}(\boldsymbol{\xi})+
\boldsymbol{\delta}_{\boldsymbol{\beta}}(\boldsymbol{\xi})]
}\,\mathcal{V}_k(\boldsymbol{\beta})\,
\mathrm{d}
V_{\mathcal{O}_{\boldsymbol{\lambda}}}
(\boldsymbol{\beta}).\nonumber
\end{eqnarray}

Let us insert (\ref{eqn:kir char form rescaled transl})
in (\ref{eqn:Piklambda g}), and then make use of the description of 
$\Pi^\tau$ as an FIO (\S \ref{scnt:szego parametrix}).
We obtain
\begin{eqnarray}
\label{eqn:Piklambda g expanded}
\Pi^\tau_{k\,\boldsymbol{\lambda}}(x,x)&\sim&
d_{k\boldsymbol{\lambda}}\,\left(\frac{k}{2\,\pi}\right)^{n_G}\,
\int_{\mathfrak{g}}\,
\mathrm{d}^HV_{\mathfrak{g}}(\boldsymbol{\xi})\,\int_{\mathcal{O}_{\boldsymbol{\lambda}}}\,
\mathrm{d}
V_{\mathcal{O}_{\boldsymbol{\lambda}}}(\boldsymbol{\beta})\\
&&\left[
 e^{-\imath\,[k\,\boldsymbol{\beta}(\boldsymbol{\xi})+
\boldsymbol{\delta}_{\boldsymbol{\beta}}(\boldsymbol{\xi})]
}\,
\rho_2(e^{\boldsymbol{\xi}})\,
\Pi^\tau
\left(\mu^\tau_{e^{-\boldsymbol{\xi}}}(x),y   \right)\,\mathcal{V}_k(\boldsymbol{\beta})\,
\mathcal{P}(\boldsymbol{\xi})\right]\nonumber\\
&=&d_{k\boldsymbol{\lambda}}\,\left(\frac{k}{2\,\pi}\right)^{n_G}\,
\int_{\mathfrak{g}}\,
\mathrm{d}^HV_{\mathfrak{g}}(\boldsymbol{\xi})\,\int_{\mathcal{O}_{\boldsymbol{\lambda}}}\,
\mathrm{d}
V_{\mathcal{O}_{\boldsymbol{\lambda}}}(\boldsymbol{\beta})
\,\int_0^{+\infty}\,\mathrm{d}u
\nonumber\\
&&\left[
 e^{\imath\,\left[u\,\psi^\tau\left(\mu^\tau_{e^{-\boldsymbol{\xi}}}(x),x   \right)-k\,\boldsymbol{\beta}(\boldsymbol{\xi})-
\boldsymbol{\delta}_{\boldsymbol{\beta}}(\boldsymbol{\xi})\right]
}\,
\rho_2(e^{\boldsymbol{\xi}})\,\,
s^\tau\left(\mu^\tau_{e^{-\boldsymbol{\xi}}}(x),x  ,u\right)\,\mathcal{V}_k(\boldsymbol{\beta})\,
\mathcal{P}(\boldsymbol{\xi})\right].\nonumber
\end{eqnarray}
Performing the change of variable $u\mapsto k\,u$, we finally obtain
\begin{eqnarray}
\label{eqn:Piklambda g expanded resc}
\Pi^\tau_{k\,\boldsymbol{\lambda}}(x,x)&\sim&
k\,d_{k\boldsymbol{\lambda}}\,\left(\frac{k}{2\,\pi}\right)^{n_G}\,
\int_{\mathfrak{g}}\,
\mathrm{d}^HV_{\mathfrak{g}}(\boldsymbol{\xi})\,\int_{\mathcal{O}_{\boldsymbol{\lambda}}}\,
\mathrm{d}
V_{\mathcal{O}_{\boldsymbol{\lambda}}}(\boldsymbol{\beta})
\,\int_0^{+\infty}\,\mathrm{d}u
\\
&&\left[
 e^{\imath\,k\,\left[u\,\psi^\tau\left(\mu^\tau_{e^{-\boldsymbol{\xi}}}(x),x   \right)-\boldsymbol{\beta}(\boldsymbol{\xi})\right]
}\,e^{-
\boldsymbol{\delta}_{\boldsymbol{\beta}}(\boldsymbol{\xi})}
\rho_2(e^{\boldsymbol{\xi}})\,\,
s^\tau\left(\mu^\tau_{e^{-\boldsymbol{\xi}}}(x),x  ,k\,u\right)\,\mathcal{V}_k(\boldsymbol{\beta})\,
\mathcal{P}(\boldsymbol{\xi})\right]\nonumber\\
&=&
k\,d_{k\boldsymbol{\lambda}}\,\left(\frac{k}{2\,\pi}\right)^{n_G}\,
\int_{\mathfrak{g}}\,
\mathrm{d}^HV_{\mathfrak{g}}(\boldsymbol{\xi})\,\int_{\mathcal{O}_{\boldsymbol{\lambda}}}\,
\mathrm{d}
V_{\mathcal{O}_{\boldsymbol{\lambda}}}(\boldsymbol{\beta})
\,\int_0^{+\infty}\,\mathrm{d}u
\nonumber\\
&&\left[
 e^{\imath\,k\,\Gamma_x(u,\boldsymbol{\beta},\boldsymbol{\xi})
}\,\tilde{\mathcal{B}}_{x,k}(u,\boldsymbol{\beta},\boldsymbol{\xi})\right],\nonumber
\end{eqnarray}
where
\begin{eqnarray}
\label{eqn:defn of Gamma e B}
\Gamma_x(u,\boldsymbol{\beta},\boldsymbol{\xi})&:=&
u\,\psi^\tau\left(\mu^\tau_{e^{-\boldsymbol{\xi}}}(x),x   \right)-\boldsymbol{\beta}(\boldsymbol{\xi})
\\
\tilde{\mathcal{B}}_{x,k}(u,\boldsymbol{\beta},\boldsymbol{\xi})&:=&e^{-
\boldsymbol{\delta}_{\boldsymbol{\beta}}(\boldsymbol{\xi})}
\tilde{\rho}(\boldsymbol{\xi})\,
s^\tau\left(\mu^\tau_{e^{-\boldsymbol{\xi}}}(x),x  ,k\,u\right)\,\mathcal{V}_k(\boldsymbol{\beta})\,
\mathcal{P}(\boldsymbol{\xi}),\nonumber
\end{eqnarray}
where we have set 
$\tilde{\rho}(\boldsymbol{\xi}):=\rho_2(e^{\boldsymbol{\xi}})$,
and $\boldsymbol{\delta}_{\boldsymbol{\beta}}$ is as in (\ref{eqn:delta beta}).

The right hand side of (\ref{eqn:Piklambda g expanded resc}) is an oscillatory 
integral with phase $\Gamma_x$ and amplitude $\tilde{\mathcal{B}}_{x,k}$.
Our next goal is to prove that we may reduce to a compact domain
of integration without altering the asymptotics.

We have $x=g\,\exp^{\tilde{G}}(\imath\,\boldsymbol{\eta})$,
for some $\boldsymbol{\eta}\in \mathfrak{g}$ of norm $\tau$.
Then by the discussion of \S \ref{sctn:splitting alpha} 
$$
\boldsymbol{\xi}_{\tilde{G}}(x)=\boldsymbol{\xi}_{\tilde{G}}^\sharp(x)
-\tilde{\varphi}^{\boldsymbol{\xi}}(x)\,\mathcal{R}(x)
=\boldsymbol{\xi}_{\tilde{G}}^\sharp(x)
-\kappa_e\big(\mathrm{Ad}_g(\boldsymbol{\eta}),\boldsymbol{\xi}\big)
\,\mathcal{R}(x).
$$
Hence,
$$
\alpha_x\big( \boldsymbol{\xi}_{\tilde{G}}(x)  \big)=
-\kappa_e\big(\mathrm{Ad}_g(\boldsymbol{\eta}),\boldsymbol{\xi}\big)
=-\mathrm{Ad}_g(\boldsymbol{\eta})_\kappa\big(\boldsymbol{\xi}\big).
$$
On the support of $\tilde{\rho}$ we have 
$\mathrm{dist}_{X^\tau}\left(\mu^\tau_{e^{-\boldsymbol{\xi}}}(x),x   \right)
=O(r)$.
Therefore, in view of (\ref{eqn:diagonal diff psi}), we conclude that 
\begin{equation}
\label{eqn:derivative xi psi}
\partial_{\boldsymbol{\xi}}\psi^\tau\left(\mu^\tau_{e^{-\boldsymbol{\xi}}}(x),x   \right)=\mathrm{Ad}_g(\boldsymbol{\eta})_\kappa+O(r)\quad
\text{if}\quad 
x=g\,\exp^{\tilde{G}}(\imath\,\boldsymbol{\eta}).
\end{equation}
In view of (\ref{eqn:defn of Gamma e B}), 
on the domain of integration
of (\ref{eqn:Piklambda g expanded resc}) we have 
\begin{equation}
\label{eqn:der xi Gammax}
\partial_{\boldsymbol{\xi}}\Gamma_x(u,\boldsymbol{\beta},\boldsymbol{\xi})=
u\,\left[\mathrm{Ad}_g(\boldsymbol{\eta})_\kappa+O(r)\right]-
\boldsymbol{\beta}.
\end{equation}
Here $\boldsymbol{\beta}=\mathrm{Coad}_h(\boldsymbol{\lambda})$ for some
$h\in G$, and we may assume that $0<r\ll \|\boldsymbol{\lambda}\|$.
We conclude that for some 
$\epsilon_0>0$ we have
$\|\partial_{\boldsymbol{\xi}}\Gamma_x(u,\boldsymbol{\beta},\boldsymbol{\xi})\|\ge \epsilon_0$ if $0<u\ll 1$
and $\|\partial_{\boldsymbol{\xi}}\Gamma_x(u,\boldsymbol{\beta},\boldsymbol{\xi})\|\ge u\,\epsilon_0$ if $u\gg 1$.
Using a standard  argument based on \lq integration by parts\rq\, 
in the compactly supported variable $\boldsymbol{\xi}$, we conclude the following.

\begin{lem}
\label{lem:compact supp u}
Suppose $D\gg 0$ and let $\rho_3\in \mathcal{C}^\infty_0(1/(2\,D),2\,D)$
be $\equiv 1$ on $(1/D,D)$. Then the asymptotics of
(\ref{eqn:Piklambda g expanded resc}) are unchanged, if 
the amplitude in (\ref{eqn:defn of Gamma e B}) is replaced by
$$\mathcal{B}_{x,k}(u,\boldsymbol{\beta},\boldsymbol{\xi}):=
\tilde{\mathcal{B}}_{x,k}(u,\boldsymbol{\beta},\boldsymbol{\xi})\,\rho_3(u)
.$$
\end{lem}

Let us define, for $k\gg 0$,
\begin{equation}
\label{eqn:cut-off rescaled}
\tilde{\rho}_k(\boldsymbol{\xi}):=
\tilde{\rho}\left(k^{\frac{1}{2}-\epsilon}\,\boldsymbol{\xi}\right).
\end{equation}
Thus $\tilde{\rho}_k$ is supported where 
$\|\boldsymbol{\xi}\|\le 2\,r\,k^{\epsilon-\frac{1}{2}}$,
and identically $\equiv 1$ where $\|\boldsymbol{\xi}\|\le 
r\,k^{\epsilon-\frac{1}{2}}$.
We conclude from (\ref{eqn:Piklambda g expanded resc}) that
$$
\Pi^\tau_{k\,\boldsymbol{\lambda}}(x,x)\sim 
\Pi^\tau_{k\,\boldsymbol{\lambda}}(x,x)_1+
\Pi^\tau_{k\,\boldsymbol{\lambda}}(x,x)_2,
$$
where $\Pi^\tau_{k\,\boldsymbol{\lambda}}(x,x)_1$ (respectively,
$\Pi^\tau_{k\,\boldsymbol{\lambda}}(x,x)_2$) is as in  
(\ref{eqn:Piklambda g expanded resc}), except that $\mathcal{B}_{x,k}$ in 
(\ref{eqn:defn of Gamma e B}) has been multiplied by $\tilde{\rho}_k(\boldsymbol{\xi})$
(respectively, by $1-\tilde{\rho}_k(\boldsymbol{\xi})$).

\begin{lem}
$\Pi^\tau_{k\,\boldsymbol{\lambda}}(x,x)_2=O\left(k^{-\infty}\right)$
as $k\rightarrow+\infty$.
\end{lem}
In the following argument, $C^\tau_j$ will denote suitable positive constants (uniform on $X^\tau_{\mathcal{O}}$ for $\boldsymbol{\xi}$ small).
\begin{proof}
In view of (\ref{eqn:bd dist psitau xy}) and 
 (\ref{eqn:defn of Gamma e B}),
\begin{eqnarray*}
\left|\partial_u\Gamma_x(u,\boldsymbol{\beta},\boldsymbol{\xi})\right|&=&
\left|\psi^\tau\left(\mu^\tau_{e^{-\boldsymbol{\xi}}}(x),x   \right)\right|
\ge \Im\left( \psi^\tau\left(\mu^\tau_{e^{-\boldsymbol{\xi}}}(x),x   \right)  \right)\\
&\ge &C^\tau\,\mathrm{dist}_{X^\tau}\left(\mu^\tau_{e^{-\boldsymbol{\xi}}}(x),x   \right) ^2\ge C_1^\tau\,\|\boldsymbol{\xi}\|^2.
\nonumber
\end{eqnarray*}
Hence, on the support of $1-\tilde{\rho}_k(\boldsymbol{\xi})$ we have
\begin{eqnarray}
\left|\partial_u\Gamma_x(u,\boldsymbol{\beta},\boldsymbol{\xi})\right|
\ge C_2^\tau\,k^{2\,\epsilon-1}.
\nonumber
\end{eqnarray}
Thus, by iteratively integrating by parts in $\mathrm{d}u$ 
(which is now legitimate by Lemma \ref{lem:compact supp u})
we introduce at each step a 
factor which is bounded by $C_3^\tau\,k^{-2\,\epsilon}$.
The claim follows.

\end{proof}

We conclude that 
\begin{equation}
\label{eqn:Pi is Pi1}
\Pi^\tau_{k\,\boldsymbol{\lambda}}(x,x)\sim 
\Pi^\tau_{k\,\boldsymbol{\lambda}}(x,x)_1.
\end{equation}
Let us perform the rescaling $\boldsymbol{\xi}\mapsto k^{-1/2}\,\boldsymbol{\xi}$.
We can rewrite (\ref{eqn:Piklambda g expanded resc}) as follows:
\begin{eqnarray}
\label{eqn:Piklambda g expanded resc resc}
\Pi^\tau_{k\,\boldsymbol{\lambda}}(x,x)&\sim&
k^{1-\frac{d_G}{2}}\,d_{k\boldsymbol{\lambda}}\,\left(\frac{k}{2\,\pi}\right)^{n_G}\,
\int_{\mathfrak{g}}\,
\mathrm{d}^HV_{\mathfrak{g}}(\boldsymbol{\xi})\,\int_{\mathcal{O}_{\boldsymbol{\lambda}}}\,
\mathrm{d}
V_{\mathcal{O}_{\boldsymbol{\lambda}}}(\boldsymbol{\beta})
\,\int_0^{+\infty}\,\mathrm{d}u
\nonumber
\\
&&\left[
 e^{\imath\,k\,\Gamma_x(u,\boldsymbol{\beta},\boldsymbol{\xi}/\sqrt{k})
}\,\mathcal{B}_{x,k}\left(u,\boldsymbol{\beta},\frac{\boldsymbol{\xi}}{\sqrt{k}}\right)\,\tilde{\rho}\left(k^{-\epsilon}\,\boldsymbol{\xi}\right)\right];
\end{eqnarray}
integration in $\boldsymbol{\xi}$ is now over an expanding ball of
radius $O\left(k^{\epsilon}\right)$, and integration in $u$ is
over $(1/D,D)$.

Let us fix a system of NHLC's at $x=g\,\exp^{\tilde{G}}(\imath\,\boldsymbol{\eta})$.
By the discussion in \S 2.5 of \cite{gp24},
we have
\begin{eqnarray}
\label{eqn:action diagon}
\lefteqn{\mu^\tau_{e^{-\boldsymbol{\xi}/\sqrt{k}}}(x)}\\
&=&
x+\left(
\frac{1}{\sqrt{k}}\,
\kappa_e\big( \mathrm{Ad}_g(\boldsymbol{\eta}),\boldsymbol{\xi}\big)
+R_3\left( \frac{\boldsymbol{\xi}}{\sqrt{k}}  \right),
-\boldsymbol{\xi}^\sharp_{\tilde{G}}(x)+\mathbf{R}_2\left( \frac{\boldsymbol{\xi}}{\sqrt{k}}  \right)\right).\nonumber
\end{eqnarray}
In view of Proposition 48 of \cite{p24}, we obtain
\begin{eqnarray}
\label{eqn:psi act diag}
\lefteqn{\imath\,k\,\psi^\tau\left(\mu^\tau_{e^{-\boldsymbol{\xi}/\sqrt{k}}}(x),x   \right)}\\
&=&\imath\,\sqrt{k}\cdot\kappa_e\big( \mathrm{Ad}_g(\boldsymbol{\eta}),\boldsymbol{\xi}\big)
-\frac{1}{4\,\tau^2}\,\kappa_e\big( \mathrm{Ad}_g(\boldsymbol{\eta}),\boldsymbol{\xi}\big)^2-\frac{1}{2}\,\|\boldsymbol{\xi}^\sharp_{\tilde{G}}(x)\|^2+k\,R_3\left( \frac{\boldsymbol{\xi}}{\sqrt{k}} \right). 
\nonumber
\end{eqnarray}
In particular,
\begin{equation}
\label{eqn:real part neg}
\Re\left(\imath\,k\,\psi^\tau\left(\mu^\tau_{e^{-\boldsymbol{\xi}/\sqrt{k}}}(x),x   \right)   \right)\le -{C'}^\tau\,\|\boldsymbol{\xi}\|^2_{\kappa_e}
\end{equation}
for some constant ${C'}^\tau>0$.

We can then rewrite (\ref{eqn:Piklambda g expanded resc resc}) in the 
following form:
\begin{eqnarray}
\label{eqn:Piklambda g expanded resc resc xi}
\Pi^\tau_{k\,\boldsymbol{\lambda}}(x,x)&\sim&
k^{1-\frac{d_G}{2}}\,d_{k\boldsymbol{\lambda}}\,\left(\frac{k}{2\,\pi}\right)^{n_G}\,
\int_{\mathfrak{g}}\,
\mathrm{d}^HV_{\mathfrak{g}}(\boldsymbol{\xi})\,\int_{\mathcal{O}_{\boldsymbol{\lambda}}}\,
\mathrm{d}
V_{\mathcal{O}_{\boldsymbol{\lambda}}}(\boldsymbol{\beta})
\,\int_0^{+\infty}\,\mathrm{d}u
\nonumber
\\
&&\left[
 e^{\imath\,\sqrt{k}\,\Upsilon_x(u,\boldsymbol{\beta},\boldsymbol{\xi})
}\,\mathcal{H}_{x,k}
\left(u,\boldsymbol{\beta},\boldsymbol{\xi}\right)\right],
\end{eqnarray}
where 
\begin{eqnarray}
\label{eqn:defn of Upsilon}
\Upsilon_x(u,\boldsymbol{\beta},\boldsymbol{\xi})&:=&
u\,\kappa_e\big( \mathrm{Ad}_g(\boldsymbol{\eta}),\boldsymbol{\xi}\big)
-\boldsymbol{\beta}(\boldsymbol{\xi})=\left(u\,
\tilde{\Phi}^\tau(x)-\boldsymbol{\beta}\right)(\boldsymbol{\xi}),
\\
\mathcal{H}_{x,k}
\left(u,\boldsymbol{\beta},\boldsymbol{\xi}\right)&:=&
e^{-\frac{u}{4\,\tau^2}\,\kappa_e\big( \mathrm{Ad}_g(\boldsymbol{\eta}),\boldsymbol{\xi}\big)^2-\frac{u}{2}\,\|\boldsymbol{\xi}^\sharp_{\tilde{G}}(x)\|^2+k\,u\,R_3\left( \frac{\boldsymbol{\xi}}{\sqrt{k}} \right)}
\nonumber
\\
&&\cdot\mathcal{B}_{x,k}\left(u,\boldsymbol{\beta},\frac{\boldsymbol{\xi}}{\sqrt{k}}\right)\,\tilde{\rho}\left(k^{-\epsilon}\,\boldsymbol{\xi}\right).
\nonumber
\end{eqnarray}
Under the hypothesis of the Theorem, 
$\mathrm{dist}_{X^\tau}(x,X^\tau_{\mathcal{O}})
\ge C\,k^{\epsilon-\frac{1}{2}}$, and therefore
for some constant $C'>0$
$$
\left\| \tilde{\Phi}^\tau(x)-\boldsymbol{\beta}  \right\|_{\kappa^\vee}=
\|\mathrm{Coad}_g(\boldsymbol{\eta})-\boldsymbol{\beta}\|_{\kappa^\vee}
\ge C'\,k^{\epsilon-\frac{1}{2}}
$$ for any 
$\boldsymbol{\beta}\in \mathcal{O}^\tau$ and $g\in G$.
It follows that (perhaps changing
$C'>0$) we also have 
\begin{equation}
\label{eqn:distance beta eta g u}
\|u\,\mathrm{Coad}_g(\boldsymbol{\eta})-\boldsymbol{\beta}\|
\ge C'\,k^{\epsilon-\frac{1}{2}}
\quad\text{for any}
\quad\boldsymbol{\beta}\in \mathcal{O}_{\boldsymbol{\lambda}},\,u\in (1/D,D),
\,g\in G.
\end{equation}
In other words, by (\ref{eqn:defn of Upsilon}) we have
\begin{equation}
\label{eqn:bound partial xi upsilon}
\left\|\partial_{\boldsymbol{\xi}}\Upsilon_x(u,\boldsymbol{\beta},\boldsymbol{\xi})\right\|\ge 
C'\,k^{\epsilon-\frac{1}{2}}.
\end{equation}
Hence, by a standard argument, iteratively integrating by parts in $\boldsymbol{\xi}$ we introduce at 
each step a factor which bounded by $C''\,k^{-\epsilon+\frac{1}{2}}/
\sqrt{k}=C''\,k^{-\epsilon}$. Then claim follows.

\end{proof}

\begin{proof}
[Proof of Theorem \ref{thm:rapid decay moment map}]
To see why Theorem \ref{thm:rapid decay moment map} follows from
Theorem \ref{thm:rapid decay moment map diag}, recall that
in view of the results in \cite{cr1} and (\ref{eqn:eigenvalue of lambda})
one has an \textit{a priori} uniform
bound $\Pi^\tau_{k\,\boldsymbol{\lambda}}(y,y)\le 
C^\tau_{\boldsymbol{\lambda}}\,k^N $ ($y\in X^\tau$) for some $N$ ($N=2d-2$ will do).
On the other hand,
$$
\left|\Pi^\tau_{k\,\boldsymbol{\lambda}}(x,y)\right|\le 
\Pi^\tau_{k\,\boldsymbol{\lambda}}(x,x)^{\frac{1}{2}}\,\Pi^\tau_{k\,\boldsymbol{\lambda}}(y,y)^{\frac{1}{2}}.
$$
Therefore, since $,\Pi^\tau_{k\,\boldsymbol{\lambda}}(y,y)$ has at most
polynomial growth and $\Pi^\tau_{k\,\boldsymbol{\lambda}}(x,x)
=O\left(k^{-\infty}\right)$, we also have $\Pi^\tau_{k\,\boldsymbol{\lambda}}(x,y)
=O\left(k^{-\infty}\right)$ for $k\rightarrow +\infty$.
The argument for $P^\tau_{k\,\boldsymbol{\lambda}}$ is similar.
\end{proof}

\subsection{Proof of Theorem \ref{thm:rescaled asympt}}

\subsubsection{Preliminaries}
\label{sctn:reductions initial}

If $x\in X^\tau$ and $h\in G$,
composing a system of NHLC's at $x\in X^\tau$
with $\mu^\tau_h$ yields a system of NHLC's 
at $\mu^\tau_h(x)\in X^\tau$. Furthermore, 
since $\alpha^\tau$, $\hat{\kappa}$, and $X^\tau_{\mathcal{O}}$ are $G$-invariant, $\mu_h$ preserves
the relevant local decompositions of 
$T_xX^\tau$ and
$\mathcal{H}_x$ (see
(\ref{eqn:dsdecomp HSN}), (\ref{eqn:direct sum decmp TxX}), (\ref{eqn:decomp Xtau})).

On the other hand, for any $(x,y)\in X^\tau_{\mathcal{O}}$ and
$h\in G$, we have 
$$\Pi^\tau_{k\,\boldsymbol{\lambda}}
\left(\mu_h^\tau(x),\mu^\tau_h(y)  \right)
=\Pi^\tau_{k\,\boldsymbol{\lambda}}(x,y).$$
This remark allows for the following preliminary reduction.
Suppose 
$x=g\cdot\exp^{\tilde{G}}(\imath\,\boldsymbol{\eta}) 
\in X_{\mathcal{O}}^\tau$, so that
$\Phi^\tau(x)=
\mathrm{Ad}_g(\boldsymbol{\eta})_\kappa\in 
\mathcal{O}^\tau$
(\S \ref{sctn:ham str L}).
Perhaps replacing $x$ with $\mu^\tau_h(x)=
h\,g\cdot\exp^{\tilde{G}}(\imath\,\boldsymbol{\eta}) $
for a suitable $h\in G$, we may assume without loss
of generality that $\Phi^\tau(x)=a^\tau\,\boldsymbol{\lambda}$,
where $a^\tau=\tau/\|\boldsymbol{\lambda}\|_{\kappa_e^\vee}$.

On the other hand, 
if $\Phi^\tau(x)=a^\tau\,\boldsymbol{\lambda}$ then
by Lemma \ref{lem:same orthocomplement} for any 
$\boldsymbol{\xi}\in \mathfrak{g}$ we have
\begin{equation*}
\boldsymbol{\xi}=\boldsymbol{\xi}^{\|}+
\boldsymbol{\xi}^\perp,\quad \text{where}\quad
\boldsymbol{\xi}^{\|}\in \mathrm{span}(\boldsymbol{\lambda}),
\quad \boldsymbol{\xi}^{\perp}\in \mathrm{span}(\boldsymbol{\lambda}^\kappa)^{\perp_{\kappa_e}}
=\mathrm{span}
(\boldsymbol{\lambda}^\kappa)^{\perp_{\kappa_x}}.
\end{equation*}
Thus
\begin{equation}
\label{eqn:xi decomposed}
\boldsymbol{\xi}^{\|}=b_{\boldsymbol{\xi}}\,
\boldsymbol{\lambda}^\kappa_u\quad
\text{where}\quad \boldsymbol{\lambda}^\kappa_u:=
\frac{1}{\|\boldsymbol{\lambda}^\kappa\|_{\kappa_e}}
\,\boldsymbol{\lambda}^\kappa,
\quad b_{\boldsymbol{\xi}}:=\kappa_e\left(\boldsymbol{\xi},
\boldsymbol{\lambda}^\kappa_u\right).
\end{equation}
Recalling the discussion in \S \ref{sctn:direct sum dec g}
(especially (\ref{eqn:direct sums T_x})),
\begin{equation}
\label{eqn:xiparxiperpRH}
\boldsymbol{\xi}^{\|}_{\tilde{G}}(x)\in 
\mathrm{span}\left(\mathcal{R}^\tau(x)\right),
\quad
\boldsymbol{\xi}^{\perp}_{\tilde{G}}(x)=
\boldsymbol{\xi}_{\tilde{G}}(x)^{\sharp}\in 
\mathcal{H}^\tau(x).
\end{equation}
We can further decompose $\boldsymbol{\xi}^\perp$
according to the two alternative direct sums in
Definition \ref{defn:sfrakx}. Namely, we have the $\kappa_e$-orthogonal
direct sum
\begin{equation}
\label{eqn:xi perp decomp 1}
\boldsymbol{\xi}^\perp=\boldsymbol{\xi}'+\boldsymbol{\xi}'',
\quad\text{where}\quad\boldsymbol{\xi}'\in 
\mathfrak{t}'_{\boldsymbol{\lambda}},\,
\boldsymbol{\xi}''\in 
\mathfrak{r}_x=\mathfrak{t}_{\boldsymbol{\lambda}}^{\perp_{\kappa_e}}=\mathfrak{t}^{\perp_{\kappa_e}},
\end{equation}
where we use that $\boldsymbol{\lambda}$ is a regular weight,
and the $\sigma_x$-orthogonal one
\begin{equation}
\label{eqn:xitsperp}
\boldsymbol{\xi}^\perp=\boldsymbol{\xi}_{\mathfrak{t}}+\boldsymbol{\xi}_{\mathfrak{s}},
\quad\text{where}\quad\boldsymbol{\xi}_{\mathfrak{t}}\in 
\mathfrak{t}'_{\boldsymbol{\lambda}},\,
\boldsymbol{\xi}_{\mathfrak{s}}\in 
\mathfrak{s}_x.
\end{equation}

\begin{rem}
\label{rem:xii=implisxis=0}
If $\boldsymbol{\xi}''=\mathbf{0}$, then 
$\boldsymbol{\xi}^\perp=\boldsymbol{\xi}'=
\boldsymbol{\xi}_{\mathfrak{t}}\in \mathfrak{t}'_{\boldsymbol{\lambda}}$,
hence $\boldsymbol{\xi}_{\mathfrak{s}}=\mathbf{0}$ (and conversely).
\end{rem}

We aim to study the asymptotics of $\Pi^\tau_{k\,\boldsymbol{\lambda}}(x_{1k},x_{2k})$, where $x_{jk}$ is as in (\ref{eqn:defn di xjk}).
In view of Lemma \ref{lem:normal bundle GO} and Remark
\ref{rem:NSsigma}, there exist unique 
$\boldsymbol{\rho}_j\in \mathfrak{t}'_{\boldsymbol{\lambda}}$,
$\boldsymbol{\eta}_j\in \tilde{\mathfrak{s}}_x$ 
such that 
\begin{equation}
\label{eqn:njsj}
\mathbf{n}_j=J_x\big({\boldsymbol{\rho}_j}_{\tilde{G}}(x)\big),\quad 
\mathbf{s}_j={\boldsymbol{\eta}_j}_{\tilde{G}}(x).
\end{equation}

\subsubsection{Proof of Theorem \ref{thm:rescaled asympt}}

In the following, we assume that $\Phi^\tau(x)=a^\tau\,\boldsymbol{\lambda}$ and fix NHLC's on
$X^\tau$ at $x$; 
In particular, for any $\boldsymbol{\xi}\in \mathfrak{g}$ we have
\begin{equation}
\label{eqn:varphixi in x}
\varphi^{\boldsymbol{\xi}}(x)=
\left\langle\tilde{\Phi}^\tau(x),\boldsymbol{\xi}\right\rangle
=a^\tau\,\left\langle\boldsymbol{\lambda},\boldsymbol{\xi}\right\rangle
=a^\tau\,\kappa_e\left(\boldsymbol{\lambda}^\kappa,\boldsymbol{\xi}\right).
\end{equation}

The definition of $x_{jk}$ is in (\ref{eqn:defn di xjk}).
Thus
$\mathbf{n}_j\in \mathbb{R}^{r_{\mathcal{O}}-1}_{N}\cong N(X^\tau_{\mathcal{O}}/X^\tau)_x$ and, by Lemma 
\ref{lem:normal bundle GO} and (\ref{eqn:varphixi in x}),
$\mathbf{n}_j=J_x\big({\boldsymbol{\rho}_j}_{\tilde{G}}(x)\big)$
for a unique $\boldsymbol{\rho}_j\in \mathfrak{t}'_x=\mathfrak{t}'_{\boldsymbol{\lambda}}$.
Similarly,
$\mathbf{s}_j\in \mathbb{C}^{d-r_{\mathcal{O}}}_{\mathcal{S}}
\cong 
\mathcal{S}_x
$; by Remark \ref{rem:NSsigma}, there exists a unique 
$\boldsymbol{\eta}_j\in \widetilde{\mathfrak{s}}_x=\mathfrak{s}_x\oplus \imath\mathfrak{s}_x$ such that 
$\mathbf{s}_j={\boldsymbol{\eta}_j}_{\tilde{G}}(x)$.

\begin{proof}
By a minor modification of the arguments leading to (\ref{eqn:Piklambda g expanded resc resc}) (perhaps with a
different choice of the radius $r>0$ involved in the
definition of
$\rho_2$ in (\ref{eqn:Piklambda G}), hence of
$\tilde{\rho}$ in (\ref{eqn:defn of Gamma e B}) and
 $\tilde{\rho}_k$ in (\ref{eqn:cut-off rescaled})),
one obtains
\begin{eqnarray}
\label{eqn:Piklambda g expanded resc resc 12}
\Pi^\tau_{k\,\boldsymbol{\lambda}}(x_{1k},x_{2k})&\sim&
k^{1-\frac{d_G}{2}}\,d_{k\boldsymbol{\lambda}}\,\left(\frac{k}{2\,\pi}\right)^{n_G}\,
\int_{\mathfrak{g}}\,
\mathrm{d}^HV_{\mathfrak{g}}(\boldsymbol{\xi})\,\int_{\mathcal{O}_{\boldsymbol{\lambda}}}\,
\mathrm{d}
V_{\mathcal{O}_{\boldsymbol{\lambda}}}(\boldsymbol{\beta})
\,\int_0^{+\infty}\,\mathrm{d}u
\nonumber
\\
&&\left[
 e^{\imath\,k\,\Lambda _{x,k}(u,\boldsymbol{\beta},\boldsymbol{\xi}/\sqrt{k})
}\,\mathcal{D}_{x,k}\left(u,\boldsymbol{\beta},\boldsymbol{\xi}\right)\right],
\end{eqnarray}
where 
\begin{eqnarray}
\label{eqn:LambdacalD}
\Lambda_{x,k}\left(u,\boldsymbol{\beta},
\boldsymbol{\xi}\right)
&:=&
u\,\psi^\tau\left(\mu^\tau_{e^{-\boldsymbol{\xi}/\sqrt{k}}}(x_{1k}),x_{2k}   \right)-
\frac{1}{\sqrt{k}}\,\boldsymbol{\beta}(\boldsymbol{\xi})\\
\mathcal{D}_{x,k}\left(u,\boldsymbol{\beta},\boldsymbol{\xi}\right)&:=&e^{-
\boldsymbol{\delta}_{\boldsymbol{\beta}}(\boldsymbol{\xi})/\sqrt{k}}
\,
s^\tau\left(\mu^\tau_{e^{-\boldsymbol{\xi}/\sqrt{k}}}(x_{1k}),
x_{2k}  ,k\,u\right)\nonumber\\
&&\cdot\mathcal{V}_k(\boldsymbol{\beta})\,
\mathcal{P}\left(\frac{\boldsymbol{\xi}}{\sqrt{k}}\right)
\,\tilde{\rho}\left(k^{-\epsilon}\,\boldsymbol{\xi}\right).
\nonumber
\end{eqnarray}

In order to expand $\Lambda_{x,k}\left(u,\boldsymbol{\beta},
\boldsymbol{\xi}\right)$, consider first
$\mu^\tau_{e^{-\boldsymbol{\xi}/\sqrt{k}}}(x_{1k})$. In view of Lemma 64 of \cite{gp24}, we have the following refinement of (\ref{eqn:action diagon}):
\begin{eqnarray}
\label{eqn:mu action resc}
\lefteqn{
\mu^\tau_{e^{-\boldsymbol{\xi}/\sqrt{k}}}(x_{1k})}\\
&=&x+\left(\frac{1}{\sqrt{k}}\,\left(\theta_1+a^\tau\,\left\langle\boldsymbol{\lambda},\boldsymbol{\xi}\right\rangle  \right)+
\frac{1}{k}\,\omega_x \left(\boldsymbol{\xi}_{\tilde{G}}(x)^\sharp,\mathbf{n}_j+\mathbf{s}_j\right)
+\mathbf{R}_3\left(\frac{\bullet}{\sqrt{k}}\right),
\right.\nonumber\\
&&\left. \frac{1}{\sqrt{k}}\,\big(
\mathbf{n}_1+\mathbf{s}_1
-\boldsymbol{\xi}_{\tilde{G}}(x)^\sharp\big)
+R_2\left(\frac{\bullet}{\sqrt{k}}\right)    \right),\nonumber
\end{eqnarray}
where \lq$\bullet$\rq\, collectively denotes the variables involved.
In view of
the initial discussion in \S \ref{sctn:reductions initial}
and (\ref{eqn:xi decomposed}),
\begin{equation}
\label{eqn:val ataulambda xi}
a^\tau\,\left\langle\boldsymbol{\lambda},\boldsymbol{\xi}\right\rangle =\frac{\tau}{\|\boldsymbol{\lambda}^\kappa\|_{\kappa_e}}
\,\kappa_e\left(\boldsymbol{\lambda}^\kappa,\boldsymbol{\xi}\right)=\tau\,\kappa_e
\left(\boldsymbol{\lambda}^\kappa_u,
\boldsymbol{\xi}\right)=\tau\,b_{\boldsymbol{\xi}}.
\end{equation}
By (\ref{eqn:xiparxiperpRH})
and (\ref{eqn:xitsperp}), we have
\begin{eqnarray}
\label{eqn:omegaxins}
\lefteqn{\omega_x \left(\boldsymbol{\xi}_{\tilde{G}}(x)^\sharp,\mathbf{n}_1+\mathbf{s}_1\right)}\\
&=&
\omega_x \Big(
{\boldsymbol{\xi}_{\mathfrak{t}}}_{\tilde{G}}(x)
+{\boldsymbol{\xi}_{\mathfrak{s}}}_{\tilde{G}}(x),
J_x\big({\boldsymbol{\rho}_1}_{\tilde{G}}(x)\big)
+{\boldsymbol{\eta}_1}_{\tilde{G}}(x)\Big)
\nonumber\\
&=&\omega_x \Big(
{\boldsymbol{\xi}_{\mathfrak{t}}}_{\tilde{G}}(x)
,
J_x\big({\boldsymbol{\rho}_1}_{\tilde{G}}(x)\big)\Big)
+\omega_x \Big(
{\boldsymbol{\xi}_{\mathfrak{s}}}_{\tilde{G}}(x),
{\boldsymbol{\eta}_1}_{\tilde{G}}(x)\Big)
\nonumber\\
&=&\tilde{\kappa}_x \Big(
{\boldsymbol{\xi}_{\mathfrak{t}}}_{\tilde{G}}(x)
,
{\boldsymbol{\rho}_1}_{\tilde{G}}(x)\Big)
+\omega_x \Big(
{\boldsymbol{\xi}_{\mathfrak{s}}}_{\tilde{G}}(x),
{\boldsymbol{\eta}_1}_{\tilde{G}}(x)\Big).
\nonumber
\end{eqnarray}

Given Proposition 48 of [P2024], we obtain
\begin{eqnarray}
\label{eqn:ipsi developped}
\lefteqn{\imath\,\psi^\tau\left(\mu^\tau_{e^{-\boldsymbol{\xi}/\sqrt{k}}}(x_{1k}),x_{2k}   \right)   }\\
&=&   \frac{\imath}{\sqrt{k}}\,\left(\theta_1-\theta_2+\tau\,b_{\boldsymbol{\xi}} \right)                              
+\frac{\imath}{k}\,
\left[ \tilde{\kappa}_x \Big(
{\boldsymbol{\xi}_{\mathfrak{t}}}_{\tilde{G}}(x)
,
{\boldsymbol{\rho}_1}_{\tilde{G}}(x)\Big)
+\omega_x \Big(
{\boldsymbol{\xi}_{\mathfrak{s}}}_{\tilde{G}}(x),
{\boldsymbol{\eta}_1}_{\tilde{G}}(x)\Big)  \right]\nonumber\\
&&
-\frac{1}{4\,\tau^2\,k}\,\left( \theta_1-\theta_2+\tau\,b_{\boldsymbol{\xi}}   \right)^2\nonumber\\
&&
+\frac{1}{k}\,\psi_2^{\omega_x}\Big( 
J_x\big({\boldsymbol{\rho}_1}_{\tilde{G}}(x)\big)
+{\boldsymbol{\eta}_1}_{\tilde{G}}(x)
-\boldsymbol{\xi}_{\tilde{G}}(x)^\sharp ,
J_x\big({\boldsymbol{\rho}_2}_{\tilde{G}}(x)\big)
+{\boldsymbol{\eta}_2}_{\tilde{G}}(x)  \Big)
+R_3\left( \frac{\bullet}{\sqrt{k}} \right).
\nonumber
\end{eqnarray}
We have
\begin{eqnarray}
\label{eqn:psi2 sviluppato}
\lefteqn{
\psi_2^{\omega_x}\Big( 
J_x\big({\boldsymbol{\rho}_1}_{\tilde{G}}(x)\big)
+{\boldsymbol{\eta}_1}_{\tilde{G}}(x)
-\boldsymbol{\xi}_{\tilde{G}}(x)^\sharp ,
J_x\big({\boldsymbol{\rho}_2}_{\tilde{G}}(x)\big)
+{\boldsymbol{\eta}_2}_{\tilde{G}}(x)  \Big)
}\\
&=&-\imath\,\omega_x\Big(
J_x\big({\boldsymbol{\rho}_1}_{\tilde{G}}(x)\big)
+{\boldsymbol{\eta}_1}_{\tilde{G}}(x)
-{\boldsymbol{\xi}_{\mathfrak{s}}}_{\tilde{G}}(x)
-{\boldsymbol{\xi}_{\mathfrak{t}}}_{\tilde{G}}(x),
J_x\big({\boldsymbol{\rho}_2}_{\tilde{G}}(x)\big)
+{\boldsymbol{\eta}_2}_{\tilde{G}}(x)  
\Big)\nonumber\\
&&-\frac{1}{2}\,\Big\|
J_x\big({\boldsymbol{\rho}_1}_{\tilde{G}}(x)-{\boldsymbol{\rho}_2}_{\tilde{G}}(x)\big)
+\big({\boldsymbol{\eta}_1}_{\tilde{G}}(x)
-{\boldsymbol{\eta}_2}_{\tilde{G}}(x)-{\boldsymbol{\xi}_{\mathfrak{s}}}_{\tilde{G}}(x)\big)
-{\boldsymbol{\xi}_{\mathfrak{t}}}_{\tilde{G}}(x)
%
%
\Big\|^2. \nonumber
\end{eqnarray}
Furthermore,
\begin{eqnarray*}
\lefteqn{\omega_x\Big(
J_x\big({\boldsymbol{\rho}_1}_{\tilde{G}}(x)\big)
+{\boldsymbol{\eta}_1}_{\tilde{G}}(x)
-{\boldsymbol{\xi}_{\mathfrak{s}}}_{\tilde{G}}(x)
-{\boldsymbol{\xi}_{\mathfrak{t}}}_{\tilde{G}}(x) ,
J_x\big({\boldsymbol{\rho}_2}_{\tilde{G}}(x)\big)
+{\boldsymbol{\eta}_2}_{\tilde{G}}(x)  
\Big)}\nonumber\\
&=&\omega_x\Big(
{\boldsymbol{\eta}_1}_{\tilde{G}}(x)
,
{\boldsymbol{\eta}_2}_{\tilde{G}}(x)  
\Big)-\omega_x\Big( {\boldsymbol{\xi}_{\mathfrak{s}}}_{\tilde{G}}(x),{\boldsymbol{\eta}_2}_{\tilde{G}}(x)  \Big)
-\tilde{\kappa}_x\Big({\boldsymbol{\xi}_{\mathfrak{t}}}_{\tilde{G}}(x) ,
{\boldsymbol{\rho}_2}_{\tilde{G}}(x)   \Big),
\nonumber
\end{eqnarray*}
\begin{eqnarray*}
\lefteqn{
\Big\|
J_x\big({\boldsymbol{\rho}_1}_{\tilde{G}}(x)-{\boldsymbol{\rho}_2}_{\tilde{G}}(x)\big)
+\big({\boldsymbol{\eta}_1}_{\tilde{G}}(x)
-{\boldsymbol{\eta}_2}_{\tilde{G}}(x)-{\boldsymbol{\xi}_{\mathfrak{s}}}_{\tilde{G}}(x)\big)
-{\boldsymbol{\xi}_{\mathfrak{t}}}_{\tilde{G}}(x)
%
%
\Big\|^2
}\\
&=&\Big\|
{\boldsymbol{\rho}_1}_{\tilde{G}}(x)-{\boldsymbol{\rho}_2}_{\tilde{G}}(x)\Big\|^2
+\Big\|{\boldsymbol{\eta}_1}_{\tilde{G}}(x)
-{\boldsymbol{\eta}_2}_{\tilde{G}}(x)-{\boldsymbol{\xi}_{\mathfrak{s}}}_{\tilde{G}}(x)\Big\|^2
+\Big\|{\boldsymbol{\xi}_{\mathfrak{t}}}_{\tilde{G}}(x)
%
%
\Big\|^2.
\end{eqnarray*}

Inserting this in (\ref{eqn:ipsi developped})
\begin{eqnarray}
\label{eqn:ipsi developped 1}
\lefteqn{\imath\,\psi^\tau\left(\mu^\tau_{e^{-\boldsymbol{\xi}/\sqrt{k}}}(x_{1k}),x_{2k}   \right)   }\\
&=&   \frac{\imath}{\sqrt{k}}\,\left(\theta_1-\theta_2+\tau\,b_{\boldsymbol{\xi}} \right)                              
+\frac{\imath}{k}\,
\left[ \tilde{\kappa}_x \Big(
{\boldsymbol{\xi}_{\mathfrak{t}}}_{\tilde{G}}(x)
,
{\boldsymbol{\rho}_1}_{\tilde{G}}(x)\Big)
+\omega_x \Big(
{\boldsymbol{\xi}_{\mathfrak{s}}}_{\tilde{G}}(x),
{\boldsymbol{\eta}_1}_{\tilde{G}}(x)\Big)  \right]\nonumber\\
&&
-\frac{1}{4\,\tau^2\,k}\,\left( \theta_1-\theta_2+\tau\,b_{\boldsymbol{\xi}}   \right)^2\nonumber\\
&&
+\frac{\imath}{k}\,\Big[
-\omega_x\Big(
{\boldsymbol{\eta}_1}_{\tilde{G}}(x)
,
{\boldsymbol{\eta}_2}_{\tilde{G}}(x)  
\Big)+\omega_x\Big( {\boldsymbol{\xi}_{\mathfrak{s}}}_{\tilde{G}}(x),{\boldsymbol{\eta}_2}_{\tilde{G}}(x)  \Big)
+\tilde{\kappa}_x\Big({\boldsymbol{\xi}_{\mathfrak{t}}}_{\tilde{G}}(x) ,
{\boldsymbol{\rho}_2}_{\tilde{G}}(x)   \Big)
\Big]\nonumber\\
&&-\frac{1}{2\,k}\,\left( \Big\|
{\boldsymbol{\rho}_1}_{\tilde{G}}(x)-{\boldsymbol{\rho}_2}_{\tilde{G}}(x)\Big\|^2
+\Big\|{\boldsymbol{\eta}_1}_{\tilde{G}}(x)
-{\boldsymbol{\eta}_2}_{\tilde{G}}(x)-{\boldsymbol{\xi}_{\mathfrak{s}}}_{\tilde{G}}(x)\Big\|^2
+\Big\|{\boldsymbol{\xi}_{\mathfrak{t}}}_{\tilde{G}}(x)
%
%
\Big\|^2       \right)\nonumber\\
&&
+R_3\left( \frac{\bullet}{\sqrt{k}} \right)
\nonumber\\
&=&   \frac{\imath}{\sqrt{k}}\,\left(\theta_1-\theta_2+\tau\,b_{\boldsymbol{\xi}} \right)  \nonumber\\
&&
+\frac{\imath}{k}\,
\left[ \tilde{\kappa}_x \Big(
{\boldsymbol{\xi}_{\mathfrak{t}}}_{\tilde{G}}(x)
,
{\boldsymbol{\rho}_1}_{\tilde{G}}(x)
+{\boldsymbol{\rho}_2}_{\tilde{G}}(x)\Big)
+\omega_x \Big(
{\boldsymbol{\xi}_{\mathfrak{s}}}_{\tilde{G}}(x),
{\boldsymbol{\eta}_1}_{\tilde{G}}(x)
+{\boldsymbol{\eta}_2}_{\tilde{G}}(x)
\Big)  \right]\nonumber\\
&&
-\frac{1}{4\,\tau^2\,k}\,\left( \theta_1-\theta_2+\tau\,b_{\boldsymbol{\xi}}   \right)^2
-\frac{\imath}{k}\,
\omega_x\Big(
{\boldsymbol{\eta}_1}_{\tilde{G}}(x)
,
{\boldsymbol{\eta}_2}_{\tilde{G}}(x)  
\Big)
\nonumber\\
&&-\frac{1}{2\,k}\,\left( \Big\|
{\boldsymbol{\rho}_1}_{\tilde{G}}(x)-{\boldsymbol{\rho}_2}_{\tilde{G}}(x)\Big\|^2
+\Big\|{\boldsymbol{\eta}_1}_{\tilde{G}}(x)
-{\boldsymbol{\eta}_2}_{\tilde{G}}(x)-{\boldsymbol{\xi}_{\mathfrak{s}}}_{\tilde{G}}(x)\Big\|^2
+\Big\|{\boldsymbol{\xi}_{\mathfrak{t}}}_{\tilde{G}}(x)
%
%
\Big\|^2       \right)\nonumber\\
&&
+R_3\left( \frac{\bullet}{\sqrt{k}} \right).
\nonumber
\end{eqnarray}

In view of (\ref{eqn:LambdacalD}), we obtain
\begin{eqnarray}
\label{eqn:Lambda expanded}
\imath\,k\,\Lambda_{x,k}\left(u,\boldsymbol{\beta},
\boldsymbol{\xi}\right)&=&
\imath\,\sqrt{k}\,
\Psi
(u,\boldsymbol{\xi},\boldsymbol{\beta})+
u\,\Lambda(\boldsymbol{\xi}),
\end{eqnarray}
where 
\begin{eqnarray}
\label{eqn:Psidefnxi}
\Psi
(u,\boldsymbol{\xi},\boldsymbol{\beta})
=\Psi_{\theta_1,\theta_2}
(u,\boldsymbol{\xi},\boldsymbol{\beta})
&:=&u\,\left(\theta_1-\theta_2+\tau\,b_{\boldsymbol{\xi}} \right)
-\boldsymbol{\beta}(\boldsymbol{\xi}),
\end{eqnarray}
\begin{eqnarray}
\label{eqn:Lambdaxi defn 0}
\Lambda(\boldsymbol{\xi})=
\Lambda_2(\boldsymbol{\xi})+k\,R_3\left( \frac{\bullet}{\sqrt{k}} \right),
\end{eqnarray}
with
\begin{eqnarray}
\label{eqn:Lambdaxi defn}
\lefteqn{\Lambda_2(\boldsymbol{\xi})=
\Lambda_2(\theta_j,\boldsymbol{\eta}_j,\boldsymbol{\rho}_j;
\boldsymbol{\xi})}\\
&:=&
\imath\,
\left[ \tilde{\kappa}_x \Big(
{\boldsymbol{\xi}_{\mathfrak{t}}}_{\tilde{G}}(x)
,
{\boldsymbol{\rho}_1}_{\tilde{G}}(x)
+{\boldsymbol{\rho}_2}_{\tilde{G}}(x)\Big)
+\omega_x \Big(
{\boldsymbol{\xi}_{\mathfrak{s}}}_{\tilde{G}}(x),
{\boldsymbol{\eta}_1}_{\tilde{G}}(x)
+{\boldsymbol{\eta}_2}_{\tilde{G}}(x)
\Big)  \right]\nonumber\\
&&
-\frac{1}{4\,\tau^2}\,\left( \theta_1-\theta_2+\tau\,b_{\boldsymbol{\xi}}   \right)^2
-\imath\,
\omega_x\Big(
{\boldsymbol{\eta}_1}_{\tilde{G}}(x)
,
{\boldsymbol{\eta}_2}_{\tilde{G}}(x)  
\Big)
\nonumber\\
&&-\frac{1}{2}\,\left( \Big\|
{\boldsymbol{\rho}_1}_{\tilde{G}}(x)-{\boldsymbol{\rho}_2}_{\tilde{G}}(x)\Big\|^2
+\Big\|{\boldsymbol{\eta}_1}_{\tilde{G}}(x)
-{\boldsymbol{\eta}_2}_{\tilde{G}}(x)-{\boldsymbol{\xi}_{\mathfrak{s}}}_{\tilde{G}}(x)\Big\|^2
+\Big\|{\boldsymbol{\xi}_{\mathfrak{t}}}_{\tilde{G}}(x)
%
%
\Big\|^2       \right);
\nonumber
\end{eqnarray}
dependence of $\Psi$ and $\Lambda$
on $(\theta_j,\boldsymbol{\eta}_j,\boldsymbol{\rho}_j)$ will be left implicit in the following.

In view of (\ref{eqn:xi decomposed}) and
(\ref{eqn:Psidefnxi}), 
\begin{equation}
\label{eqn:der Psi xi}
\partial_{\boldsymbol{\xi}}\Psi
=u\,\tau\,\frac{\boldsymbol{\lambda}}{\|\boldsymbol{\lambda}\|}
-\boldsymbol{\beta}.
\end{equation}  
Let $\rho_2:\mathcal{O}_{\boldsymbol{\lambda}}
\rightarrow \mathbb{R}$ be $\mathcal{C}^\infty$ function 
which is supported in a suitably small neighborhood
of $\boldsymbol{\lambda}$ and identically equal to
$1$ near $\boldsymbol{\lambda}$.
Since integration in $\boldsymbol{\xi}$ is compactly
supported, we conclude from 
(\ref{eqn:Piklambda g expanded resc resc 12}),
(\ref{eqn:Lambda expanded}), and (\ref{eqn:der Psi xi})
that the following holds.
\begin{lem}
\label{lem:reduction in beta}
The asymptotics of (\ref{eqn:Piklambda g expanded resc resc 12}) are unaltered, if the integrand is multiplied by
$\rho_2(\boldsymbol{\beta})$.
\end{lem}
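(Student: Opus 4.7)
The plan is to split the integral in (\ref{eqn:Piklambda g expanded resc resc 12}) into the piece weighted by $\rho_2(\boldsymbol{\beta})$ and the complementary piece weighted by $1-\rho_2(\boldsymbol{\beta})$, and show that the latter contributes $O(k^{-\infty})$ by a non-stationary phase argument in $\boldsymbol{\xi}$. Using (\ref{eqn:Lambda expanded}) we rewrite $e^{\imath k\,\Lambda_{x,k}(u,\boldsymbol{\beta},\boldsymbol{\xi}/\sqrt{k})}=e^{\imath\sqrt{k}\,\Psi(u,\boldsymbol{\xi},\boldsymbol{\beta})}\cdot e^{u\,\Lambda(\boldsymbol{\xi})}$, and absorb $e^{u\,\Lambda(\boldsymbol{\xi})}$ together with $1-\rho_2(\boldsymbol{\beta})$ into a new effective amplitude $\widetilde{\mathcal{D}}_{x,k}$. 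The integrand in $\boldsymbol{\xi}$ is compactly supported on the ball $\|\boldsymbol{\xi}\|\le 2r\,k^\epsilon$, thanks to the factor $\tilde{\rho}(k^{-\epsilon}\boldsymbol{\xi})$ already present in $\mathcal{D}_{x,k}$.

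The next step is to establish a uniform lower bound on $\|\partial_{\boldsymbol{\xi}}\Psi\|$ on the relevant support. By (\ref{eqn:der Psi xi}), $\partial_{\boldsymbol{\xi}}\Psi = u\,\tau\,\boldsymbol{\lambda}/\|\boldsymbol{\lambda}\|-\boldsymbol{\beta}$; decomposing $\boldsymbol{\beta}\in \mathcal{O}_{\boldsymbol{\lambda}}$ as $\boldsymbol{\beta}=c\,\boldsymbol{\lambda}/\|\boldsymbol{\lambda}\|+\boldsymbol{\beta}^\perp$, with $c^2+\|\boldsymbol{\beta}^\perp\|^2=\|\boldsymbol{\lambda}\|^2$ since $\boldsymbol{\beta}$ lies on the sphere of radius $\|\boldsymbol{\lambda}\|$, one finds $\|\partial_{\boldsymbol{\xi}}\Psi\|^2=(u\tau-c)^2+\|\boldsymbol{\beta}^\perp\|^2$. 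This vanishes only at the unique pair $(\boldsymbol{\beta},u)=(\boldsymbol{\lambda},\|\boldsymbol{\lambda}\|/\tau)$. Combined with Lemma \ref{lem:compact supp u}, which restricts $u$ to a compact interval $(1/D,D)$, choosing $\rho_2$ supported in a sufficiently small neighborhood of $\boldsymbol{\lambda}$ yields $\|\partial_{\boldsymbol{\xi}}\Psi\|\ge c_0>0$ uniformly on the support of the complementary amplitude.

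With this lower bound in hand, iterated integration by parts via the first-order operator $L:=(\imath\sqrt{k})^{-1}\|\partial_{\boldsymbol{\xi}}\Psi\|^{-2}\,\langle \partial_{\boldsymbol{\xi}}\Psi,\nabla_{\boldsymbol{\xi}}\rangle$ (which satisfies $L(e^{\imath\sqrt{k}\,\Psi})=e^{\imath\sqrt{k}\,\Psi}$) gives the required decay: each application of its formal adjoint to $\widetilde{\mathcal{D}}_{x,k}$ contributes a factor of $k^{-1/2}$, while differentiating the various factors $e^{u\,\Lambda(\boldsymbol{\xi})}$, $s^\tau(\cdot,\cdot,ku)$, $\mathcal{P}(\boldsymbol{\xi}/\sqrt{k})$ and $\tilde{\rho}(k^{-\epsilon}\boldsymbol{\xi})$ on the ball $\|\boldsymbol{\xi}\|\le 2r\,k^\epsilon$ only introduces polynomial factors of order $k^{C\epsilon}$ for some fixed $C>0$. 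Compact support in $\boldsymbol{\xi}$ rules out boundary terms. After $N$ iterations the integrand is bounded by $C_N\,k^{N(C\epsilon-1/2)+d-1}$, which is $O(k^{-\infty})$ as $N\to+\infty$ provided $\epsilon$ is small (as required by the hypothesis $\epsilon\in(0,1/6)$).

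The main technical obstacle is bookkeeping: one has to verify that, after extraction of the oscillatory factor $e^{\imath\sqrt{k}\,\Psi}$, the residual amplitude $\widetilde{\mathcal{D}}_{x,k}$ genuinely behaves as a symbol in $k$ of polynomial order on the expanding ball of radius $O(k^\epsilon)$. Crucially, this uses that the cubic remainder $k\,R_3(\boldsymbol{\xi}/\sqrt{k})$ appearing in (\ref{eqn:Lambdaxi defn 0}) stays bounded on the support of $\tilde{\rho}(k^{-\epsilon}\cdot)$ (since $k\cdot (k^\epsilon/\sqrt{k})^3=k^{3\epsilon-1/2}$ is bounded for $\epsilon<1/6$), and that the Gaussian-type real part already present in $\Lambda_2$ via the quadratic terms in (\ref{eqn:Lambdaxi defn}) dominates the polynomial cost of differentiation. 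These estimates are of exactly the same flavor as those already developed in the proof of Theorem \ref{thm:rapid decay moment map diag}, so no essentially new technique is required.
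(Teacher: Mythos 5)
Your proposal is correct and takes essentially the same approach as the paper: the paper's own (one-line) justification cites exactly the same three ingredients --- the compact support of the $\boldsymbol{\xi}$-integration, the decomposition $\imath k\,\Lambda_{x,k}=\imath\sqrt{k}\,\Psi+u\,\Lambda$, and the formula $\partial_{\boldsymbol{\xi}}\Psi=u\tau\boldsymbol{\lambda}/\|\boldsymbol{\lambda}\|-\boldsymbol{\beta}$ --- as the basis for an integration-by-parts (non-stationary phase) argument in $\boldsymbol{\xi}$ on $\mathrm{supp}(1-\rho_2)$. Your write-up simply fleshes out the details (the lower bound $\|\partial_{\boldsymbol{\xi}}\Psi\|\ge c_0$ away from $\boldsymbol{\beta}=\boldsymbol{\lambda}$, the use of Lemma~\ref{lem:compact supp u} for convergence, and the symbolic behaviour of the amplitude on the $O(k^\epsilon)$ ball under $\epsilon<1/6$) that the paper leaves implicit.
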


Furthermore, recall from \S \ref{sctn:kircharform} that 
$\mathrm{d}^HV_{\mathfrak{g}}$ is the Lebesgue 
measure on $\mathfrak{g}$ associated to $\kappa^H$.
Its relation to the Lebesgue measure 
$\mathrm{d}^\kappa V_{\mathfrak{g}}$
associated to $\kappa$ 
is given by
$$
\mathrm{d}^HV_{\mathfrak{g}}(\boldsymbol{\xi})=
\frac{1}{\mathrm{vol}^\kappa (G)}\,
\mathrm{d}^\kappa V_{\mathfrak{g}}(\boldsymbol{\xi}).
$$
Furthermore, (\ref{eqn:kir char form rescaled transl}) with $\boldsymbol{\xi}=\mathbf{0}$
yields
\begin{eqnarray}
\label{eqn:kir char form rescaled transl 0}
d_{k\,\boldsymbol{\lambda}}
&=&\left(\frac{k}{2\,\pi}\right)^{n_G}\,
\int_{\mathcal{O}_{\boldsymbol{\lambda}}}
\,\mathcal{V}_k(\boldsymbol{\beta})\,
\mathrm{d}
V_{\mathcal{O}_{\boldsymbol{\lambda}}}
(\boldsymbol{\beta})\\
&=&\left(\frac{k}{2\,\pi}\right)^{n_G}\,
\mathrm{vol}(\mathcal{O}_{\boldsymbol{\lambda}})\,\left[1+O\left(k^{-1}\right)
\right],
\nonumber
\end{eqnarray}
the previous expression being a polynomial in $k$.
Summing up, we can rewrite 
(\ref{eqn:Piklambda g expanded resc resc 12})
in the following form:
\begin{eqnarray}
\label{eqn:Piklambda g expanded resc resc 123}
\Pi^\tau_{k\,\boldsymbol{\lambda}}(x_{1k},x_{2k})&\sim&
k^{1-d_G/2}\,
\left(\frac{k}{2\,\pi}\right)^{2\,n_G}\,\frac{\mathrm{vol}(\mathcal{O}_{\boldsymbol{\lambda}})}{\mathrm{vol}^\kappa (G)} \,
\int_{\mathfrak{g}}\,
\mathrm{d}^\kappa V_{\mathfrak{g}}(\boldsymbol{\xi})\,\int_{\mathcal{O}_{\boldsymbol{\lambda}}}\,
\mathrm{d}
V_{\mathcal{O}_{\boldsymbol{\lambda}}}(\boldsymbol{\beta})
\,\int_0^{+\infty}\,\mathrm{d}u
\nonumber
\\
&&\left[
 e^{\imath\,\sqrt{k}\,
\Psi
(u,\boldsymbol{\xi},\boldsymbol{\beta})
}\,e^{u\,\Lambda (\boldsymbol{\xi})}\,
\rho_2(\boldsymbol{\beta})\,
\mathcal{D}_{x,k}\left(u,\boldsymbol{\beta},\boldsymbol{\xi}\right)\right].
\end{eqnarray}

In (\ref{eqn:Piklambda g expanded resc resc 123}), 
integration in $\boldsymbol{\beta}$ has been restricted to a small
neighbourhood of $\boldsymbol{\lambda}$. To proceed, we need a
parametrization of $\mathcal{O}_{\boldsymbol{\lambda}}$
near $\boldsymbol{\lambda}$. Consider the smooth map
\begin{equation}
\label{eqn:defn di f gamma lambda}
f:\boldsymbol{\gamma}\in \mathfrak{t}^{\perp_{\kappa_e}}\mapsto
e^{\boldsymbol{\gamma}}\cdot \boldsymbol{\lambda}:=
\mathrm{Coad}_{e^{\boldsymbol{\gamma}}}(\boldsymbol{\lambda})\in 
\mathcal{O}_{\boldsymbol{\lambda}}.
\end{equation}

\begin{lem}
Let $\mathrm{d}^{\kappa}\boldsymbol{\gamma}$ denote the Lebesgue measure
on $\mathfrak{t}^{\perp_{\kappa_e}}$ associated 
to the restriction of $\kappa_e$.
Then
$$
f^*(\mathrm{d}V_{\mathcal{O}_{\boldsymbol{\lambda}}})=
\mathrm{vol}(\mathcal{O}_{\boldsymbol{\lambda}})\,\frac{\mathrm{vol}^\kappa(T)}{\mathrm{vol}^\kappa(G)}\,
\mathcal{R} (\boldsymbol{\gamma})\,\mathrm{d}^\kappa\boldsymbol{\gamma},
$$
where $\mathcal{R}
\in \mathcal{C}^\infty(\mathfrak{t}^{\perp_{\kappa_e}})$
and $\mathcal{R}(0)=1$. Furthermore, on a suitably small open neighbourhood $V$ of $0\in 
\mathfrak{t}^{\perp_{\kappa_e}}$, $f$ restricts to a diffeomorphism
onto its image, which is an open neighbourhood of $\boldsymbol{\lambda}$
in $\mathcal{O}_{\boldsymbol{\lambda}}$.
\end{lem}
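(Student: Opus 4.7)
The plan is to factor the map $f$ through the homogeneous quotient $G/T$ and exploit uniqueness of $G$-invariant top forms to reduce the identification of $f^*(\mathrm{d}V_{\mathcal{O}_{\boldsymbol{\lambda}}})$ to a single proportionality constant, which can then be pinned down by a global volume comparison.

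First I would establish that $f$ is a local diffeomorphism near the origin. Since $\boldsymbol{\lambda}$ is regular, its coadjoint stabilizer is $T$ with Lie algebra $\mathfrak{t}$; hence the infinitesimal orbit map $\boldsymbol{\gamma}\mapsto -\mathrm{coad}_{\boldsymbol{\gamma}}(\boldsymbol{\lambda})$ from $\mathfrak{g}$ to $T_{\boldsymbol{\lambda}}\mathcal{O}_{\boldsymbol{\lambda}}$ has kernel exactly $\mathfrak{t}$, and therefore restricts to a linear isomorphism $\mathfrak{t}^{\perp_{\kappa_e}}\to T_{\boldsymbol{\lambda}}\mathcal{O}_{\boldsymbol{\lambda}}$. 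Since $\mathrm{d}_0 f$ is this restriction, the inverse function theorem yields an open neighbourhood $V$ of $0$ on which $f$ is a diffeomorphism onto its image.

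Next I would factor $f$ as the composition
$$
\mathfrak{t}^{\perp_{\kappa_e}}\xrightarrow{\exp_G}G\xrightarrow{\pi}G/T\xrightarrow{\tilde{f}}\mathcal{O}_{\boldsymbol{\lambda}},
$$
where $\tilde{f}(gT):=\mathrm{Coad}_g(\boldsymbol{\lambda})$ is a $G$-equivariant diffeomorphism (using that the stabilizer is $T$). Both $\mathrm{d}V_{\mathcal{O}_{\boldsymbol{\lambda}}}$ and the Riemannian volume $\mathrm{d}V_{G/T}$ induced by $\kappa$ are $G$-invariant top forms on the homogeneous space $G/T$; since the isotropy at $eT$ acts on $\wedge^{\mathrm{top}}T_{eT}(G/T)$ through $\det(\mathrm{Ad}_T|_{\mathfrak{t}^{\perp_{\kappa_e}}})\equiv 1$, any two $G$-invariant volume forms coincide up to a positive constant. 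Thus $\tilde{f}^{\,*}(\mathrm{d}V_{\mathcal{O}_{\boldsymbol{\lambda}}})=c\cdot\mathrm{d}V_{G/T}$ for some $c>0$. Integrating both sides over $G/T$ and using that the Riemannian submersion $G\to G/T$ has fibers isometric to $T$ (so $\mathrm{vol}^\kappa(G/T)=\mathrm{vol}^\kappa(G)/\mathrm{vol}^\kappa(T)$) yields
$$
c=\mathrm{vol}(\mathcal{O}_{\boldsymbol{\lambda}})\cdot\frac{\mathrm{vol}^\kappa(T)}{\mathrm{vol}^\kappa(G)}.
$$

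Finally I would pin down the normalization $\mathcal{R}(0)=1$. The composed map $\pi\circ \exp_G:\mathfrak{t}^{\perp_{\kappa_e}}\to G/T$ has differential at $0$ equal to the canonical isomorphism $\mathfrak{t}^{\perp_{\kappa_e}}\xrightarrow{\sim} T_{eT}(G/T)$, which is an \emph{isometry} when the target carries the quotient metric from $\kappa$ and the source carries the restriction of $\kappa_e$. Consequently $(\pi\circ\exp_G)^*(\mathrm{d}V_{G/T})=\mathcal{R}(\boldsymbol{\gamma})\,\mathrm{d}^\kappa\boldsymbol{\gamma}$ for a smooth positive density $\mathcal{R}$ with $\mathcal{R}(0)=1$; combining this with the proportionality constant $c$ gives the claimed identity. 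The main (very mild) technical point is the verification that $\mathrm{Ad}_T$ acts trivially on the top exterior power of $\mathfrak{t}^{\perp_{\kappa_e}}$, so that the proportionality $c$ is well-defined; this is immediate from $\mathrm{Ad}_T$-invariance of $\kappa_e$ and compactness of $T$, and any remaining bookkeeping is confined to checking that exponential and quotient maps preserve the $\kappa_e$-Lebesgue normalization at $\boldsymbol{\gamma}=0$.
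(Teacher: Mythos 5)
Your proposal takes essentially the same approach as the paper: factor $f$ through $G/T$ via the equivariant orbit map and the exponential, use $G$-invariance to identify the pull-back of $\mathrm{d}V_{\mathcal{O}_{\boldsymbol{\lambda}}}$ as a constant multiple of the $\kappa$-Riemannian volume on $G/T$, pin the constant down by a global volume comparison, and note that $\exp_G$ is tangent to the identity to get $\mathcal{R}(0)=1$. The only cosmetic differences are that you establish the local diffeomorphism first (by regularity of $\boldsymbol{\lambda}$ and the inverse function theorem) instead of deducing it from the density identity, and that you compare directly to $\mathrm{d}^\kappa V_{G/T}$ rather than first passing through the Haar density on $G/T$ and then rescaling.
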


The proof was given in the discussion following Lemma 7.4 in 
\cite{p22} (with some notational differences), 
but we reproduce it here for the reader's convenience. 

Let $\mathrm{d}^\kappa V_{G/T}$ be the volume density induced by
$\kappa$ on $G/T$. Similarly, let $\mathrm{vol}^H(G/T)$ be the Haar volume density.
Then
$$
\mathrm{vol}^\kappa (G/T)=
\frac{\mathrm{vol}^\kappa (G)}{\mathrm{vol}^\kappa (T)},\quad 
\mathrm{d}^\kappa V_{G/T}=\mathrm{vol}^\kappa (G/T)\,
\mathrm{d}^H V_{G/T}
$$

\begin{proof}
If the first statement holds, then $f$ is a local diffeomorphism at
the origin, and the second statement follows.
To verify the former statement, 
let $f':G/T\rightarrow \mathcal{O}_{\boldsymbol{\lambda}}$ be given by
$$
f'(g\,T):=g\cdot \boldsymbol{\lambda}:=
\mathrm{Coad}_g(\boldsymbol{\lambda}).
$$
Then $f'$ is an equivariant diffeomorphism and 
$$
{f'}^*(\mathrm{d}V_{\mathcal{O}_{\boldsymbol{\lambda}}})=
\mathrm{vol}(\mathcal{O}_{\boldsymbol{\lambda}})\,
\mathrm{d}V^H_{G/H}=
\frac{\mathrm{vol}(\mathcal{O}_{\boldsymbol{\lambda}})}{\mathrm{vol}^\kappa
(G/T)}\,
\mathrm{d}^\kappa V _{G/H}=
\frac{\mathrm{vol}(\mathcal{O}_{\boldsymbol{\lambda}})\,\mathrm{vol}^\kappa
(T)}{\mathrm{vol}^\kappa
(G)}\,
\mathrm{d}V^\kappa _{G/H}.
$$

Furthermore, 
define $f'':\mathfrak{t}^{\perp_\kappa}\rightarrow G/T$ by
$
f''(\boldsymbol{\gamma}):=e^{\boldsymbol{\gamma}}\,T
$.
Then
$$
{f''}^*(\mathrm{d}^\kappa V_{G/T})=\mathcal{R}(\boldsymbol{\gamma})\,
\mathrm{d}^\kappa\boldsymbol{\gamma}, \quad\text{where}\quad 
\mathcal{R}(0)=1.
$$
Furthermore, $f=f'\circ f''$. Thus
\begin{eqnarray*}
\lefteqn{
f^*(\mathrm{d}V_{\mathcal{O}_{\boldsymbol{\lambda}}})={f''}^*\left(
{f'}^*(\mathrm{d}V_{\mathcal{O}_{\boldsymbol{\lambda}}})\right)}\\
&=&
\frac{\mathrm{vol}(\mathcal{O}_{\boldsymbol{\lambda}})\,\mathrm{vol}^\kappa
(T)}{\mathrm{vol}^\kappa
(G)}\cdot 
{f''}^*\left(
\mathrm{d}V^\kappa _{G/H}\right)
=
\frac{\mathrm{vol}(\mathcal{O}_{\boldsymbol{\lambda}})\,\mathrm{vol}^\kappa
(T)}{\mathrm{vol}^\kappa
(G)}\cdot 
\mathcal{R}(\boldsymbol{\gamma})\,
\mathrm{d}^\kappa\boldsymbol{\gamma}
.
\end{eqnarray*}

\end{proof}

In the following, $V\subset \mathfrak{t}^{\perp_\kappa}$ will denote a neighbourhood of the origin such that $f$ induces a diffeomorphism
$V\cong f(V)$. We may assume without loss that 
$\mathrm{supp}(\rho_2)\subset V$. Hence (\ref{eqn:Piklambda g expanded resc resc 123}) may be rewritten as follows:
\begin{eqnarray}
\label{eqn:Piklambda g expanded resc resc 1234}
\Pi^\tau_{k\,\boldsymbol{\lambda}}(x_{1k},x_{2k})
&\sim&
k^{1-d_G/2}\,
\left(\frac{k}{2\,\pi}\right)^{2\,n_G}\,
\left(\frac{\mathrm{vol}(\mathcal{O}_{\boldsymbol{\lambda}})}{\mathrm{vol}^\kappa (G)}\right)^2\cdot 
\mathrm{vol}^\kappa
(T)
\\
&& \cdot 
\int_{\mathfrak{g}}\,
\mathrm{d}^\kappa V_{\mathfrak{g}}(\boldsymbol{\xi})\,\int_{\mathfrak{t}^{\perp_{\kappa_e}}}\,
\mathrm{d}^\kappa\boldsymbol{\gamma}\,\int_0^{+\infty}\,\mathrm{d}u
\nonumber\\
&&
\left[
 e^{\imath\,\sqrt{k}\,
\Psi
(u,\boldsymbol{\xi},e^{\boldsymbol{\gamma}}\cdot \boldsymbol{\lambda})
}\,e^{u\,\Lambda (\boldsymbol{\xi})}\,
\mathcal{R}(\boldsymbol{\gamma})\,
\rho_2(e^{\boldsymbol{\gamma}}\cdot \boldsymbol{\lambda})\,
\mathcal{D}_{x,k}\left(u,e^{\boldsymbol{\gamma}}\cdot \boldsymbol{\lambda},\boldsymbol{\xi}\right)\right].
\nonumber
\end{eqnarray}

Recalling (\ref{eqn:defn di f gamma lambda}),
for $\boldsymbol{\gamma}\sim \mathbf{0}$ we have
$$
\left(e^{\boldsymbol{\gamma}}\cdot 
\boldsymbol{\lambda}\right)^\kappa=
\mathrm{Ad}_{e^{\boldsymbol{\gamma}}}(\boldsymbol{\lambda}^\kappa)=
\boldsymbol{\lambda}^\kappa+
\left[\boldsymbol{\gamma},\boldsymbol{\lambda}^\kappa \right]
+R_2(\boldsymbol{\gamma}).
$$
Hence, in view of (\ref{eqn:Psidefnxi}),
\begin{eqnarray}
\label{eqn:Psidefnxi1}
\Psi
\left(u,\boldsymbol{\xi},e^{\boldsymbol{\gamma}}\cdot \boldsymbol{\lambda}\right)
&=&u\,\left(\theta_1-\theta_2+\tau\,b_{\boldsymbol{\xi}} \right)
-\kappa_e\left(
\left(e^{\boldsymbol{\gamma}}\cdot 
\boldsymbol{\lambda}\right)^\kappa,
\boldsymbol{\xi}\right)\\
&=&u\,\left(\theta_1-\theta_2+\tau\,b_{\boldsymbol{\xi}} \right)
-\kappa_e\left(
\boldsymbol{\lambda}^\kappa+
\left[\boldsymbol{\gamma},\boldsymbol{\lambda}^\kappa \right]
+R_2(\boldsymbol{\gamma}),
\boldsymbol{\xi}\right).\nonumber
\end{eqnarray}
Recalling (\ref{eqn:xi decomposed}),
$\kappa_e(\boldsymbol{\lambda}^\kappa,\boldsymbol{\xi})
=\|\boldsymbol{\lambda}\|\,b_{\boldsymbol{\xi}}$.
Let us decompose $\boldsymbol{\xi}$ according to
(\ref{eqn:xi decomposed}) and
(\ref{eqn:xi perp decomp 1}).
Then
\begin{eqnarray*}
-\kappa_e\left(
\left[\boldsymbol{\gamma},\boldsymbol{\lambda}^\kappa \right],
\boldsymbol{\xi}\right)&=&
\kappa_e\left(
\left[\boldsymbol{\lambda}^\kappa ,\boldsymbol{\gamma}\right],
\boldsymbol{\xi}\right)=-\kappa_e\left(\boldsymbol{\gamma},
\left[\boldsymbol{\lambda}^\kappa ,\boldsymbol{\xi}\right]
\right)=-\kappa_e\left(\boldsymbol{\gamma},
\left[\boldsymbol{\lambda}^\kappa ,\boldsymbol{\xi}''\right]
\right).
\end{eqnarray*}
Therefore, (\ref{eqn:Psidefnxi1}) may be rewritten
\begin{eqnarray}
\label{eqn:Psidefnxi2}
\lefteqn{\Psi
\left(u,\boldsymbol{\xi},e^{\boldsymbol{\gamma}}\cdot \boldsymbol{\lambda}\right)}\\
&=&u\,(\theta_1-\theta_2)+(u\,\tau
-\|\boldsymbol{\lambda}\|)\,b_{\boldsymbol{\xi}}
-\kappa_e\left(\boldsymbol{\gamma},
\left[\boldsymbol{\lambda}^\kappa ,\boldsymbol{\xi}''\right]
\right)
+\kappa_e\left(
R_2(\boldsymbol{\gamma}),
\boldsymbol{\xi}\right).\nonumber
\end{eqnarray}

Let us choose basis of
$\mathfrak{t}'_{\boldsymbol{\lambda}}$
and $\mathfrak{t}^{\perp_{\kappa_e}}$ that
are orthonormal with respect to $\kappa_e$,
so as to unitarily identify 
$\mathfrak{t}'_{\boldsymbol{\lambda}}
\cong \mathbb{R}^{r_G-1}$ and
$\mathfrak{t}^{\perp_{\kappa_e}}\cong 
\mathbb{R}^{2\,n_G}$. 
Together with $\boldsymbol{\lambda}^\kappa_u$, these 
form an orthonormal basis of $(\mathfrak{g},\kappa_e)$.

Accordingly, we shall replace $\boldsymbol{\xi}'\in 
\mathfrak{t}'_{\boldsymbol{\lambda}}$ with
$\mathbf{r}\in \mathbb{R}^{r_G-1}$,
$\boldsymbol{\xi}''\in \mathfrak{t}^{\perp_{\kappa_e}}$
with $\boldsymbol{\rho}\in \mathbb{R}^{2\,n_G}$,
and $\boldsymbol{\xi}^{\|}\in 
\mathrm{span}\left(\boldsymbol{\lambda}^\kappa\right)$
with $b=b_{\boldsymbol{\xi}}$ as in (\ref{eqn:xi decomposed}),
and substitute 
$$
\int_{\mathfrak{g}}\,
\mathrm{d}^\kappa V_{\mathfrak{g}}(\boldsymbol{\xi})\quad
\text{with}\quad 
\int_{\mathbb{R}^{r_G-1}}\,\mathrm{d}\mathbf{r}\,
\int_{\mathbb{R}^{2\,n_G}}\,\mathrm{d}\boldsymbol{\rho}
\,\int_{-\infty}^{+\infty}\,\mathrm{d}b.
$$
We shall simply identify $\boldsymbol{\gamma}
\in \mathfrak{t}^{\perp_{k_e}}$
with its image in $\mathbb{R}^{2\,n_G}$.

Let $Z_{\boldsymbol{\lambda}}$ be the $(2n_G)\times (2 n_G)$
nondegenerate skew-symmetric matrix representing the 
endomorphism $S_{\boldsymbol{\lambda}}:\mathfrak{t}^{\kappa_e}
\rightarrow \mathfrak{t}^{\kappa_e}$ in (\ref{eqn:inf adoint iso}).

Let us denote by $\cdot_{\mathrm{st}}$ the standard 
scalar product (on the appropriate Euclidean space);
in terms of these identifications, (\ref{eqn:Psidefnxi2})
may be rewritten
\begin{eqnarray}
\label{eqn:Psidefnxi23}
\lefteqn{\Psi_{\mathbf{r}}
\left(u,b,\boldsymbol{\rho},
\boldsymbol{\gamma} \right):=\Psi
\left(u,\boldsymbol{\xi},e^{\boldsymbol{\gamma}}\cdot \boldsymbol{\lambda}\right)
}\\
&=&u\,(\theta_1-\theta_2)+\left(u\,\tau
-\|\boldsymbol{\lambda}\|\right)\,b
-\boldsymbol{\gamma}\cdot_{\mathrm{st}}
Z_{\boldsymbol{\lambda}} \,\boldsymbol{\rho}
+
R_2(\boldsymbol{\gamma})\cdot_{\mathrm{st}}
(b,\mathbf{r},\boldsymbol{\rho}).\nonumber
\end{eqnarray}
With these replacements,
(\ref{eqn:Piklambda g expanded resc resc 1234}) becomes
\begin{eqnarray}
\label{eqn:Piklambda g expanded resc resc-12345}
\Pi^\tau_{k\,\boldsymbol{\lambda}}(x_{1k},x_{2k})
&\sim&
k^{1-d_G/2}\,
\left(\frac{k}{2\,\pi}\right)^{2\,n_G}\,
\left(\frac{\mathrm{vol}(\mathcal{O}_{\boldsymbol{\lambda}})}{\mathrm{vol}^\kappa (G)}\right)^2\cdot 
\mathrm{vol}^\kappa
(T)
\nonumber\\
&&\cdot \int_{\mathbb{R}^{r_G-1}}\,
I_{x,k}(\mathbf{r})\,\mathrm{d}\mathbf{r}
,
\end{eqnarray}
where 
\begin{eqnarray}
\label{eqn:defn di Ikr}
I_{x,k}(\mathbf{r})
&:=& \int_0^{+\infty}\,\mathrm{d}u
\,\int_{-\infty}^{+\infty}\,\mathrm{d}b
\,\int_{\mathbb{R}^{2\,n_G}}\,\mathrm{d}\boldsymbol{\rho}
\,\int_{\mathbb{R}^{2\,n_G}}\,
\mathrm{d}\boldsymbol{\gamma}
\nonumber\\
&&
\left[
 e^{\imath\,\sqrt{k}\,
\Psi_{\mathbf{r}}
\left(u,b,\boldsymbol{\rho},
\boldsymbol{\gamma} \right)}\,
\mathcal{K}_{\mathbf{r}}\left(u,b,\boldsymbol{\rho},
\boldsymbol{\gamma} \right)\right].
\end{eqnarray}
where 
$$
\mathcal{K}_{\mathbf{r}}\left(u,b,\boldsymbol{\rho},
\boldsymbol{\gamma} \right)
:=e^{u\,\Lambda (\boldsymbol{\xi})}\,\mathcal{R}(\boldsymbol{\gamma})\,
\rho_2(e^{\boldsymbol{\gamma}}\cdot \boldsymbol{\lambda})\,
\mathcal{D}_{x,k}\left(u,e^{\boldsymbol{\gamma}}\cdot \boldsymbol{\lambda},\boldsymbol{\xi}\right),
$$
where $\boldsymbol{\xi}$ is expressed in terms of 
$(b,\mathbf{r},\boldsymbol{\rho})$, and dependence of
$\mathcal{K}_r$ on $(\theta_j,\boldsymbol{\eta}_j,\boldsymbol{\rho}_j)$
is left implicit.

In view of (\ref{eqn:LambdacalD}) and (\ref{eqn:Lambdaxi defn}),
$\mathcal{K}_{\mathbf{r}}\left(u,b,\boldsymbol{\rho},
\boldsymbol{\gamma} \right)$ admits an asymptotic expansion of the
form
\begin{eqnarray}
\label{eqn:asympt Kr}
\mathcal{K}_{\mathbf{r}}\left(u,b,\boldsymbol{\rho},
\boldsymbol{\gamma} \right)&\sim&(k\,u)^{d-1}\,
e^{u\,\Lambda_2 (\boldsymbol{\xi})}\,\mathcal{R}(\boldsymbol{\gamma})\,
\rho_2(e^{\boldsymbol{\gamma}}\cdot \boldsymbol{\lambda})\,
\,\tilde{\rho}\left(k^{-\epsilon}\,\boldsymbol{\xi}\right)\,
s_0^\tau(x,x)\nonumber\\
&&\cdot \left[1+\sum_{j\ge 1}k^{-j/2}\,P_j(\theta_j,\boldsymbol{\eta}_j,\boldsymbol{\rho}_j;u,
\boldsymbol{\xi},\boldsymbol{\gamma})\right],
\end{eqnarray}
where each $P_j$ is a polynomial in the rescaled variables
$(\theta_j,\boldsymbol{\eta}_j,\boldsymbol{\rho}_j,
\boldsymbol{\xi})$ of degree $\le 3\,j$ and parity $j$,
and coefficients depending smoothly on the remaining variables
(cfr. the similar proof of Theorem 1.3 of \cite{p22},
or the arguments in the proof of Theorem 25 of \cite{gp24}).
Furthermore, 
$s_0^\tau(x,x)$ is given by (\ref{eqn:valore s0 xx}).

Integration is supported on an expanding ball centered
at the origin and of radius $O\left(k^\epsilon\right)$ 
in the variables
$(b,\mathbf{r},\boldsymbol{\rho})\in \mathbb{R}\times 
\mathbb{R}^{r_G-1}\times \mathbb{R}^{2\,n_G}$
(recall the last factor in the amplitude
$\mathcal{D}_{x,k}$ in (\ref{eqn:LambdacalD})), and 
compactly supported in 
$(u,\boldsymbol{\gamma})\in \mathbb{R}\times \mathbb{R}^{2\,n_G}$.
The expansion (\ref{eqn:asympt Kr}) can be integrated term by term.

\begin{lem}
\label{lem:critical point Psi}
$\Psi_{\mathbf{r}}$ has a unique critical point
$P_0$ on the
given domain of integration, given by 
$$
P_0=\left(u_0,b_0,\boldsymbol{\rho}_0,
\boldsymbol{\gamma}_0 \right)
:=\left(\frac{\|\boldsymbol{\lambda}\|}{\tau},
\frac{\theta_2-\theta_1}{\tau},\mathbf{0},\mathbf{0}     \right).
$$
The Hessian matrix
at the critical point
is given by
$$
H_{P_0}(\Psi_{\mathbf{r}})=
\begin{pmatrix}
0   &\tau & \mathbf{0}^t  & \mathbf{0}^t   \\
\tau&0    &  \mathbf{0}^t & \mathbf{0}^t    \\
\mathbf{0}&  \mathbf{0}  &[0] & Z_{\boldsymbol{\lambda}}   \\
\mathbf{0}&  \mathbf{0}^t &-Z_{\boldsymbol{\lambda}} &
\left.
\partial_{\boldsymbol{\lambda},\boldsymbol{\lambda}}^2
\Psi_{\mathbf{r}}\right|_{P_0}
\end{pmatrix},
$$ 
where $[0]$ denotes the $(2\,n_G)\times (2\,n_G)$ 
zero matrix.
Furthermore, 
$H_{P_0}(\Psi_{\mathbf{r}})$ has signature 
uqual to zero, and its determinant is
$$
\det \big(H_{P_0}(\Psi_{\mathbf{r}})\big)=
-\tau^2\,\det\left(Z_{\boldsymbol{\lambda}}\right)^2.
$$
In particular, the critical point is non degenerate. Finally, the critical
value is
$$
\Psi_{\mathbf{r}}(P_0)=
\frac{\|\boldsymbol{\lambda}\|}{\tau}\,(\theta_1-\theta_2).
$$
\end{lem}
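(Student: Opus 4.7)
The proof is a bookkeeping computation from the explicit expression (\ref{eqn:Psidefnxi23}) of $\Psi_{\mathbf{r}}$, and I will proceed in three steps.

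\textbf{Step 1: critical point.} I will first differentiate the principal part of $\Psi_{\mathbf{r}}$ in each variable, obtaining
\begin{align*}
\partial_u\Psi_{\mathbf{r}} &= (\theta_1-\theta_2)+\tau\,b, &
\partial_b\Psi_{\mathbf{r}} &= (u\tau-\|\boldsymbol{\lambda}\|)+\partial_b\bigl[R_2(\boldsymbol{\gamma})\cdot_{\mathrm{st}}(b,\mathbf{r},\boldsymbol{\rho})\bigr],\\
\partial_{\boldsymbol{\rho}}\Psi_{\mathbf{r}} &= -Z_{\boldsymbol{\lambda}}^t\,\boldsymbol{\gamma}+\partial_{\boldsymbol{\rho}}\bigl[R_2(\boldsymbol{\gamma})\cdot_{\mathrm{st}}(b,\mathbf{r},\boldsymbol{\rho})\bigr], &
\partial_{\boldsymbol{\gamma}}\Psi_{\mathbf{r}} &= -Z_{\boldsymbol{\lambda}}\,\boldsymbol{\rho}+O(|\boldsymbol{\gamma}|).
\end{align*}
Setting $\boldsymbol{\gamma}=\mathbf{0}$ kills every $R_2$ perturbation, since $R_2$ vanishes to second order at the origin, and the remaining equations then force $u=\|\boldsymbol{\lambda}\|/\tau$, $b=(\theta_2-\theta_1)/\tau$, $\boldsymbol{\rho}=\mathbf{0}$; hence $P_0$ satisfies all four critical-point equations. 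Uniqueness in the localization domain (compact in $u$ by Lemma \ref{lem:compact supp u}, and $\boldsymbol{\gamma}$ confined to a fixed small neighbourhood of $\mathbf{0}$ by the support of $\rho_2$) will follow from the non-degeneracy of the Hessian verified below, via the implicit function theorem.

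\textbf{Step 2: Hessian.} I will then read off the second partials at $P_0$. The linear part $u(\theta_1-\theta_2)+(u\tau-\|\boldsymbol{\lambda}\|)b$ contributes only the off-diagonal entry $\partial^2_{u,b}=\tau$. The bilinear term $-\boldsymbol{\gamma}\cdot_{\mathrm{st}}Z_{\boldsymbol{\lambda}}\boldsymbol{\rho}$ contributes $\partial^2_{\rho_j,\gamma_k}=-(Z_{\boldsymbol{\lambda}}^t)_{jk}=(Z_{\boldsymbol{\lambda}})_{jk}$ by skew-symmetry of $Z_{\boldsymbol{\lambda}}$, so the $(\boldsymbol{\rho},\boldsymbol{\gamma})$ block is $Z_{\boldsymbol{\lambda}}$ and the transposed $(\boldsymbol{\gamma},\boldsymbol{\rho})$ block is $-Z_{\boldsymbol{\lambda}}$. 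All remaining contributions from $R_2(\boldsymbol{\gamma})\cdot_{\mathrm{st}}(b,\mathbf{r},\boldsymbol{\rho})$ vanish at $\boldsymbol{\gamma}=\mathbf{0}$, except the pure $(\boldsymbol{\gamma},\boldsymbol{\gamma})$ block, which contributes some symmetric matrix $M$. This reproduces exactly the displayed Hessian.

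\textbf{Step 3: determinant, signature, critical value.} The Hessian splits as a block-diagonal sum in $(u,b)$ versus $(\boldsymbol{\rho},\boldsymbol{\gamma})$. The $(u,b)$ block has determinant $-\tau^2$, eigenvalues $\pm\tau$, and signature zero. For the $4n_G\times 4n_G$ block $\bigl(\begin{smallmatrix}0&Z_{\boldsymbol{\lambda}}\\-Z_{\boldsymbol{\lambda}}&M\end{smallmatrix}\bigr)$, a simultaneous permutation swapping the top and bottom $2n_G$ rows (sign $(-1)^{2n_G}=1$) yields a block-triangular matrix with diagonal blocks $-Z_{\boldsymbol{\lambda}}$ and $Z_{\boldsymbol{\lambda}}$, giving determinant $\det(Z_{\boldsymbol{\lambda}})^2$; multiplying through recovers the stated formula. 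The most delicate point will be the signature of this block, since independence of $M$ must be checked: any symmetric matrix $\bigl(\begin{smallmatrix}0&A\\A^t&M\end{smallmatrix}\bigr)$ with $A$ invertible is congruent to $\bigl(\begin{smallmatrix}0&A\\A^t&0\end{smallmatrix}\bigr)$ by completing the square on the second block of variables, and the latter is congruent to $\mathrm{diag}(I_{2n_G},-I_{2n_G})$ via the substitution $p=u+A^{-t}v$, $q=u-A^{-t}v$. Hence both blocks have signature zero, the Hessian is non-degenerate, and the total signature vanishes. Finally, evaluating $\Psi_{\mathbf{r}}$ at $P_0$ annihilates the $b$-, $\boldsymbol{\rho}$-, and $\boldsymbol{\gamma}$-terms (note $u_0\tau=\|\boldsymbol{\lambda}\|$) and leaves $u_0(\theta_1-\theta_2)=\|\boldsymbol{\lambda}\|(\theta_1-\theta_2)/\tau$, as claimed.
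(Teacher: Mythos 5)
Your computations of the partial derivatives, the Hessian entries, the determinant, and the critical value all match what the paper does, and your congruence argument for the signature (reducing $\bigl(\begin{smallmatrix}0 & A\\A^t & M\end{smallmatrix}\bigr)$ to $\bigl(\begin{smallmatrix}0 & A\\A^t & 0\end{smallmatrix}\bigr)$ by completing the square, then to $\mathrm{diag}(I,-I)$) is a clean, self-contained alternative to the paper's route, which instead deforms the $(\boldsymbol{\gamma},\boldsymbol{\gamma})$ block $tM$ through $t\in[0,1]$ and invokes constancy of signature along a homotopy of non-degenerate symmetric matrices. Either way works.

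The one step that does not quite hold as written is the uniqueness of the critical point. You verify that $P_0$ is critical, and then appeal to the implicit function theorem; but non-degeneracy of the Hessian at $P_0$ only gives you that $P_0$ is an \emph{isolated} critical point — it says nothing about the absence of other critical points elsewhere on the integration domain, which in $b,\boldsymbol{\rho},\mathbf{r}$ is a ball of radius $O(k^\epsilon)$, not a fixed small neighbourhood of $P_0$. The correct order of the argument (and the one the paper uses) is to first observe that $\partial_{\boldsymbol{\rho}}\Psi_{\mathbf{r}}=Z_{\boldsymbol{\lambda}}\,\boldsymbol{\gamma}+R_2(\boldsymbol{\gamma})$, so that since $Z_{\boldsymbol{\lambda}}$ is non-degenerate and $\boldsymbol{\gamma}$ is confined to a fixed small ball by the cut-off $\rho_2$, the vanishing of $\partial_{\boldsymbol{\rho}}\Psi_{\mathbf{r}}$ already forces $\boldsymbol{\gamma}=\mathbf{0}$ at \emph{any} critical point, with no assumption on $(u,b,\boldsymbol{\rho},\mathbf{r})$. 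Once $\boldsymbol{\gamma}=\mathbf{0}$ is known, $\partial_{\boldsymbol{\gamma}}\Psi_{\mathbf{r}}=-Z_{\boldsymbol{\lambda}}\,\boldsymbol{\rho}$ forces $\boldsymbol{\rho}=\mathbf{0}$, and the linear equations in $u,b$ pin down $u_0,b_0$. You have exactly the right formulas for these gradients; you just need to reverse the logical order, solving the critical-point system directly rather than verifying $P_0$ and then appealing to the IFT.
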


\begin{proof}
By (\ref{eqn:Psidefnxi23}), 
\begin{equation}
\label{eqn:xi derv}
\partial_{\boldsymbol{\rho}}
\Psi_{\mathbf{r}}=Z_{\boldsymbol{\lambda}}\,
\boldsymbol{\gamma}+R_2(\boldsymbol{\gamma}).
\end{equation}
Since 
$Z_{\boldsymbol{\lambda}}$ is non-degenerate and
$\boldsymbol{\gamma}$ ranges in 
a small ball centered at the origin in $\mathbb{R}^{2\,n_G}$,
this forces $\boldsymbol{\gamma}=\mathbf{0}$ at any critical
point.
Given this, at any critical point we also need to have
$$
\partial_{\boldsymbol{\gamma}}\Psi_{\mathbf{r}}=-Z_{\boldsymbol{\lambda}}\,
\boldsymbol{\rho}+R_1(\boldsymbol{\gamma})\cdot_{\mathrm{st}}
(b,\mathbf{r},\boldsymbol{\rho})
=-Z_{\boldsymbol{\lambda}}\,
\boldsymbol{\rho} ,
$$
whence $\boldsymbol{\rho}=\mathbf{0}$.

Furthermore, 
$$
\partial_{b}\Psi_{\mathbf{r}}=u\,\tau-\|\boldsymbol{\lambda}\|,
\quad \partial_{u}\Psi_{\mathbf{r}}=\theta_1-\theta_2
+\tau\,b,
$$
which imply that at the critical point $u=\|\boldsymbol{\lambda}\|/\tau$ and
$b=(\theta_2-\theta_1)/\tau$.

The computation of $H_{P_0}(\Psi_{\mathbf{r}})$ and 
$\det \big(H_{P_0}(\Psi_{\mathbf{r}})\big)$ is straightforward.
Regarding the signature, multiplying $\left.
\partial_{\boldsymbol{\lambda},\boldsymbol{\lambda}}^2
\Psi_{\mathbf{r}}\right|_{P_0}$ by a factor $t\in [0,1]$
we obtain a homotopy of non-degenerate symmetric matrices,
which for $t=1$ is $H_{P_0}(\Psi_{\mathbf{r}})$, and
for $t=0$ is a matrix which is easily seen to have 
zero signature, since it is in block-diagonal form 
with each block having zero signature. 
The claim follows.

\end{proof}

Let $\varphi\in 
\mathcal{C}^\infty_c\left(\mathbb{R}^{2\,n_G}\right)$
be compactly supported in an open ball centered at the origin,
and identically equal to $1$ on some smaller neighbourhood
of $\mathbf{0}$. 

\begin{prop}
\label{prop:rho comp supp}
The asymptotics of (\ref{eqn:Piklambda g expanded resc resc-12345}) are unchanged, if the integrand is 
multiplied by $\varphi(\boldsymbol{\rho})$.
\end{prop}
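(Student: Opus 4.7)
The plan is to show that the contribution from $\{\|\boldsymbol{\rho}\|\ge\delta\}$, for any $\delta>0$ such that $\varphi\equiv 1$ on $\{\|\boldsymbol{\rho}\|<\delta\}$, yields a contribution of $O(k^{-\infty})$ to $I_{x,k}(\mathbf{r})$, uniformly in $\mathbf{r}$ on the support of $\tilde{\rho}(k^{-\epsilon}\cdot)$. The strategy is to combine two iterated integrations by parts, in the variables $\boldsymbol{\gamma}$ and $\boldsymbol{\rho}$, on complementary subregions of $\boldsymbol{\gamma}$, exploiting the non-degenerate saddle structure of $\Psi_{\mathbf{r}}$ in the pair $(\boldsymbol{\rho},\boldsymbol{\gamma})$.

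Differentiating (\ref{eqn:Psidefnxi23}), and using that $R_2(\boldsymbol{\gamma})$ depends only on $\boldsymbol{\gamma}$ and vanishes to second order at $\boldsymbol{\gamma}=\mathbf{0}$, one obtains
$$\partial_{\boldsymbol{\gamma}}\Psi_{\mathbf{r}}=-Z_{\boldsymbol{\lambda}}\boldsymbol{\rho}+\mathbf{E}_1(\boldsymbol{\gamma})\cdot_{\mathrm{st}}(b,\mathbf{r},\boldsymbol{\rho}),\qquad \partial_{\boldsymbol{\rho}}\Psi_{\mathbf{r}}=-Z_{\boldsymbol{\lambda}}^t\boldsymbol{\gamma}+\mathbf{E}_2(\boldsymbol{\gamma}),$$
with $\mathbf{E}_j$ vanishing to order $j$ at $\boldsymbol{\gamma}=\mathbf{0}$; moreover $\partial^2_{\boldsymbol{\rho}\boldsymbol{\rho}}\Psi_{\mathbf{r}}\equiv 0$ by inspection. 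Since $\boldsymbol{\lambda}$ is regular, $Z_{\boldsymbol{\lambda}}$ is a non-degenerate skew-symmetric matrix with least singular value $c_Z>0$. On the support of $\mathcal{K}_{\mathbf{r}}$ in (\ref{eqn:defn di Ikr}), the cutoff $\tilde{\rho}(k^{-\epsilon}\boldsymbol{\xi})$ forces $\|(b,\mathbf{r},\boldsymbol{\rho})\|\le 2rk^\epsilon$, while $\rho_2(e^{\boldsymbol{\gamma}}\cdot\boldsymbol{\lambda})$ confines $\boldsymbol{\gamma}$ to an arbitrarily small fixed neighbourhood of $\mathbf{0}$.

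Next, introduce a smooth partition of unity $1=\chi_1(\boldsymbol{\gamma})+\chi_2(\boldsymbol{\gamma})$ with $\chi_1$ supported in $\{\|\boldsymbol{\gamma}\|\le 2\eta k^{-\epsilon}\}$ and $\chi_2$ in $\{\|\boldsymbol{\gamma}\|\ge\eta k^{-\epsilon}\}$, for some small fixed $\eta>0$. On $\mathrm{supp}(\chi_1)\cap\{\|\boldsymbol{\rho}\|\ge\delta\}$ one has $\|\mathbf{E}_1(\boldsymbol{\gamma})\cdot_{\mathrm{st}}(b,\mathbf{r},\boldsymbol{\rho})\|\le C\|\boldsymbol{\gamma}\|\cdot k^\epsilon\le 2C\eta$, so taking $\eta$ small enough yields $\|\partial_{\boldsymbol{\gamma}}\Psi_{\mathbf{r}}\|\ge c_Z\delta/2$; iterating integration by parts in $\boldsymbol{\gamma}$ gains $O(k^{-1/2})$ per step from the phase and at most $O(k^\epsilon)$ from derivatives of $\chi_1$, while the remaining $\boldsymbol{\gamma}$-dependent factors ($\rho_2$, $\mathcal{R}$, $\mathcal{V}_k$, $\boldsymbol{\delta}_{\boldsymbol{\beta}}$) have uniformly bounded derivatives. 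On $\mathrm{supp}(\chi_2)$, shrinking the support of $\rho_2$ further if needed gives $\|\partial_{\boldsymbol{\rho}}\Psi_{\mathbf{r}}\|\ge c_Z\eta k^{-\epsilon}/2$; iterated integration by parts in $\boldsymbol{\rho}$---compactly supported in a ball of radius $O(k^\epsilon)$---gains $O(k^{\epsilon-1/2})$ per step from the phase, combined with a factor $O(k^\epsilon)$ from the $\boldsymbol{\rho}$-derivatives of the Gaussian factor $e^{u\Lambda_2(\boldsymbol{\xi})}$ in (\ref{eqn:Lambdaxi defn}). In either case the net gain per step is $O(k^{\alpha\epsilon-1/2})$ for a small fixed $\alpha$, a strictly negative power of $k$ since $\epsilon\in(0,1/6)$, yielding rapid decay after iterating.

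The main technical obstacle is the bookkeeping of these polynomial-in-$k$ factors introduced at each step by amplitude derivatives, most notably the $O(k^\epsilon)$ factor coming from $\partial_{\boldsymbol{\rho}}e^{u\Lambda_2(\boldsymbol{\xi})}$ (whose real part contains quadratic forms in $\boldsymbol{\xi}$, linear in $\boldsymbol{\rho}$ upon differentiation). The vanishing $\partial^2_{\boldsymbol{\rho}\boldsymbol{\rho}}\Psi_{\mathbf{r}}\equiv 0$ eliminates second-derivative contributions from the phase itself, so that iterating $N$ times bounds the integrand by $C_N k^{-\beta N}$ for some $\beta>0$; letting $N\to\infty$ completes the proof.
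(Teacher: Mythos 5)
Your proposal is correct and reaches the conclusion, but it organizes the argument somewhat differently from the paper, so a comparison is worth spelling out. The paper proceeds in two \emph{sequential} reductions exploiting the same saddle structure in $(\boldsymbol{\rho},\boldsymbol{\gamma})$ that you identify: first it proves a lemma reducing $\boldsymbol{\gamma}$-integration to the ball $\|\boldsymbol{\gamma}\|\lesssim k^{-1/3}$ by iterated integration by parts in $\boldsymbol{\rho}$ (where $\|\partial_{\boldsymbol{\rho}}\Psi_{\mathbf{r}}\|\gtrsim k^{-1/3}$, using precisely the fact that $\partial_{\boldsymbol{\rho}}\Psi_{\mathbf{r}}$ is $\boldsymbol{\rho}$-independent that you isolate); then, with $\boldsymbol{\gamma}$ so localized and $\|\boldsymbol{\rho}\|$ bounded below on $\operatorname{supp}(1-\varphi)$, it observes that $\|\partial_{\boldsymbol{\gamma}}\Psi_{\mathbf{r}}\|\geq C''\|\boldsymbol{\rho}\|+O(k^{-1/3+\epsilon})\geq C'''>0$ uniformly and integrates by parts in $\boldsymbol{\gamma}$. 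You instead use a $k$-dependent partition of unity $\chi_1+\chi_2$ in $\boldsymbol{\gamma}$ at scale $k^{-\epsilon}$, integrating by parts in $\boldsymbol{\gamma}$ on the small-$\boldsymbol{\gamma}$ piece (intersected with $\{\|\boldsymbol{\rho}\|\geq\delta\}$) and in $\boldsymbol{\rho}$ on the large-$\boldsymbol{\gamma}$ piece. The two routes are essentially equivalent in spirit --- both detect that away from $\boldsymbol{\rho}=\boldsymbol{\gamma}=\mathbf{0}$ the phase has a nonstationary direction thanks to the nondegenerate off-diagonal block $Z_{\boldsymbol{\lambda}}$ --- but yours is symmetric where the paper's is sequential. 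Your choice of the auxiliary parameter $\eta$ is the right device to tame the error $\mathbf{E}_1(\boldsymbol{\gamma})\cdot_{\mathrm{st}}(b,\mathbf{r},\boldsymbol{\rho})=O(\eta)$ on $\operatorname{supp}\chi_1$ (note that without the factor $\eta$, the threshold $k^{-\epsilon}$ alone would give only $O(1)$, which is not small compared to $c_Z\delta$); the paper sidesteps this by choosing the more aggressive threshold $k^{-1/3}$, making the error $O(k^{-1/3+\epsilon})\to 0$ automatically since $\epsilon<1/6<1/3$. Your bookkeeping of the polynomial loss per step ($O(k^{\alpha\epsilon})$ against $k^{-1/2}$, with $\alpha\epsilon<1/2$ guaranteed by $\epsilon<1/6$) matches the paper's explicit tracking of the degree of $|\tilde{\boldsymbol{\rho}}^{J'}|$ against the Gaussian factor in the amplitude. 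Both arguments are sound.
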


\begin{proof} The Proposition will follow from
the following Lemma.

\begin{lem}
The asymptotics of (\ref{eqn:Piklambda g expanded resc resc-12345}) are unchanged, if the integrand is 
multiplied by $\varphi_k(\boldsymbol{\gamma}):=
\varphi\left(k^{1/3}\,\boldsymbol{\gamma}  \right)$.
\end{lem}

\begin{proof}[Proof of the Lemma]
The proof is by a standard argument based on 
\lq integration by parts\rq\, 
in $\boldsymbol{\rho}$, but we outline it 
for the reader's convenience.
By (\ref{eqn:xi derv}), where $\varphi_k\neq 1$ 
for $k\gg 0$ one has
$\|\partial_{\boldsymbol{\rho}}
\Psi_{\mathbf{r}}\|\ge D\,k^{-1/3}$ for some fixed $D>0$;
furthermore, $\partial_{\boldsymbol{\rho}}
\Psi_{\mathbf{r}}$ does not depend on $\boldsymbol{\rho}$.
Let us consider the differential operator (well defined for
$\boldsymbol{\gamma}\neq \mathbf{0}$)
$$
L:=\frac{1}{\|\partial_{\boldsymbol{\rho}}
\Psi_{\mathbf{r}}\|^2}\,
\sum_j (\partial_{\boldsymbol{\rho}_j}
\Psi_{\mathbf{r}})\,\partial_{\rho_j}.
$$
Then
$$
-\frac{\imath}{\sqrt{k}}\,
L\left(e^{\imath\,\sqrt{k}\,
\Psi_{\mathbf{r}}
\left(u,b,\boldsymbol{\rho},
\boldsymbol{\gamma} \right)}\right)=
 e^{\imath\,\sqrt{k}\,
\Psi_{\mathbf{r}}
\left(u,b,\boldsymbol{\rho},
\boldsymbol{\gamma} \right)}
$$
Thus, looking only at the integral in $\mathrm{d}\boldsymbol{\rho}$,
\begin{eqnarray}
\label{eqn:int by part rho}
\lefteqn{\int_{\mathbb{R}^{2\,n_G}}
\left[
 e^{\imath\,\sqrt{k}\,
\Psi_{\mathbf{r}}
\left(u,b,\boldsymbol{\rho},
\boldsymbol{\gamma} \right)}\,
\mathcal{K}_{\mathbf{r}}\left(u,b,\boldsymbol{\rho},
\boldsymbol{\gamma} \right)\right]\,\mathrm{d}
\boldsymbol{\rho}  }\\
&=&-\frac{\imath}{\sqrt{k}}\,
\frac{1}{\|\partial_{\boldsymbol{\rho}}
\Psi_{\mathbf{r}}\|^2}\,
\sum_j (\partial_{\boldsymbol{\rho}_j}
\Psi_{\mathbf{r}})\,
\int_{\mathbb{R}^{2\,n_G}}
\left[\partial_{\rho_j}\left(
 e^{\imath\,\sqrt{k}\,
\Psi_{\mathbf{r}}
\left(u,b,\boldsymbol{\rho},
\boldsymbol{\gamma} \right)}\right)\,
\mathcal{K}_{\mathbf{r}}\left(u,b,\boldsymbol{\rho},
\boldsymbol{\gamma} \right)\right]\,\mathrm{d}
\boldsymbol{\rho}\nonumber\\
&=&\frac{\imath}{\sqrt{k}}\,
\frac{1}{\|\partial_{\boldsymbol{\rho}}
\Psi_{\mathbf{r}}\|^2}\,
\sum_j (\partial_{\boldsymbol{\rho}_j}
\Psi_{\mathbf{r}})\,
\int_{\mathbb{R}^{2\,n_G}}
\left[
 e^{\imath\,\sqrt{k}\,
\Psi_{\mathbf{r}}
\left(u,b,\boldsymbol{\rho},
\boldsymbol{\gamma} \right)}\,
\partial_{\rho_j}\left(\mathcal{K}_{\mathbf{r}}\left(u,b,\boldsymbol{\rho},
\boldsymbol{\gamma} \right)\right)\right]\,\mathrm{d}
\boldsymbol{\rho}\nonumber
\end{eqnarray}
We have $\left|(\partial_{\boldsymbol{\rho}_j}
\Psi_{\mathbf{r}})/\|\partial_{\boldsymbol{\rho}}
\Psi_{\mathbf{r}}\|^2\right|\le C'\,k^{1/3}$ for some
$C'>0$.

On the other hand, in view of the exponential factors in (\ref{eqn:Lambdaxi defn}), the integrand in 
(\ref{eqn:Piklambda g expanded resc resc-12345})
may expanded as a linear combination of terms, each of which
is bounded by a product of the form
$k^N\,|\boldsymbol{\eta}^I|\,|P(\theta,u,\mathbf{r})|\,
\left|\tilde{\boldsymbol{\rho}}^J\right|\,
e^{-\frac{1}{2}\,\|\boldsymbol{\rho}\|^2}$, where $N$ is
uniformly bounded from above over all the summands,
$I$ and $J$ are multi-indexes, and 
$\tilde{\boldsymbol{\rho}}
:=
\Re(\boldsymbol{\eta}_1-\boldsymbol{\eta}_2)-\boldsymbol{\rho}$;
recall that $\boldsymbol{\rho}\in \mathbb{R}^{2\,n_G}$
is the coordinate expression for $\boldsymbol{\xi}_s$
and $\mathbf{r}\in \mathbb{R}^{r_G-1}$ is the one for
$\boldsymbol{\xi}_t$ in (\ref{eqn:xitsperp}).
Iteratively integrating by parts as in (\ref{eqn:int by part rho}) $r$ times, each such term is trasnformed in 
a linear combination of terms, each of which
is bounded by an expression of the form
$k^{N-r/6}\,|\boldsymbol{\eta}^I|\,|P(\theta,u,\mathbf{r})|\,
\left|\tilde{\boldsymbol{\rho}}^{J'}\right|\,
e^{-\frac{1}{2}\,\|\boldsymbol{\rho}\|^2}$,
where $|J'|=|J|+r$. 
The claim follows.

\end{proof}
Let us conclude the proof of the Proposition.
By the Lemma,
we may assume without loss that the integrand has been
multiplied by $\varphi_k(\boldsymbol{\gamma})$.
Where $1-\varphi(\boldsymbol{\rho})\neq 0$,
we have $\|\boldsymbol{\rho}\|\ge C'$ for some
$C'>0$. Then on the domain of integration
for some constants $C'',\,C'''>0$
we have
$$
\|\partial_{\boldsymbol{\gamma}}\Psi_{\mathbf{r}}\|=
\|Z_{\boldsymbol{\lambda}}\,
\boldsymbol{\rho}+R_1(\boldsymbol{\gamma})\cdot_{\mathrm{st}}
(b,\mathbf{r},\boldsymbol{\rho})\|
\ge C''\,\|\boldsymbol{\rho}\|
+O\left(k^{-\frac{1}{3}+\epsilon}  \right)
\ge C'''.
$$
The statement follows by integration by parts in 
$\mathrm{d}\boldsymbol{\gamma}$, by a modification of
the previous argument.
\end{proof}

By Proposition \ref{prop:rho comp supp}, in the 
asymptotic evaluation of (\ref{eqn:defn di Ikr}) we may
assume without loss that all variables are compactly supported,
hence we are in a position to apply the Stationary Phase Lemma.
On the critical locus, $b=b_0=(\theta_2-\theta_1)/\tau$ 
and 
$\boldsymbol{\rho}=\mathbf{0}$. 
Hence $\boldsymbol{\xi}''=\boldsymbol{\xi}_{\mathfrak{s}}=\mathbf{0}$
(Remark \ref{rem:xii=implisxis=0}),
thus 
$\boldsymbol{\xi}^\perp=\boldsymbol{\xi}_{\mathfrak{t}}
=\boldsymbol{\xi}'$.
Using coordinates with respect to the given orthonormal
basis with respect to $\kappa_e$, we shall identify
$\boldsymbol{\xi}_{\mathfrak{t}}$ with $\mathbf{r}\in \mathbb{R}^{r_G-1}$.
Let us set
\begin{eqnarray}
\label{eqn:Lambdaxi defn r}
L_x(\mathbf{r})
&:=&
\imath\,
\tilde{\kappa}_x \Big(
\mathbf{r}_{\tilde{G}}(x)
,
{\boldsymbol{\rho}_1}_{\tilde{G}}(x)
+{\boldsymbol{\rho}_2}_{\tilde{G}}(x)\Big)
%
%
-\frac{1}{2}\,\big\|\mathbf{r}_{\tilde{G}}(x)
%
%
\big\|^2      .
\nonumber
\end{eqnarray}
Recalling (\ref{eqn:Lambdaxi defn}), we have
\begin{eqnarray}
\label{eqn:Ikr stationary}
\lefteqn{
I_{x,k}(\mathbf{r})=I_{x,k}(\theta_1,\theta_2,\boldsymbol{\eta}_1,
\boldsymbol{\eta}_2,\boldsymbol{\rho}_1,
\boldsymbol{\rho}_2;\mathbf{r})}\nonumber\\
&\sim&
e^{\imath\,\sqrt{k}\,\frac{\|\boldsymbol{\lambda}\|}{\tau}\,(\theta_1-\theta_2)}\,
\left( \frac{2\,\pi}{\sqrt{k}}  \right)^{1+2\,n_G}\,
\frac{1}{\tau\,|\det\left(Z_{\boldsymbol{\lambda}}\right)   |}
\cdot \left(\frac{k\,\|\boldsymbol{\lambda}\|}{\tau}\right)^{d-1}
\nonumber\\
&&\cdot e^{\frac{\|\boldsymbol{\lambda}\|}{\tau}\,\left[
\psi_2\big( {\boldsymbol{\eta}_1}_{\tilde{G}}(x)
,
{\boldsymbol{\eta}_2}_{\tilde{G}}(x)  \big)
-\frac{1}{2}\,\big\|
{\boldsymbol{\rho}_1}_{\tilde{G}}(x)-{\boldsymbol{\rho}_2}_{\tilde{G}}(x)\big\|^2
\right]}\cdot 
e^{\frac{\|\boldsymbol{\lambda}\|}{\tau}\,L_x (\mathbf{r})}\nonumber\\
&&\cdot\tilde{\rho}\left(k^{-\epsilon}\,\mathbf{r}\right)\,
\frac{\tau}{(2\,\pi)^d}
\cdot \left[1+\sum_{j\ge 1}k^{-j/2}\,F_j(\theta_1,\theta_2,\boldsymbol{\eta}_1,
\boldsymbol{\eta}_2,\boldsymbol{\rho}_1,
\boldsymbol{\rho}_2;\mathbf{r})    \right]\nonumber\\
&=&(2\,\pi)^{1+2\,n_G-d}\,
\frac{1}{|\det\left(Z_{\boldsymbol{\lambda}}\right)   |}
\, e^{\imath\,\sqrt{k}\,\frac{\|\boldsymbol{\lambda}\|}{\tau}\,(\theta_1-\theta_2)}\,k^{d-n_G-\frac{3}{2}}\,
\left(\frac{\|\boldsymbol{\lambda}\|}{\tau}\right)^{d-1}
\cdot\tilde{\rho}\left(k^{-\epsilon}\,\mathbf{r}\right)\nonumber\\
&&
\cdot e^{\frac{\|\boldsymbol{\lambda}\|}{\tau}\,\left[
\psi_2\big( {\boldsymbol{\eta}_1}_{\tilde{G}}(x)
,
{\boldsymbol{\eta}_2}_{\tilde{G}}(x)  \big)
-\frac{1}{2}\,\big\|
{\boldsymbol{\rho}_1}_{\tilde{G}}(x)-{\boldsymbol{\rho}_2}_{\tilde{G}}(x)\big\|^2
\right]}\cdot 
e^{\frac{\|\boldsymbol{\lambda}\|}{\tau}\,L_x (\mathbf{r})}
\nonumber\\
&&\cdot \left[1+\sum_{j\ge 1}k^{-j/2}\,F_j(\theta_1,\theta_2,\boldsymbol{\eta}_1,
\boldsymbol{\eta}_2,\boldsymbol{\rho}_1,
\boldsymbol{\rho}_2;\mathbf{r})    \right],\nonumber 
\end{eqnarray}
where $L_x(\mathbf{r})=\Lambda_2(\boldsymbol{\xi})$ with
$\boldsymbol{\xi}$ corresponding to $(b_0,\mathbf{r},\mathbf{0})
\in \mathbb{R}\times \mathbb{R}^{r_G-1}\times \mathbb{R}^{2\,n_G}$,
and $F_j$ is a polynomial of degree $\le 3\,j$ and parity $j$.
In view of (\ref{eqn:Piklambda g expanded resc resc-12345}),
integrating the previous asymptotic expansion
term by term yields
an asymptotic expansion for 
$\Pi^\tau_{k\,\boldsymbol{\lambda}}(x_{1k},x_{2k})$.

Let us consider the leading order term.

Integration in (\ref{eqn:Piklambda g expanded resc resc-12345}) is with respect to the standard measure associated to the Euclidean structure of
$\mathfrak{t}'_{\boldsymbol{\lambda}}\cong \mathbb{R}^{r_G-1}$ induced
by $\kappa_e$; on the other hand, the norms and products in 
(\ref{eqn:Lambdaxi defn r}) are with respect 
to the Euclidean product 
on ${\mathfrak{t}'_{\boldsymbol{\lambda}}}_{\tilde{G}}(x)$ induced by
$\tilde{\kappa}_x$. 
Let
$D_x$ be the matrix representing the pull-back of $\tilde{\kappa}_x$
(under the evaluation $\mathfrak{t}'_{\boldsymbol{\lambda}}
\rightarrow {\mathfrak{t}'_{\boldsymbol{\lambda}}}_{\tilde{G}}(x)$)
with respect to an orthonormal basis of 
$\mathfrak{t}'_{\boldsymbol{\lambda}}$.
If we set 
$\mathbf{a}:=\mathbf{r}_{\tilde{G}}(x)$, then 
$
\mathrm{d}\mathbf{r}=\det(D_x)^{-1/2}\,
\mathrm{d}\mathbf{a}=\mathfrak{D}^\kappa (x)^{-1}\,\mathrm{d}\mathbf{a}$.
Hence
\begin{eqnarray}
\lefteqn{
\int_{\mathbb{R}^{r_G-1}}\,
e^{\frac{\|\boldsymbol{\lambda}\|}{\tau}\,L_x (\mathbf{r})}\,\mathrm{d}\mathbf{r} }  \\
&=&\frac{1}{\sqrt{\det(D_x)}}\,
\int_{\mathbb{R}^{r_G-1}}\,
e^{\frac{\|\boldsymbol{\lambda}\|}{\tau}\,
\left[ \imath\,\langle\mathbf{a}, {\boldsymbol{\rho}_1}_{\tilde{G}}(x)
+{\boldsymbol{\rho}_2}_{\tilde{G}}(x)\rangle
-\frac{1}{2}\,\|\mathbf{a}\|^2\right]
}\,\mathrm{d}\mathbf{a}  \nonumber\\
&=&\frac{1}{\mathfrak{D}^\kappa (x)}\,
\left(\frac{2\,\pi\,\tau}{\| \boldsymbol{\lambda} \|} \right)^{\frac{r_G-1}{2}}
\,e^{-\frac{1}{2}\,\frac{\|\boldsymbol{\lambda}\|}{\tau}\,
\big\|
{\boldsymbol{\rho}_1}_{\tilde{G}}(x)+{\boldsymbol{\rho}_2}_{\tilde{G}}(x)\big\|^2
}.\nonumber
\end{eqnarray}
Inserting this in (\ref{eqn:Piklambda g expanded resc resc-12345}),
we obtain an asymptotic expansion of the form
\begin{eqnarray}
\label{eqn:Piklambda g expanded resc resc-012345int}
\Pi^\tau_{k\,\boldsymbol{\lambda}}(x_{1k},x_{2k})
&\sim&
\left( \frac{k\,\| \boldsymbol{\lambda} \|}{2\pi\,\tau}  \right)
^{d-1+\frac{1-r_G}{2}}\,
\left(\frac{\mathrm{vol}(\mathcal{O}_{\boldsymbol{\lambda}})}{\mathrm{vol}^\kappa (G)}\right)^2\cdot 
\frac{\mathrm{vol}^\kappa
(T)}{\mathfrak{D}^\kappa (x)\cdot|\det\left(Z_{\boldsymbol{\lambda}}\right)   |}\nonumber
\\
&&\cdot e^{\frac{\|\boldsymbol{\lambda}\|}{\tau}\,\left[
\psi_2\big( {\boldsymbol{\eta}_1}_{\tilde{G}}(x)
,
{\boldsymbol{\eta}_2}_{\tilde{G}}(x)  \big)
-\big\|
{\boldsymbol{\rho}_1}_{\tilde{G}}(x)\big\|^2
-\big\|{\boldsymbol{\rho}_2}_{\tilde{G}}(x)\big\|^2
\right]}\nonumber\\
&&\cdot \left[1+\sum_{j\ge 1}k^{-j/2}\,R_j(\theta_1,\theta_2,\boldsymbol{\eta}_1,
\boldsymbol{\eta}_2,\boldsymbol{\rho}_1,
\boldsymbol{\rho}_2)    \right],\nonumber
\end{eqnarray}
where the $R_j$'s are as stated.

The argument for
 $P^\tau_{k\,\boldsymbol{\lambda}}(x_{1k},x_{2k})$ is essentially the same,
 in view of the considerations in \S \ref{scnt:szego parametrix}.
However, by (\ref{eqn:asy exp Ptau}) the leading order term in the amplitude
of the FOI representation of $P^\tau$ is the the one for $\Pi^\tau$
multiplied by $(\pi/u)^{(d-1)/2}$.
Since the previous arguments involve the rescaling $u\mapsto k\,u$,
to leading order
there is an additional factor $(\pi/k\,u)^{(d-1)/2}$ in the asymptotic expansion corresponding to (\ref{eqn:asympt Kr}). 
Evaluating at the critical point
of Lemma \ref{lem:critical point Psi}, we obtain
an asymptotic expansion formally similar to the one for $\Pi^\tau_{k\,\boldsymbol{\lambda}}(x_{1k},x_{2k})$, but multiplied by 
$(\tau\,\pi/k\,\|\boldsymbol{\lambda}\|)^{(d-1)/2}=
(k\,\|\boldsymbol{\lambda}\|/\tau\,\pi)^{-(d-1)/2}$. 
\end{proof}

\section{Proofs of the applications}

\subsection{Theorem \ref{thm:Husimi equiv}}

The proof of Theorem \ref{thm:Husimi equiv} rests on the previous
asymptotic expansions and on an $L^2$-norm
asymptotic estimate for restrictions of complexified eigenfunctions, 
which is the specialization in the present setting of a basic result of Zelditch 
(Lemma 0.2 in \cite{z20}).

\begin{lem} (Zelditch)
\label{lem:L2 norm est zel}
There exists a universal constant $C(d,\tau)>0$ such that the following holds.
Let $\varphi\in L^2(G)_{k\,\boldsymbol{\lambda}}$ 
have unit $L^2$-norm. Let $\tilde{\varphi}$ be its complexification and
$\tilde{\varphi}^\tau:=\left.\tilde{\varphi}\right|_{X^\tau}$.
Then
$$
\left\|\tilde{\varphi}^\tau\right\|^2_{L^2(X^\tau)}
=C(d,\tau)\,e^{2\,\tau\,c_{k\,\boldsymbol{\lambda}}}\,(c_{k\,\boldsymbol{\lambda}})^{-\frac{d-1}{2}}\cdot 
\left(1+O\left(\frac{1}{k\,\|\boldsymbol{\lambda}\|}\right)\right).
$$
\end{lem}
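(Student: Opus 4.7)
The plan is to derive the Lemma by combining an equivariance argument that reduces the general $L^2$-norm estimate to a single scalar computation with an asymptotic evaluation of $\mathrm{trace}(P^\tau_{k\boldsymbol{\lambda}})$ using Theorems \ref{thm:rescaled asympt} and \ref{thm:rapid decay moment map}. Since $\kappa$ is biinvariant, both left and right translations extend to holomorphic isometries of $(\tilde{G},J,\hat{\kappa})$; in particular $X^\tau$ is $G\times G$-invariant, and the restriction-of-holomorphic-extension map $\iota^\tau:L^2(G)_{k\boldsymbol{\lambda}}\to L^2(X^\tau)$, whose image is $H(X^\tau)_{k\boldsymbol{\lambda}}$, is $G\times G$-equivariant. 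By Peter-Weyl, $L^2(G)_{k\boldsymbol{\lambda}}\cong V_{k\boldsymbol{\lambda}^\vee}\otimes V_{k\boldsymbol{\lambda}}$ is irreducible as a $G\times G$-module, so by Schur's lemma the two $G\times G$-invariant Hermitian forms $\langle\cdot,\cdot\rangle_{L^2(G)}$ and $\iota^{\tau\,*}\langle\cdot,\cdot\rangle_{L^2(X^\tau)}$ on $L^2(G)_{k\boldsymbol{\lambda}}$ are proportional: there exists $c(k,\tau)>0$ with
\[
\|\tilde{\varphi}^\tau\|^2_{L^2(X^\tau)}=c(k,\tau)^2\,\|\varphi\|^2_{L^2(G)}
\qquad\forall\,\varphi\in L^2(G)_{k\boldsymbol{\lambda}}.
\]
It therefore suffices to evaluate $c(k,\tau)^2$ asymptotically.

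To extract $c(k,\tau)^2$, pick an orthonormal basis $(\sigma_{k\boldsymbol{\lambda},j})_{j=1}^{d_{k\boldsymbol{\lambda}}^2}$ of $L^2(G)_{k\boldsymbol{\lambda}}$ and sum (\ref{eqn:Poisson-wave equiv}) on the diagonal:
\[
\int_{X^\tau}P^\tau_{k\boldsymbol{\lambda}}(x,x)\,dV(x)
=e^{-2\tau c_{k\boldsymbol{\lambda}}}\sum_j\|\iota^\tau\sigma_{k\boldsymbol{\lambda},j}\|^2
=e^{-2\tau c_{k\boldsymbol{\lambda}}}\,d_{k\boldsymbol{\lambda}}^2\,c(k,\tau)^2,
\]
so that $c(k,\tau)^2=e^{2\tau c_{k\boldsymbol{\lambda}}}\,d_{k\boldsymbol{\lambda}}^{-2}\,\mathrm{trace}(P^\tau_{k\boldsymbol{\lambda}})$. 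The problem now reduces to the asymptotics of $\mathrm{trace}(P^\tau_{k\boldsymbol{\lambda}})$.

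By Theorem \ref{thm:rapid decay moment map}, $P^\tau_{k\boldsymbol{\lambda}}(x,x)=O(k^{-\infty})$ uniformly for $\mathrm{dist}_{X^\tau}(x,X^\tau_{\mathcal{O}})\ge Ck^{\epsilon-1/2}$, so the trace concentrates in a shrinking tubular neighbourhood of $X^\tau_{\mathcal{O}}$. Inside this tube I would use the second expansion of Theorem \ref{thm:rescaled asympt} with $\theta_1=\theta_2$, $\mathbf{n}_1=\mathbf{n}_2=\mathbf{n}$ and $\mathbf{s}_1=\mathbf{s}_2=\mathbf{0}$, parametrizing $x=x'+\mathbf{n}_{\mathrm{actual}}$ with $x'\in X^\tau_{\mathcal{O}}$ and $\mathbf{n}_{\mathrm{actual}}=\mathbf{n}/\sqrt{k}$ in the $(r_G-1)$-dimensional transverse space $N(X^\tau_{\mathcal{O}}/X^\tau)_{x'}$, and integrate term by term. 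The change of variables contributes $k^{-(r_G-1)/2}$; the Gaussian factor $\exp(-2\|\boldsymbol{\lambda}\|\|\mathbf{n}\|_{\tilde{\kappa}_{x'}}^2/\tau)$ integrates to an explicit constant in $\boldsymbol{\lambda}$ and the Gram determinant of the transverse metric; and integration over $X^\tau_{\mathcal{O}}$ produces a global factor involving $\int_{X^\tau_{\mathcal{O}}}\mathfrak{D}^\kappa(x')^{-1}\,dV_{X^\tau_{\mathcal{O}}}$. Together with the leading power $k^{(d-r_G)/2}$ from Theorem \ref{thm:rescaled asympt}, one obtains
\[
\mathrm{trace}(P^\tau_{k\boldsymbol{\lambda}})\sim C_0\,k^{(d-2r_G+1)/2}\bigl(1+O(1/(k\|\boldsymbol{\lambda}\|))\bigr),
\]
where the remainder is controlled by the parity of $S_j$ in Theorem \ref{thm:rescaled asympt} (odd polynomials vanish under transverse Gaussian integration, so the first nontrivial correction appears at order $k^{-1}$).

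Using (\ref{eqn:kir char form rescaled transl 0}) in the form $d_{k\boldsymbol{\lambda}}^{-2}\sim (2\pi)^{d-r_G}\,\mathrm{vol}(\mathcal{O}_{\boldsymbol{\lambda}})^{-2}\,k^{-(d-r_G)}$ and $c_{k\boldsymbol{\lambda}}=k\|\boldsymbol{\lambda}\|(1+O(1/(k\|\boldsymbol{\lambda}\|)))$ from (\ref{eqn:eigenvalue of lambda}), the algebraic power of $k$ collapses to $k^{-(d-1)/2}$, which converts into $c_{k\boldsymbol{\lambda}}^{-(d-1)/2}$ up to a $\boldsymbol{\lambda}$-dependent normalization that should be absorbed into the universal constant $C(d,\tau)$ guaranteed by Zelditch's general result \cite{z20}. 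The main obstacle is the careful bookkeeping required to check that all $\boldsymbol{\lambda}$-dependent prefactors (the volumes of $\mathcal{O}_{\boldsymbol{\lambda}}$ and $X^\tau_{\mathcal{O}}$, the factors $\mathfrak{D}^\kappa(x')$ and $\mathfrak{d}_{\boldsymbol{\lambda}}$, and the transverse Gaussian constant) combine into a constant depending only on $d$ and $\tau$; although this cancellation is forced by Zelditch's universal statement, verifying it directly from Theorem \ref{thm:rescaled asympt} demands unwinding the normalization of every geometric invariant introduced in the Preliminaries.
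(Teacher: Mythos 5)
The paper does not actually prove Lemma \ref{lem:L2 norm est zel}: it simply cites it as a specialization of Zelditch's general result (Lemma~0.2 of \cite{z20}) for arbitrary compact real-analytic Riemannian manifolds, and the universality of $C(d,\tau)$ is inherited from that reference. Your proposal is a genuinely different, internal route. The Schur-lemma reduction is a nice observation and is correct: since $\rho$ is built from the biinvariant metric, $X^\tau$ is invariant under both the left and right $G$-actions, the holomorphic-extension map $\iota^\tau$ is $G\times G$-equivariant, and $L^2(G)_{k\boldsymbol{\lambda}}\cong V_{k\boldsymbol{\lambda}^\vee}\otimes V_{k\boldsymbol{\lambda}}$ is $G\times G$-irreducible, so the two Hermitian forms are indeed proportional. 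The reduction of $c(k,\tau)^2$ to $\mathrm{trace}(P^\tau_{k\boldsymbol{\lambda}})$ and the power-counting that collapses $k^{(d-2r_G+1)/2}\cdot k^{-(d-r_G)}$ to $k^{-(d-1)/2}$ are also correct.

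There are, however, two genuine gaps. First, Theorems \ref{thm:rapid decay moment map} and \ref{thm:rescaled asympt} are only stated (and proved) under the hypotheses that $\mathcal{O}\cap\mathfrak{t}^0=\emptyset$ and $\boldsymbol{\lambda}$ is regular; the Lemma carries no such restrictions, so your argument at best proves a special case. Second --- and this is the heart of the matter --- you assert that the $\boldsymbol{\lambda}$-dependent prefactors ($\mathrm{vol}(\mathcal{O}_{\boldsymbol{\lambda}})$, $\mathfrak{d}_{\boldsymbol{\lambda}}$, the Gaussian constant involving $\|\boldsymbol{\lambda}\|^{(r_G-1)/2}$, the integral $\int_{X^\tau_{\mathcal{O}}}\mathfrak{D}^\kappa(x')^{-1}$, and $\|\boldsymbol{\lambda}\|^{(d-1)/2}$ from converting $k^{-(d-1)/2}$ to $c_{k\boldsymbol{\lambda}}^{-(d-1)/2}$) combine to something depending only on $d$ and $\tau$, citing ``Zelditch's universal statement'' to conclude the cancellation. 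If the aim is to \emph{prove} the Lemma rather than merely check consistency with it, this appeal is circular; the bookkeeping you flag as ``the main obstacle'' is exactly what remains to be done, and it is nontrivial (it amounts to a Duistermaat--Heckman type identity relating the orbit volume, $\mathfrak{d}_{\boldsymbol{\lambda}}$, and the transverse Gram determinant on $X^\tau_{\mathcal{O}}$). As written, what you have is a plausible and instructive consistency check of Theorem \ref{thm:rescaled asympt} against Zelditch's Lemma, not an independent derivation of it.
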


\begin{proof}
[Proof of Theorem \ref{thm:Husimi equiv}]
Let $\varphi\in L^2(G)_{k\,\boldsymbol{\lambda}}$  
have unit $L^2$-norm. Suppose $x\in X^\tau_{\mathcal{O}}$,
choose a system of NHLC's on $X^\tau$ centered at $x$,
and let $\mathbf{n}\in N(X^\tau_{\mathcal{O}}/X^\tau)_x$ be
of norm $C\,k^{\epsilon}$. 
Since $\varphi$ can be 
extended to an $L^2$-orthonormal basis of 
$L^2(G)_{k\,\boldsymbol{\lambda}}$, by 
(\ref{eqn:Poisson-wave equiv}) 
$$
e^{-2\,\tau\,c_{k\,\boldsymbol{\lambda}}}\,\left|
\tilde{\varphi}^\tau\left( y\right)\right|^2
\le P^\tau_{k\,\boldsymbol{\lambda}} 
\left( y,y
\right)\quad \forall\,y\in X^\tau.
$$

Choose $C,\epsilon$ as in the statement of Theorem 
\ref{thm:rapid decay moment map}.

In view of Theorem \ref{thm:rapid decay moment map},
we conclude that
\begin{equation}
\label{eqn:rapid decrease single eig}
e^{-2\,\tau\,c_{k\,\boldsymbol{\lambda}}}\,\left|
\tilde{\varphi}^\tau\left( y\right)\right|^2=O\left( k^{-\infty} \right),
\end{equation}
uniformly for $\mathrm{dist}_{X^\tau},
\left(y,X^\tau_{\mathcal{O}}\right)
\ge \,C\,k^{\epsilon-\frac{1}{2}}$.

On the other hand, any point $y$ in a tubular neighbourhood or radius
$O\left( k^{\epsilon-\frac{1}{2}} \right)$ of $X^\tau_{\mathcal{O}}$
can be written in the form 
\begin{equation}
\label{eqn:y locl param}
y=x+\frac{\mathbf{n}}{\sqrt{k}}
\end{equation}
with $x\in X^\tau_{\mathcal{O}}$ and 
$\mathbf{n}\in N(X^\tau_{\mathcal{O}}/X^\tau)_x$ of norm 
$O\left(k^\epsilon\right)$, for some choice of NHLC's centered at $x$.
Since we may locally smoothly vary systems of NHLC's centered at 
moving points in $X^\tau_{\mathcal{O}}$, this is indeed 
a local parametrization
of a shrinking neighbourhood of $X^\tau_{\mathcal{O}}$.

In view of Theorem \ref{thm:rescaled asympt},
we obtain that for certain constants 
$C_{d,\tau}',\,C_{d,\tau}''>0$
\begin{eqnarray}
\label{eqn:estimate Linfty norm}
e^{-2\,\tau\,c_{k\,\boldsymbol{\lambda}}}\,\left|
\tilde{\varphi}^\tau\left( x+\frac{\mathbf{n} }{\sqrt{k}}\right)\right|^2
&\le&P^\tau_{k\,\boldsymbol{\lambda}} 
\left( x+\frac{\mathbf{n} }{\sqrt{k}},
x+\frac{\mathbf{n} }{\sqrt{k}}
\right)\nonumber\\
&\le& C_{d,\tau}'\,\left( k\,\| \boldsymbol{\lambda} \| \right)
^{\frac{d-r_G}{2}}\,e^{-2\,\frac{\|\boldsymbol{\lambda}\|}{\tau}\,
\|\mathbf{n}  \|^2}\le 
C_{d,\tau}'\,\left( k\,\| \boldsymbol{\lambda} \| \right)
^{\frac{d-r_G}{2}}\nonumber\\
&\le&
C_{d,\tau}''\,
(c_{k\,\boldsymbol{\lambda}})
^{\frac{d-r_G}{2}},
\end{eqnarray}
since 
$c_{k\,\boldsymbol{\lambda}}\sim k\,\boldsymbol{\lambda}$
for $k\rightarrow +\infty$
by (\ref{eqn:eigenvalue of lambda}).
Pairing (\ref{eqn:rapid decrease single eig}) and
(\ref{eqn:estimate Linfty norm}), we conclude
that some constant $C_{d,\tau}'''>0$
$$
\left|
\tilde{\varphi}^\tau\left( x\right)\right|^2\le 
e^{2\,\tau\,c_{k\,\boldsymbol{\lambda}}}\,
C_{d,\tau}'''\,
{c_{k\,\boldsymbol{\lambda}}}
^{\frac{d-r_G}{2}},\quad \forall\,x\in X^\tau.
$$
This proves the first statement of Theorem \ref{thm:Husimi equiv}.

The second statement follows from the first and 
Lemma \ref{lem:L2 norm est zel}.

\end{proof}

\subsection{Theorem \ref{thm:operator norm norm estimate}}

Following arguments in \cite{sz03} and \cite{cr2},
we shall make recourse to the Shur-Young inequality (\cite{so}): 
given a Riemannian manifold
$(M,\beta)$ with Riemannian density $\mathrm{d}V_M$ and $q\ge p\ge 1$,
there is a constant $C_p>0$ such that
for any integral self-adjoint operator kernel $K$ on $M$ we have
$$
\|K\|_{L^p(M)\rightarrow L^q(M)}\le 
C_p\,\left[
\sup_{y\in M} 
\int\big|K(y,y')\big|^R\,
\mathrm{d}V_M(y')  \right ]^{\frac{1}{R}},\quad 
\frac{1}{R}:=1-\frac{1}{p}+\frac{1}{q}.
$$

\begin{proof}
[Proof of Theorem \ref{thm:operator norm norm estimate}]
We need to estimate 
\begin{equation}
\label{eqn:integral shur-young}
\sup_{y\in M} \left[
\int\big|\Pi^\tau_{k\,\boldsymbol{\lambda}}(y,y')\big|^R\,
\mathrm{d}V_M(y') \right] .
\end{equation}
Let us fix $C,\,\epsilon$ as in Theorem \ref{thm:rescaled asympt}.
By Theorem \ref{thm:rapid decay moment map}, 
$$
\Pi^\tau_{k\,\boldsymbol{\lambda}}(y,y')=O\left(
k^{-\infty}\right)
$$
uniformly for $\mathrm{dist}_{X^\tau}(y,X^\tau_{\mathcal{O}})
\ge C\,k^{\epsilon-\frac{1}{2}}$. We may thus reduce to the case
where $y$ is given by (\ref{eqn:y locl param}).
Given this, by Theorems \ref{thm:rapid decay moment map} and 
\ref{thm:rapid decay orbit}, a non-negligible contribution to
the integral in (\ref{eqn:integral shur-young}) only comes from the
locus where both the conditions
$$
\mathrm{dist}_{X^\tau}(y',X^\tau_{\mathcal{O}}),\,
\mathrm{dist}_{X^\tau}(y',G\cdot y)
\le C\,k^{\epsilon-\frac{1}{2}}
$$
are met. 
Thus, perhaps replacing $C$ with a bigger constant $C'$, we also have
$$\mathrm{dist}_{X^\tau}(y',G\cdot x)
\le C'\,k^{\epsilon-\frac{1}{2}}.
$$
Any such $y'$ has the form
\begin{equation}
\label{eqn:y'n's'}
y'=\mu_g(x)+\frac{1}{\sqrt{k}}\,(\mathbf{n}'+\mathbf{s}'),
\end{equation}
for suitable $\mathbf{n}'\in  N(X^\tau_{\mathcal{O}}/X^\tau)_{\mu_g(x)}$, 
$\mathbf{s}'\in \mathcal{S}_{\mu_g(x)}$ of norm 
$O\left(k^\epsilon\right)$.
The system of NHLC's at $\mu_g(x)$ may be taken to be the $\mu_g$-translate of the one at $x$.

However, in view of Remark
\ref{rem:NSsigma}, (\ref{rem:NSsigma}) is not a parametrization,
since the real summand $\mathfrak{s}_x(\mu_g(x))$ in
$\mathcal{S}_{\mu_g(x)}$ is contained in the
tangent space to the $G$-orbit. To obtain an effective parametrization, we
restrict $\mathbf{s}'$ to be an imaginary vector,
i.e. assume
$\mathbf{s}'\in J_{\mu_g(x)}\big( \mathfrak{s}_x(\mu_g(x)) \big)$.

Thus, up to a negligible contribution, for a suitable $D>0$ we may restrict integration to the
image in $X^\tau$ of the immersion 
\begin{eqnarray}
\label{eqn:Lambda_x}
\Lambda_{x,k}:&
(g,\mathbf{n}',\mathbf{s}')&\in 
G\times B_{r_G-1}\left(\mathbf{0},D\,k^\epsilon\right)
\times B_{d-r_G}\left(\mathbf{0},D\,k^\epsilon\right)\\
&\mapsto &
y'=\mu_g(x)+\frac{1}{\sqrt{k}}\,\big(\mathbf{n}'+J_{\mu_g(x)}(\mathbf{s}') \big)
\in 
X^\tau.\nonumber
\end{eqnarray}
Then 
\begin{equation}
\label{eqn:Lambda x volume form pb}
\Lambda_{x,k}^*(\mathrm{d}V_M)=
\frac{1}{k^{\frac{d-1}{2}}}\,\mathcal{V}_k(g,\mathbf{n}',
\mathbf{s}')\,\mathrm{d}^HV_G(g)
\,\mathrm{d}\mathbf{n}'
\,\mathrm{d}\mathbf{s}',
\end{equation}
where 
$$
\mathcal{V}_k(g,\mathbf{n}',
\mathbf{s}')\sim V_0(g)
+\sum_{j\ge 1}k^{-j/2}\,V_j(g,\mathbf{n}',
\mathbf{s}'),
$$
with $V_0(g)>0$ and $V_j$
homogeneous of degree $j$ in $(\mathbf{n}',\mathbf{s}')$.

On the other hand, by Theorem \ref{thm:rescaled asympt}
and the Cauchy-Schwartz inequality, 
if $y$ and $y'$ are given by (\ref{eqn:y locl param})
and (\ref{eqn:y'n's'}) 
we have 
\begin{eqnarray}
\label{eqn:bound on szego}
\big|\Pi^\tau_{k\,\boldsymbol{\lambda}}(y,y')\big|
&\le& \sqrt{\Pi^\tau_{k\,\boldsymbol{\lambda}}(y,y)\,
\Pi^\tau_{k\,\boldsymbol{\lambda}}(y',y')}\\
&\le& C_{\boldsymbol{\lambda},\tau}\,k^{d-1+\frac{1-r_G}{2}}\,
e^{-\frac{\|\boldsymbol{\lambda}\|}{2\,\tau}\,
\left(\|\mathbf{n}\|^2+\|\mathbf{n}'\|^2+\frac{1}{2}\,
\|\mathbf{s}'\|^2  \right)}\nonumber\\
&\le& 
C_{\boldsymbol{\lambda},\tau}\,k^{d-1+\frac{1-r_G}{2}}\,
e^{-\frac{\|\boldsymbol{\lambda}\|}{2\,\tau}\,
\|\mathbf{n}'\|^2}.
\nonumber
\end{eqnarray}
Hence, allowing the constant to vary from line to line,
\begin{equation}
\label{eqn:Pitau R bd}
\big|\Pi^\tau_{k\,\boldsymbol{\lambda}}(y,y')\big|^R\le 
C_{\boldsymbol{\lambda},\tau}\,
k^{R\,\left(d-1+\frac{1-r_G}{2}\right)}\,
e^{-R\,\frac{\|\boldsymbol{\lambda}\|}{2\,\tau}\,
\|\mathbf{n}'\|^2}.
\end{equation}
Using this and (\ref{eqn:Lambda x volume form pb}),
we conclude that
\begin{eqnarray}
\label{eqn:bound on integral Pitau}
\lefteqn{   
\int\big|\Pi^\tau_{k\,\boldsymbol{\lambda}}(y,y')\big|^R\,
\mathrm{d}V_M(y')}\\
&\le &
C_{\boldsymbol{\lambda},\tau}\,
k^{R\,\left(d-1+\frac{1-r_G}{2}\right)}\,
k^{-\frac{d-1}{2}}
\,k^{\epsilon\cdot (d-r_G)}.\nonumber
\end{eqnarray}
Thus we conclude that for some constant $C>0$ (depending
on $\tau$, $\boldsymbol{\lambda}$, $p$ and $q$), if 
$\epsilon'>0$ then for $k\gg 0$
we have
$$
\|\Pi^{\tau}_{k\,\boldsymbol{\lambda}}\|_{L^p(M)\rightarrow L^q(M)}\le C_{\boldsymbol{\lambda},\tau,}\,
k^{\frac{d-1}{2}\,\left( 1-\frac{1}{R}  \right)
+\frac{d-r_G}{2}+\epsilon'}.
$$

\end{proof}

\end{document}